\DeclareOldFontCommand{\rm}{\normalfont\rmfamily}{\mathrm}
\DeclareOldFontCommand{\sf}{\normalfont\sffamily}{\mathsf}
\DeclareOldFontCommand{\tt}{\normalfont\ttfamily}{\mathtt}
\DeclareOldFontCommand{\bf}{\normalfont\bfseries}{\mathbf}
\DeclareOldFontCommand{\it}{\normalfont\itshape}{\mathit}
\DeclareOldFontCommand{\sl}{\normalfont\slshape}{\@nomath\sl}
\DeclareOldFontCommand{\sc}{\normalfont\scshape}{\@nomath\sc}
\newtheorem{dummytheorem}{Dummy-Theorem}[section]
\newcommand{\proofendsign}{$\Box$} 
\newtheorem{definition}[dummytheorem]{Definition}
\newtheorem{lemma}[dummytheorem]{Lemma}
\newtheorem{theorem}[dummytheorem]{Theorem}
\newtheorem{proposition}[dummytheorem]{Proposition}
\newenvironment{proof}{{\noindent \bf Proof }}
 {{\hspace*{\fill}\proofendsign\par\bigskip}}
\newtheorem{remarknorm}[dummytheorem]{Remark}
\newtheorem{examplenorm}[dummytheorem]{Example}
\newcommand{\N}{\mathbb{N}}
\newcommand{\R}{\mathbb{R}}
\newcommand{\M}{\mathbb{M}}
\newcommand{\pr}{\mathbb{P}}
\newcommand{\ex}{\mathbb{E}}
\newcommand{\eins}{\mathbbm{1}}
\newcommand{\OFP}{(\Omega,{\cal F},\pr)}
\renewcommand{\P}{\boldsymbol{P}}
\newcommand{\Q}{\boldsymbol{Q}}
\begin{document}


\title{First-order sensitivity of the optimal value in a Markov decision model with respect to deviations in the transition probability function}

\author{
Patrick Kern\footnote{Department of Mathematics, Saarland University; {kern@math.uni-sb.de}}
\qquad\quad
Axel Simroth\footnote{Fraunhofer Institute for Transportation and Infrastructure Systems; {axel.simroth@ivi.fraunhofer.de}} 
\qquad\quad
Henryk Zähle\footnote{Department of Mathematics, Saarland University; {zaehle@math.uni-sb.de}}
}
\date{ } 

\maketitle

\begin{abstract}
Markov decision models (MDM) used in practical applications are most often less complex than the underlying `true' MDM. The reduction of model complexity is performed for several reasons. However, it is obviously of interest to know what kind of model reduction is reasonable (in regard to the optimal value) and what kind is not. In this article we propose a way how to address this question. We introduce a sort of derivative of the optimal value as a function of the transition probabilities, which can be used to measure the (first-order) sensitivity of the optimal value w.r.t.\ changes in the transition probabilities. `Differentiability' is obtained for a fairly broad class of MDMs, and the `derivative' is specified explicitly. Our theoretical findings are illustrated by means of optimization problems in inventory control and mathematical finance.
\end{abstract}

{\bf Keywords:}
Markov decision model; Model reduction; Transition probability function; Optimal value; Functional differentiability; Financial optimization


\newpage

\section{Introduction}\label{intro}


Already in the 1990th, M\"uller \cite{Mueller1997} pointed out that the impact of the transition probabilities of a Markov decision process (MDP) on the optimal value of a corresponding Markov decision model (MDM) can {\em not} be ignored for practical issues. For instance, in most cases the transition probabilities are unknown and have to be estimated by statistical methods. Moreover in many applications the `true' model is replaced by an approximate version of the `true' model or by a variant which is simplified and thus less complex. The result is that in practical applications the optimal (strategy and thus the optimal) value is most often computed on the basis of transition probabilities that differ from the underlying true transition probabilities. Therefore the sensitivity of the optimal value w.r.t.\ deviations in the transition probabilities is obviously of interest.

M\"uller \cite{Mueller1997} showed that under some structural assumptions the optimal value in a discrete-time MDM depends continuously on the transition probabilities, and he established bounds for the approximation error. In the course of this the distance between transition probabilities was measured by means of some suitable probability metrics. Even earlier, Kolonko \cite{Kolonko1983} obtained analogous bounds in a MDM in which the transition probabilities depend on a parameter. Here the distance between transition probabilities was measured by means of the distance between the respective parameters. Error bounds for the expected total reward of discrete-time Markov reward processes were also specified by van Dijk \cite{vanDijk1988} and van Dijk and Puterman \cite{vanDijkPuterman1988}. In the latter reference the authors also discussed the case of discrete-time Markov decision processes with countable state and action spaces. 

In this article, we focus on the situation where the `true' model is replaced by a less complex version (for a simple example, see Subsection \mbox{\ref{Subsec - Inventory Example - Numerical example}} in the supplementary material). The reduction of model complexity in practical applications is common and performed for several reasons. Apart from computational aspects and the difficulty of considering all relevant factors, one major point is that statistical inference for certain transition probabilities can be costly in terms of both time and money. However, it is obviously of interest to know what kind of model reduction is reasonable and what kind is not. In the following we want to propose a way how to address the latter question.

Our original motivation comes from the field of optimal logistics transportation planning, where ongoing projects like SYNCHRO-NET \cite{Synchronet} aim at stochastic decision models based on transition probabilities estimated from historical route information. Due to the lack of historical data for unlikely events, transition probabilities are often modeled in a simplified way. In fact, events with small probabilities are often ignored in the model. However, the impact of these events on the optimal value (here the minimal expected transportation costs) of the corresponding MDM may nevertheless be significant. The identification of unlikely but potentially cost sensitive events is therefore a major challenge. In logistics planning operations engineers have indeed become increasingly interested in comprehensibly quantifying the sensitivity of the optimal value w.r.t.\ the incorporation of unlikely events into the model. For background see, for instance, \cite{HolfeldSimroth2017,Holfeldetal2018}. The assessment of rare but risky events takes on greater importance also in other areas of applications; see, for instance, \cite{Komljenovic2016,Yangetal2015} and references cited therein.

By an incorporation of an unlikely event into the model we mean, for instance, that under performance of an action $a$ at some time $n$ a previously impossible transition from one state $x$ to another state $y$ gets now assigned small but strictly positive probability $\varepsilon$. Mathematically this means that the transition probability $P_n((x,a),\,\cdot\,)$ is replaced by $(1-\varepsilon)P_n((x,a),\,\bullet\,)+\varepsilon Q_n((x,a),\,\bullet\,)$ with $Q_n((x,a),\,\bullet\,):=\delta_y[\,\bullet\,]$, where $\delta_y$ is the Dirac measure at $y$.
More generally one could consider a change of the whole transition function (the family of all transition probabilities) $\P$ to $(1-\varepsilon)\P+\varepsilon\Q$ with $\varepsilon>0$ small. For operations engineers it is here interesting to know how this change affects the optimal value, ${\cal V}_{0}(\P)$. If the effect is minor, then an incorporation can be seen as superfluous, at least from a pragmatic point of view. If on the other hand the effect is significant, then the engineer should consider the option to extend the model and to make an effort to get access to statistical data for the extended model.

At this point it is worth mentioning that a change of the transition function from $\P$ to $(1-\varepsilon)\P+\varepsilon\Q$ with $\varepsilon>0$ small can also have a different interpretation than an incorporation of an (unlikely) {\em  new event}. It could also be associated with an incorporation of an (unlikely) {\em divergence from the normal transition rules}. See Subsection \mbox{\ref{Subsec - Finance Example - Numerical example}} for an example.

In this article, we will introduce an approach for quantifying the effect of changing the transition function from $\P$ to $(1-\varepsilon)\P+\varepsilon\Q$, with $\varepsilon>0$ small, on the optimal value ${\cal V}_{0}(\P)$ of the MDM. In view of $(1-\varepsilon)\P+\varepsilon\Q=\P+\varepsilon(\Q-\P)$, we feel that it is reasonable to quantify the effect by a sort of derivative of the value functional ${\cal V}_{0}$ at $\P$ evaluated at direction $\Q-\P$. To some extent the `derivative' $\dot{\cal V}_{0;\P}(\Q-\P)$ specifies the first-order sensitivity of ${\cal V}_{0}(\P)$ w.r.t.\ a change of $\P$ as above. Take into account that
\begin{equation}\label{motivation of measure for f-o sensitivity}
    {\cal V}_{0}(\P+\varepsilon(\Q-\P)) - {\cal V}_{0}(\P)\,\approx\,\varepsilon\cdot\dot{\cal V}_{0;\P}(\Q-\P) \qquad \mbox{for $\varepsilon>0$ small}.
\end{equation}

To be able to compare the first-order sensitivity for (infinitely) many different $\Q$, it is favourable to know that the approximation in (\mbox{\ref{motivation of measure for f-o sensitivity}}) is uniform in $\Q\in{\cal K}$ for preferably large sets ${\cal K}$ of transition functions. Moreover, it is not always possible to specify the relevant $\Q$ exactly. For that reason it would be also good to have robustness (i.e.\ some sort of continuity) of $\dot{\cal V}_{0;\P}(\Q-\P)$ in $\Q$. These two things induced us to focus on a variant of tangential ${\cal S}$-differentiability as introduced by Sebasti\~{a}o e Silva \cite{Sebastiao e Silva1956a} and Averbukh and Smolyanov \cite{AverbukhSmolyanov1967} (here ${\cal S}$ is a family of sets ${\cal K}$ of transition functions). In Section \mbox{\ref{Sec - Hadamard differentiability chapter}} we present a result on `${\cal S}$-differentiability' of ${\cal V}_0$ for the family ${\cal S}$ of all {\em relatively compact} sets of admissible transition functions and a reasonably broad class of MDMs, where we measure the distance between transition functions by means of metrics based on probability metrics as in \cite{Mueller1997}.

The `derivative' $\dot{\cal V}_{0;\P}(\Q-\P)$ of the optimal value functional ${\cal V}_{0}$ at $\P$ quantifies the effect of a change from $\P$ to $(1-\varepsilon)\P+\varepsilon\Q$, with $\varepsilon>0$ small, assuming that after the change the strategy $\pi$ (tuple of the underlying decision rules) is chosen such that it optimizes the
target value ${\cal V}_{0}^{\pi}(\P')$ (e.g.\ expected total costs or rewards) in $\pi$ under the new transition function $\P':=(1-\varepsilon)\P+\varepsilon\Q$. On the other hand, practitioners are also interested in quantifying the impact of a change of $\P$ when the optimal strategy (under $\P$) is kept after the change. Such a quantification would somehow answers the question: How much different does a strategy derived in a simplified MDM perform in a more complex (more realistic) variant of the MDM? Since the `derivative' $\dot{\cal V}_{0;\P}^\pi(\Q-\P)$ of the functional ${\cal V}_{0}^\pi$ under a {\em fixed} strategy $\pi$ turns out to be a building stone for the derivative $\dot{\cal V}_{0;\P}(\Q-\P)$ of the optimal value functional ${\cal V}_{0}$ at $\P$, our elaborations cover both situations anyway. For fixed strategy $\pi$ we obtain `${\cal S}$-differentiability' of ${\cal V}_0^\pi$ even for the broader family ${\cal S}$ of all {\em bounded} sets of admissible transition functions.

The `derivative' which we propose to regard as a measure for the first-order sensitivity will formally be introduced in Definition \mbox{\ref{def S differentiability}}. This definition is applicable to quite general finite time horizon MDMs and might look somewhat cumbersome at first glance. However, in the special case of a finite state space and finite action spaces, a situation one faces in many practical applications, the proposed `differentiability' boils down to a rather intuitive concept. This will be explained in Section \mbox{\ref{Sec - Discrete MDM}} of the supplementary material with a minimum of notation and terminology. In Section \mbox{\ref{Sec - Discrete MDM}} of the supplementary material we will also reformulate a backward iteration scheme for the computation of the `derivative' (which can be deduced from our main result, Theorem \mbox{\ref{Thm - hadamard cal v}}) in the discrete case, and we will discuss an example.

In Section \mbox{\ref{Sec - Formal definition of MDM}} we formally introduce quite general MDMs in the fashion of the standard monographs \cite{BaeuerleRieder2011,HernandezLasserre1996,Hinderer1970,Puterman1994}. Since it is important to have an elaborate notation in order to formulate our main result, we are very precise in Section \mbox{\ref{Sec - Formal definition of MDM}}. As a result, this section is a little longer compared to the respective sections in other articles on MDMs. In Section \mbox{\ref{Sec - Hadamard differentiability chapter}} we carefully introduce our notion of `differentiability' and state our main result concerning the computation of the `derivative' of the value functional.

In Section \mbox{\ref{Sec - Finance Example}} we will apply the results of Section \mbox{\ref{Sec - Hadamard differentiability chapter}} to assess the impact of one or more than one unlikely but substantial shock in the dynamics of an asset on the solution of a terminal wealth problem in a (simple) financial market model free of shocks. This example somehow motivates the general set-up chosen in Sections \mbox{\ref{Sec - Formal definition of MDM}}--\mbox{\ref{Sec - Hadamard differentiability chapter}}. All results of this article are proven in Sections \mbox{\ref{Sec - Proof of results from Section 4}}--\mbox{\ref{Sec - Proof of results from Section 5}} of the supplementary material. For the convenience of the reader we recall in Section \mbox{\ref{Sec - Existence of optimal strategies}} of the supplementary material a result on the existence of optimal strategies in general MDMs. Section \mbox{\ref{Sec - hoelder metric psi weak topology}} of the supplementary material contains an auxiliary topological result.


\section{Formal definition of Markov decision model} \label{Sec - Formal definition of MDM}


Let $E$ be a non-empty set equipped with a $\sigma$-algebra ${\cal E}$, referred to as {\em state space}. Let $N\in\N$ be a fixed finite time horizon (or planning horizon) in discrete time. For each point of time $n=0,\ldots,N-1$ and each state $x\in E$, let $A_n(x)$ be a non-empty set. The elements of $A_n(x)$ will be seen as the admissible {\em actions} (or {\em controls}) at time $n$ in state $x$. For each $n=0,\ldots,N-1$, let
$$
     A_n:=\bigcup_{x\in E} A_n(x) \quad\mbox{ and }\quad D_n:=\big\{(x,a)\in E\times A_n:\,a\in A_n(x)\big\}.
$$
The elements of $A_n$ can be seen as the actions that may basically be selected at time $n$ whereas the elements of $D_n$ are the possible state-action combinations at time $n$. For our subsequent analysis, we equip $A_n$ with a $\sigma$-algebra ${\cal A}_n$, and let ${\cal D}_n:=({\cal E}\otimes{\cal A}_n)\cap D_n$ be the trace of the product $\sigma$-algebra ${\cal E}\otimes{\cal A}_n$ in $D_n$. Recall that a map $P_n:D_n\times{\cal E}\rightarrow[0,1]$ is said to be a probability kernel (or Markov kernel) from $(D_n,{\cal D}_n)$ to $(E,{\cal E})$ if $P_n(\,\cdot\,,B)$ is a $({\cal D}_n,{\cal B}([0,1]))$-measurable map for any $B\in{\cal E}$, and $P_n((x,a),\,\bullet\,)\in{\cal M}_1(E)$ for any $(x,a)\in D_n$. Here ${\cal M}_1(E)$ is the set of all probability measures on $(E,{\cal E})$.


\subsection{Markov decision process}\label{Subsec - Markov decision process}


In this subsection, we will give a formal definition of an $E$-valued (discrete-time) Markov decision process (MDP) associated with a given initial state, a given transition function and a given strategy.
By definition a {\em (Markov decision) transition (probability) function} is an $N$-tuple
$$
     \P = (P_0,\ldots,P_{N-1})
$$
whose $n$-th entry $P_n$ is a probability kernel from $(D_n,{\cal D}_n)$ to $(E,{\cal E})$. In this context $P_n$ will be referred to as {\em one-step transition (probability) kernel at time $n$} (or {\em from time $n$ to $n+1$}) and the probability measure $P_n((x,a),\,\bullet\,)$ is referred to as {\em one-step transition probability at time $n$} (or {\em from time $n$ to $n+1$}) {\em given state $x$ and action $a$}. We denote by ${\cal P}$ the set of all transition functions.

We will assume that the actions are performed by a so-called $N$-stage strategy (or $N$-stage policy). An {\em ($N$-stage) strategy} is an $N$-tuple
$$
     \pi = (f_0,\ldots,f_{N-1})
$$
of decision rules at times $n=0,\ldots,N-1$, where a {\em decision rule at time $n$} is an $({\cal E},{\cal A}_n)$-measurable map $f_n:E\rightarrow A_n$ satisfying $f_n(x)\in A_n(x)$ for all $x\in E$. Note that a decision rule at time $n$ is (deterministic and) `Markovian' since it only depends on the current state and is independent of previous states and actions. We denote by $\mathbb{F}_n$ the set of {\em all} decision rules at time $n$, and assume that $\mathbb{F}_n$ is non-empty.
Hence a strategy is an element of the set $\mathbb{F}_0\times\cdots\times\mathbb{F}_{N-1}$, and this set can be seen as the set of {\em all} strategies. Moreover, we fix for any $n=0,\ldots,N-1$ some $F_n\subseteq\mathbb{F}_n$ which can be seen as the set of all {\em admissible} decision rules at time $n$. In particular, the set $\Pi:=F_0\times\cdots\times F_{N-1}$ can be seen as the set of all {\em admissible} strategies.

For any transition function $\P=(P_n)_{n=0}^{N-1}\in{\cal P}$, strategy $\pi=(f_n)_{n=0}^{N-1}\in\Pi$, and time point $n\in\{0,\ldots,N-1\}$, we can derive from $P_n$ a probability kernel $P_n^{\pi}$ from $(E,{\cal E})$ to $(E,{\cal E})$ through
\begin{equation}\label{def prob kernel E to E}
    P_n^{\pi}(x,B) := P_n\big((x,f_n(x)),B\big),	\qquad x\in E,\,B\in{\cal E}.
\end{equation}
The probability measure $P_n^\pi(x,\,\bullet\,)$ can be seen as the one-step transition probability at time $n$ given state $x$ when the transitions and actions are governed by $\P$ and $\pi$, respectively.

Now, consider the measurable space
$$
    (\Omega,{\cal F}) := (E^{N+1},{\cal E}^{\otimes(N+1)}).
$$
For any $x_0\in E$, $\P=(P_n)_{n=0}^{N-1}\in{\cal P}$, and $\pi\in\Pi$ define the probability measure
\begin{equation}\label{def P pi}
     \pr^{x_0,\P;\pi} := \delta_{x_0}\otimes P_0^\pi\otimes\cdots\otimes P_{N-1}^\pi
\end{equation}
on $(\Omega,{\cal F})$, where $x_0$ should be seen as the {\em initial state} of the MDP to be constructed. The right-hand side of (\mbox{\ref{def P pi}}) is the usual product of the probability measure $\delta_{x_0}$ and the kernels $P_0^\pi,\ldots,P_{N-1}^\pi$; for details see display (\mbox{\ref{def P pi - exactly}}) in Section \mbox{\ref{Sec - Supplement to Section 2}} of the supplementary material.  Moreover let $\boldsymbol{X}=(X_0,\ldots,X_N)$ be the identity on $\Omega$, i.e.
\begin{equation}\label{def bold X}
    X_n(x_0,\ldots,x_N) := x_n,	\qquad (x_0,\ldots,x_N)\in E^{N+1},\,n=0,\ldots,N.
\end{equation}
Note that, for any $x_0\in E$, $\P=(P_n)_{n=0}^{N-1}\in{\cal P}$, and $\pi\in\Pi$, the map $\boldsymbol{X}$ can be regarded as an $(E^{N+1},{\cal E}^{\otimes (N+1)})$-valued random variable on the probability space $(\Omega,{\cal F},\pr^{x_0,\P;\pi})$ with distribution $\delta_{x_0}\otimes P_0^\pi\otimes\cdots\otimes P_{N-1}^\pi$.

It follows from Lemma \mbox{\ref{lemma on X and P}} in the supplementary material) that for any $x_0,\widetilde{x}_0,\break x_1,\ldots,x_n\in E$, $\P=(P_n)_{n=0}^{N-1}\in{\cal P}$, $\pi=(f_n)_{n=0}^{N-1}\in\Pi$, and $n=1,\ldots,N-1$
\begin{enumerate}
    \item[{\rm (i)}] $\pr^{x_0,\P;\pi}[X_{0}\in\,\bullet\,]=\delta_{x_0}[\,\bullet\,]$,

    \item[{\rm (ii)}] $\pr^{x_0,\P;\pi}[X_{1}\in\,\bullet\,\|X_0=\widetilde{x}_0]=P_0\big((x_0,f_0(x_0)),\,\bullet\,\big)$,

    \item[{\rm (iii)}] $\pr^{x_0,\P;\pi}[X_{n+1}\in\,\bullet\,\|(X_0,X_1,\ldots,X_n)=(\widetilde{x}_0,x_1,\ldots,x_n)]= P_n\big((x_n,f_n(x_n)),\,\bullet\,\big)$,

    \item[{\rm (iv)}] $\pr^{x_0,\P;\pi}[X_{n+1}\in\,\bullet\,\|X_n=x_n]= P_n\big((x_n,f_n(x_n)),\,\bullet\,\big)$.
\end{enumerate}
The formulation of (ii)--(iv) is somewhat sloppy, because in general a (regular version of the) factorized conditional distribution of $X$ given $Y$ under $\pr^{x_0,\P;\pi}$ (evaluated at a fixed set $B\in{\cal E}$) is only $\pr_Y^{x_0,\P;\pi}$-a.s.\ unique. So assertion (iv) in fact means that the probability kernel $P_n((\,\cdot\,,f_n(\,\cdot\,)),\,\bullet\,)$ provides a (regular version of the) factorized conditional distribution of $X_{n+1}$ given $X_n$ under $\pr^{x_0,\P;\pi}$, and analogously for (ii) and (iii). Note that the factorized conditional distribution in part (ii) is constant w.r.t.\ $\widetilde{x}_0\in E$.
Assertions (iii) and (iv) together imply that the temporal evolution of $X_n$ is Markovian. This justifies the following terminology.

\begin{definition}[MDP]\label{def MDP}
Under law $\pr^{x_0,\P;\pi}$ the random variable $\boldsymbol{X}=(X_0,\ldots,X_N)$ is called (discrete-time) Markov decision process (MDP) associated with initial state $x_0\in E$, transition function $\P\in{\cal P}$ and strategy $\pi\in\Pi$.
\end{definition}


\subsection{Markov decision model and value function} \label{Subsec - MDM}


Maintain the notation and terminology introduced in Subsection \mbox{\ref{Subsec - Markov decision process}}. In this subsection, we will first define a (discrete-time) Markov decision model (MDM) and introduce subsequently the corresponding value function. The latter will be derived from a reward maximization problem.
Fix $\boldsymbol{P}\in{\cal P}$, and let for each point of time $n=0,\ldots,N-1$
$$
     r_n : D_n\longrightarrow\R
$$
be a $({\cal D}_n,{\cal B}(\R))$-measurable map, referred to as {\em one-stage reward function}. Here $r_n(x,a)$ specifies the one-stage reward when action $a$ is taken at time $n$ in state $x$. Let
$$
     r_N : E\longrightarrow\R
$$
be an $({\cal E},{\cal B}(\R))$-measurable map, referred to as {\em terminal reward function}. The value $r_N(x)$ specifies the reward of being in state $x$ at terminal time $N$.

Denote by $\boldsymbol{A}$ the family of all sets $A_n(x)$, $n=0,\ldots,N-1$, $x\in E$, and set $\boldsymbol{r}:=(r_n)_{n=0}^N$. Moreover let $\boldsymbol{X}$ be defined as in (\mbox{\ref{def bold X}}) and recall Definition \mbox{\ref{def MDP}}. Then we define our MDM as follows.

\begin{definition}[MDM]\label{def MDM}
The quintuple $(\boldsymbol{X},\boldsymbol{A},\P,\Pi,\boldsymbol{r})$ is called (discrete-time) Markov decision model (MDM) associated with the family of action spaces $\boldsymbol{A}$, transition function $\P\in{\cal P}$, set of admissible strategies $\Pi$, and reward functions $\boldsymbol{r}$.
\end{definition}

In the sequel we will always assume that a MDM $(\boldsymbol{X},\boldsymbol{A},\P,\Pi,\boldsymbol{r})$ satisfies the following Assumption {\bf (A)}. In Subsection \mbox{\ref{Subsec - Bounding functions}} we will discuss some conditions on the MDM under which Assumption {\bf (A)} holds.
We will use $\ex_{n,x_n}^{x_0,\P;\pi}$ to denote the expectation w.r.t.\ the factorized conditional distribution $\pr^{x_0,\P;\pi}[\,\bullet\,\| X_n=x_n]$. For $n=0$, we clearly have $\pr^{x_0,\P;\pi}[\,\bullet\,\| X_0=x_0]=\pr^{x_0,\P;\pi}[\,\bullet\,]$ for every $x_0\in E$; see Lemma \mbox{\ref{lemma on X and P}} in the supplementary material. In what follows we use the convention that the sum over the empty set is zero.

\par\medskip

\noindent{\bf Assumption (A)}:\,
$
\sup_{\pi=(f_n)_{n=0}^{N-1}\in\Pi}\ex_{n,x_n}^{x_0,\P;\pi}[\,\sum_{k=n}^{N-1}|r_k(X_k,f_k(X_k))|+ |r_N(X_N)|\,]<\infty
$
for any $x_n\in E$ and $n=0,\ldots,N$.

\par\medskip

Under Assumption {\bf (A)} we may define in a MDM $(\boldsymbol{X},\boldsymbol{A},\P,\Pi,\boldsymbol{r})$ for any $\pi=(f_n)_{n=0}^{N-1}\in\Pi$ and $n=0,\ldots,N$ a map $V_{n}^{\P;\pi}:E\rightarrow\R$ through
\begin{equation}\label{exp tot reward}
    V_n^{\P;\pi}(x_n) := \ex_{n,x_n}^{x_0,\P;\pi}\Big[\sum_{k=n}^{N-1} r_k(X_k,f_k(X_k)) + r_N(X_N)\Big].
\end{equation}
As a factorized conditional expectation this map is $({\cal E},{\cal B}(\R))$-measurable (for any $\pi\in\Pi$ and $n=0,\ldots,N$). Note that for $n=1,\ldots,N$ the right-hand side of (\mbox{\ref{exp tot reward}}) does {\em not} depend on $x_0$; see Lemma \mbox{\ref{lemma on integrals of functions of X}} in the supplementary material. Therefore the map $V_n^{\P;\pi}(\cdot)$ need {\em not} be equipped with an index $x_0$.

The value $V_{n}^{\P;\pi}(x_n)$ specifies the {\em expected total reward from time $n$ to $N$} of $\boldsymbol{X}$ under $\pr^{x_0,\P;\pi}$ when strategy $\pi$ is used and $\boldsymbol{X}$ is in state $x_n$ at time $n$. It is natural to ask for those strategies $\pi\in\Pi$ for which the expected total reward from time $0$ to $N$ is maximal for all initial states $x_0\in E$. This results in the following optimization problem:
\begin{equation}\label{maximization problem}
    V_0^{\P;\pi}(x_0) \longrightarrow \max\ \mbox{(in $\pi\in\Pi$)\,!}
\end{equation}
If a solution $\pi^{\P}$ to the optimization problem (\mbox{\ref{maximization problem}}) (in the sense of Definition \mbox{\ref{def opt strategy}} ahead) exists, then the corresponding maximal expected total reward is given by the so-called {\em value function (at time $0$)}.

\begin{definition}[Value function] \label{def value function}
For a MDM $(\boldsymbol{X},\boldsymbol{A},\P,\Pi,\boldsymbol{r})$ the value function at time $n\in\{0,\ldots,N\}$ is the map $V_{n}^{\P}:E\rightarrow\R$ defined by
\begin{equation}\label{value function}
    V_n^{\P}(x_n) := \sup_{\pi\in\Pi}V_n^{\P;\pi}(x_n).
\end{equation}
\end{definition}

Note that the value function $V_{n}^{\P}$ is well defined due to Assumption {\bf (A)} but not necessarily $({\cal E},{\cal B}(\R))$-measurable. The measurability holds true, for example, if the sets $F_{n},\ldots,F_{N-1}$ are at most countable or if conditions (a)--(c) of Theorem \mbox{\ref{Thm - existence opt strategy}} in the supplementary material) are satisfied; see also Remark \mbox{\ref{remark existence optimal strat}}(i) in the supplementary material.

\begin{definition}[Optimal strategy]\label{def opt strategy}
In a MDM $(\boldsymbol{X},\boldsymbol{A},\P,\Pi,\boldsymbol{r})$ a strategy $\pi^{\P}\in\Pi$ is called optimal w.r.t.\ $\P$ if 
\begin{equation}\label{def opt strategy - eq10}
    V_0^{\P;\pi^{\P}}(x_0) = V_0^{\P}(x_0)\quad\mbox{for all }x_0\in E.
\end{equation}
In this case $V_0^{\P;\pi^{\P}}(x_0)$ is called optimal value (function), and we denote by $\Pi(\P)$ the set of all optimal strategies w.r.t.\ $\P$. Further, for any given $\delta>0$, a strategy $\pi^{\P;\delta}\in\Pi$ is called $\delta$-optimal w.r.t.\ $\P$ in a MDM $(\boldsymbol{X},\boldsymbol{A},\P,\Pi,\boldsymbol{r})$ if
\begin{equation}\label{def opt strategy - eq20}
	V_0^{\P}(x_0) - \delta \le V_0^{\P;\pi^{\P;\delta}}(x_0) \quad\mbox{for all }x_0\in E,
\end{equation}
and we denote by $\Pi(\P;\delta)$ the set of all $\delta$-optimal strategies w.r.t.\ $\P$.
\end{definition}

Note that condition (\mbox{\ref{def opt strategy - eq10}}) requires that $\pi^{\P}\in\Pi$ is an optimal strategy for {\em all} possible initial states $x_0\in E$. Though, in some situations it might be sufficient to ensure that $\pi^{\P}\in\Pi$ is an optimal strategy only for some fixed initial state $x_0$.
For a brief discussion of the existence and computation of optimal strategies, see Section \mbox{\ref{Sec - Existence of optimal strategies}} of the supplementary material.

\begin{remarknorm}\label{remark non Markovian strat}
(i) In practice, the choice of an action can possibly be based on historical observations of states and actions. In particular one could relinquish the Markov property of the decision rules and allow them to depend also on previous states and actions. Then one might hope that the corresponding (deterministic) history-dependent strategies improve the optimal value of a MDM $(\boldsymbol{X},\boldsymbol{A},\P,\Pi,\boldsymbol{r})$. However, it is known that the optimal value of a MDM $(\boldsymbol{X},\boldsymbol{A},\P,\Pi,\boldsymbol{r})$ can {\em not} be enhanced by considering history-dependent strategies; see, e.g., Theorem 18.4 in \cite{Hinderer1970} or Theorem 4.5.1 in \cite{Puterman1994}.

(ii) Instead of considering the reward maximization problem (\mbox{\ref{maximization problem}}) one could as well be interested in minimizing expected total costs over the time horizon $N$. In this case, one can maintain the previous notation and terminology when regarding the functions $r_n$ and $r_N$ as the one-stage costs and the terminal costs, respectively. The only thing one has to do is to replace ``$\sup$'' by ``$\inf$'' in the representation (\mbox{\ref{value function}}) of the value function. Accordingly, a strategy $\pi^{\P;\delta}\in\Pi$ will be $\delta$-optimal for a given $\delta>0$ if in condition (\mbox{\ref{def opt strategy - eq20}}) ``$-\delta$'' and ``$\le$'' are replaced by ``$+\delta$'' and ``$\ge$''.
{\hspace*{\fill}$\Diamond$\par\bigskip}
\end{remarknorm}


\section{`Differentiability' in $\P$ of the optimal value} \label{Sec - Hadamard differentiability chapter}


In this section, we show that the value function of a MDM, regarded as a real-valued functional on a set of transition functions, is `differentiable' in a certain sense. The notion of `differentiability' we use for functionals that are defined on a set of admissible transition functions will be introduced in Subsection \mbox{\ref{Subsec - Hadamard differentiability definition}}. The motivation of our notion of `differentiability' was discussed subsequent to (\mbox{\ref{motivation of measure for f-o sensitivity}}). Before defining `differentiability' in a precise way, we will explain in Subsections \mbox{\ref{Subsec - Metric on set of probability measures}}--\mbox{\ref{Subsec - Metric on set of transition functions}} how we measure the distance between transition functions. In Subsections \mbox{\ref{Subsec - Hadamard differentiability of the optimal value functional}}--\mbox{\ref{Subsec - alternative rep Frechet derivative}} we will specify the `Hadamard derivative' of the value function.
At first, however, we will discuss in Subsection \mbox{\ref{Subsec - Bounding functions}} some conditions under which Assumption {\bf (A)} holds true. Throughout this section, $\boldsymbol{A}$, $\Pi$, and $\boldsymbol{r}$ are fixed.


\subsection{Bounding functions} \label{Subsec - Bounding functions}


Recall from Section \mbox{\ref{Sec - Formal definition of MDM}} that ${\cal P}$ stands for the set of all transition functions, i.e.\ of all $N$-tuples $\P=(P_n)_{n=0}^{N-1}$ of probability kernels $P_n$ from $(D_n,{\cal D}_n)$ to $(E,{\cal E})$. Let $\psi:E\rightarrow\R_{\ge 1}$ be an $({\cal E},{\cal B}(\R_{\ge 1}))$-measurable map, referred to as {\em gauge function}, where $\R_{\ge 1}:=[1,\infty)$. Denote by $\M(E)$ the set of all $({\cal E},{\cal B}(\R))$-measurable maps $h\in\R^E$, and let $\M_\psi(E)$ be the set of all $h\in\M(E)$ satisfying $\|h\|_{\psi}:=\sup_{x\in E}|h(x)|/\psi(x)<\infty$. The following definition is adapted from \cite{BaeuerleRieder2011,Mueller1997,Wessels1977}. Conditions (a)--(c) of this definition are sufficient for the well-definiteness of $V_{n}^{\P;\pi}$ (and $V_{n}^{\P}$); see Lemma \mbox{\ref{lemma suff cond for stand cond - pre}} ahead.

\begin{definition}[Bounding function] \label{def bounding function}
Let ${\cal P}'\subseteq{\cal P}$. A gauge function $\psi:E\to\R_{\ge 1}$ is called a bounding function for the family of MDMs $\{(\boldsymbol{X},\boldsymbol{A},\P,\Pi,\boldsymbol{r}):\P\in{\cal P}'\}$ if there exist
finite constants $K_1,K_2,K_3>0$ such that the following conditions hold for any $n=0,\ldots,N-1$ and $\P=(P_n)_{n=0}^{N-1}\in{\cal P}'$.
\vspace{-1.5mm}
\begin{enumerate}
    \item[{\rm (a)}] $|r_n(x,a)| \le K_1 \psi(x)$\ for all $(x,a)\in D_n$.

    \item[{\rm (b)}] $|r_N(x)| \le K_2 \psi(x)$\ for all $x\in E$.

    \item[{\rm (c)}] $\int_E\psi(y)\,P_n\big((x,a),dy\big)\le K_3 \psi(x)$\ for all $(x,a)\in D_n$.
\end{enumerate}
If ${\cal P}'=\{\P\}$ for some $\P\in{\cal P}$, then $\psi$ is called a bounding function for the MDM $(\boldsymbol{X},\boldsymbol{A},\P,\Pi,\boldsymbol{r})$.
\end{definition}

Note that the conditions in Definition \mbox{\ref{def bounding function}} do {\em not} depend on the set $\Pi$. That is, the terminology {\em bounding function} is independent of the set of all (admissible) strategies. Also note that conditions (a) and (b) can be satisfied by unbounded reward functions.

The following lemma, whose proof can be found in Subsection \mbox{\ref{Subsec - proof lemma suff cond for stand cond - pre}} of the supplementary material, ensures that Assumption {\bf (A)} is satisfied when the underlying MDM possesses a bounding function.

\begin{lemma}\label{lemma suff cond for stand cond - pre}
Let ${\cal P}'\subseteq{\cal P}$. If the family of MDMs $\{(\boldsymbol{X},\boldsymbol{A},\P,\Pi,\boldsymbol{r}):\P\in{\cal P}'\}$ possesses a bounding function $\psi$, then Assumption {\bf (A)} is satisfied for any $\P\in{\cal P}'$. Moreover, the expectation in Assumption {\bf (A)} is even uniformly bounded w.r.t.\ $\P\in{\cal P}'$, and $V_n^{\P;\pi}(\cdot)$ is contained in $\M_\psi(E)$ for any $\P\in{\cal P}'$, $\pi\in\Pi$, and $n=0,\ldots,N$.
\end{lemma}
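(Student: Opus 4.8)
The plan is to reduce everything to a single moment bound on the gauge function along the controlled chain, namely the estimate $\ex_{n,x_n}^{x_0,\P;\pi}[\psi(X_k)]\le K_3^{\,k-n}\psi(x_n)$ for all $k\ge n$, and then combine it with conditions (a)--(b). First I would use (a) and (b) to bound the integrand in Assumption {\bf (A)} pointwise by a multiple of $\psi$: since $|r_k(X_k,f_k(X_k))|\le K_1\psi(X_k)$ and $|r_N(X_N)|\le K_2\psi(X_N)$, monotonicity of the factorized conditional expectation gives
\begin{equation*}
\ex_{n,x_n}^{x_0,\P;\pi}\Big[\sum_{k=n}^{N-1}|r_k(X_k,f_k(X_k))|+|r_N(X_N)|\Big]\le K_1\sum_{k=n}^{N-1}\ex_{n,x_n}^{x_0,\P;\pi}[\psi(X_k)]+K_2\,\ex_{n,x_n}^{x_0,\P;\pi}[\psi(X_N)].
\end{equation*}
Everything thus hinges on controlling the quantities $\ex_{n,x_n}^{x_0,\P;\pi}[\psi(X_k)]$ uniformly.

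Second, I would establish the moment bound by induction on $k\ge n$. For $k=n$ one has $\ex_{n,x_n}^{x_0,\P;\pi}[\psi(X_n)]=\psi(x_n)$, because under $\pr^{x_0,\P;\pi}[\,\bullet\,\|X_n=x_n]$ the coordinate $X_n$ equals $x_n$. For the induction step I would condition on $X_k$ via the tower property together with the Markov property (iv), using the consistency of the factorized conditional distributions from Lemma \ref{lemma on X and P}, so that the regular conditional distribution of $X_{k+1}$ given $X_k$ is $P_k^\pi(X_k,\,\bullet\,)=P_k((X_k,f_k(X_k)),\,\bullet\,)$. Condition (c), applied with $(x,a)=(X_k,f_k(X_k))\in D_k$, gives $\int_E\psi(y)\,P_k^\pi(X_k,dy)\le K_3\psi(X_k)$, whence $\ex_{n,x_n}^{x_0,\P;\pi}[\psi(X_{k+1})]\le K_3\,\ex_{n,x_n}^{x_0,\P;\pi}[\psi(X_k)]$. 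Iterating yields $\ex_{n,x_n}^{x_0,\P;\pi}[\psi(X_k)]\le K_3^{\,k-n}\psi(x_n)$ for $k=n,\ldots,N$. I would make this rigorous by appealing to Lemma \ref{lemma on integrals of functions of X}, which expresses these conditional expectations as iterated integrals against the kernels $P_k^\pi$ and thereby also removes any dependence on $x_0$.

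Combining the two steps, the expectation in Assumption {\bf (A)} is bounded by $C_n\psi(x_n)$ with $C_n:=K_1\sum_{k=n}^{N-1}K_3^{\,k-n}+K_2K_3^{\,N-n}$, a finite constant depending only on $K_1,K_2,K_3$ and $N$, but not on $x_n$, $\pi$, or $\P\in{\cal P}'$. Since $\psi(x_n)<\infty$, taking the supremum over $\pi\in\Pi$ establishes Assumption {\bf (A)}, and the $\P$-independence of $C_n$ establishes the claimed uniform boundedness over ${\cal P}'$. Finally, the same estimate applied to (\ref{exp tot reward}) gives $|V_n^{\P;\pi}(x_n)|\le C_n\psi(x_n)$ for all $x_n\in E$, hence $\|V_n^{\P;\pi}\|_{\psi}\le C_n<\infty$; together with the measurability noted after (\ref{exp tot reward}), this yields $V_n^{\P;\pi}(\cdot)\in\M_\psi(E)$.

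The main obstacle is the measure-theoretic bookkeeping in the induction step: one must verify that conditioning on $\{X_n=x_n\}$ and subsequently on $X_k$ is compatible, i.e.\ that $P_k^\pi(X_k,\,\bullet\,)$ really serves as a regular version of the conditional law of $X_{k+1}$ given $X_k$ even under the additional conditioning on $X_n$, and that the ($\pr_{X_k}^{x_0,\P;\pi}$-a.s.) non-uniqueness of factorized conditional distributions does not spoil the recursion. This is exactly what Lemmas \ref{lemma on X and P} and \ref{lemma on integrals of functions of X} are designed to handle, so once those are invoked the remaining estimates are routine.
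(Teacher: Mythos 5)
Your proposal is correct and follows essentially the same route as the paper's proof: both reduce the claim to the moment bound $\ex_{n,x_n}^{x_0,\P;\pi}[\psi(X_k)]\le K_3^{\,k-n}\psi(x_n)$ obtained by iterating condition (c) through the iterated-integral representation of Lemma \ref{lemma on integrals of functions of X}, and both arrive at the same constant $C_n=K_1\sum_{k=n}^{N-1}K_3^{\,k-n}+K_2K_3^{\,N-n}$. The only cosmetic difference is that you phrase the iteration as an induction on $k$ and first bound the rewards by $\psi$ before taking expectations, whereas the paper unrolls the integrals for each reward term directly; the paper also treats the case $n=0$ explicitly via parts (ii) and (iv) of Lemma \ref{lemma on integrals of functions of X}, which your appeal to that lemma covers implicitly.
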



\subsection{Metric on set of probability measures}\label{Subsec - Metric on set of probability measures}


In Subsection \mbox{\ref{Subsec - Hadamard differentiability definition}} we will work with a (semi-) metric (on a set of transition functions) to be defined in (\mbox{\ref{def metric transition functions}}) below. As it is common in the theory of probability metrics (see, e.g., p.\,10\,ff in \cite{Rachev1991}), we allow the distance between two probability measures and the distance between two transition functions to be infinite. That is, we adapt the axioms of a (semi-) metric but we allow a (semi-) metric to take values in $\overline\R_{\ge 0}:=\R_{\ge 0}\cup\{\infty\}$ rather than only in $\R_{\ge 0}:=[0,\infty)$.

Let $\psi$ be any gauge function, and denote by ${\cal M}_1^\psi(E)$ the set of all $\mu\in{\cal M}_1(E)$ for which $\int_E \psi\,d\mu<\infty$. Note that the integral $\int_E h\,d\mu$ exists and is finite for any $h\in\M_\psi(E)$ and $\mu\in{\cal M}_1^\psi(E)$. For any fixed $\M\subseteq\M_\psi(E)$, the distance between two probability measures $\mu,\nu\in{\cal M}_1^\psi(E)$ can be measured by
\begin{equation}\label{def integral prob metric}
	d_{\M}(\mu,\nu) := \sup_{h\in\M}\Big|\int_E h\,d\mu - \int_E h\,d\nu\Big|.
\end{equation}
Note that (\mbox{\ref{def integral prob metric}}) indeed defines a map $d_{\M}:{\cal M}_1^\psi(E)\times{\cal M}_1^\psi(E)\rightarrow\overline{\R}_+$ which is symmetric and fulfills the triangle inequality, i.e.\ $d_{\M}$ provides a semi-metric. If $\M$ separates points in ${\cal M}_1^\psi(E)$ (i.e.\ if any two $\mu,\nu\in{\cal M}_1^\psi(E)$ coincide when $\int_E h\,d\mu=\int_E h\,d\nu$ for all $h\in\M$), then $d_{\M}$ is even a metric. It is sometimes called {\em integral probability metric} or {\em probability metric with a $\zeta$-structure}; see \cite{Mueller1997b,Zolotarev1983}. In some situations the (semi-) metric $d_\M$ (with $\M$ fixed) can be represented by the right-hand side of (\mbox{\ref{def integral prob metric}}) with $\M$ replaced by a different subset $\M'$ of $\M_\psi(E)$. Each such set $\M'$ is said to be a {\em generator of $d_\M$}. The largest generator of $d_\M$ is called the {\em maximal generator of $d_\M$} and denoted by $\overline{\M}$. That is, $\overline{\M}$ is defined to be the set of all $h\in\M_\psi(E)$ for which $|\int_Eh\,d\mu-\int_Eh\,d\nu|\le d_\M(\mu,\nu)$ for all $\mu,\nu\in{\cal M}_1^\psi(E)$.

We now give some examples for the distance $d_{\M}$.
The metrics in the first four examples were already mentioned in \cite{Mueller1997,Mueller1997b}.
In the last three examples $d_\M$ metricizes the $\psi$-weak topology. The latter is  defined to be the coarsest topology on ${\cal M}_1^\psi(E)$ for which all mappings $\mu\mapsto\int_E h\,d\mu$, $h\in \mathbb{C}_\psi(E)$, are continuous. Here $\mathbb{C}_\psi(E)$ is the set of all continuous functions in $\M_\psi(E)$. If specifically $\psi\equiv 1$, then ${\cal M}_1^\psi(E)={\cal M}_1(E)$ and the $\psi$-weak topology is nothing but the classical weak topology. In Section 2 in \cite{Kraetschmeretal2017} one can find characterizations of those subsets of ${\cal M}_1^\psi(E)$ on which the relative $\psi$-weak topology coincides with the relative weak topology.

\begin{examplenorm}\label{examples prob metric - tv}
Let $\psi:\equiv 1$ and $\M:=\M_{\rm{\scriptsize{TV}}}$, where $\M_{\rm{\scriptsize{TV}}}:=\{\eins_B : B\in{\cal E}\}\subseteq\M_\psi(E)$. Then $d_\M$ equals the {\em total variation metric} $d_{\rm{\scriptsize{TV}}}(\mu,\nu) := \sup_{B\in{\cal E}}|\mu[B] - \nu[B]|$. The set $\M_{\rm{\scriptsize{TV}}}$ clearly separates points in ${\cal M}_1^{\psi}(E)={\cal M}_1(E)$. The maximal generator of $d_{\rm{\scriptsize{TV}}}$ is the set $\overline{\M}_{\rm{\scriptsize{TV}}}$ of all $h\in\M(E)$ with ${\rm sp}(h):=\sup_{x\in E} h(x) - \inf_{x\in E} h(x)\le 1$; see Theorem 5.4 in \cite{Mueller1997b}.
{\hspace*{\fill}$\Diamond$\par\bigskip}
\end{examplenorm}

\begin{examplenorm}\label{examples prob metric - kolmogorov}
For $E=\R$, let $\psi:\equiv 1$ and $\M:=\M_{\rm{\scriptsize{Kolm}}}$, where $\M_{\rm{\scriptsize{Kolm}}}:=\{\eins_{(-\infty,t]} : t\in\R\}\subseteq\M_\psi(\R)$. Then $d_\M$ equals the {\em Kolmogorov metric} $d_{\rm{\scriptsize{Kolm}}}(\mu,\nu) := \sup_{t\in\R}|F_{\mu}(t) - F_{\nu}(t)|$, where $F_\mu$ and $F_\nu$ refer to the distribution functions of $\mu$ and $\nu$, respectively. The set $\M_{\rm{\scriptsize{Kolm}}}$ clearly separates points in ${\cal M}_1^{\psi}(\R)={\cal M}_1(\R)$. The maximal generator of $d_{\rm{\scriptsize{Kolm}}}$ is the set $\overline{\M}_{\rm{\scriptsize{Kolm}}}$ of all $h\in\R^\R$ with $\mathbb{V}(h)\le 1$, where $\mathbb{V}(h)$ denotes the total variation of $h$; see Theorem 5.2 in \cite{Mueller1997b}.
{\hspace*{\fill}$\Diamond$\par\bigskip}
\end{examplenorm}

\begin{examplenorm}\label{examples prob metric - bounded lipschitz}
Assume that $(E,d_E)$ is a metric space and let ${\cal E}:={\cal B}(E)$. Let $\psi:\equiv 1$ and $\M:=\M_{\rm{\scriptsize{BL}}}$, where $\M_{\rm{\scriptsize{BL}}}:=\{h\in\R^E: \|h\|_{\rm{\scriptsize{BL}}}\leq 1 \}\subseteq\M_\psi(E)$ with $\|h\|_{\rm{\scriptsize{BL}}}:=\max\{\|h\|_{\infty},\,\|h\|_{\rm{\scriptsize{Lip}}}\}$
for $\|h\|_{\infty}:=\sup_{x\in E}|h(x)|$ and $\|h\|_{\rm{\scriptsize{Lip}}}:=\sup_{x,y\in E:\,x\neq y}|h(x)-\break h(y)|/d_E(x,y)$. Then $d_\M$ is nothing but the {\em bounded Lipschitz metric} $d_{\rm{\scriptsize{BL}}}$. The set $\M_{\rm{\scriptsize{BL}}}$ separates points in ${\cal M}_1^{\psi}(E)={\cal M}_1(E)$; see Lemma 9.3.2 in \cite{Dudley2002}. Moreover it is known (see, e.g., Theorem 11.3.3 in \cite{Dudley2002}) that if $E$ is separable then $d_{\rm{\scriptsize{BL}}}$ metricizes the weak topology on ${\cal M}_1^\psi(E)={\cal M}_1(E)$.
{\hspace*{\fill}$\Diamond$\par\bigskip}
\end{examplenorm}

\begin{examplenorm}\label{examples prob metric - kantorovich}
Assume that $(E,d_E)$ is a metric space and let ${\cal E}:={\cal B}(E)$. For some fixed $x'\in E$, let $\psi(x):= 1 + d_E(x,x')$ and $\M:=\M_{\rm{\scriptsize{Kant}}}$, where $\M_{\rm Kant}:=\{h\in\R^E: \|h\|_{\rm{\scriptsize{Lip}}}\leq 1 \}\subseteq\M_\psi(E)$ with $\|h\|_{\rm{\scriptsize{Lip}}}$ as in Example \mbox{\ref{examples prob metric - bounded lipschitz}}. Then $d_\M$ is nothing but the {\em Kantorovich metric} $d_{\rm{\scriptsize{Kant}}}$. The set $\M_{\rm{\scriptsize{Kant}}}$ separates points in ${\cal M}_1^{\psi}(E)$, because $\M_{\rm{\scriptsize{BL}}}$ ($\subseteq \M_{\rm{\scriptsize{Kant}}}$) does.
It is known (see, e.g., Theorem 7.12 in \cite{Villani2003}) that if $E$ is complete and separable then $d_{\rm{\scriptsize{Kant}}}$ metricizes the
$\psi$-weak topology on ${\cal M}_1^\psi(E)$.

Recall from \cite{Vallender1974} that for $E=\R$ the {\em $L^1$-Wasserstein metric} $d_{{\rm{\scriptsize{Wass}}}_1}(\mu,\nu) := \int_{-\infty}^\infty |F_\mu(t) - F_\nu(t)|\,dt$
coincides with the Kantorovich metric. In this case the $\psi$-weak topology is also referred to as $L^1$-weak topology. Note that the $L^1$-Wasserstein metric is a conventional metric for measuring the distance between probability distributions; see, for instance, \cite{DallAglio1956,KantorovichRubinstein1958,Vallender1974} for the general concept and \cite{Bellinietal2014,Kieseletal2016,Kraetschmeretal2012,KraetschmerZaehlel2017} for recent applications.
{\hspace*{\fill}$\Diamond$\par\bigskip}
\end{examplenorm}

Although the Kantorovich metric is a popular and well established metric, for the application in Section \mbox{\ref{Sec - Finance Example}} we will need the following generalization from $\alpha=1$ to $\alpha\in(0,1]$.

\begin{examplenorm}\label{examples prob metric - hoelder}
Assume that $(E,d_E)$ is a metric space and let ${\cal E}:={\cal B}(E)$. For some fixed $x'\in E$ and $\alpha\in(0,1]$, let $\psi(x):= 1 + d_E(x,x')^\alpha$ and $\M:=\M_{\rm{\scriptsize{H\ddot{o}l}},\alpha}$, where $\M_{\rm{\scriptsize{H\ddot{o}l}},\alpha}:=\{h\in\R^E: \|h\|_{\rm{\scriptsize{H\ddot{o}l}},\alpha}\leq 1 \}\subseteq\M_\psi(E)$ with $\|h\|_{\rm{\scriptsize{H\ddot{o}l}},\alpha}:=\sup_{x,y\in E:\,x\neq y}|h(x)\break-h(y)|/d_E(x,y)^\alpha$. The set $\M_{\rm{\scriptsize{H\ddot{o}l}},\alpha}$ separates points in ${\cal M}_1^{\psi}(E)$ (this follows with similar arguments as in the proof of Lemma 9.3.2 in \cite{Dudley2002}). Then $d_{\M}$ provides a metric on ${\cal M}_1^\psi(E)$ which we denote by $d_{\rm{\scriptsize{H\ddot{o}l}},\alpha}$ and refer to as {\em H{\"o}lder-$\alpha$ metric}. Especially when dealing with risk averse utility functions (as, e.g., in Section \mbox{\ref{Sec - Finance Example}}) this metric can be beneficial. Lemma \mbox{\ref{lemma - d beta hoelder - psi weak topology}} in Section \mbox{\ref{Sec - hoelder metric psi weak topology}} of the supplementary material shows that if $E$ is complete and separable then $d_{\rm{\scriptsize{H\ddot{o}l}},\alpha}$ metricizes the $\psi$-weak topology on ${\cal M}_1^\psi(E)$.
{\hspace*{\fill}$\Diamond$\par\bigskip}
\end{examplenorm}


\subsection{Metric on set of transition functions}\label{Subsec - Metric on set of transition functions}


Maintain the notation from Subsection \mbox{\ref{Subsec - Metric on set of probability measures}}. Let us denote by $\overline{\cal P}_\psi$ the set of all transition functions $\P=(P_n)_{n=0}^{N-1}\in{\cal P}$ satisfying $\int_E\psi(y)\,P_n((x,a),dy)<\infty$ for all $(x,a)\in D_n$ and $n=0,\ldots,N-1$. That is, $\overline{\cal P}_\psi$ consists of those transition functions $\P=(P_n)_{n=0}^{N-1}\in{\cal P}$ with $P_n((x,a),\,\bullet\,)\in{\cal M}_1^\psi(E)$ for all $(x,a)\in D_n$ and $n=0,\ldots,N-1$. Hence, for the elements $\P=(P_n)_{n=0}^{N-1}$ of $\overline{\cal P}_\psi$ all integrals of the shape $\int_E h(y)\,P_n((x,a),dy)$, $h\in\M_\psi(E)$, $(x,a)\in D_n$, $n=0,\ldots,N-1$, exist and are finite. In particular, for two transition functions $\P=(P_n)_{n=0}^{N-1}$ and $\Q=(Q_n)_{n=0}^{N-1}$ from $\overline{\cal P}_\psi$ the distance $d_\M(P_n((x,a),\,\bullet\,),Q_n((x,a),\,\bullet\,))$ is well defined for all $(x,a)\in D_n$ and $n=0,\ldots,N-1$ (recall that $\M\subseteq\M_\psi(E)$). So we can define the distance between two transition functions $\P=(P_n)_{n=0}^{N-1}$ and $\Q=(Q_n)_{n=0}^{N-1}$ from $\overline{\cal P}_\psi$ by
\begin{equation}\label{def metric transition functions}
    d_{\infty,\M}^{\phi}(\P,\Q):=\max_{n=0,\ldots,N-1}\sup_{(x,a)\in D_n}\,\frac{1}{\phi(x)}\cdot d_{\M}\Big(P_n\big((x,a),\,\bullet\,\big),Q_n\big((x,a),\,\bullet\,\big)\Big)
\end{equation}
for another gauge function $\phi:E\to\R_{\ge 1}$. Note that (\mbox{\ref{def metric transition functions}}) defines a semi-metric $d_{\infty,\M}^{\phi}:\overline{\cal P}_\psi\times\overline{\cal P}_\psi\rightarrow\overline{\R}_{\ge 0}$ on $\overline{\cal P}_\psi$ which is even a metric if $\M$ separates points in ${\cal M}_1^\psi(E)$.

Maybe apart from the factor $1/\phi(x)$, the definition of $d_{\infty,\M}^{\phi}(\P,\Q)$ in (\mbox{\ref{def metric transition functions}}) is quite natural and in line with the definition of a distance introduced by M\"uller \cite[p.\,880]{Mueller1997}. In \cite{Mueller1997}, M\"uller considers time-homogeneous MDMs, so that the transition kernels do not depend on $n$. He fixed a state $x$ and took the supremum only over all admissible actions $a$ in state $x$. That is, for any $x\in E$ he defined the distance between $P((x,\,\cdot\,),\,\bullet\,)$ and $Q((x,\,\cdot\,),\,\bullet\,)$ by $\sup_{a\in A(x)}d_{\M}(P((x,a),\,\bullet\,),Q((x,a),\,\bullet\,))$. To obtain a reasonable distance between $P_n$ and $Q_n$ it is however natural to take the supremum of the distance between $P_n((x,\,\cdot\,),\,\bullet\,)$ and $Q_n((x,\,\cdot\,),\,\bullet\,)$ uniformly over $a$ {\em and} over $x$.

The factor $1/\phi(x)$ in (\mbox{\ref{def metric transition functions}}) causes that the (semi-) metric $d_{\infty,\M}^{\phi}$ is less strict compared to the (semi-) metric $d_{\infty,\M}^1$ which is defined as in (\mbox{\ref{def metric transition functions}}) with $\phi:\equiv 1$. For a motivation of considering the factor $1/\phi(x)$, see part (iii) of Remark \mbox{\ref{remark S differentiability}} and the discussion afterwards.


\subsection{Definition of `differentiability'} \label{Subsec - Hadamard differentiability definition}


Let $\psi$ be any gauge function, and fix some ${\cal P}_\psi\subseteq\overline{\cal P}_\psi$ being closed under mixtures (i.e.\ $(1-\varepsilon)\boldsymbol{P}+\varepsilon\boldsymbol{Q}\in{\cal P}_\psi$ for any $\boldsymbol{P},\boldsymbol{Q}\in{\cal P}_\psi$, $\varepsilon\in(0,1)$). The set ${\cal P}_\psi$ will be equipped with the distance $d_{\infty,\M}^{\phi}$ introduced in (\mbox{\ref{def metric transition functions}}). In Definition \mbox{\ref{def S differentiability}} below we will introduce a reasonable notion of `differentiability' for an arbitrary functional ${\cal V}:{\cal P}_\psi\rightarrow L$ taking values in a normed vector space $(L,\|\cdot\|_L)$. It is related to the general functional analytic concept of (tangential) ${\cal S}$-differentiability introduced by Sebasti\~{a}o e Silva \cite{Sebastiao e Silva1956a} and Averbukh and Smolyanov \cite{AverbukhSmolyanov1967}; see also \cite{Fernholz1983,Gilletal1989,Shapiro1990} for applications. However, ${\cal P}_{\psi}$ is $\textit{not}$ a vector space. This implies that Definition \mbox{\ref{def S differentiability}} differs from the classical notion of (tangential) ${\cal S}$-differentiability. For that reason we will use inverted commas and write `${\cal S}$-differentiability' instead of ${\cal S}$-differentiability. Due to the missing vector space structure, we in particular need to allow the tangent space to depend on the point $\P\in{\cal P}_\psi$ at which ${\cal V}$ is differentiated. The role of the `tangent space' will be played by the set
\begin{equation*}\label{tangential space}
    {\cal P}_{\psi}^{\P;\pm} := \{\Q-\P:\,\Q\in{\cal P}_\psi\}
\end{equation*}
whose elements $\Q-\P:=(Q_0-P_0,\ldots,Q_{N-1}-P_{N-1})$ can be seen as signed transition functions. In Definition \mbox{\ref{def S differentiability}} we will employ the following terminology.

\begin{definition}\label{def continuity functional}
Let $\M\subseteq\M_\psi(E)$, $\phi$ be another gauge function, and fix $\P\in{\cal P}_\psi$. A map ${\cal W}:{\cal P}_{\psi}^{\P;\pm}\rightarrow L$ is said to be $(\M,\phi)$-continuous if the mapping $\Q\mapsto{\cal W}(\Q-\P)$ from ${\cal P}_\psi$ to $L$ is $(d_{\infty,\M}^{\phi},\|\cdot\|_L)$-continuous.
\end{definition}

For the following definition it is important to note that $\P + \varepsilon(\Q - \P)$ lies in ${\cal P}_\psi$ for any $\P,\Q\in{\cal P}_\psi$ and $\varepsilon\in(0,1]$.

\begin{definition}[`${\cal S}$-differentiability']\label{def S differentiability}
Let $\M\subseteq\M_\psi(E)$, $\phi$ be another gauge function, and fix $\P\in{\cal P}_{\psi}$. Moreover let ${\cal S}$ be a system of subsets of ${\cal P}_{\psi}$. A map ${\cal V}:{\cal P}_{\psi}\rightarrow L$ is said to be `${\cal S}$-differentiable' at $\P$ w.r.t.\ $(\M,\phi)$ if there exists an $(\M,\phi)$-continuous map $\dot{\cal V}_{\P}:{\cal P}_{\psi}^{\P;\pm}\rightarrow L$ such that
\begin{equation}\label{def eq for S-like D}
    \lim_{m\to\infty}\Big\|\frac{{\cal V}(\P+\varepsilon_m(\Q-\P))-{\cal V}(\P)}{\varepsilon_m} - \dot{\cal V}_{\P}(\Q-\P)\Big\|_L=0  \quad\mbox{uniformly in $\Q\in{\cal K}$}
\end{equation}
for every ${\cal K}\in{\cal S}$ and every sequence $(\varepsilon_m)\in(0,1]^{\N}$ with $\varepsilon_m\to 0$. In this case, $\dot{\cal V}_{\P}$ is called `${\cal S}$-derivative' of ${\cal V}$ at $\P$ w.r.t.\ $(\M,\phi)$.
\end{definition}

Note that in Definition \mbox{\ref{def S differentiability}} the derivative is {\em not} required to be linear (in fact the derivative is not even defined on a vector space). This is another point where Definition \mbox{\ref{def S differentiability}} differs from the functional analytic definition of (tangential) ${\cal S}$-differentiabil\-ity. However, non-linear derivatives are common in the field of mathematical optimization; see, for instance, \cite{Roemisch2004,Shapiro1990}.

\begin{remarknorm}\label{remark S differentiability}
(i) At least in the case $L=\R$, the `${\cal S}$-derivative' $\dot {\cal V}_{\P}$ evaluated at $\Q - \P$, i.e.\ $\dot{\cal V}_{\P}(\Q-\P)$, can be seen as a measure for the first-order sensitivity of the functional ${\cal V}: {\cal P}_{\psi}\to\R$ w.r.t.\ a change of the argument from $\P$ to $(1-\varepsilon)\P + \varepsilon\Q$, with $\varepsilon>0$ small, for some given transition function $\Q$.

(ii) The prefix `${\cal S}$-' in Definition \mbox{\ref{def S differentiability}} provides the following information. Since the convergence in (\mbox{\ref{def eq for S-like D}}) is required to be uniform in $\Q\in{\cal K}$, the values of the first-order sensitivities $\dot{\cal V}_{\P}(\Q-\P)$, $\Q\in{\cal K}$, can be compared with each other with clear conscience for any fixed ${\cal K}\in{\cal S}$. It is therefore favorable if the sets in ${\cal S}$ are large. However, the larger the sets in ${\cal S}$, the stricter the condition of `${\cal S}$-differentiability'.

(iii) The subset $\M$ ($\subseteq\M_\psi(E)$) and the gauge function $\phi$ tell us in a way how `robust' the `${\cal S}$-derivative' $\dot {\cal V}_{\P}$ is w.r.t.\ changes in $\boldsymbol{Q}$: The smaller the set $\M$ and the `steeper' the gauge function $\phi$, the less strict the metric $d_{\infty,\M}^{\phi}(\P,\Q)$ (given by (\mbox{\ref{def metric transition functions}})), and therefore the more robust $\dot{\cal V}_{\P}(\Q-\P)$ in $\boldsymbol{Q}$. It is thus favorable if the set $\M$ is small and the gauge function $\phi$ is `steep'. However, the smaller $\M$ and the `steeper' $\phi$, the stricter the condition of `${\cal S}$-differentiability'. More precisely, if $\M_1\subseteq\M_2$ and $\phi_1\ge\phi_2$ then `${\cal S}$-differentiability' w.r.t.\ $(\M_1,\phi_1)$ implies `${\cal S}$-differentiability' w.r.t.\ $(\M_2,\phi_2)$. Also note that in general the choice of ${\cal S}$ in Definition \mbox{\ref{def S differentiability}} is {\em not} influenced by the choice of the pair $(\M,\phi)$, and vice versa.
{\hspace*{\fill}$\Diamond$\par\bigskip}
\end{remarknorm}

In the general framework of our main result (Theorem \mbox{\ref{Thm - hadamard cal v}}) we can {\em not} choose $\phi$ `steeper' than the gauge function $\psi$ which plays the role of a bounding function there. Indeed, the proof of $(\M,\psi)$-continuity of the map $\dot{\cal V}_{\P}:{\cal P}_{\psi}^{\P;\pm}\rightarrow\R$ in Theorem \mbox{\ref{Thm - hadamard cal v}} does {\em not} work anymore if $d_{\infty,\M}^{\psi}$ is replaced by $d_{\infty,\M}^{\phi}$ for any gauge function $\phi$ `steeper' than $\psi$. And here it does {\em not} matter how exactly ${\cal S}$ is chosen.

In the application in Section \mbox{\ref{Sec - Finance Example}}, the set $\{\Q_{\Delta,\tau}: \Delta\in[0,\delta]\}$ should be contained in ${\cal S}$ (for details see Remark \mbox{\ref{Ex fin - remark relatively compactness}}). This set can be shown to be (relatively) compact w.r.t.\ $d_{\infty,\M}^{\phi}$ for $\phi(x)=\psi(x)$ ($:=1+u_{\alpha}(x)$) but not for any `flatter' gauge function $\phi$. So, in this example, and certainly in many other examples, relatively compact subsets of ${\cal P}_\psi$ w.r.t.\ $d_{\infty,\M}^{\psi}$ should be contained in ${\cal S}$. It is thus often beneficial to know that the value functional is `differentiable' in the sense of part (b) of the following Definition \mbox{\ref{def Gateaux-Hadamard-Frechet differentiability}}.

The terminology of Definition \mbox{\ref{def Gateaux-Hadamard-Frechet differentiability}} is motivated by the functional analytic analogues. Bounded and relatively compact sets in the (semi-) metric space $({\cal P}_\psi,d_{\infty,\M}^{\phi})$ are understood in the conventional way. A set ${\cal K}\subseteq{\cal P}_\psi$ is said to be bounded (w.r.t.\ $d_{\infty,\M}^{\phi}$) if there exist $\P'\in{\cal P}_\psi$ and $\delta>0$ such that $d_{\infty,\M}^{\phi}(\Q,\P')\le \delta$ for every $\Q\in{\cal K}$. It is said to be relatively compact (w.r.t.\ $d_{\infty,\M}^{\phi}$) if for every sequence $(\Q_m)\in{\cal K}^{\N}$ there exists a subsequence $(\Q'_m)$ of $(\Q_m)$ such that $d_{\infty,\M}^{\phi}(\Q_m',\Q)\to 0$ for some $\Q\in{\cal P}_\psi$. The system of all bounded sets and the system of all relatively compact sets (w.r.t.\ $d_{\infty,\M}^{\phi}$) are larger the `steeper' the gauge function $\phi$ is.

\begin{definition}\label{def Gateaux-Hadamard-Frechet differentiability}
In the setting of Definition \mbox{\ref{def S differentiability}} we refer to `${\cal S}$-differentiability' as
\vspace{-1.5mm}
\begin{enumerate}
	\item[{\rm (a)}] `Gateaux--L\'{e}vy differentiability' if ${\cal S} = {\cal S}_{{\rm f}} := \{{\cal K}\subseteq{\cal P}_\psi : {\cal K} \mbox{ is finite}\}$.
	
	\item[{\rm (b)}] `Hadamard differentiability' if ${\cal S} = {\cal S}_{{\rm rc}} := \{{\cal K}\subseteq{\cal P}_\psi : {\cal K} \mbox{ is relatively compact}\}$.
	
	\item[{\rm (c)}] `Fr\'echet differentiability' if ${\cal S} = {\cal S}_{{\rm b}} := \{{\cal K}\subseteq{\cal P}_\psi : {\cal K} \mbox{ is bounded}\}$.
\end{enumerate}
\end{definition}

Clearly, `Fr\'echet differentiability' (of ${\cal V}$ at $\P$ w.r.t.\ $(\M,\phi)$) implies `Hadamard differentiability' which in turn implies `Gateaux--L\'{e}vy differentiability', each with the same `derivative'.

The last sentence before Definition \mbox{\ref{def Gateaux-Hadamard-Frechet differentiability}} and the second to last sentence in part (iii) of Remark \mbox{\ref{remark S differentiability}} together imply that `Hadamard (resp.\ Fr\'echet) differentiability' w.r.t.\ $(\M,\phi_1)$ implies `Hadamard (resp.\ Fr\'echet) differentiability' w.r.t.\ $(\M,\phi_2)$ when $\phi_1\ge\phi_2$.

The following lemma, whose proof can be found in Subsection \mbox{\ref{Subsec - Proof Lemma hd characterization}} of the supplementary material, provides an equivalent characterization of `Hadamard differentiability'.

\begin{lemma}\label{lemma HD characterization}
Let $\M\subseteq\M_\psi(E)$, $\phi$ be another gauge function, ${\cal V}:{\cal P}_\psi\rightarrow L$ be any map, and fix $\P\in{\cal P}_\psi$. Then the following two assertions hold.

{\rm (i)} If ${\cal V}$ is `Hadamard differentiable' at $\P$ w.r.t.\ $(\M,\phi)$ with `Hadamard derivative' $\dot{\cal V}_{\P}$, then we have for each triplet $(\Q, (\Q_m), (\varepsilon_m))\in{\cal P}_\psi\times{\cal P}_\psi^{\N}\times(0,1]^{\N}$ with $d_{\infty,\M}^{\phi}(\Q_m,\Q)\to 0$ and $\varepsilon_m\to 0$ that
\begin{eqnarray}\label{HD characterization}
    \lim_{m\to\infty}\Big\|\frac{{\cal V}(\P+\varepsilon_m(\Q_m-\P))-{\cal V}(\P)}{\varepsilon_m} - \dot{\cal V}_{\P}(\Q-\P)\Big\|_L = 0.
\end{eqnarray}

{\rm (ii)} If there exists an $(\M,\phi)$-continuous map $\dot{\cal V}_{\P}:{\cal P}_{\psi}^{\P;\pm}\rightarrow L$ such that (\mbox{\ref{HD characterization}}) holds for each triplet $(\Q, (\Q_m), (\varepsilon_m))\in{\cal P}_\psi\times{\cal P}_\psi^{\N}\times(0,1]^{\N}$ with $d_{\infty,\M}^{\phi}(\Q_m,\Q)\to 0$ and $\varepsilon_m\to 0$, then ${\cal V}$ is `Hadamard differentiable' at $\P$ w.r.t.\ $(\M,\phi)$ with `Hada\-mard derivative' $\dot{\cal V}_{\P}$.
\end{lemma}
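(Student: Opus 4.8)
The plan is to prove the equivalence characterization of `Hadamard differentiability' stated in Lemma \ref{lemma HD characterization}. The core idea is that the definition (Definition \ref{def S differentiability}, part (b)) imposes uniform convergence over relatively compact sets ${\cal K}$, whereas (\ref{HD characterization}) is a statement about convergent sequences $\Q_m\to\Q$. The link between these two formulations is the standard fact that uniform convergence over compact sets is equivalent to convergence along arbitrary convergent sequences. I would exploit this by building, from a convergent sequence $(\Q_m)$, a relatively compact set to which the uniform-convergence hypothesis applies, and conversely by extracting from any would-be failure of uniform convergence a sequence that contradicts (\ref{HD characterization}).

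\medskip

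\textbf{Proof of (i).} Assume ${\cal V}$ is `Hadamard differentiable' at $\P$ with derivative $\dot{\cal V}_{\P}$, and fix a triplet $(\Q,(\Q_m),(\varepsilon_m))$ with $d_{\infty,\M}^{\phi}(\Q_m,\Q)\to 0$ and $\varepsilon_m\to 0$. First I would observe that the set ${\cal K}:=\{\Q\}\cup\{\Q_m:m\in\N\}$ is relatively compact w.r.t.\ $d_{\infty,\M}^{\phi}$, since any sequence in ${\cal K}$ either is eventually constant or has a subsequence converging to $\Q$ (as $\Q_m\to\Q$). Hence ${\cal K}\in{\cal S}_{\rm rc}$, and Definition \ref{def S differentiability} yields that
\begin{equation*}
    \Big\|\frac{{\cal V}(\P+\varepsilon_m(\widetilde\Q-\P))-{\cal V}(\P)}{\varepsilon_m} - \dot{\cal V}_{\P}(\widetilde\Q-\P)\Big\|_L \longrightarrow 0 \quad\mbox{uniformly in $\widetilde\Q\in{\cal K}$.}
\end{equation*}
Specializing $\widetilde\Q$ to the running $\Q_m$ in the $m$-th term gives, by uniformity, $\|\varepsilon_m^{-1}({\cal V}(\P+\varepsilon_m(\Q_m-\P))-{\cal V}(\P)) - \dot{\cal V}_{\P}(\Q_m-\P)\|_L\to 0$. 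It then remains to replace $\dot{\cal V}_{\P}(\Q_m-\P)$ by $\dot{\cal V}_{\P}(\Q-\P)$; this is exactly where the $(\M,\phi)$-continuity of $\dot{\cal V}_{\P}$ enters, since $d_{\infty,\M}^{\phi}(\Q_m,\Q)\to 0$ forces $\|\dot{\cal V}_{\P}(\Q_m-\P)-\dot{\cal V}_{\P}(\Q-\P)\|_L\to 0$. Combining the two limits via the triangle inequality delivers (\ref{HD characterization}).

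\medskip

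\textbf{Proof of (ii).} Conversely, assume there is an $(\M,\phi)$-continuous $\dot{\cal V}_{\P}$ satisfying (\ref{HD characterization}) along every convergent triplet. I would argue by contradiction that the uniform convergence required in (\ref{def eq for S-like D}) holds for every relatively compact ${\cal K}$ and every null sequence $(\varepsilon_m)$. Suppose not: then there exist ${\cal K}\in{\cal S}_{\rm rc}$, a null sequence $(\varepsilon_m)$, some $\eta>0$, and transition functions $\Q_m\in{\cal K}$ along (a subsequence of) which the quantity inside the norm in (\ref{def eq for S-like D}) stays $\ge\eta$. By relative compactness of ${\cal K}$, pass to a further subsequence so that $d_{\infty,\M}^{\phi}(\Q_m,\Q)\to 0$ for some $\Q\in{\cal P}_\psi$; along this subsequence the corresponding $\varepsilon_m$ still tend to $0$. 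Applying the hypothesis (\ref{HD characterization}) to this convergent triplet shows that the relevant expression tends to $0$, contradicting the lower bound $\eta$. Hence the uniform convergence holds, i.e.\ ${\cal V}$ is `Hadamard differentiable' with derivative $\dot{\cal V}_{\P}$.

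\medskip

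\textbf{The main obstacle} I anticipate is bookkeeping around subsequences in part (ii): one must ensure that when extracting a convergent subsequence $(\Q_{m_k})$ from ${\cal K}$, the index-aligned sequence $(\varepsilon_{m_k})$ is still a null sequence so that (\ref{HD characterization}) is genuinely applicable, and that the assumed failure of uniform convergence is correctly encoded as a single contradicting triplet. No deep analysis is required beyond the compactness-equals-sequential-sup argument; the continuity of $\dot{\cal V}_{\P}$ does all the work needed to pass from $\dot{\cal V}_{\P}(\Q_m-\P)$ to $\dot{\cal V}_{\P}(\Q-\P)$ in part (i). The remaining steps are routine triangle-inequality estimates.
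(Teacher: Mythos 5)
Your proposal is correct and follows essentially the same route as the paper's proof: in (i) you form the relatively compact set from the convergent sequence, apply the uniform convergence from Definition \ref{def S differentiability}, and use the $(\M,\phi)$-continuity of $\dot{\cal V}_{\P}$ (which the definition guarantees) to pass from $\dot{\cal V}_{\P}(\Q_m-\P)$ to $\dot{\cal V}_{\P}(\Q-\P)$; in (ii) you run the same contradiction argument via a convergent subsequence extracted from the relatively compact set. The subsequence bookkeeping you flag is handled exactly as you describe, so no gap remains.
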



\subsection{`Differentiability' of the value functional} \label{Subsec - Hadamard differentiability of the optimal value functional}


Recall that $\boldsymbol{A}$, $\Pi$, and $\boldsymbol{r}$ are fixed, and let $V_{n}^{\P;\pi}$ and $V_{n}^{\P}$ be defined as in (\mbox{\ref{exp tot reward}}) and (\mbox{\ref{value function}}), respectively. Moreover let $\psi$ be any gauge function and fix some ${\cal P}_\psi\subseteq\overline{\cal P}_\psi$ being closed under mixtures.

In view of Lemma \mbox{\ref{lemma suff cond for stand cond - pre}} (with ${\cal P}':=\{\P\}$), condition (a) of Theorem \mbox{\ref{Thm - hadamard cal v}} below ensures that Assumption {\bf (A)} is satisfied for {\em any} $\P\in{\cal P}_\psi$. Then for any $x_n\in E$, $\pi\in\Pi$, and $n=0,\ldots,N$ we may define under condition (a) of Theorem \mbox{\ref{Thm - hadamard cal v}} functionals ${\cal V}_{n}^{x_n;\pi}:{\cal P}_\psi\rightarrow\R$ and ${\cal V}_{n}^{x_n}:{\cal P}_\psi\rightarrow\R$ by
\begin{equation}\label{def cal v}
    {\cal V}_{n}^{x_n;\pi}(\P) := V_{n}^{\P;\pi}(x_n)\quad\mbox{ and }\quad{\cal V}_{n}^{x_n}(\P) := V_{n}^{\P}(x_n),
\end{equation}
respectively. Note that ${\cal V}_{n}^{x_n}(\P)$ specifies the maximal value for the expected total reward in the MDM (given state $x_n$ at time $n$) when the underlying transition function is $\P$. By analogy with the name `value function' we refer to ${\cal V}_{n}^{x_n}$ as {\em value functional given state $x_n$ at time $n$}. Part (ii) of Theorem \mbox{\ref{Thm - hadamard cal v}} provides (under some assumptions) an `Hadamard derivative' of the value functional ${\cal V}_{n}^{x_n}$ in the sense of Definition \mbox{\ref{def Gateaux-Hadamard-Frechet differentiability}}.

Conditions (b) and (c) of Theorem \mbox{\ref{Thm - hadamard cal v}} involve the so-called {\em Minkowski (or gauge) functional} $\rho_{\M}:\M_\psi(E)\rightarrow\overline{\R}_+$ (see, e.g., \cite[p.\,25]{Rudin1991}) defined by
\begin{equation}\label{def minkowski functional}
	\rho_{\M}(h) := \inf\big\{\lambda\in\R_{>0}:\, h/\lambda\in\M\big\},
\end{equation}
where we use the convention $\inf\emptyset :=\infty$, $\M$ is any subset of $\M_\psi(E)$, and we set $\R_{>0}:=(0,\infty)$. We note that M\"uller \cite{Mueller1997} also used the Minkowski functional to formulate his assumptions.

\begin{examplenorm}\label{example minkowski}
For the sets $\M$ (and the corresponding gauge functions $\psi$) from Examples \mbox{\ref{examples prob metric - tv}}--\mbox{\ref{examples prob metric - hoelder}} we have $\rho_{\overline{\M}_{\rm{\scriptsize{TV}}}}(h) = {\rm sp}(h)$, $\rho_{\overline{\M}_{\rm{\scriptsize{Kolm}}}}(h) = \mathbb{V}(h)$, $\rho_{\M_{\rm{\scriptsize{BL}}}}(h) = \|h\|_{\rm BL}$, $\rho_{\M_{\rm{\scriptsize{Kant}}}}(h) = \|h\|_{\rm Lip}$, and $\rho_{\M_{\rm{\scriptsize{H\ddot{o}l}},\alpha}}(h) = \|h\|_{\rm{\scriptsize{H\ddot{o}l}},\alpha}$, where as before $\overline{\M}_{\rm{\scriptsize{TV}}}$ and $\overline{\M}_{\rm{\scriptsize{Kolm}}}$ are used to denote the maximal generator of $d_{\rm{\scriptsize{TV}}}$ and $d_{\rm{\scriptsize{Kolm}}}$, respectively. The latter three equations are trivial, for the former two equations see \cite[p.\,880]{Mueller1997}.
{\hspace*{\fill}$\Diamond$\par\bigskip}
\end{examplenorm}

Recall from Definition \mbox{\ref{def opt strategy}} that for given $\P\in{\cal P}_\psi$ and $\delta>0$ the sets $\Pi(\P;\delta)$ and $\Pi(\P)$ consist of all $\delta$-optimal strategies w.r.t.\ $\P$ and of all optimal strategies w.r.t.\ $\P$, respectively. Generators $\M'$ of $d_\M$ were introduced subsequent to (\mbox{\ref{def integral prob metric}}).

\begin{theorem}[`Differentiability' of ${\cal V}_{n}^{x_n;\pi}$ and ${\cal V}_{n}^{x_n}$]\label{Thm - hadamard cal v}
Let $\M\subseteq\M_\psi(E)$ and $\M'$ be any generator of $d_\M$. Fix $\P=(P_n)_{n=0}^{N-1}\in{\cal P}_\psi$, and assume that the following three conditions hold.
\begin{enumerate}
    \item[{\rm (a)}] $\psi$ is a bounding function for the MDM $(\boldsymbol{X},\boldsymbol{A},\Q,\Pi,\boldsymbol{r})$ for any $\Q\in{\cal P}_\psi$.

    \item[{\rm (b)}] $\sup_{\pi\in\Pi}\rho_{\M'}(V_n^{\P;\pi}) < \infty$ for any $n=1,\ldots,N$.

    \item[{\rm (c)}] $\rho_{\M'}(\psi) < \infty$.
\end{enumerate}
Then the following two assertions hold.
\begin{enumerate}
    \item[{\rm (i)}] For any $x_n\in E$, $\pi=(f_n)_{n=0}^{N-1}\in\Pi$, $n=0,\ldots,N$, the map ${\cal V}_{n}^{x_n;\pi}:{\cal P}_\psi\rightarrow\R$ defined by (\mbox{\ref{def cal v}}) is `Fr\'echet differentiable' at $\P$ w.r.t.\ $(\M,\psi)$ with `Fr\'echet derivative' $\dot{\cal V}_{n;\P}^{x_n;\pi}:{\cal P}_{\psi}^{\P;\pm}\rightarrow\R$ given by
        \begin{eqnarray} \label{hd ex tot costs}
           \lefteqn{\dot{\cal V}_{n;\P}^{x_n;\pi}(\Q-\P)} \\
            & \hspace{-2mm}:=\hspace{-2mm} &  \sum_{k=n+1}^{N-1}\sum_{j=n}^{k-1}\int_E\cdots\int_E r_k(y_k,f_k(y_k))\,P_{k-1}\big((y_{k-1},f_{k-1}(y_{k-1})),dy_k\big) \nonumber \\
            & & \quad \cdots (Q_j-P_j)\big((y_j,f_{j}(y_j)),dy_{j+1}\big)\cdots P_n\big((x_n,f_n(x_n)),dy_{n+1}\big)\nonumber \\
            & & +~ \sum_{j=n}^{N-1}\int_E\cdots\int_E r_N(y_N)\,P_{N-1}\big((y_{N-1},f_{N-1}(y_{N-1})),dy_N\big)\nonumber \\
            & & \quad \cdots (Q_j-P_j)\big((y_j,f_{j}(y_j)),dy_{j+1}\big)\cdots P_n\big((x_n,f_n(x_n)),dy_{n+1}\big). \nonumber
        \end{eqnarray}

    \item[{\rm (ii)}] For any $x_n\in E$ and $n=0,\ldots,N$, the map ${\cal V}_{n}^{x_n}:{\cal P}_\psi\rightarrow\R$ defined by (\mbox{\ref{def cal v}}) is `Hadamard differentiable' at $\P$ w.r.t.\ $(\M,\psi)$ with `Hadamard derivative' $\dot{\cal V}_{n;\P}^{x_n}:{\cal P}_{\psi}^{\P;\pm}\rightarrow\R$ given by
        \begin{eqnarray}\label{hd value function - 0}
            \dot{\cal V}_{n;\P}^{x_n}(\Q-\P) := \lim_{\delta\searrow 0}\,\sup_{\pi\in\Pi(\P;\delta)}\dot{\cal V}_{n;\P}^{x_n;\pi}(\Q-\P).
        \end{eqnarray}
        If the set of optimal strategies $\Pi(\P)$ is non-empty, then the `Hadamard derivative' admits the representation
        \begin{eqnarray}\label{hd value function}
            \dot{\cal V}_{n;\P}^{x_n}(\Q-\P)
            = \sup_{\pi\in\Pi(\P)}\dot{\cal V}_{n;\P}^{x_n;\pi}(\Q-\P).
        \end{eqnarray}
\end{enumerate}
\end{theorem}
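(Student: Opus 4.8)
The plan is to prove part (i) first by an operator-theoretic expansion, and then to derive part (ii) from part (i) by a Danskin/envelope-type argument, upgrading to the Hadamard (relatively compact) setting through Lemma \ref{lemma HD characterization}.

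For part (i) I fix $\pi=(f_n)$ and introduce, for each $j$, the transition operator $T_j^\pi h(x):=\int_E h(y)\,P_j((x,f_j(x)),dy)$ on $\M_\psi(E)$, together with the backward recursion $V_n^{\P;\pi}=r_n(\cdot,f_n(\cdot))+T_n^\pi V_{n+1}^{\P;\pi}$ with $V_N^{\P;\pi}=r_N$. Unrolling this recursion expresses $V_n^{\P;\pi}(x_n)$ as an iterated integral that is multilinear in the kernels $P_n^\pi,\dots,P_{N-1}^\pi$, matching the structure of (\ref{exp tot reward}). Writing $\P_\varepsilon:=\P+\varepsilon(\Q-\P)$ and noting that the perturbed operator factorises as $T_{j,\varepsilon}^\pi=T_j^\pi+\varepsilon S_j^\pi$ with $S_j^\pi h(x):=\int_E h\,d(Q_j-P_j)((x,f_j(x)),\cdot)$, I expand each product $\prod_j(T_j^\pi+\varepsilon S_j^\pi)$ into finitely many terms indexed by the subset $J$ of positions carrying an $S$-factor. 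The terms with $|J|=1$ collect, after summing over the reward index $k$, into $\sum_j T_n^\pi\cdots T_{j-1}^\pi S_j^\pi V_{j+1}^{\P;\pi}(x_n)$, which is exactly the right-hand side of (\ref{hd ex tot costs}); this is the candidate derivative. The terms with $|J|\ge 2$ carry a factor $\varepsilon^{|J|}$ and constitute the remainder.

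The estimates rest on two bounds for these operators on $(\M_\psi(E),\|\cdot\|_\psi)$. A crude bound, $\|T_{j,\varepsilon}^\pi h\|_\psi\le\widetilde K_3\|h\|_\psi$ and $\|S_j^\pi h\|_\psi\le 2\widetilde K_3\|h\|_\psi$, follows from the bounding-function condition $\int\psi\,dP_j\le K_3\psi$ together with the fact that, on a bounded set ${\cal K}$, condition (c) ($\rho_{\M'}(\psi)<\infty$) forces $\int\psi\,dQ_j\le\widetilde K_3\psi$ uniformly in $\Q\in{\cal K}$; since the reward functions satisfy $\|r_k(\cdot,f_k(\cdot))\|_\psi\le K_1$ and $\|r_N\|_\psi\le K_2$, the remainder is bounded by $C\varepsilon^2\psi(x_n)$ uniformly over bounded ${\cal K}$, which yields the difference-quotient estimate and hence `Fr\'echet differentiability'. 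A sharp bound, $|S_j^\pi h(x)|\le\rho_{\M'}(h)\,\psi(x)\,d_{\infty,\M}^\psi(\P,\Q)$, comes from the defining property of the Minkowski functional $\rho_{\M'}$ of the generator $\M'$; applied with $h=V_{j+1}^{\P;\pi}$ and combined with condition (b) ($\sup_\pi\rho_{\M'}(V_{j+1}^{\P;\pi})<\infty$), it shows that $\dot{\cal V}_{n;\P}^{x_n;\pi}$ is finite and, upon replacing $Q_j-P_j$ by $Q_j-Q_j'$, that it is $(\M,\psi)$-continuous with $|\dot{\cal V}_{n;\P}^{x_n;\pi}(\Q-\P)-\dot{\cal V}_{n;\P}^{x_n;\pi}(\Q'-\P)|\le C\,\psi(x_n)\,d_{\infty,\M}^\psi(\Q,\Q')$. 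Crucially, all these constants are independent of $\pi$, which is what makes part (ii) accessible.

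For part (ii) set $g_\varepsilon(\pi):=(\,{\cal V}_n^{x_n;\pi}(\P_\varepsilon)-{\cal V}_n^{x_n;\pi}(\P)\,)/\varepsilon$ and $d(\pi):=\dot{\cal V}_{n;\P}^{x_n;\pi}(\Q-\P)$. The $\pi$-uniform bounds of part (i) give $\sup_\pi|g_\varepsilon(\pi)-d(\pi)|\to 0$ and $|{\cal V}_n^{x_n}(\P_\varepsilon)-{\cal V}_n^{x_n}(\P)|\le C\varepsilon\psi(x_n)$ as $\varepsilon\to 0$, and the candidate $\Delta:=\lim_{\delta\searrow 0}\sup_{\pi\in\Pi(\P;\delta)}d(\pi)$ is a well-defined monotone limit, $(\M,\psi)$-continuous as a limit of suprema of an equi-$(\M,\psi)$-continuous family. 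For the upper bound, choose $\pi_\varepsilon$ that is $\varepsilon^2$-optimal for $\P_\varepsilon$; the uniform closeness of ${\cal V}_n^{x_n;\pi}(\P_\varepsilon)$ and ${\cal V}_n^{x_n;\pi}(\P)$ shows $\pi_\varepsilon$ is $O(\varepsilon)$-optimal for $\P$, whence $(\,{\cal V}_n^{x_n}(\P_\varepsilon)-{\cal V}_n^{x_n}(\P)\,)/\varepsilon\le g_\varepsilon(\pi_\varepsilon)+\varepsilon\le\sup_{\Pi(\P;\delta_\varepsilon)}d+o(1)$ with $\delta_\varepsilon\to 0$, giving $\limsup\le\Delta$. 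For the lower bound, fix $\eta>0$, pick $\pi\in\Pi(\P;\varepsilon^2)$ with $d(\pi)\ge\Delta-\eta$ (possible since the monotone suprema stay $\ge\Delta$), and use ${\cal V}_n^{x_n}(\P_\varepsilon)\ge{\cal V}_n^{x_n;\pi}(\P_\varepsilon)$ to obtain $(\,{\cal V}_n^{x_n}(\P_\varepsilon)-{\cal V}_n^{x_n}(\P)\,)/\varepsilon\ge g_\varepsilon(\pi)-\varepsilon\ge\Delta-\eta-o(1)$, giving $\liminf\ge\Delta$. When $\Pi(\P)\neq\emptyset$ one takes $\pi$ genuinely optimal in the lower bound, removing the $\delta$-layer and producing the representation (\ref{hd value function}). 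Finally, since all estimates are uniform over the relatively compact (hence bounded) set $\{\Q_m\}_m\cup\{\Q\}$, the identical computation run with $\P+\varepsilon_m(\Q_m-\P)$ yields the sequential criterion of Lemma \ref{lemma HD characterization}, establishing `Hadamard differentiability'.

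The main obstacle is the bookkeeping in the lower bound of part (ii): the additive near-optimality gap $\delta$ must be driven to $0$ strictly faster than $\varepsilon$ (taking $\delta=\varepsilon^2$), and one must reconcile the time-$0$ notion of $\delta$-optimality from Definition \ref{def opt strategy} with the near-optimality actually required for the sub-problem at $(n,x_n)$, in particular verifying that near-$\P_\varepsilon$-optimizers are near-$\P$-optimizers. This is exactly where the $\pi$-uniformity of the part (i) estimates (supplied by conditions (b) and (c)) is indispensable, and it is also the reason part (ii) only yields `Hadamard' rather than `Fr\'echet' differentiability: the supremum over $\pi$ destroys uniform remainder control away from relatively compact sets of directions.
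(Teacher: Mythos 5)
Your proposal is correct, and part (i) coincides with the paper's argument in all essentials: the paper's Lemma \mbox{\ref{lemma diff quot hd}} performs exactly your expansion of the product of perturbed kernels over subsets $J$ of positions carrying a $(Q_j-P_j)$-factor, isolates the $|J|=1$ terms as the derivative (in the form of Representation I, (\mbox{\ref{hadamard derivative ex tot costs - II}}), which rearranges into (\mbox{\ref{hd ex tot costs}})), and controls the $|J|\ge 2$ remainder uniformly in $\pi$ and in $\Q$ over bounded sets via the bounding function together with condition (c) (its Lemma \mbox{\ref{lemma char bounding function d-bounded sets}} is your ``crude bound''); your ``sharp bound'' via the Minkowski functional and condition (b) is precisely the paper's Lemma \mbox{\ref{lemma continuity hd}}. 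For part (ii) you take a genuinely different, though equivalent, route. The paper factorizes ${\cal V}_n^{x_n}=\Psi\circ\Upsilon_n^{x_n}$ as in (\mbox{\ref{representation cal v}})--(\mbox{\ref{def of Upsilon}}), cites R\"omisch's result that the sup-functional $\Psi$ on $\ell^\infty(\Pi)$ is Hadamard differentiable with the Danskin-type derivative, and then proves a chain rule using the sequential characterization of Lemma \mbox{\ref{lemma HD characterization}}; you instead inline the content of that cited result as a direct envelope argument with $\delta$-optimal strategies, driving $\delta=\varepsilon^2$ to zero faster than $\varepsilon$ in the lower bound and checking that near-optimizers for $\P_\varepsilon$ are near-optimizers for $\P$ in the upper bound. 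What the paper's route buys is modularity (the only new work is the chain rule, and the $\pi$-uniform `Fr\'echet differentiability' of $\Upsilon_n^{x_n}$ is cleanly isolated as Theorem \mbox{\ref{Thm - hadamard upsilon}}); what your route buys is self-containedness and an explicit view of where the $\delta$-layer and the $\pi$-uniformity of the part (i) estimates are actually consumed. You also correctly flag the small mismatch between the time-$0$, all-$x_0$ notion of $\delta$-optimality in Definition \mbox{\ref{def opt strategy}} and the pointwise-at-$(n,x_n)$ near-optimality that the envelope argument really uses; the paper's own proof quietly works with the pointwise sets $\Pi((w(\pi))_{\pi\in\Pi},\delta)$, so this is a presentational wrinkle of the paper rather than a gap in your argument. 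One minor caveat: your closing heuristic that the supremum ``destroys uniform remainder control away from relatively compact sets'' is plausible but not established; the paper itself only remarks that `Fr\'echet differentiability' of ${\cal V}_n^{x_n}$ is doubtful because sup-functionals are generally not Fr\'echet differentiable, and neither you nor the paper needs this claim.
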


The proof of Theorem \mbox{\ref{Thm - hadamard cal v}} can be found in Section \mbox{\ref{Sec - Proof of Theorem HD}} of the supplementary material. Note that the set $\Pi(\P;\delta)$ shrinks as $\delta$ decreases. Therefore the right-hand side of (\mbox{\ref{hd value function - 0}}) is well defined. The supremum in (\mbox{\ref{hd value function}}) ranges over all optimal strategies w.r.t.\ $\P$. If, for example, the MDM $(\boldsymbol{X},\boldsymbol{A},\P,\Pi,\boldsymbol{r})$ satisfies conditions (a)--(c) of Theorem \mbox{\ref{Thm - existence opt strategy}} in the supplementary material, then by part (iii) of this theorem an optimal strategy can be found, i.e.\ $\Pi(\P)$ is non-empty. The existence of an optimal strategy is also ensured if the sets $F_0,\ldots,F_{N-1}$ are finite (a situation one often faces in applications). In the latter case the `Hadamard derivative' $\dot{\cal V}_{n;\P}^{x_n}(\Q-\P)$ can easily be determined by computing the finitely many values $\dot{\cal V}_{n;\P}^{x_n;\pi}(\Q-\P)$, $\pi\in\Pi(\P)$, and taking their maximum. The discrete case will be discussed in more detail in Subsection \mbox{\ref{Subsec - Hadamard differentiability optimal value functional - discrete case}} of the supplementary material.

If there exists a unique optimal strategy $\pi^{\P}\in\Pi$ w.r.t.\ $\P$, then $\Pi(\P)$ is nothing but the singleton $\{\pi^{\P}\}$, and in this case the `Hadamard derivative' $\dot{\cal V}_{0;\boldsymbol{P}}^{x_0}$ of the optimal value (functional) ${\cal V}_0^{x_0}$ 
at $\P$ coincides with $\dot{\cal V}_{0;\P}^{x_0;\pi^{\P}}$.

\begin{remarknorm}\label{remark thm hd cal v}
(i) The `Fr\'echet differentiability' in part (i) of Theorem \mbox{\ref{Thm - hadamard cal v}} holds even uniformly in $\pi\in\Pi$; see Theorem \mbox{\ref{Thm - hadamard upsilon}} in the supplementary material for the precise meaning.

(ii) We do not know if it is possible to replace `Hadamard differentiability' by `Fr\'echet differentiability' in part (ii) of Theorem \mbox{\ref{Thm - hadamard cal v}}. The following arguments rather cast doubt on this possibility. The proof of part (ii) is based on the decomposition of the value functional ${\cal V}_{n}^{x_n}$ in display (\mbox{\ref{representation cal v}}) of the supplementary material and a suitable chain rule, where the decomposition (\mbox{\ref{representation cal v}})
involves the sup-functional $\Psi$ introduced in display (\mbox{\ref{def of Upsilon}}) of the supplementary material. However, Corollary 1 in \cite{CoxNadler1971} (see also Proposition 4.6.5 in \cite{Schirotzek2007}) shows that in normed vector spaces sup-functionals are in general {\em not} Fr\'echet differentiable. This could be an indication that `Fr\'echet differentiable' of the value functional indeed fails. We can not make a reliable statement in this regard.

(iii) Recall that `Hadamard (resp.\ Fr\'echet) differentiability' w.r.t.\ $(\M,\psi)$ implies `Hadamard (resp.\ Fr\'echet) differentiability' w.r.t.\ $(\M,\phi)$ for any gauge function $\phi\le\psi$. However, for any such $\phi$ `Hadamard (resp.\ Fr\'echet) differentiability' w.r.t.\ $(\M,\phi)$ is less meaningful than w.r.t.\ $(\M,\psi)$. Indeed, when using $d_{\infty,\M}^{\phi}$ with $\phi\le\psi$ instead of $d_{\infty,\M}^{\psi}$, the sets ${\cal K}$ for whose elements the first-order sensitivities can be compared with each other with clear conscience are smaller and the `derivative' is less robust.

(iv) In the case where we are interested in minimizing expected total costs in the MDM $(\boldsymbol{X},\boldsymbol{A},\P,\Pi,\boldsymbol{r})$ (see Remark \mbox{\ref{remark non Markovian strat}}(ii)), we obtain under the assumptions (and with the same arguments as in the proof of part (ii)) of Theorem \mbox{\ref{Thm - hadamard cal v}} that the `Hadamard derivative' of the corresponding value functional is given by (\mbox{\ref{hd value function - 0}}) (resp.\ (\mbox{\ref{hd value function}})) with ``$\sup$'' replaced by ``$\inf$''.
{\hspace*{\fill}$\Diamond$\par\bigskip}
\end{remarknorm}

\begin{remarknorm}\label{remark verification assumptions}
(i) Condition (a) of Theorem \mbox{\ref{Thm - hadamard cal v}} is in line with the existing literature. In fact, similar conditions as in Definition \mbox{\ref{def bounding function}} (with ${\cal P}':=\{\Q\}$) have been imposed many times before; see, for instance, \cite[Definition 2.4.1]{BaeuerleRieder2011}, \cite[Definition 2.4]{Mueller1997}, \cite[p.\,231\,ff]{Puterman1994}, and \cite{Wessels1977}.

(ii) In some situations, condition (a) implies condition (b) in Theorem \mbox{\ref{Thm - hadamard cal v}}. This is the case, for instance, in the following four settings (the involved sets $\M'$ were introduced in Examples \mbox{\ref{examples prob metric - tv}}--\mbox{\ref{examples prob metric - hoelder}}).

1) $\M':=\overline{\M}_{\rm{\scriptsize{TV}}}$ and $\psi:\equiv 1$.

2) $\M':=\overline{\M}_{\rm{\scriptsize{Kolm}}}$ and $\psi:\equiv 1$, as well as for $n=1,\ldots,N-1$\\
\noindent\hspace*{8.2mm} -\ $\int_{\R}V_{n+1}^{\P;\pi}(y)\,P_n((\,\cdot\,,f_n(\,\cdot\,)),dy)$, $\pi=(f_n)_{n=0}^{N-1}\in\Pi$, are increasing,\\	
\noindent\hspace*{8.2mm} -\ $r_n(\,\cdot\,,f_n(\,\cdot\,))$, $\pi=(f_n)_{n=0}^{N-1}\in\Pi$, and $r_N(\cdot)$ are increasing.
			
3) $\M':=\M_{\rm{\scriptsize{BL}}}$ and $\psi:\equiv 1$, as well as for $n=1,\ldots,N-1$\\
\noindent\hspace*{8.2mm} -\ $\sup_{\pi=(f_n)_{n=0}^{N-1}\in\Pi}\sup_{x\not=y}d_{\rm{\scriptsize{BL}}}(P_n((x,f_n(x)),\,\bullet\,),P_n((y,f_n(y)),\,\bullet\,))/d_E(x,y)<\infty$,\\	
\noindent\hspace*{8.2mm} -\ $\sup_{\pi=(f_n)_{n=0}^{N-1}\in\Pi}\|r_n(\,\cdot\,,f_n(\,\cdot\,))\|_{\rm{\scriptsize{Lip}}}<\infty$ and $\|r_N\|_{\rm{\scriptsize{Lip}}}<\infty$.

4) $\M':=\M_{\rm{\scriptsize{H\ddot{o}l}},\alpha}$ and $\psi(x):=1+d_E(x,x')^\alpha$, as well as for $n=1,\ldots,N-1$\\
\noindent\hspace*{8.2mm} -\ $\sup_{\pi=(f_n)_{n=0}^{N-1}\in\Pi}\sup_{x\not=y}d_{\rm{\scriptsize{H\ddot{o}l}},\alpha}(P_n((x,f_n(x)),\,\bullet\,),P_n((y,f_n(y)),\,\bullet\,))/d_E(x,y)^\alpha<\infty$,\\
\noindent\hspace*{8.2mm} -\ $\sup_{\pi=(f_n)_{n=0}^{N-1}\in\Pi}\|r_n(\,\cdot\,,f_n(\,\cdot\,))\|_{\rm{\scriptsize{H\ddot{o}l}},\alpha}<\infty$ and $\|r_N\|_{\rm{\scriptsize{H\ddot{o}l}},\alpha}<\infty$ for some $x'\in E$ and\\
\noindent\hspace*{8mm} $\alpha\in(0,1]$. Recall that $\M_{\rm{\scriptsize{H\ddot{o}l}},\alpha}=\M_{\rm{\scriptsize{Kant}}}$ for $\alpha=1$.

\noindent The proof of (a)$\Rightarrow$(b) relies in setting 1) on Lemma \mbox{\ref{lemma suff cond for stand cond - pre}} (with ${\cal P}':=\{\P\}$) and in settings 2)--4) on Lemma \mbox{\ref{lemma suff cond for stand cond - pre}} (with ${\cal P}':=\{\P\}$) along with Proposition \mbox{\ref{proposition reward iteration}} of the supplementary material. The conditions in setting 2) are similar to those in parts (ii)--(iv) of Theorem 2.4.14 in \cite{BaeuerleRieder2011}, and the conditions in settings 3) and 4) are motivated by the statements in \cite[p.\,11f]{Hinderer2005}.

(iii) In many situations, condition (c) of Theorem \mbox{\ref{Thm - hadamard cal v}} holds trivially. This is the case, for instance, if $\M'\in\{\overline{\M}_{\rm{\scriptsize{TV}}},\overline{\M}_{\rm{\scriptsize{Kolm}}},\M_{\rm{\scriptsize{BL}}}\}$ and $\psi:\equiv 1$, or if $\M':=\M_{\rm{\scriptsize{H\ddot{o}l}},\alpha}$ and $\psi(x):=1+d_E(x,x')^\alpha$ for some fixed $x'\in E$ and $\alpha\in(0,1]$.

(iv) The conditions (b) and (c) of Theorem \mbox{\ref{Thm - hadamard cal v}} can also be verified directly in some cases; see, for instance, the proof of Lemma \mbox{\ref{lemma Ex-fin verification assumptions Thm hd}} in Subsection \mbox{\ref{Subsec - Finance Example - auxiliary lemmas proof Thm hd}} of the supplementary material.
{\hspace*{\fill}$\Diamond$\par\bigskip}
\end{remarknorm}

In applications it is not necessarily easy to specify the set $\Pi(\P)$ of all optimal strategies w.r.t.\ $\P$. While in most cases an optimal strategy can be found with little effort (one can use the Bellman equation; see part (i) of Theorem \mbox{\ref{Thm - existence opt strategy}} in Section \mbox{\ref{Sec - Existence of optimal strategies}} of the supplementary material), it is typically more involved to specify {\em all} optimal strategies or to show that the optimal strategy is unique. The following remark may help in some situations; for an application see Subsection \mbox{\ref{Subsec - Finance Example - Hadamard derivative  optimal value}}.

\begin{remarknorm}\label{remark hd value function subset}
In some situations it turns out that for {\em every} $\P\in{\cal P}_\psi$ the solution of the optimization problem (\mbox{\ref{maximization problem}}) does not change if $\Pi$ is replaced by a subset $\Pi'\subseteq\Pi$ (being independent of $\P$). Then in the definition (\mbox{\ref{value function}}) of the value function (at time $0$) the set $\Pi$ can be replaced by the subset $\Pi'$, and it follows (under the assumptions of Theorem \mbox{\ref{Thm - hadamard cal v}}) that in the representation (\mbox{\ref{hd value function}}) of the `Hadamard derivative' $\dot{\cal V}_{0;\P}^{x_0}$ of ${\cal V}_0^{x_0}$ at $\P$ the set $\Pi(\P)$ can be replaced by the set $\Pi'(\P)$ of all optimal strategies w.r.t.\ $\P$ from the subset $\Pi'$. Of course, in this case it suffices to ensure that conditions (a)--(b) of Theorem \mbox{\ref{Thm - hadamard cal v}} are satisfied for the subset $\Pi'$ instead of $\Pi$.
{\hspace*{\fill}$\Diamond$\par\bigskip}
\end{remarknorm}


\subsection{Two alternative representations of $\dot{\cal V}_{n;\P}^{x_n;\pi}$} \label{Subsec - alternative rep Frechet derivative}


In this subsection we present two alternative representations (see (\mbox{\ref{hadamard derivative ex tot costs - II}}) and (\mbox{\ref{remark hd - eq10}})) of the `Fr\'echet derivative' $\dot{\cal V}_{n;\P}^{x_n;\pi}$ in (\mbox{\ref{hd ex tot costs}}). The representation (\mbox{\ref{hadamard derivative ex tot costs - II}}) will be beneficial for the proof
of Theorem \mbox{\ref{Thm - hadamard cal v}} (see Lemma \mbox{\ref{lemma continuity hd}} in Subsection \mbox{\ref{Subsec - differentiablility of upsilon}} of the supplementary material) and the representation (\mbox{\ref{remark hd - eq10}}) will be used to derive the `Hadamard derivative' of the optimal value of the terminal wealth problem in (\mbox{\ref{Ex fin - term wealth prob}}) below (see the proof of Theorem \mbox{\ref{Ex fin - Thm Hadamard}} in Subsection \mbox{\ref{Subsec - Finance Example - proof Thm hd}} of the supplementary material).

\begin{remarknorm}[Representation I]\label{remark hd repr 1}
By rearranging the sums in (\mbox{\ref{hd ex tot costs}}), we obtain under the assumptions of Theorem \mbox{\ref{Thm - hadamard cal v}} that for every fixed $\P=(P_n)_{n=0}^{N-1}\in{\cal P}_\psi$ the `Fr\'echet derivative' $\dot{\cal V}_{n;\P}^{x_n;\pi}$ of ${\cal V}_{n}^{x_n;\pi}$ at $\P$ can be represented as
\begin{equation} \label{hadamard derivative ex tot costs - II}
    \begin{aligned}
    \dot{\cal V}_{n;\P}^{x_n;\pi}(\Q-\P)
    & \,=\, \sum_{k=n}^{N-1}\int_E\int_E\cdots\int_E V_{k+1}^{\P;\pi}(y_{k+1})\,(Q_k - P_k)\big((y_k,f_k(y_k)),dy_{k+1}\big) \\
    &  \qquad P_{k-1}\big((y_{k-1},f_{k-1}(y_{k-1})),dy_{k}\big) \cdots P_n\big((x_n,f_n(x_n)),dy_{n+1}\big)
    \end{aligned}
\end{equation}
for every $x_n\in E$, $\Q=(Q_n)_{n=0}^{N-1}\in{\cal P}_\psi$, $\pi=(f_n)_{n=0}^{N-1}\in\Pi$, and $n=0,\ldots,N$.
{\hspace*{\fill}$\Diamond$\par\bigskip}
\end{remarknorm}

\begin{remarknorm}[Representation II]\label{remark hd - iteration scheme}
For every fixed $\P=(P_n)_{n=0}^{N-1}\in{\cal P}_\psi$, and under the assumptions of Theorem \mbox{\ref{Thm - hadamard cal v}}, the `Fr\'echet derivative' $\dot{\cal V}_{n;\P}^{x_n;\pi}$ of ${\cal V}_{n}^{x_n;\pi}$ at $\P$ admits the representation
\begin{equation}\label{remark hd - eq10}
    \dot{\cal V}_{n;\P}^{x_n;\pi}(\Q-\P)=\dot V_n^{\P,\Q;\pi}(x_n)
\end{equation}
for every $x_n\in E$, $\Q=(Q_n)_{n=0}^{N-1}\in{\cal P}_\psi$, $\pi=(f_n)_{n=0}^{N-1}\in\Pi$, and $n=0,\ldots,N$, where $(\dot V_k^{\P,\Q;\pi})_{k=0}^N$ is the solution of the following {\em backward iteration scheme}
\begin{equation}\label{Hadamard pi backward iteration scheme}
	\begin{aligned}
	   \dot V_N^{\P,\Q;\pi}(\cdot) & \,:=\, 0 \\
	   \dot V_k^{\P,\Q;\pi}(\cdot) & \,:=\, \int_E \dot V_{k+1}^{\P,\Q;\pi}(y)\,P_k\big((\,\cdot\,,f_k(\cdot)),dy\big) \\
       & \qquad + \int_E V_{k+1}^{\P;\pi}(y)\,(Q_k - P_k)\big((\,\cdot\,,f_k(\cdot)),dy\big),\qquad k=0,\ldots,N-1. \
	\end{aligned}
\end{equation}

Indeed, it is easily seen that $\dot V_n^{\P,\Q;\pi}(x_n)$ coincides with the right-hand side of (\mbox{\ref{hadamard derivative ex tot costs - II}}). Note that it can be verified iteratively by means of condition (a) of Theorem \mbox{\ref{Thm - hadamard cal v}} and Lemma \mbox{\ref{lemma suff cond for stand cond - pre}} (with ${\cal P}':=\{\Q\}$) that $\dot V_n^{\P,\Q;\pi}(\cdot)\in\M_\psi(E)$ for every $\Q\in{\cal P}_\psi$, $\pi\in\Pi$, and $n=0,\ldots,N$. In particular, this implies that the integrals on the right-hand side of (\mbox{\ref{Hadamard pi backward iteration scheme}}) exist and are finite. Also note that the iteration scheme (\mbox{\ref{Hadamard pi backward iteration scheme}}) involves the family $(V^{\P;\pi}_k)_{k=1}^N$ which itself can be seen as the solution of a backward iteration scheme:
\begin{eqnarray*}
    V_N^{\P;\pi}(\cdot) & := & r_N(\cdot)\\
	V_k^{\P;\pi}(\cdot) & := & r_k(\,\cdot\,,f_k(\cdot)) + \int_E V_{k+1}^{\P;\pi}(y)\,P_k\big((\,\cdot\,,f_k(\cdot)),dy\big),\qquad k=1,\ldots,N-1;
\end{eqnarray*}
see Proposition \mbox{\ref{proposition reward iteration}} of the supplementary material.
{\hspace*{\fill}$\Diamond$\par\bigskip}
\end{remarknorm}


\section{Application to a terminal wealth optimization problem in mathematical finance}\label{Sec - Finance Example}


In this section we will apply the theory of Sections \mbox{\ref{Sec - Formal definition of MDM}}--\mbox{\ref{Sec - Hadamard differentiability chapter}} to a particular optimization problem in mathematical finance. At first, we introduce in Subsection \mbox{\ref{Subsec - Finance Example - Basic financial market}} the basic financial market model and formulate subsequently the terminal wealth problem as a classical optimization problem in mathematical finance. The market model is in line with standard literature as \cite[Chapter 4]{BaeuerleRieder2011} or \cite[Chapter 5]{FoellmerSchied2011}. To keep the presentation as clear as possible we restrict ourselves to a simple variant of the market model (only one risky asset). In Subsection \mbox{\ref{Subsec - Finance Example - Embedding FM into MDM}} we will see that the market model can be embedded into the MDM of Section \mbox{\ref{Sec - Formal definition of MDM}}. It turns out that the existence (and computation) of an optimal (trading) strategy can be obtained by solving iteratively $N$ one-stage investment problems; see Subsection \mbox{\ref{Subsec - Finance Example - Comp opt trading strat}}. In Subsection \mbox{\ref{Subsec - Finance Example - Hadamard derivative  optimal value}} we will specify the `Hadamard derivative' of the optimal value functional of the terminal wealth problem, and Subsection \mbox{\ref{Subsec - Finance Example - Numerical example}} provides some numerical examples.


\subsection{Basic financial market model, and the target}\label{Subsec - Finance Example - Basic financial market}


Consider an $N$-period financial market consisting of one riskless bond $B=(B_0,\break\ldots,B_N)$ and one risky asset $S=(S_0,\ldots,S_N)$. Further assume that the value of the bond evolves deterministically according to
$$
    B_0= 1,\qquad		B_{n+1} = \mathfrak{r}_{n+1} B_n,	\qquad n=0,\ldots,N-1
$$
for some fixed constants $\mathfrak{r}_1,\ldots,\mathfrak{r}_N\in\R_{\ge 1}$, and that the value of the asset evolves stochastically according to
$$
    S_0>0,\qquad 	S_{n+1} = \mathfrak{R}_{n+1} S_n,	\qquad n=0,\ldots,N-1
$$
for some independent $\R_{\ge 0}$-valued random variables $\mathfrak{R}_1,\ldots,\mathfrak{R}_N$ on some probability space $\OFP$ with distributions $\mathfrak{m}_1,\ldots,\mathfrak{m}_N$, respectively.

Throughout Section \mbox{\ref{Sec - Finance Example}} we will assume that the financial market satisfies the following Assumption {\bf (FM)}, where $\alpha\in(0,1)$ is fixed and chosen as in (\mbox{\ref{Ex fin - def power utility}}) below. In Examples \mbox{\ref{Ex fin - example CRR model}} and \mbox{\ref{Ex fin - example BSM model}} we will discuss specific financial market models which satisfy Assumption {\bf (FM)}.

\par\medskip

\noindent{\bf Assumption (FM)}:
The following three assertions hold for any $n=0,\ldots,N-1$.
\vspace{-2mm}
\begin{enumerate}
	\item[{\rm (a)}] $\int_{\R_{\ge 0}}y^\alpha\,\mathfrak{m}_{n+1}(dy)<\infty$.

	\item[{\rm (b)}] $\mathfrak{R}_{n+1}>0$ $\pr$-a.s.
	
    \item[{\rm (c)}] $\pr[\mathfrak{R}_{n+1} \neq \mathfrak{r}_{n+1}]=1$.
\end{enumerate}

\par\medskip

Note that for any $n=0,\ldots,N-1$ the value $\mathfrak{r}_{n+1}$ (resp.\ $\mathfrak{R}_{n+1}$) corresponds to the relative price change $B_{n+1}/B_n$ (resp.\ $S_{n+1}/S_n$) of the bond (resp.\ asset) between time $n$ and $n+1$. Let ${\cal F}_0$ be the trivial $\sigma$-algebra, and set ${\cal F}_n:=\sigma(S_0,\ldots,S_n)=\sigma(\mathfrak{R}_1,\ldots,\mathfrak{R}_n)$ for any $n=1,\ldots,N$.

Now, an agent invests a given amount of capital $x_0\in\R_{\ge 0}$ in the bond and the asset according to some self-financing trading strategy. By {\em trading strategy} we mean an $({\cal F}_n)$-adapted $\R_{\ge 0}^2$-valued stochastic process $\varphi=(\varphi_n^0,\varphi_n)_{n=0}^{N-1}$, where $\varphi_n^0$ (resp.\ $\varphi_n$) specifies the amount of capital that is invested in the bond (resp.\ asset) during the time interval $[n,n+1)$. Here we require that both $\varphi_n^0$ and $\varphi_n$ are nonnegative for any $n$, which means that taking loans and short sellings of the asset are excluded. The corresponding {\em portfolio process} $X^{\varphi}=(X_0^{\varphi},\ldots,X_N^{\varphi})$ associated with $\varphi=(\varphi_n^0,\varphi_n)_{n=0}^{N-1}$ is given by
$$
    X_0^{\varphi} := \varphi_0^0+\varphi_0 \quad\mbox{ and }\quad X_{n+1}^{\varphi} := \varphi_n^0\mathfrak{r}_{n+1} + \varphi_n\mathfrak{R}_{n+1},\qquad n=0,\ldots,N-1.
$$
A trading strategy $\varphi=(\varphi_n^0,\varphi_n)_{n=0}^{N-1}$ is said to be {\em self-financing w.r.t.\ the initial capital $x_0$} if $x_0=\varphi_0^0+\varphi_0$ and $X_n^{\varphi}=\varphi_n^0+\varphi_n$ for all $n=1,\ldots,N$. It is easily seen that for any self-financing trading strategy $\varphi=(\varphi_n^0,\varphi_n)_{n=0}^{N-1}$ w.r.t.\ $x_0$ the corresponding portfolio process admits the representation
\begin{equation}\label{Ex fin - rep portfolio process}
    X_0^{\varphi} = x_0 \quad\mbox{ and }\quad X_{n+1}^{\varphi} = \mathfrak{r}_{n+1} X_n^{\varphi} + \varphi_n(\mathfrak{R}_{n+1}-\mathfrak{r}_{n+1}) \quad\mbox{for }n=0,\ldots,N-1.
\end{equation}
Note that $X_n^{\varphi}-\varphi_n$ corresponds to the amount of capital which is invested in the bond between time $n$ and $n+1$. Also note that it can be verified easily by means of Remark 3.1.6 in \cite{BaeuerleRieder2011} that under condition (c) of Assumption {\bf (FM)} the financial market introduced above is free of arbitrage opportunities.

In view of (\mbox{\ref{Ex fin - rep portfolio process}}), we may and do identify a self-financing trading strategy w.r.t.\ $x_0$ with an $({\cal F}_n)$-adapted $\R_{\ge 0}$-valued stochastic process $\varphi=(\varphi_n)_{n=0}^{N-1}$ satisfying $\varphi_0\in[0,x_0]$ and $\varphi_n\in[0,X_n^{\varphi}]$ for all $n=1,\ldots,N-1$. We restrict ourselves to {\em Markovian} self-financing trading strategies $\varphi=(\varphi_n)_{n=0}^{N-1}$ w.r.t.\ $x_0$ which means that $\varphi_n$ only depends on $n$ and $X_n^{\varphi}$. To put it another way, we assume that for any $n=0,\ldots,N-1$ there exists some Borel measurable map $f_n:\R_{\ge 0}\to\R_{\ge 0}$ such that
$
    \varphi_n = f_n(X_n^{\varphi}).
$
Then, in particular, $X^{\varphi}$ is an $\R_{\ge 0}$-valued $({\cal F}_n)$-Markov process whose one-step transition probability at time $n\in\{0,\ldots,N-1\}$ given state $x_n\in\R_{\ge 0}$ and strategy $\varphi=(\varphi_n)_{n=0}^{N-1}$ (resp.\ $\pi=(f_n)_{n=0}^{N-1}$) is given by
$
    \mathfrak{m}_{n+1}\circ\eta_{n,(x,f_n(x))}^{-1}
$
with
\begin{equation}\label{Ex fin - def mapping eta}
	 \eta_{n,(x,f_n(x))}(y) := \mathfrak{r}_{n+1}x + f_n(x)(y - \mathfrak{r}_{n+1}), \qquad y\in\R_{\ge 0}.
\end{equation}

The agent's aim is to find a self-financing trading strategy $\varphi=(\varphi_n)_{n=0}^{N-1}$ (resp.\ $\pi=(f_n)_{n=0}^{N-1}$) w.r.t.\ $x_0$ for which her expected utility of the discounted 
terminal wealth is maximized. We assume that the agent is risk averse and that her attitude towards risk is set via the {\em power utility} function $u_\alpha:\R_{\ge 0}\rightarrow\R_{\ge 0}$ defined by
\begin{equation}\label{Ex fin - def power utility}
   u_\alpha(y) := y^\alpha
\end{equation}
for some fixed $\alpha\in(0,1)$ (as in Assumption {\bf (FM)}). The coefficient $\alpha$ determines the degree of risk aversion of the agent: the smaller the coefficient $\alpha$, the greater her risk aversion. Hence the agent is interested in those self-financing trading strategies $\varphi=(\varphi_n)_{n=0}^{N-1}$ (resp.\ $\pi=(f_n)_{n=0}^{N-1}$) w.r.t.\ $x_0$ for which the expectation of $u_\alpha(X_N^{\varphi}/B_N)$ under $\pr$ is maximized.

In the following subsections we will assume for notational simplicity that $\mathfrak{r}_{1},\ldots,\mathfrak{r}_{N}$ are fixed and that $\mathfrak{m}_1,\ldots,\mathfrak{m}_N$ are a sort of model parameters. In this case the factor $1/B_N$ in $u_\alpha(X_N^{\varphi}/B_N)$ in display (\mbox{\ref{Ex fin - terminal reward function}}) is superfluous; it indeed does not influence the maximization problem or any `derivative' of the optimal value. On the other hand, if also the (Dirac-) distributions of $\mathfrak{r}_{1},\ldots,\mathfrak{r}_{N}$ would be allowed to be variable, then this factor could matter for the derivative of the optimal value w.r.t.\ changes in the (deterministic) dynamics of $B_N$.


\subsection{Embedding into MDM, and optimal trading strategies} \label{Subsec - Finance Example - Embedding FM into MDM}


The setting introduced in Subsection \mbox{\ref{Subsec - Finance Example - Basic financial market}} can be embedded into the setting of Sections \mbox{\ref{Sec - Formal definition of MDM}}--\mbox{\ref{Sec - Hadamard differentiability chapter}} as follows. Let $\mathfrak{r}_1,\ldots,\mathfrak{r}_N\in\R_{\ge 1}$ be a priori fixed constants. Let
$
	 (E,{\cal E}) := (\R_{\ge 0},{\cal B}(\R_{\ge 0})),
$
and
$	
	  A_n(x) := [0,x] 
$
for any $x\in\R_{\ge 0}$ and $n=0,\ldots,N-1$. Then 
$
   A_n = \R_{\ge 0}
$
and
$
	D_n = D := \{(x,a)\in\R_{\ge 0}^2:\,a\in[0,x]\}.
$
Let ${\cal A}_n := {\cal B}(\R_{\ge 0})$. In particular, ${\cal D}_n = {\cal B}(\R_{\ge 0}^2)\cap D$ and the set $\mathbb{F}_n$ of all decision rules at time $n$ consists of all those Borel measurable functions $f_n:\R_{\ge 0}\rightarrow\R_{\ge 0}$ which satisfy $f_n(x)\in[0,x]$ for all $x\in\R_{\ge 0}$ (in particular $\mathbb{F}_n$ is independent of $n$). For any $n=0,\ldots,N-1$, let the set $F_n$ of all admissible decision rules at time $n$ be equal to $\mathbb{F}_n$.
Let as before $\Pi:=F_0\times\cdots\times F_{N-1}$.

Moreover let $r_n:\equiv 0$ for any $n=0,\ldots,N-1$, and
\begin{equation}\label{Ex fin - terminal reward function}
    r_N(x) := u_\alpha(x/B_N), \qquad x\in\R_{\ge 0}.
\end{equation}
Consider the gauge function $\psi:\R_{\ge 0}\rightarrow\R_{\ge 1}$ defined by
\begin{equation}\label{Ex fin - gauge function}
	\psi(x) := 1 + u_\alpha(x). 
\end{equation}
Let ${\cal P}_\psi$ be the set of all transition functions $\P=(P_n)_{n=0}^{N-1}\in{\cal P}$ consisting of transition kernels of the shape
\begin{equation}\label{Ex fin - set trans func psi}
    P_n\big((x,a),\,\bullet\,\big) := \mathfrak{m}_{n+1}\circ\eta_{n,(x,a)}^{-1}\,[\,\bullet\,],		\qquad (x,a)\in D_n,\,n=0,\ldots,N-1
\end{equation}
for some $\mathfrak{m}_{n+1}\in{\cal M}_1^{\alpha}(\R_{\ge 0})$, where ${\cal M}_1^{\alpha}(\R_{\ge 0})$ is the set of all $\mu\in{\cal M}_1(\R_{\ge 0})$ satisfying $\int_{\R_{\ge 0}}u_\alpha\,d\mu<\infty$, and the map $\eta_{n,(x,a)}$ is defined as in (\mbox{\ref{Ex fin - def mapping eta}}).
In particular, ${\cal P}_\psi\subseteq\overline{\cal P}_\psi$ (with $\overline{\cal P}_\psi$ defined as in Subsection \mbox{\ref{Subsec - Metric on set of transition functions}}), and it can be verified easily that $\psi$ given by (\mbox{\ref{Ex fin - gauge function}}) is a bounding function for the MDM $(\boldsymbol{X},\boldsymbol{A},\Q,\Pi,\boldsymbol{r})$ for any $\Q\in{\cal P}_\psi$ (see Lemma \mbox{\ref{lemma Ex-fin verification assumptions Thm hd}}(i) of the supplementary material). Note that $\boldsymbol{X}$ plays the role of the portfolio process $X^{\varphi}$ from Subsection \mbox{\ref{Subsec - Finance Example - Basic financial market}}. Also note that for some fixed $x_0\in\R_{\ge 0}$, any self-financing trading strategy $\varphi=(\varphi_n)_{n=0}^{N-1}$ w.r.t.\ $x_0$ may be identified with some $\pi=(f_n)_{n=0}^{N-1}\in\Pi$ via $\varphi_n=f_n(X_n^{\varphi})$.

Then, for every fixed $x_0\in\R_{\ge 0}$ and $\P\in{\cal P}_\psi$ the terminal wealth problem introduced at the very end of Subsection \mbox{\ref{Subsec - Finance Example - Basic financial market}} reads as
\begin{equation}\label{Ex fin - term wealth prob}
     \ex^{x_0,\P;\pi}[r_N(X_N)] \longrightarrow \max\ (\mbox{in $\pi\in\Pi$)\,!}
\end{equation}
A strategy $\pi^{\P}\in\Pi$ is called an {\em optimal (self-financing) trading strategy w.r.t.\ $\P$ (and $x_0$)} if it solves the maximization problem (\mbox{\ref{Ex fin - term wealth prob}}).

\begin{remarknorm}
In the setting of Subsection \mbox{\ref{Subsec - Finance Example - Basic financial market}} we restrict ourselves to Markovian self-financing trading strategies $\varphi=(\varphi_n)_{n=0}^{N-1}$ w.r.t.\ $x_0$ which
may be identified with some $\pi=(f_n)_{n=0}^{N-1}\in\Pi$ via $\varphi_n=f_n(X_n^{\varphi})$. Of course, one could also assume that the decision rules of a trading
strategy $\pi$ also depend on past actions and past values of the portfolio process $X^{\varphi}$. However, as already discussed in Remark \mbox{\ref{remark non Markovian strat}}(i), the corresponding history-dependent trading strategies do not lead to an improved optimal value for the terminal wealth problem (\mbox{\ref{Ex fin - term wealth prob}}).
{\hspace*{\fill}$\Diamond$\par\bigskip}
\end{remarknorm}


\subsection{Computation of optimal trading strategies}\label{Subsec - Finance Example - Comp opt trading strat}


In this subsection we discuss the existence and computation of solutions to the terminal wealth problem (\mbox{\ref{Ex fin - term wealth prob}}), maintaining the notation of Subsection \mbox{\ref{Subsec - Finance Example - Embedding FM into MDM}}. We will adapt the arguments of Section 4.2 in \cite{BaeuerleRieder2011}. As before
$\mathfrak{r}_1,\ldots,\mathfrak{r}_N\in\R_{\ge 1}$ are fixed constants.

Basically the existence of an optimal trading strategy for the terminal wealth problem (\mbox{\ref{Ex fin - term wealth prob}}) can be ensured with the help of a suitable analogue of Theorem 4.2.2 in \cite{BaeuerleRieder2011}. In order to specify the optimal trading strategy explicitly one has to determine the local maximizers in the Bellman equation; see Theorem \mbox{\ref{Thm - existence opt strategy}}(i) in Section \mbox{\ref{Sec - Existence of optimal strategies}} of the supplementary material. However this is not necessarily easy. On the other hand, part (ii) of Theorem \mbox{\ref{Ex fin - Thm opt trading strat}} ahead (a variant of Theorem 4.2.6 in \cite{BaeuerleRieder2011}) shows that, for our particular choice of the utility function (recall (\mbox{\ref{Ex fin - def power utility}})), the optimal investment in the asset at time $n\in\{0,\ldots,N-1\}$ has a rather simple form insofar as it depends linearly on the wealth. The respective coefficient can be obtained by solving the one-stage optimization problem in (\mbox{\ref{Ex fin - one-stage max problem}}) ahead. That is, instead of finding the optimal amount of capital (possibly depending on the wealth) to be invested in the asset, it suffices to find the optimal fraction of the wealth (being independent of the wealth itself) to be invested in the asset.

For the formulation of the one-stage optimization problem note that every transition function $\P\in{\cal P}_\psi$ is generated through (\mbox{\ref{Ex fin - set trans func psi}}) by some $(\mathfrak{m}_1,\ldots,\mathfrak{m}_N)\in{\cal M}_1^{\alpha}(\R_{\ge 0})^N$. For every $\P\in{\cal P}_\psi$, we use $(\mathfrak{m}_1^{\P},\ldots,\mathfrak{m}_N^{\P})$ to denote any such set of `parameters'. Now, consider for any $\P\in{\cal P}_\psi$ and $n=0,\ldots,N-1$ the optimization problem
\begin{equation}\label{Ex fin - one-stage max problem}
	v_n^{\P;\gamma} := \int_{\R_{\ge 0}}u_\alpha\Big(1+\gamma\Big(\frac{y}{\mathfrak{r}_{n+1}} - 1\Big)\Big)\,\mathfrak{m}_{n+1}^{\P}(dy)\,\longrightarrow\,\max\ (\mbox{in $\gamma\in[0,1]$)\,!}	
\end{equation}
Note that $1+\gamma(y/\mathfrak{r}_{n+1}-1)$ lies in $\R_{\ge 0}$ for any $\gamma\in[0,1]$ and $y\in\R_{\ge 0}$, and that the integral on the left-hand side (exists and) is finite (this follows from displays (\mbox{\ref{proof lem ex solution red opt prob - eq10}})--(\mbox{\ref{proof lem ex solution red opt prob - eq30}}) in Subsection \mbox{\ref{Subsec - Proof of Lem ex solution red opt prob}} of the supplementary material) and should be seen as the expectation of $u_\alpha(1+\gamma(\mathfrak{R}_{n+1}/\mathfrak{r}_{n+1} - 1))$ under $\pr$.

The following lemma, whose proof can be found in Subsection \mbox{\ref{Subsec - Proof of Lem ex solution red opt prob}} of the supplementary material, shows in particular that
$$
	v_n^{\P} := \sup_{\gamma\in[0,1]}v_n^{\P;\gamma}
$$
is the maximal value of the optimization problem (\mbox{\ref{Ex fin - one-stage max problem}}).

\begin{lemma}\label{Ex fin - lemma existence solution reduced opt prob}
For any $\P\in{\cal P}_\psi$ and $n=0,\ldots,N-1$, there exists a unique solution $\gamma_n^{\P}\in[0,1]$ to the optimization problem (\mbox{\ref{Ex fin - one-stage max problem}}).
\end{lemma}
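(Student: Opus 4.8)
The plan is to obtain existence from compactness together with continuity of the objective, and uniqueness from \emph{strict} concavity of the map $\gamma\mapsto v_n^{\P;\gamma}$ on the compact interval $[0,1]$. To organise the argument I would abbreviate $g_y(\gamma):=u_\alpha\big(1+\gamma(y/\mathfrak{r}_{n+1}-1)\big)$ for $y\in\R_{\ge 0}$ and $\gamma\in[0,1]$, so that $v_n^{\P;\gamma}=\int_{\R_{\ge 0}}g_y(\gamma)\,\mathfrak{m}_{n+1}^{\P}(dy)$. Since $1+\gamma(y/\mathfrak{r}_{n+1}-1)=(1-\gamma)+\gamma\,y/\mathfrak{r}_{n+1}$ is a convex combination of the nonnegative numbers $1$ and $y/\mathfrak{r}_{n+1}$, it lies in $\R_{\ge 0}$ and $g_y(\gamma)$ is well defined. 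Using $\mathfrak{r}_{n+1}\ge 1$ together with the subadditivity $(1+t)^\alpha\le 1+t^\alpha$ for $t\ge 0$, $\alpha\in(0,1)$, I would establish the $\gamma$-uniform bound $g_y(\gamma)\le(1+y/\mathfrak{r}_{n+1})^\alpha\le(1+y)^\alpha\le 1+u_\alpha(y)$. Because $\mathfrak{m}_{n+1}^{\P}\in{\cal M}_1^{\alpha}(\R_{\ge 0})$, the majorant $1+u_\alpha$ is $\mathfrak{m}_{n+1}^{\P}$-integrable, which simultaneously shows that $v_n^{\P;\gamma}$ is finite and supplies a dominating function.

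For existence, $\gamma\mapsto g_y(\gamma)$ is continuous on $[0,1]$ for each fixed $y$ (as $u_\alpha$ is continuous on $\R_{\ge 0}$, including at $0$). The dominating function then lets me pass to the limit under the integral sign via dominated convergence, so that $\gamma\mapsto v_n^{\P;\gamma}$ is continuous on the compact set $[0,1]$; hence a maximiser $\gamma_n^{\P}$ exists.

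For uniqueness, I would exploit that $t\mapsto u_\alpha(t)=t^\alpha$ is \emph{strictly} concave on $\R_{\ge 0}$ for $\alpha\in(0,1)$. Composing with the affine map $\gamma\mapsto 1+\gamma(y/\mathfrak{r}_{n+1}-1)$ shows that each $g_y$ is concave on $[0,1]$, and strictly concave precisely when the slope $y/\mathfrak{r}_{n+1}-1$ is nonzero, i.e.\ when $y\neq\mathfrak{r}_{n+1}$. Integrating the defining concavity inequality against $\mathfrak{m}_{n+1}^{\P}$ yields concavity of $\gamma\mapsto v_n^{\P;\gamma}$, and the inequality becomes strict because the set $\{y\neq\mathfrak{r}_{n+1}\}$ carries positive $\mathfrak{m}_{n+1}^{\P}$-mass. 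A strictly concave function attains its maximum over a convex set at most once, so combined with existence this gives a unique $\gamma_n^{\P}\in[0,1]$.

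The delicate point, and the crux of uniqueness, is the nondegeneracy of $\mathfrak{m}_{n+1}^{\P}$ at $\mathfrak{r}_{n+1}$: were $\mathfrak{m}_{n+1}^{\P}$ the Dirac mass at $\mathfrak{r}_{n+1}$, one would have $v_n^{\P;\gamma}\equiv 1$ and every $\gamma$ would be optimal, reflecting that the asset then tracks the bond and the invested fraction is irrelevant. This is exactly what Assumption {\bf (FM)}(c), $\pr[\mathfrak{R}_{n+1}\neq\mathfrak{r}_{n+1}]=1$, rules out, guaranteeing $\mathfrak{m}_{n+1}^{\P}(\{y\neq\mathfrak{r}_{n+1}\})>0$. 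I expect the remaining effort to be purely routine: verifying the domination bound near the boundary $\gamma=1$, where the argument of $u_\alpha$ can reach $0$, and passing cleanly from pointwise strict concavity to strict concavity of the integral. I would deliberately avoid differentiating under the integral sign, since $g_y'(\gamma)=\alpha(y/\mathfrak{r}_{n+1}-1)\big(1+\gamma(y/\mathfrak{r}_{n+1}-1)\big)^{\alpha-1}$ is unbounded as the argument tends to $0$; working directly with the concavity inequalities sidesteps that issue.
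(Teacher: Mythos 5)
Your proposal is correct and follows essentially the same route as the paper: existence via the dominating function $1+u_\alpha(y)$ (integrable since $\mathfrak{m}_{n+1}^{\P}\in{\cal M}_1^{\alpha}(\R_{\ge 0})$), dominated convergence for continuity of $\gamma\mapsto v_n^{\P;\gamma}$ on the compact interval $[0,1]$, and uniqueness via strict concavity obtained from Assumption {\bf (FM)}(c). The only difference is that you spell out the strict-concavity step (pointwise concavity of each $g_y$, strict on the set $\{y\neq\mathfrak{r}_{n+1}\}$ of positive mass) which the paper merely asserts as easily verifiable.
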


Part (i) of the following Theorem \mbox{\ref{Ex fin - Thm opt trading strat}} involves the value function introduced in (\mbox{\ref{value function}}). In the present setting this function has a comparatively simple form:
\begin{equation}\label{Ex fin - rep value functions}
	V_n^{\P}(x_n) = \sup_{\pi\in\Pi}\ex_{n,x_n}^{x_0,\P;\pi}[r_N(X_N)]
\end{equation}
for any $x_n\in\R_{\ge 0}$, $\P\in{\cal P}_\psi$, and $n=0,\ldots,N$.

Part (ii) involves the subset $\Pi_{\rm lin}$ of $\Pi$ which consists of all {\em linear trading strategies}, i.e.\ of all $\pi\in\Pi$ of the form $\pi=(f_n^{\boldsymbol{\gamma}})_{n=0}^{N-1}$ for some $\boldsymbol{\gamma}=(\gamma_n)_{n=0}^{N-1}\in[0,1]^N$, where
\begin{equation}\label{Ex fin - linear decision rules}
    f_n^{\boldsymbol{\gamma}}(x) := \gamma_n\,x,\qquad x\in\R_{\ge 0},\,n=0,\ldots,N-1.
\end{equation}

In part (i) and elsewhere we use the convention that the product over the empty set is $1$.

\begin{theorem}[Optimal trading strategy]\label{Ex fin - Thm opt trading strat}
For any $\P\in{\cal P}_\psi$ the following two assertions hold.
\begin{itemize}
	\item[{\rm (i)}] The value function $V_n^{\P}$ given by (\mbox{\ref{Ex fin - rep value functions}}) admits the representation
    $$
    	V_n^{\P}(x_n) = \mathfrak{v}_n^{\P} u_\alpha(x_n/B_n)
    $$
    for any $x_n\in\R_{\ge 0}$ and $n=0,\ldots,N-1$, where $\mathfrak{v}_n^{\P} := \prod_{k=n}^{N-1}v_k^{\P}$.

	\item[{\rm (ii)}] For any $n=0,\ldots,N-1$, let $\gamma_n^{\P}\in[0,1]$ be the unique solution to the optimization problem (\mbox{\ref{Ex fin - one-stage max problem}}) and define a decision rule $f_n^{\P}:\R_{\ge 0}\to\R_{\ge 0}$ at time $n$ through
    \begin{equation}\label{Ex fin - optimal trading strategy}
       f_n^{\P}(x) := \gamma_n^{\P}x,\qquad x\in\R_{\ge 0}.
    \end{equation}
    Then $\pi^{\P}:=(f_n^{\P})_{n=0}^{N-1}\in\Pi_{\rm lin}$ forms an optimal trading strategy w.r.t.\ $\P$. Moreover, there is no further optimal trading strategy w.r.t.\ $\P$ which belongs to $\Pi_{\rm lin}$.
\end{itemize}
\end{theorem}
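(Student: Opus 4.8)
The plan is to establish (i) by backward induction on $n$ via the Bellman equation supplied by Theorem \ref{Thm - existence opt strategy}, the whole argument being driven by the multiplicativity $u_\alpha(st)=u_\alpha(s)\,u_\alpha(t)$ of the power utility. The base case $n=N$ is immediate, since $\mathfrak{v}_N^{\P}=1$ (empty product) and $V_N^{\P}(x)=r_N(x)=u_\alpha(x/B_N)$. For the induction step I would assume $V_{n+1}^{\P}(x)=\mathfrak{v}_{n+1}^{\P}\,u_\alpha(x/B_{n+1})$ and use that, because $r_n\equiv 0$, the Bellman equation reads $V_n^{\P}(x)=\sup_{a\in[0,x]}\int_E V_{n+1}^{\P}(y)\,P_n((x,a),dy)$. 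Writing $a=\gamma x$ with $\gamma\in[0,1]$ and pushing the integral forward through $\eta_{n,(x,\gamma x)}$ turns it into an integral against $\mathfrak{m}_{n+1}^{\P}$; the key algebraic identity $\eta_{n,(x,\gamma x)}(y)/B_{n+1}=(x/B_n)\,(1+\gamma(y/\mathfrak{r}_{n+1}-1))$, which follows from $B_{n+1}=\mathfrak{r}_{n+1}B_n$, combined with multiplicativity of $u_\alpha$, factors the integrand as $u_\alpha(x/B_n)\,u_\alpha(1+\gamma(y/\mathfrak{r}_{n+1}-1))$. Hence the inner integral equals $\mathfrak{v}_{n+1}^{\P}\,u_\alpha(x/B_n)\,v_n^{\P;\gamma}$, and taking the supremum over $\gamma\in[0,1]$ (the state $x=0$ being handled separately since $u_\alpha(0)=0$) gives $V_n^{\P}(x)=\mathfrak{v}_{n+1}^{\P}\,v_n^{\P}\,u_\alpha(x/B_n)=\mathfrak{v}_n^{\P}\,u_\alpha(x/B_n)$, closing the induction.

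For the optimality claim in (ii), the same computation shows that the supremum over $a\in[0,x]$ in the Bellman equation at time $n$ is attained precisely at $a=\gamma_n^{\P}x$, where $\gamma_n^{\P}$ is the unique maximizer of $\gamma\mapsto v_n^{\P;\gamma}$ provided by Lemma \ref{Ex fin - lemma existence solution reduced opt prob}. Thus $f_n^{\P}(x)=\gamma_n^{\P}x$ is a (Borel measurable, indeed linear) maximizing decision rule in the Bellman equation at every state, and the verification part of Theorem \ref{Thm - existence opt strategy} then yields that $\pi^{\P}=(f_n^{\P})_{n=0}^{N-1}$ is optimal w.r.t.\ $\P$.

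For the uniqueness statement within $\Pi_{\rm lin}$, I would evaluate an arbitrary linear strategy directly. For $\pi=(f_n^{\boldsymbol{\gamma}})_{n=0}^{N-1}$ with $\boldsymbol{\gamma}=(\gamma_n)_{n=0}^{N-1}\in[0,1]^N$, the same backward recursion, now without the supremum (the reward iteration of Proposition \ref{proposition reward iteration}), gives $V_0^{\P;\pi}(x_0)=\big(\prod_{k=0}^{N-1}v_k^{\P;\gamma_k}\big)\,u_\alpha(x_0/B_0)$. Since $v_k^{\P;0}=u_\alpha(1)=1$, each factor's maximal value satisfies $v_k^{\P}\ge 1>0$, so the product is bounded by $\prod_{k=0}^{N-1}v_k^{\P}$ with equality if and only if $v_k^{\P;\gamma_k}=v_k^{\P}$ for every $k$; by the uniqueness in Lemma \ref{Ex fin - lemma existence solution reduced opt prob} this forces $\gamma_k=\gamma_k^{\P}$ for all $k$, so $\pi^{\P}$ is the only optimizer in $\Pi_{\rm lin}$.

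The main obstacle I anticipate is bookkeeping rather than conceptual. One must justify the change of variables through the (generally non-injective) affine map $\eta_{n,(x,\gamma x)}$ as a genuine pushforward of $\mathfrak{m}_{n+1}^{\P}$, confirm that the supremum over $a\in[0,x]$ coincides with the supremum over $\gamma\in[0,1]$ uniformly (treating $x=0$ on its own), and verify that the hypotheses of Theorem \ref{Thm - existence opt strategy}, namely the measurable-selection and integrability requirements, are in force so that its verification conclusion applies. All of these reduce to the integrability already guaranteed by $\psi$ being a bounding function for every $\Q\in{\cal P}_\psi$ and to the finiteness of $v_n^{\P;\gamma}$ recorded after (\ref{Ex fin - one-stage max problem}).
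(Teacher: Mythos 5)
Your proposal is correct and follows essentially the same route as the paper: backward induction through the Bellman equation of Theorem \ref{Thm - existence opt strategy}, exploiting the multiplicativity of $u_\alpha$ and the affine structure of $\eta_{n,(x,\gamma x)}$ to reduce each stage to the one-stage problem (\ref{Ex fin - one-stage max problem}), with the hypotheses of the structure theorem verified via the invariant family of functions $\vartheta\,u_\alpha(\cdot/\kappa)$ and the reward iteration giving $V_0^{\P;\pi_{\boldsymbol{\gamma}}}(x_0)=\bigl(\prod_k v_k^{\P;\gamma_k}\bigr)u_\alpha(x_0/B_0)$ for the uniqueness step. Your uniqueness argument (equality in the product bound forces each factor to be maximal, using positivity of the factors) is just a direct rephrasing of the paper's contradiction argument and is equally valid.
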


The proof of Theorem \mbox{\ref{Ex fin - Thm opt trading strat}} can be found in Subsection \mbox{\ref{Subsec - Finance Example - proof Thm opt trad strat}} of the supplementary material. The second assertion of part (ii) of Theorem \mbox{\ref{Ex fin - Thm opt trading strat}} will be beneficial for part (ii) of Theorem \mbox{\ref{Ex fin - Thm Hadamard}}; for details see Remark \mbox{\ref{Ex fin - remark opt linear strategies}}. The following two Examples \mbox{\ref{Ex fin - example CRR model}} and \mbox{\ref{Ex fin - example BSM model}} illustrate part (ii) of Theorem \mbox{\ref{Ex fin - Thm opt trading strat}}.

\begin{examplenorm}[Cox--Ross--Rubinstein model]\label{Ex fin - example CRR model}
Let $\mathfrak{r}_1=\cdots=\mathfrak{r}_N=\mathfrak{r}$ for some $\mathfrak{r}\in\R_{\ge 1}$. Moreover let $\P\in{\cal P}$ be any transition function defined as in (\mbox{\ref{Ex fin - set trans func psi}}) with $\mathfrak{m}_1=\cdots=\mathfrak{m}_N=\mathfrak{m}_{\P}$ for some $\mathfrak{m}_{\P} := p_{\P}\delta_{{\sf u}_{\P}} + (1-p_{\P})\delta_{{\sf d}_{\P}}$, where $p_{\P}\in[0,1]$ and ${\sf d}_{\P},{\sf u}_{\P}\in\R_{>0}$ are some given constants (depending on $\P$) satisfying ${\sf d}_{\P}<\mathfrak{r}<{\sf u}_{\P}$. Then $\P\in{\cal P}_\psi$ and conditions (a)--(c) of Assumption {\bf (FM)} are clearly satisfied. In particular, the corresponding financial market is arbitrage-free and the optimization problem (\mbox{\ref{Ex fin - one-stage max problem}}) simplifies to (up to the factor $\mathfrak{r}^{-\alpha}$)
\begin{equation}\label{Ex fin - optimization problem CRR model}
	\big\{p_{\P}\, u_\alpha(\mathfrak{r} + \gamma({\sf u}_{\P}-\mathfrak{r})) + (1-p_{\P})\, u_\alpha(\mathfrak{r} + \gamma({\sf d}_{\P}-\mathfrak{r}))\big\}\,\longrightarrow \max\ (\mbox{in $\gamma\in[0,1]$)\,!}
\end{equation}
Lemma \mbox{\ref{Ex fin - lemma existence solution reduced opt prob}} ensures that (\mbox{\ref{Ex fin - optimization problem CRR model}}) has a unique solution, $\gamma_{{\sf CRR}}^{\P}$, and it can be checked easily (see, e.g., \cite[p.\,86]{BaeuerleRieder2011}) that this solution admits the representation
\begin{equation}\label{Ex fin - solution CRR model}
    \gamma_{{\sf CRR}}^{\P} =
    \left\{
    \begin{array}{lll}
        0 & , & p_{\P}\in[0,p_{\P,0}]\\
        \frac{\mathfrak{r}}{(\mathfrak{r} - {\sf d}_{\P})({\sf u}_{\P} - \mathfrak{r})}\cdot\frac{p_{\P}^{\kappa_\alpha}({\sf u}_{\P} - \mathfrak{r})^{\kappa_\alpha} - (1-p_{\P})^{\kappa_\alpha}(\mathfrak{r} - {\sf d}_{\P})^{\kappa_\alpha}}{p_{\P}^{\kappa_\alpha}({\sf u}_{\P} - \mathfrak{r})^{\kappa_\alpha\alpha} + (1-p_{\P})^{\kappa_\alpha}(\mathfrak{r} - {\sf d}_{\P})^{\kappa_\alpha\alpha}} & , & p_{\P}\in(p_{\P,0},p_{\P,1})\\
        1 & , &  p_{\P}\in[p_{\P,1},1]
    \end{array}
    \right.,
\end{equation}
where $\kappa_\alpha:=(1-\alpha)^{-1}$ and
\begin{equation*}\label{Ex fin - restriction CRR model}
	p_{\P,0} := \frac{\mathfrak{r} - {\sf d}_{\P}}{{\sf u}_{\P} - {\sf d}_{\P}}~(>0) \quad\mbox{ and }\quad p_{\P,1} := \frac{{\sf u}_{\P}^{1-\alpha}(\mathfrak{r} - {\sf d}_{\P})}{{\sf u}_{\P}^{1-\alpha}(\mathfrak{r} - {\sf d}_{\P}) + {\sf d}_{\P}^{1-\alpha}({\sf u}_{\P} - \mathfrak{r})}~(<1).
\end{equation*}
Note that only fractions from the interval $[0,1]$ are admissible, and that the expression in the middle line in (\mbox{\ref{Ex fin - solution CRR model}}) lies in $(0,1)$ when $p_{\P}\in (p_{\P,0},p_{\P,1})$. Thus, part (ii) of Theorem \mbox{\ref{Ex fin - Thm opt trading strat}} shows that the strategy $\pi^{\P}_{\sf CRR}$ defined by (\mbox{\ref{Ex fin - optimal trading strategy}}) (with $\gamma_n^{\P}$ replaced by $\gamma_{{\sf CRR}}^{\P}$) is optimal w.r.t.\ $\P$ and unique among all $\pi\in\Pi_{\rm lin}(\P)$.
{\hspace*{\fill}$\Diamond$\par\bigskip}
\end{examplenorm}

In the following example the bond and the asset evolve according to the ordinary differential equation and the It\^o stochastic differential equation
\begin{equation*}\label{bond DE and stock SDE}
    d\mathfrak{B}_t = \nu\mathfrak{B}_t\,dt \quad\mbox{ and }\quad d\mathfrak{S}_t = \mu \mathfrak{S}_t\,dt+\sigma \mathfrak{S}_t\,d\mathfrak{W}_t,
\end{equation*}
respectively, where $\nu,\mu\in\R_{\ge 0}$ and $\sigma\in\R_{>0}$ are constants and $\mathfrak{W}$ is a one-dimensional standard Brownian motion. We assume that the trading period is (without loss of generality) the unit interval $[0,1]$ and that the bond and the asset can be traded only at $N$ equidistant time points in $[0,1]$, namely at $t_{N,n}:=n/N$, $n=0,\ldots,N-1$. Then, in particular, the relative price changes $\mathfrak{r}_{n+1}:=B_{n+1}/B_n=\mathfrak{B}_{t_{N,n+1}}/\mathfrak{B}_{t_{N,n}}$ and $\mathfrak{R}_{n+1}:=S_{n+1}/S_n=\mathfrak{S}_{t_{N,n+1}}/\mathfrak{S}_{t_{N,n}}$ are given by
$$
	\exp\big\{\nu(t_{N,n+1}-t_{N,n})\big\}
$$
and
$$
	\exp\big\{(\mu - \tfrac{\sigma^2}{2})(t_{N,n+1}-t_{N,n}) + \sigma (\mathfrak{W}_{t_{N,n+1}} - \mathfrak{W}_{t_{N,n}})\big\},
$$
respectively. In particular, $\mathfrak{r}_{n+1}=\exp(\nu/N)$ and $\mathfrak{R}_{n+1}$ is distributed according to the log-normal distribution $\rm{LN}_{(\mu - \sigma^2/2)/N,\sigma^2/N}$ for any $n=0,\ldots,N-1$.

\begin{examplenorm}[Black--Scholes--Merton model]\label{Ex fin - example BSM model}
Let $\mathfrak{r}_1=\cdots=\mathfrak{r}_N=\mathfrak{r}$ for $\mathfrak{r}:=\exp(\nu/N)$, where $\nu\in\R_{\ge 0}$. Moreover let $\P\in{\cal P}$ be any transition function defined as in (\mbox{\ref{Ex fin - set trans func psi}}) with $\mathfrak{m}_1=\cdots=\mathfrak{m}_N=\mathfrak{m}_{\P}$ for $\mathfrak{m}_{\P}:=\rm{LN}_{(\mu_{\P} - \sigma_{\P}^2/2)/N,\sigma_{\P}^2/N}$, where $\mu_{\P}\in\R_{\ge 0}$ and $\sigma_{\P}\in\R_{>0}$ are some given constants (depending on $\P$) satisfying $\mu_{\P}>(1-\alpha)\sigma_{\P}^2$. Then $\P\in{\cal P}_\psi$ and it is easily seen that conditions (a)--(c) of Assumption {\bf (FM)} hold. In particular, the corresponding financial market is arbitrage-free and the optimization problem (\mbox{\ref{Ex fin - one-stage max problem}}) now reads as
\begin{equation}\label{Ex fin - optimization problem BSM model}
	\int_{\R_{\ge 0}}u_\alpha\Big(1+\gamma\Big(\frac{y}{\mathfrak{r}} - 1\Big)\Big)\mathfrak{f}_{(\mu_{\P} - \sigma_{\P}^2/2)/N,\sigma_{\P}^2/N}(y)\,\ell(dy)\longrightarrow \max\ (\mbox{in $\gamma\in[0,1]$)\,!}
\end{equation}
where $\mathfrak{f}_{(\mu_{\P} - \sigma_{\P}^2/2)/N,\sigma_{\P}^2/N}$ is the standard Lebesgue density of the log-normal distribution ${\rm LN}_{(\mu_{\P} - \sigma_{\P}^2/2)/N,\sigma_{\P}^2/N}$. Lemma \mbox{\ref{Ex fin - lemma existence solution reduced opt prob}} ensures that (\mbox{\ref{Ex fin - optimization problem BSM model}}) has a unique solution, $\gamma_{{\sf BSM}}^{\P}$, and it is known (see, e.g., \cite{Merton1969,Pham2009}) that this solution is given by
\begin{equation}\label{Ex fin - solution BSM model}
    \gamma_{{\sf BSM}}^{\P} =
    \left\{
    \begin{array}{lll}
        0 & , & \nu\in[\mu_{\P},\infty) \\
        \frac{1}{1-\alpha}\frac{\mu_{\P} - \nu}{\sigma_{\P}^2} & , & \nu\in(\nu_{\P,\alpha},\mu_{\P}) \\
        1 & , & \nu\in[0,\nu_{\P,\alpha}]
    \end{array}
    \right.,
\end{equation}
where $\nu_{\P,\alpha}:=\mu_{\P} - (1 - \alpha)\sigma_{\P}^2\,(\in(0,\mu_{\P}))$. Note that only fractions from the interval $[0,1]$ are admissible, and that the expression in the middle line in (\mbox{\ref{Ex fin - solution BSM model}}) is called {\em Merton ratio} and lies in $(0,1)$ when $\nu\in(\nu_{\P,\alpha},\mu_{\P})$. Thus, part (ii) of Theorem \mbox{\ref{Ex fin - Thm opt trading strat}} shows that the strategy $\pi^{\P}_{\sf BSM}$ defined by (\mbox{\ref{Ex fin - optimal trading strategy}}) (with $\gamma_n^{\P}$ replaced by $\gamma_{{\sf BSM}}^{\P}$) is optimal w.r.t.\ $\P$ and unique among all $\pi\in\Pi_{\rm lin}(\P)$.
{\hspace*{\fill}$\Diamond$\par\bigskip}
\end{examplenorm}


\subsection{`Hadamard derivative' of the optimal value functional}\label{Subsec - Finance Example - Hadamard derivative  optimal value}


Maintain the notation and terminology introduced in Subsections \mbox{\ref{Subsec - Finance Example - Basic financial market}}--\mbox{\ref{Subsec - Finance Example - Comp opt trading strat}}. In this subsection we will specify the `Hadamard derivative' of the optimal value functional of the terminal wealth problem (\mbox{\ref{Ex fin - term wealth prob}}) at (fixed) $\P$; see part (ii) of Theorem \mbox{\ref{Ex fin - Thm Hadamard}}. Recall that $\alpha\in(0,1)$ introduced in (\mbox{\ref{Ex fin - def power utility}}) is fixed and determines the degree of risk aversion of the agent.

By the choice of the gauge function $\psi$ (see (\mbox{\ref{Ex fin - gauge function}})) we may choose $\M:=\M':=\M_{\rm{\scriptsize{H\ddot{o}l}},\alpha}$ (with $\M_{\rm{\scriptsize{H\ddot{o}l}},\alpha}$ introduced in Example \mbox{\ref{examples prob metric - hoelder}}) in the setting of Subsection \mbox{\ref{Subsec - Hadamard differentiability of the optimal value functional}}. Note that $\psi$ coincides with the corresponding gauge function in Example \mbox{\ref{examples prob metric - hoelder}} with $x':=0$. That is, in the end the metric $d_{\infty,\M_{\rm{\scriptsize{H\ddot{o}l}},\alpha}}^\psi$ (as defined in (\mbox{\ref{def metric transition functions}})) on ${\cal P}_\psi$ is used to measure the distance between transition functions.

For the formulation of Theorem \mbox{\ref{Ex fin - Thm Hadamard}} recall from (\mbox{\ref{def cal v}}) the definition of the functionals ${\cal V}_0^{x_0;\pi}$ and ${\cal V}_0^{x_0}$, where the maps $V_0^{\P;\pi}$ and $V_0^{\P}$ are given by (\mbox{\ref{exp tot reward}}) and (\mbox{\ref{value function}}), respectively. In the specific setting of Subsection \mbox{\ref{Subsec - Finance Example - Embedding FM into MDM}} we know from (\mbox{\ref{Ex fin - rep value functions}}) that
\begin{equation}\label{Ex fin - value functional Pi lin}
	{\cal V}_0^{x_0;\pi}(\P) \,=\,  V_0^{\P;\pi}(x_0)  \,=\,  \ex^{x_0,\P;\pi}[r_N(X_N)] \quad\mbox{ and }\quad {\cal V}_0^{x_0}(\P) \,=\, \sup_{\pi\in\Pi}{\cal V}_0^{x_0;\pi}(\P)
\end{equation}
for any $x_0\in\R_{\ge 0}$, $\P\in{\cal P}_\psi$, and $\pi\in\Pi$.

Further recall that any $\boldsymbol{\gamma}=(\gamma_n)_{n=0}^{N-1}\in[0,1]^N$ induces a linear trading strategy $\pi_{\boldsymbol{\gamma}}:=(f_n^{\boldsymbol{\gamma}})_{n=0}^{N-1}\in\Pi_{\rm lin}$ through (\mbox{\ref{Ex fin - linear decision rules}}). Let $v_n^{\P;\gamma_n}$ be defined as on the left-hand side of (\mbox{\ref{Ex fin - one-stage max problem}}) and set $v_n^{\P;\boldsymbol{\gamma}}:=v_n^{\P;\gamma_n}$ for any $n=0,\ldots,N-1$.
Moreover, for any $n=0,\ldots,N-1$ denote by $\gamma_n^{\P}$ the unique solution to the optimization problem (\mbox{\ref{Ex fin - one-stage max problem}}) (Lemma \mbox{\ref{Ex fin - lemma existence solution reduced opt prob}} ensures the existence of a unique solution). Finally set $\boldsymbol{\gamma}^{\P}:=(\gamma_n^{\P})_{n=0}^{N-1}$.

\begin{theorem}[`Differentiability' of ${\cal V}_0^{x_0;\pi_{\boldsymbol{\gamma}}}$ and ${\cal V}_0^{x_0}$]\label{Ex fin - Thm Hadamard}
In the setting above let $\P\in{\cal P}_\psi$, $\boldsymbol{\gamma}\in[0,1]^N$, and $x_0\in\R_{\ge 0}$. Then the following two assertions hold.
\begin{itemize}
    \item[{\rm (i)}] The map ${\cal V}_0^{x_0;\pi_{\boldsymbol{\gamma}}}:{\cal P}_\psi\rightarrow\R$ defined by (\mbox{\ref{Ex fin - value functional Pi lin}}) is `Fr\'echet differentiable' at $\P$ w.r.t.\ $(\M_{\rm{\scriptsize{H\ddot{o}l}},\alpha},\psi)$ with `Fr\'echet derivative' $\dot{\cal V}_{0;\P}^{x_0;\pi_{\boldsymbol{\gamma}}}:{\cal P}_{\psi}^{\P; \pm}\rightarrow\R$ given by
        \begin{equation}\label{Ex fin - Frechet exp tot reward}
            \dot{\cal V}_{0;\P}^{x_0;\pi_{\boldsymbol{\gamma}}}(\Q-\P) \,=\, \dot{\mathfrak{v}}_0^{\P,\Q;\pi_{\boldsymbol{\gamma}}}\,u_\alpha(x_0),
        \end{equation}
        where $\dot{\mathfrak{v}}_0^{\P,\Q;\pi_{\boldsymbol{\gamma}}}:= \sum_{k=0}^{N-1}v_{N-1}^{\P;\boldsymbol{\gamma}}\cdots(v_k^{\Q;\boldsymbol{\gamma}} - v_k^{\P;\boldsymbol{\gamma}})\cdots v_0^{\P;\boldsymbol{\gamma}}$.

	\item[{\rm (ii)}] The map ${\cal V}_0^{x_0}:{\cal P}_\psi\rightarrow\R$ defined by (\mbox{\ref{Ex fin - value functional Pi lin}}) is `Hadamard differentiable' at $\P$ w.r.t.\ $(\M_{\rm{\scriptsize{H\ddot{o}l}},\alpha},\psi)$ with `Hadamard derivative' $\dot{\cal V}_{0;\P}^{x_0}:{\cal P}_{\psi}^{\P;\pm}\rightarrow\R$ given by
\begin{equation}\label{Ex fin - Hadamard optimal value}
	\dot{\cal V}_{0;\P}^{x_0}(\Q-\P) \,=\, \sup_{\pi\in\Pi_{\rm lin}(\P)}\dot{\cal V}_{0;\P}^{x_0;\pi}(\Q-\P) \,=\, \dot{\cal V}_{0;\P}^{x_0;\pi_{\boldsymbol{\gamma}^{\P}}}(\Q-\P).
\end{equation}
\end{itemize}
\end{theorem}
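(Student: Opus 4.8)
The plan is to derive both assertions from the general differentiability result, Theorem \ref{Thm - hadamard cal v}, specialised to the present financial market setting. First I would verify that the three hypotheses (a)--(c) of Theorem \ref{Thm - hadamard cal v} hold for the choice $\M := \M' := \M_{\rm{\scriptsize{H\ddot{o}l}},\alpha}$ and $\psi(x) = 1 + u_\alpha(x)$; this is precisely the content of Lemma \ref{lemma Ex-fin verification assumptions Thm hd}, so I would simply invoke it. Condition (c) is essentially trivial since $|x^\alpha - y^\alpha| \le |x-y|^\alpha$ gives $\rho_{\M_{\rm{\scriptsize{H\ddot{o}l}},\alpha}}(\psi) \le 1$ (cf.\ Remark \ref{remark verification assumptions}(iii)), while (a) and (b) rest on the moment assumption (a) of {\bf (FM)} and the linear-in-wealth structure of the transition maps $\eta_{n,(x,a)}$. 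Once the hypotheses are in force, Theorem \ref{Thm - hadamard cal v}(i) immediately yields `Fr\'echet differentiability' of ${\cal V}_0^{x_0;\pi_{\boldsymbol{\gamma}}}$ at $\P$ w.r.t.\ $(\M_{\rm{\scriptsize{H\ddot{o}l}},\alpha},\psi)$, together with the abstract derivative formula (\ref{hadamard derivative ex tot costs - II}); it only remains to evaluate that formula.

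For part (i) the decisive observation is that under a linear strategy $\pi_{\boldsymbol{\gamma}}$ the value functions have the multiplicative form $V_{k}^{\P;\pi_{\boldsymbol{\gamma}}}(y) = \big(\prod_{j=k}^{N-1} v_j^{\P;\boldsymbol{\gamma}}\big)\, u_\alpha(y/B_k)$, which follows by backward induction from $r_N(y) = u_\alpha(y/B_N)$ together with the scaling identity $\eta_{k,(y,\gamma_k y)}(z)/B_{k+1} = (y/B_k)\,(1 + \gamma_k(z/\mathfrak{r}_{k+1} - 1))$. Substituting this into (\ref{hadamard derivative ex tot costs - II}) and integrating the innermost variable against $(Q_k - P_k)((y_k,\gamma_k y_k),\,\bullet\,) = (\mathfrak{m}_{k+1}^{\Q} - \mathfrak{m}_{k+1}^{\P})\circ\eta_{k,(y_k,\gamma_k y_k)}^{-1}$, the power $u_\alpha$ factorises and the state variable scales out, producing the scalar factor $(v_k^{\Q;\boldsymbol{\gamma}} - v_k^{\P;\boldsymbol{\gamma}})$ and leaving $u_\alpha(y_k/B_k)$; each subsequent integration against a $P_j$-kernel produces a factor $v_j^{\P;\boldsymbol{\gamma}}$ and steps the state down, until the last integration against $P_0((x_0,\gamma_0 x_0),\,\bullet\,)$ leaves $u_\alpha(x_0)$ (as $B_0 = 1$). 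The $k$-th summand of (\ref{hadamard derivative ex tot costs - II}) thus collapses to $v_{N-1}^{\P;\boldsymbol{\gamma}}\cdots(v_k^{\Q;\boldsymbol{\gamma}} - v_k^{\P;\boldsymbol{\gamma}})\cdots v_0^{\P;\boldsymbol{\gamma}}\,u_\alpha(x_0)$, and summation over $k$ gives exactly (\ref{Ex fin - Frechet exp tot reward}). As an independent check, the same telescoping follows by expanding ${\cal V}_0^{x_0;\pi_{\boldsymbol{\gamma}}}(\P+\varepsilon(\Q-\P)) = u_\alpha(x_0)\prod_{k}\big(v_k^{\P;\boldsymbol{\gamma}} + \varepsilon(v_k^{\Q;\boldsymbol{\gamma}} - v_k^{\P;\boldsymbol{\gamma}})\big)$ to first order in $\varepsilon$, where I used that the mixture $\P + \varepsilon(\Q-\P)$ corresponds to the mixture of marginals $\mathfrak{m}_{k+1}^{\P} + \varepsilon(\mathfrak{m}_{k+1}^{\Q} - \mathfrak{m}_{k+1}^{\P})$ and that $v_k^{\,\cdot\,;\boldsymbol{\gamma}}$ is affine in that marginal.

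For part (ii) I would apply Theorem \ref{Thm - hadamard cal v}(ii). Since Theorem \ref{Ex fin - Thm opt trading strat} guarantees an optimal strategy $\pi^{\P} = \pi_{\boldsymbol{\gamma}^{\P}} \in \Pi_{\rm lin}$ for \emph{every} $\P \in {\cal P}_\psi$, the set $\Pi(\P)$ is non-empty and representation (\ref{hd value function}) applies, giving $\dot{\cal V}_{0;\P}^{x_0}(\Q-\P) = \sup_{\pi\in\Pi(\P)}\dot{\cal V}_{0;\P}^{x_0;\pi}(\Q-\P)$. The hard part is that $\Pi(\P)$ may a priori contain non-linear optimal strategies whose derivatives need not coincide with those of the linear optima, so one cannot simply drop them from the supremum. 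This obstacle is removed by Remark \ref{remark hd value function subset}: because Theorem \ref{Ex fin - Thm opt trading strat} shows that the optimal value is already attained within the ($\P$-independent) subset $\Pi_{\rm lin}$ for every $\P$, the value functional ${\cal V}_0^{x_0}$ is unchanged if $\Pi$ is replaced by $\Pi_{\rm lin}$; since conditions (a)--(b) of Theorem \ref{Thm - hadamard cal v} persist on the smaller set, the very same Hadamard derivative is obtained with $\Pi(\P)$ replaced by $\Pi_{\rm lin}(\P)$. Finally, the second assertion of Theorem \ref{Ex fin - Thm opt trading strat}(ii) forces $\Pi_{\rm lin}(\P) = \{\pi_{\boldsymbol{\gamma}^{\P}}\}$, so the supremum reduces to a single evaluation and (\ref{Ex fin - Hadamard optimal value}) follows.
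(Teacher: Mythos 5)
Your proposal is correct and follows essentially the same route as the paper: verify hypotheses (a)--(c) of Theorem \ref{Thm - hadamard cal v} via Lemma \ref{lemma Ex-fin verification assumptions Thm hd}, exploit the multiplicative form $V_k^{\P;\pi_{\boldsymbol{\gamma}}}(y)=\prod_{j=k}^{N-1}v_j^{\P;\boldsymbol{\gamma}}\,u_\alpha(y/B_k)$ to evaluate the derivative, and for part (ii) combine Remark \ref{remark hd value function subset} with the uniqueness statement in Theorem \ref{Ex fin - Thm opt trading strat}(ii) to collapse the supremum to $\pi_{\boldsymbol{\gamma}^{\P}}$. The only cosmetic difference is that you telescope Representation I (\ref{hadamard derivative ex tot costs - II}) directly, whereas the paper evaluates the equivalent backward iteration scheme of Representation II (Lemma \ref{lemma Ex-fin recursion hd exp reward}); these are the same computation organized in the two ways the paper itself declares interchangeable.
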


\begin{remarknorm}\label{Ex fin - remark opt linear strategies}
Basically Theorem \mbox{\ref{Thm - hadamard cal v}} yields the first ``$=$'' in (\mbox{\ref{Ex fin - Hadamard optimal value}}) with $\Pi_{\rm lin}(\P)$ replaced by $\Pi(\P)$. Since part (ii) of Theorem \mbox{\ref{Ex fin - Thm opt trading strat}} ensures that for any $\P\in{\cal P}_\psi$ there exists an optimal trading strategy which belongs to $\Pi_{\rm lin}$, we may replace for {\em any} $\P\in{\cal P}_\psi$ in the representation (\mbox{\ref{Ex fin - rep value functions}}) of the value function $V_0^{\P}(x_0)$ (or, equivalently, in the representation (\mbox{\ref{Ex fin - value functional Pi lin}}) of the value functional ${\cal V}_0^{x_0}(\P)$) the set $\Pi$ by $\Pi_{\rm lin}$ ($\subseteq\Pi$). Therefore one can use Theorem \mbox{\ref{Thm - hadamard cal v}} to derive the first ``$=$'' in (\mbox{\ref{Ex fin - Hadamard optimal value}}). The second ``$=$'' in (\mbox{\ref{Ex fin - Hadamard optimal value}}) is ensured by the second assertion in part (ii) of Theorem \mbox{\ref{Ex fin - Thm opt trading strat}}. For details see the proof which is carried out in Subsection \mbox{\ref{Subsec - Finance Example - proof Thm hd}} of the supplementary material.
\end{remarknorm}


\subsection{Numerical examples for the `Hadamard derivative'}\label{Subsec - Finance Example - Numerical example}


In this subsection we quantify by means of the `Hadamard derivative' (of the optimal value functional ${\cal V}_0^{x_0}$) the effect of incorporating an unlikely but significant jump in the dynamics $S=(S_0,\ldots,S_N)$ of an asset price on the optimal value of the corresponding terminal wealth problem (\mbox{\ref{Ex fin - term wealth prob}}). At the end of this subsection we will also study the effect of incorporating more than one jump.

We specifically focus on the setting of the discretized Black--Scholes--Merton model from Example \mbox{\ref{Ex fin - example BSM model}} with (mainly) $N=12$. That is, we let $\mathfrak{r}_1=\cdots=\mathfrak{r}_N=\mathfrak{r}$ for $\mathfrak{r}:=\exp(\nu/N)$, where $\nu\in\R_{\ge 0}$. Moreover let $\P$ correspond to $\mathfrak{m}_1=\cdots=\mathfrak{m}_N=\mathfrak{m}_{\P}$ for $\mathfrak{m}_{\P}:=\rm{LN}_{(\mu_{\P} - \sigma_{\P}^2/2)/N,\sigma_{\P}^2/N}$, where $\mu_{\P}\in\R_{\ge 0}$ and $\sigma_{\P}\in\R_{>0}$ are chosen such that $\mu_{\P} > (1-\alpha)\sigma_{\P}^2$.
In fact we let specifically $\mu_{\P}=0.05$ and $\sigma_{\P}=0.2$. This set of parameters is often used in numerical examples in the field of mathematical finance; see, e.g., \cite[p.\,898]{Lemoretal2006}. For the initial state we choose $x_0=1$. For the drift $\nu$ of the bond we will consider different values, all of them lying in $\{0.01,0.02,0.03,0.035,0.04\}$. Moreover, we let (mainly) 
$\alpha\in\{0.25,0.5,0.75\}$. Recall that $\alpha$ determines the degree of risk aversion of the agent; a small $\alpha$ corresponds to high risk aversion.

By a price jump at a fixed time $n\in\{0,\ldots,N-1\}$ we mean that the asset's return $\mathfrak{R}_{n+1}$ is not anymore drawn from $\mathfrak{m}_{\P}$ but is given by a deterministic value $\Delta\in\R_{\ge 0}$ esstentially `away' from $1$. As appears from Table \mbox{\ref{table:Ex Fin - Quantiles of m P}}, in the case $N=12$ it seems to be reasonable to speak of a `jump' at least if $\Delta\le 0.8$ or $\Delta\ge 1.25$. The probability under $\mathfrak{m}_{\P}$ for a realized return smaller than $0.8$ (resp.\ larger than $1.25$) is smaller than $0.0001$. A realized return of $\le 0.5$ (resp.\ $\ge 1.5$) is practically impossible; its probability under $\mathfrak{m}_{\P}$ is smaller than $10^{-30}$ (resp.\ $10^{-10}$). That is, the choice $\Delta=0.5$ or $\Delta=1.5$ doubtlessly corresponds to a significant price jump.

\begin{table}[H]
\centering
\setlength{\tabcolsep}{.9mm}
\caption{Some quantiles of the distribution $\mathfrak{m}_{\P}$ of the asset's return in the discretized ($N=12$) Black--Scholes--Merton model ($\mu_{\P}=0.05$, $\sigma_{\P}=0.2$).}
\label{table:Ex Fin - Quantiles of m P}
\begin{tabular}{c c c c c c c c c}
\hline\noalign{\smallskip}
    $t$ & $10^{-30}$ & $10^{-10}$ & $0.0001$ & $0.0005$ & $0.005$ & $0.01$ & $0.025$ & $0.05$ \\
\noalign{\smallskip}\hline\noalign{\smallskip}
    $F_{\mathfrak{m}_{\P}}^{-1}(t)$ & $0.5172$ & $0.6944$ & $0.8088$ & $0.8290$ & $0.8639$ & $0.8765$ & $0.8952$ & $0.9116$\\
    $F_{\mathfrak{m}_{\P}}^{-1}(1-t)$ & $1.9433$ & $1.4474$ & $1.2426$ & $1.2126$ & $1.1632$ & $1.1466$ & $1.1226$ & $1.1024$ \\
\noalign{\smallskip}\hline
\end{tabular}
\end{table}

If at a fixed time $\tau\in\{0,\ldots,N-1\}$ a formerly nearly impossible `jump' $\Delta$ can now occur with probability $\varepsilon$, then instead of $\mathfrak{m}_{\tau+1}=\mathfrak{m}_{\P}$ one has $\mathfrak{m}_{\tau+1}=(1-\varepsilon)\mathfrak{m}_{\P}+\varepsilon\delta_\Delta$. That is, instead of $\P$ the transition function is now given by $(1-\varepsilon)\P+\varepsilon\Q_{\Delta,\tau}$ with $\Q_{\Delta,\tau}$ generated through (\mbox{\ref{Ex fin - set trans func psi}}) by
$\mathfrak{m}_{n+1}=\mathfrak{m}_{\Q_{\Delta,\tau;n}}$, $n=0,\ldots,N-1$, where
\begin{equation}\label{Ex fin - Hadamard BSM - eq10}
	\mathfrak{m}_{\Q_{\Delta,\tau;n}}
	:= \left\{
    \begin{array}{lll}
     \delta_{\Delta} & , & n=\tau \\
     \mathfrak{m}_{\P} & , &  \mbox{otherwise}
    \end{array}
    \right..
\end{equation}
By part (ii) of Theorem \mbox{\ref{Ex fin - Thm Hadamard}} the `Hadamard derivative' $\dot{\cal V}_{0;\P}^{x_0}$ of the optimal value functional ${\cal V}_0^{x_0}$ evaluated at $\Q_{\Delta,\tau}-\P$ can be written as
\begin{eqnarray}\label{Ex fin - Hadamard BSM - eq20}
    \dot{\cal V}_{0;\P}^{x_0}(\Q_{\Delta,\tau}-\P)
    & = & \sum_{k=0}^{N-1}v_{N-1}^{\P;\boldsymbol{\gamma}^{\P}_{\sf BSM}}\cdots(v_k^{\Q_{\Delta,\tau};\boldsymbol{\gamma}^{\P}_{\sf BSM}} - v_k^{\P;\boldsymbol{\gamma}^{\P}_{\sf BSM}})\cdots v_0^{\P;\boldsymbol{\gamma}^{\P}_{\sf BSM}} \nonumber \\
    & = & v_{N-1}^{\P;\boldsymbol{\gamma}^{\P}_{\sf BSM}}\cdots(v_\tau^{\Q_{\Delta,\tau};\boldsymbol{\gamma}^{\P}_{\sf BSM}} - v_\tau^{\P;\boldsymbol{\gamma}^{\P}_{\sf BSM}})\cdots v_0^{\P;\boldsymbol{\gamma}^{\P}_{\sf BSM}}
\end{eqnarray}
with $\boldsymbol{\gamma}^{\P}_{\sf BSM}:=(\gamma^{\P}_{\sf BSM},\ldots,\gamma^{\P}_{\sf BSM})$, where $\gamma^{\P}_{\sf BSM}$ is given by (\mbox{\ref{Ex fin - solution BSM model}}). The involved factors are
\begin{align}\label{Ex fin - Hadamard BSM - eq40}
	& v_n^{\P;\boldsymbol{\gamma}^{\P}_{\sf BSM}}\\
    & = \left\{
    \begin{array}{lll}
       \hspace{-.5mm} 1 & , & \nu\in[\mu_{\P},\infty) \\
       \hspace{-.5mm} \int_{\R_{\ge 0}}u_\alpha\big(1+\frac{1}{1-\alpha}\frac{\mu_{\P} - \nu}{\sigma_{\P}^2}(\frac{y}{\mathfrak{r}} - 1)\big)\mathfrak{f}_{(\mu_{\P} - \sigma_{\P}^2/2)/N,\sigma_{\P}^2/N}(y)\,\ell(dy) & , & \nu\in(\nu_{\P,\alpha},\mu_{\P}) \\
       \hspace{-.5mm} \mathfrak{r}^{-\alpha}\,\exp\big\{\frac{\alpha}{N}(\mu_{\P} - \frac{\sigma_{\P}^2}{2}) + \frac{(\alpha\sigma_{\P})^2}{2N}\big\} & , & \nu\in[0,\nu_{\P,\alpha}],
    \end{array}
    \right.\nonumber
\end{align}
\begin{equation}\label{Ex fin - Hadamard BSM - eq50}
	v_n^{\Q_{\Delta,\tau};\boldsymbol{\gamma}^{\P}_{\sf BSM}}
	 = \left\{
    \begin{array}{lll}
      \hspace{-.5mm} 1 & , & \nu\in[\mu_{\P},\infty) \\
      \hspace{-.5mm} \int_{\R_{\ge 0}}u_\alpha\big(1 + \frac{1}{1-\alpha}\frac{\mu_{\P} - \nu}{\sigma_{\P}^2}(\frac{y}{\mathfrak{r}}-1)\big)\,\mathfrak{m}_{\Q_{\Delta,\tau;n}}(dy)
       & , &  \nu\in(\nu_{\P,\alpha},\mu_{\P}) \\
      \hspace{-.5mm} \mathfrak{r}^{-\alpha}\,\int_{\R_{\ge 0}}u_\alpha(y)\,\mathfrak{m}_{\Q_{\Delta,\tau;n}}(dy)
      & , & \nu\in[0,\nu_{\P,\alpha}]
    \end{array}
    \right.
\end{equation}
for $n=0,\ldots,N-1$, where $\nu_{\P,\alpha}:=\mu_{\P} - (1 - \alpha)\sigma_{\P}^2$ ($\in(0,\mu_{\P})$).

Note that $\dot{\cal V}_{0;\P}^{x_0}(\Q_{\Delta,\tau}-\P)$ is independent of $\tau$, which can be seen from (\mbox{\ref{Ex fin - Hadamard BSM - eq10}})--(\mbox{\ref{Ex fin - Hadamard BSM - eq50}}). That is, the effect of a jump is independent of the time at which the jump takes place. Also note that $\dot{\cal V}_{0;\P}^{x_0}(\Q_{\Delta,\tau}-\P)\equiv 0$ when $\nu\in[\mu_{\P},\infty)$. This is not surprising, because in this case the optimal fraction $\gamma^{\P}_{\sf BSM}$ to be invested into the asset is equal to $0$ (see (\mbox{\ref{Ex fin - solution BSM model}})) and the agent performs a complete investment in the bond at each trading time $n$.

\begin{remarknorm}\label{Ex fin - remark relatively compactness}
As mentioned before, the `Hadamard derivative' $\dot{\cal V}_{0;\P}^{x_0}$ evaluated at $\Q_{\Delta,\tau}-\P$ can be seen as the first-order sensitivity of the optimal value ${\cal V}_0^{x_0}(\P)$ w.r.t.\ a change of $\P$ to $(1-\varepsilon)\P+\varepsilon\Q_{\Delta,\tau}$, with $\varepsilon>0$ small. It is a natural wish to compare these values for different $\Delta\in\R_{>0}$. In Subsection \mbox{\ref{Subsec - Finance Example - proof remark relatively compactness}} of the supplementary material it is proven that the family $\{\Q_{\Delta,\tau}: \Delta\in[0,\delta]\}$ is relatively compact w.r.t.\ $d_{\infty,\M_{\rm{\scriptsize{H\ddot{o}l}},\alpha}}^\psi$ (the proof does {\em not} work if $d_{\infty,\M_{\rm{\scriptsize{H\ddot{o}l}},\alpha}}^\psi$ is replaced by $d_{\infty,\M_{\rm{\scriptsize{H\ddot{o}l}},\alpha}}^\phi$ for any gauge function $\phi$ `flatter' than $\psi$) for any fixed $\delta\in\R_{>0}$. As a consequence the approximation (\mbox{\ref{motivation of measure for f-o sensitivity}}) with $\Q=\Q_{\Delta,\tau}$ holds uniformly in $\Delta\in[0,\delta]$, and therefore the values $\dot{\cal V}_{0;\P}^{x_0}(\Q_{\Delta,\tau}-\P)$, $\Delta\in[0,\delta]$, can be compared with each other with clear conscience.
{\hspace*{\fill}$\Diamond$\par\bigskip}
\end{remarknorm}

By Remark \mbox{\ref{Ex fin - remark relatively compactness}} and (\mbox{\ref{Ex fin - Hadamard BSM - eq20}}) we are able to compare the effect of incorporating different `jumps' $\Delta$ in the dynamics $S=(S_0,\ldots,S_N)$ of an asset price on the optimal value (functional) ${\cal V}_0^{x_0}(\P)$.

\begin{figure}[H]
\centering
\subfigure{\includegraphics[width=0.35\textwidth]{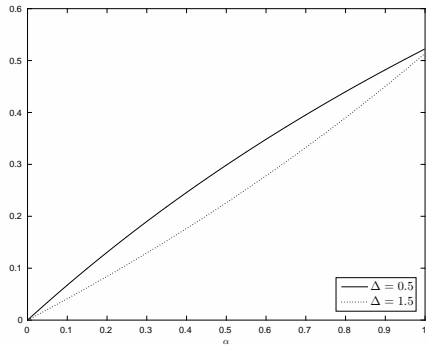}}
\caption{`Hadamard derivative' $\dot{\cal V}_{0;\P}^{x_0}(\Q_{\Delta,\tau}-\P)$ (for $\Delta=1.5)$ and negative `Hadamard derivative' $-\dot{\cal V}_{0;\P}^{x_0}(\Q_{\Delta,\tau}-\P)$ (for $\Delta=0.5)$ for $N=12$, $\nu=0.01$, $\mu_{\P}=0.05$, and $\sigma_{\P}=0.2$ in dependence of the risk aversion parameter $\alpha$.}
\label{fig:Ex Fin - Hadamard comp diff jumps}
\end{figure}

As appears from Figure \mbox{\ref{fig:Ex Fin - Hadamard comp diff jumps}} the negative effect of incorporating a `jump' $\Delta=0.5$ in the dynamics $S=(S_0,\ldots,S_N)$ of an asset price is larger than the positive effect of incorporating a `jump' $\Delta=1.5$ for every choice of the agent's degree of risk aversion. Figure \mbox{\ref{fig:Ex Fin - Hadamard comp diff jumps}} also shows the unsurprising effect that a high risk aversion (small value of $\alpha$) leads to a negligible sensitivity.

Next we compare the values of $\dot{\cal V}_{0;\P}^{x_0}(\Q_{\Delta,\tau}-\P)$ for trading horizons $N\in\{4,12,52\}$ in dependence of the drift $\nu$ of the bond and the `jump' $\Delta$. This choices of $N$ correspond respectively to a quarterly, monthly, and weekly time discretization. We will restrict ourselves to `jumps' $\Delta\le 0.8$. On the one hand, this ensures that the `jumps' are significant; see the discussion above. On the other hand, as just discerned from Figure \mbox{\ref{fig:Ex Fin - Hadamard comp diff jumps}}, 
the effect of jumps `down' are more significant than jumps `up'.

\begin{figure}[H]
\centering
\subfigure{\includegraphics[width=0.35\textwidth]{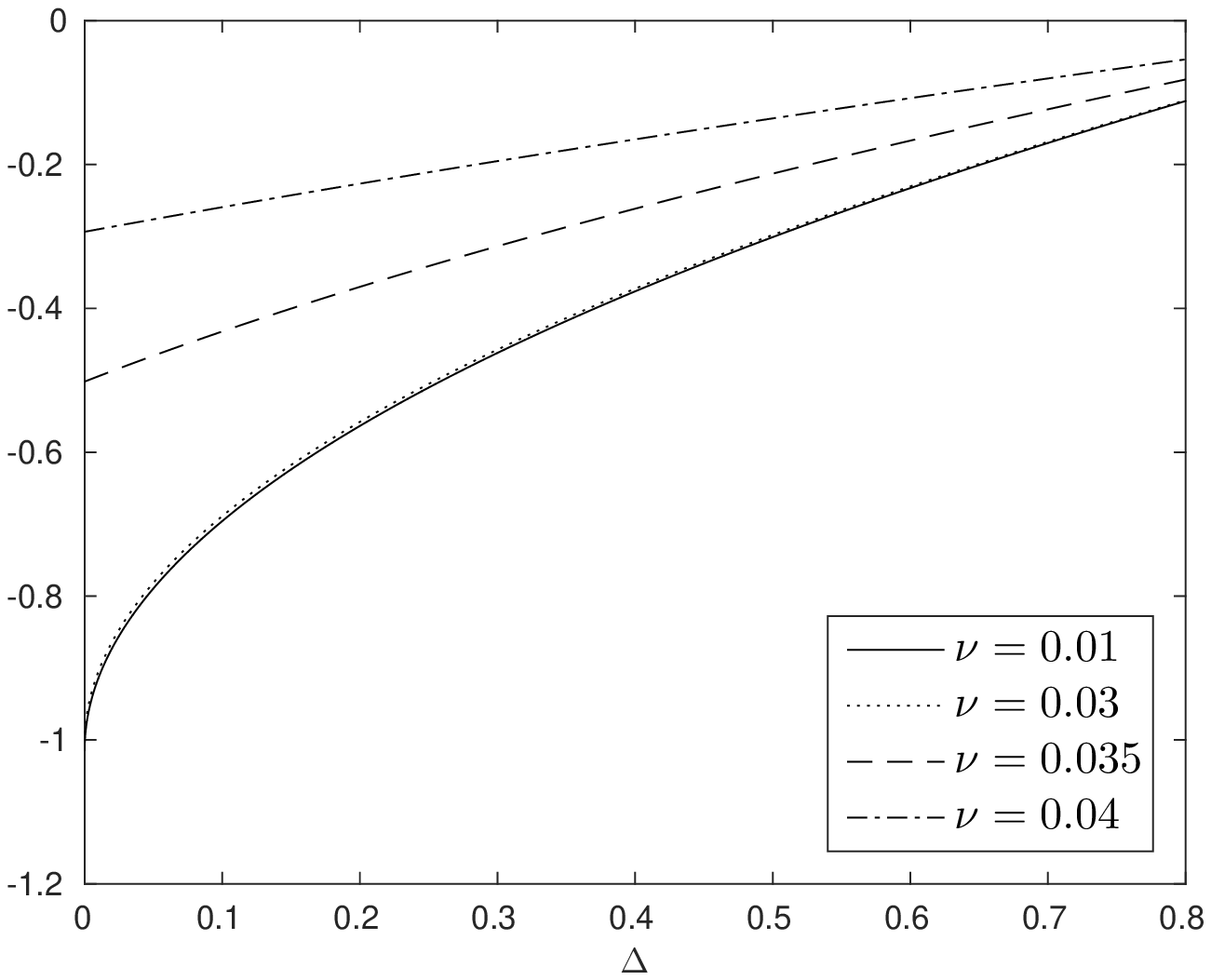}} \hspace{-5mm}
\subfigure{\includegraphics[width=0.35\textwidth]{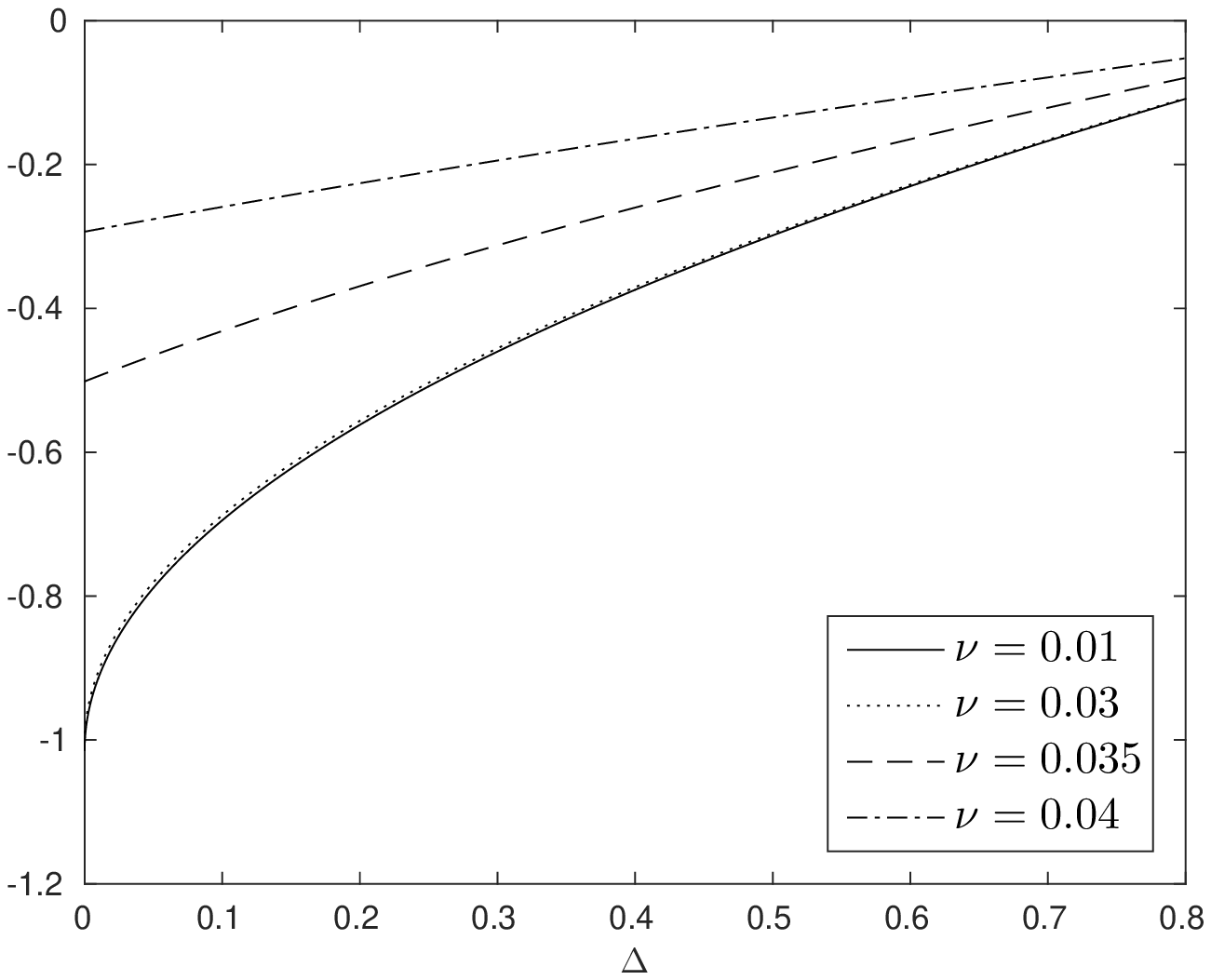}} \hspace{-5mm}
\subfigure{\includegraphics[width=0.35\textwidth]{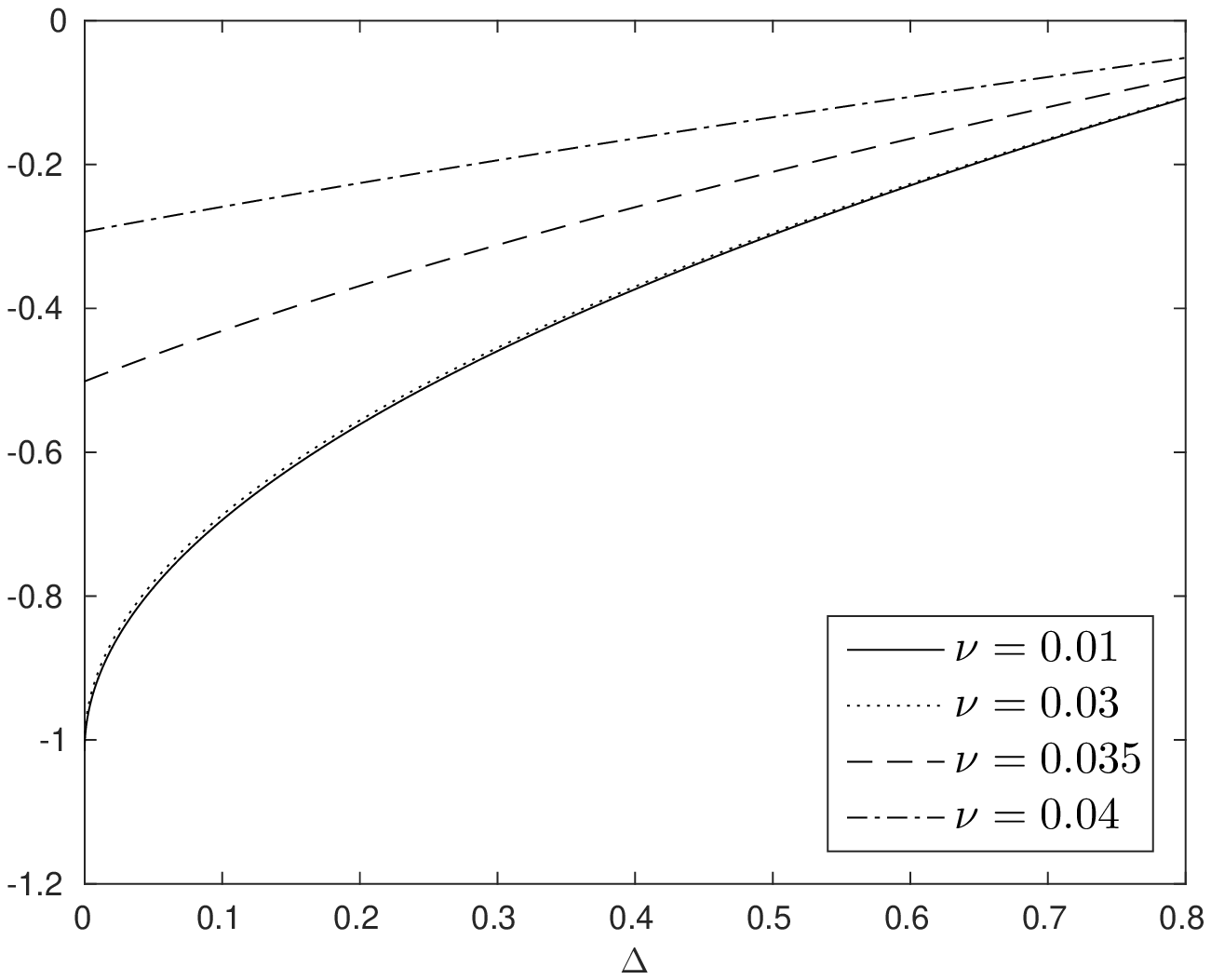}}
\caption{`Hadamard derivative' $\dot{\cal V}_{0;\P}^{x_0}(\Q_{\Delta,\tau}-\P)$ for $\alpha=0.5$, $\mu_{\P}=0.05$, and $\sigma_{\P}=0.2$ in dependence of the `jump' $\Delta$ and the drift $\nu$ of the bond, showing $N=4$ in the first, $N=12$ in the second, and $N=52$ in the third column.}
\label{fig:Ex fin - Hadamard alpha fixed}
\end{figure}

From Figure \mbox{\ref{fig:Ex fin - Hadamard alpha fixed}} one can see that for each trading time $N$ and any $\Delta\in[0,0.8]$ the (negative) effect of incorporating a `jump' $\Delta$ in the dynamics $S=(S_0,\ldots,S_N)$ of an asset price is the smaller the smaller the spread between the drift $\mu_{\P}$ of the asset and the drift $\nu$ of the bond. There is only a tiny (nearly invisible) difference between the `Hadamard derivative' $\dot{\cal V}_{0;\P}^{x_0}(\Q_{\Delta,\tau}-\P)$ for the trading times $N\in\{4,12,52\}$. So the fineness of the discretization seems to play a minor part. Next we compare the values of $\dot{\cal V}_{0;\P}^{x_0}(\Q_{\Delta,\tau}-\P)$ for the drift $\nu\in\{0.02,0.03,0.04\}$ of the bond in dependence of the risk aversion parameter $\alpha$ and the `jump' $\Delta$.

\begin{figure}[H]
\centering
\subfigure{\includegraphics[width=0.35\textwidth]{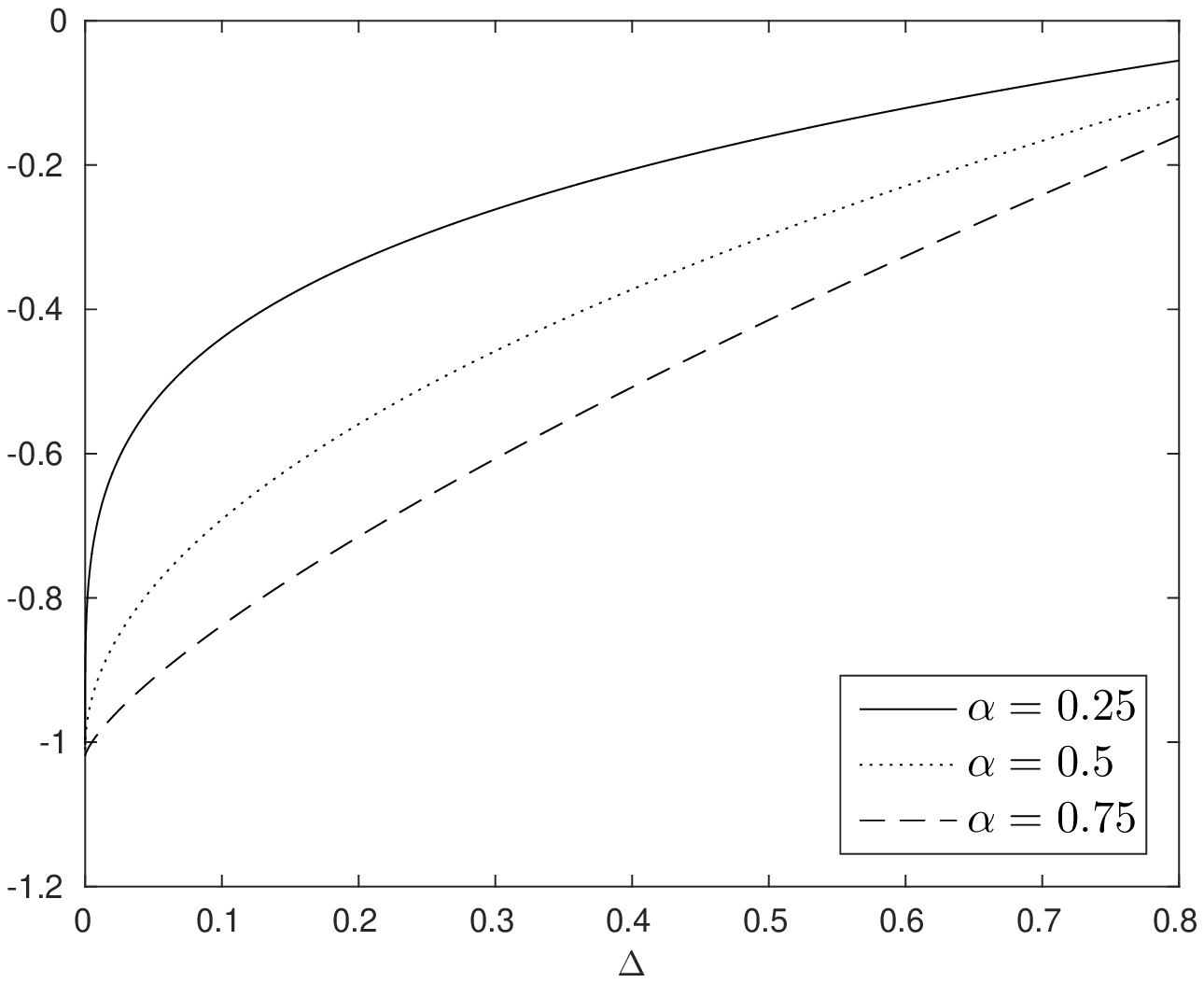}} \hspace{-5mm}
\subfigure{\includegraphics[width=0.35\textwidth]{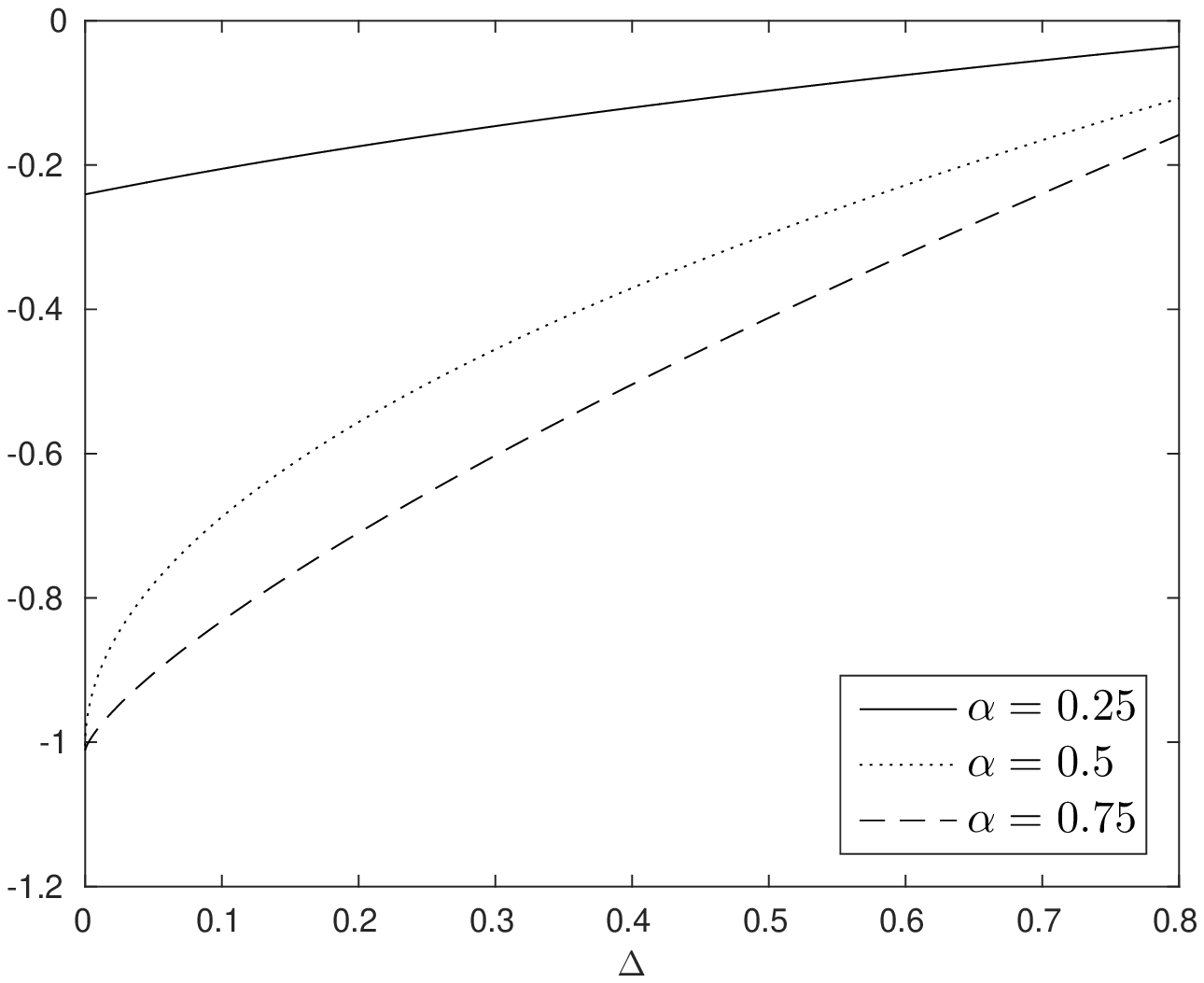}} \hspace{-5mm}
\subfigure{\includegraphics[width=0.35\textwidth]{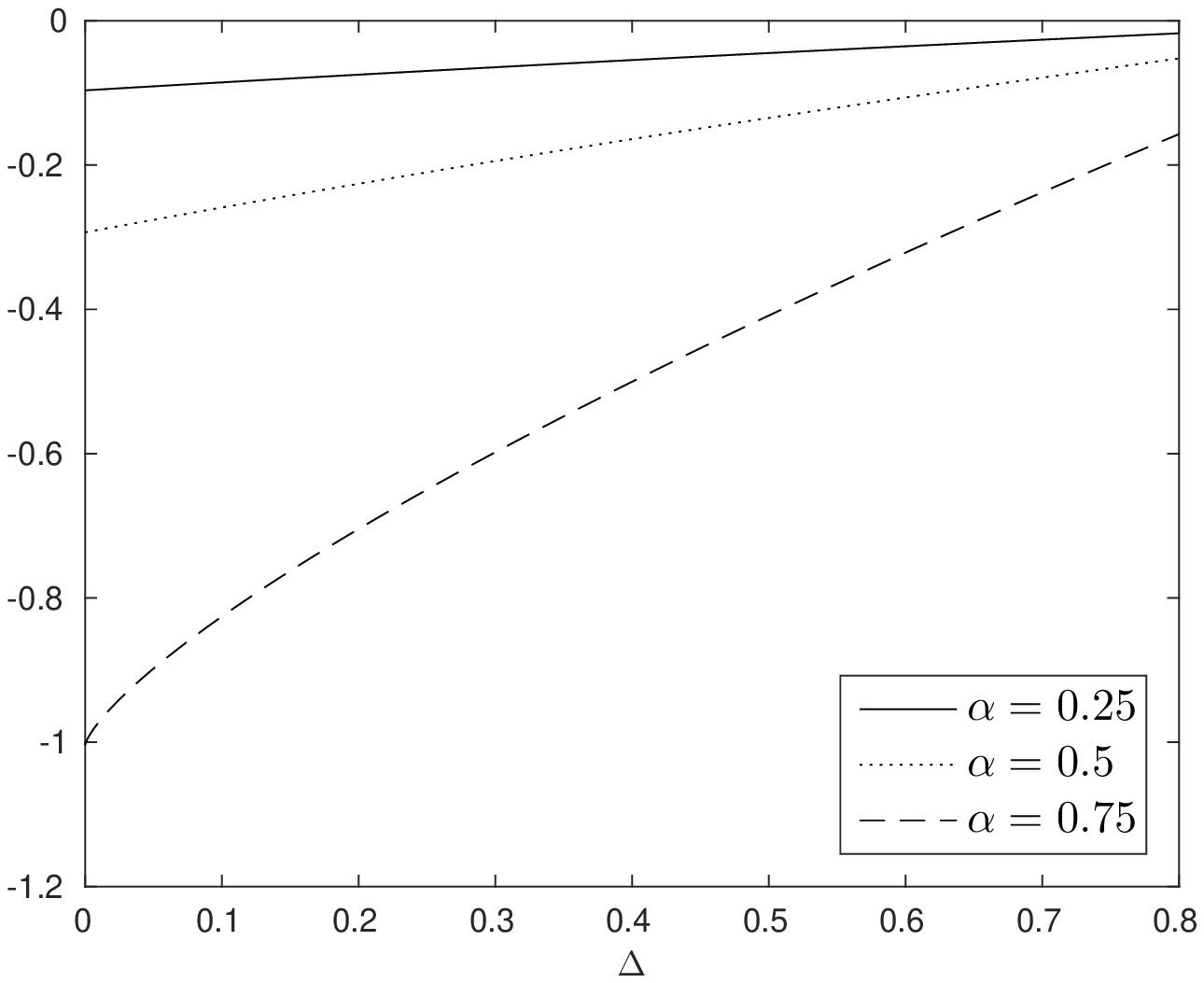}}
\caption{`Hadamard derivative' $\dot{\cal V}_{0;\P}^{x_0}(\Q_{\Delta,\tau}-\P)$ for $N=12$, $\mu_{\P}=0.05$, and $\sigma_{\P}=0.2$ in dependence of the `jump' $\Delta$ and risk aversion parameter $\alpha$, showing $\nu=0.02$ in the first, $\nu=0.03$ in the second, and $\nu=0.04$ in the third column.}
\label{fig:Ex fin - Hadamard nu fixed}
\end{figure}

As appears from Figure \mbox{\ref{fig:Ex fin - Hadamard nu fixed}}, for any $\Delta\in[0,0.8]$ the (negative) effect of incorporating a `jump' $\Delta$ in the dynamics $S=(S_0,\ldots,S_N)$ of an asset price is the smaller the higher the agent's risk aversion, no matter what the drift $\nu\in\{0.02,0.03,0.04\}$ of the bond looks like. Take into account that the extent of this effect is influenced via (\mbox{\ref{Ex fin - Hadamard BSM - eq20}})--(\mbox{\ref{Ex fin - Hadamard BSM - eq50}}) by the optimal fraction $\gamma^{\P}_{\sf BSM}$ to be invested into the asset which in turn depends on the risk aversion parameter $\alpha$ (see (\mbox{\ref{Ex fin - solution BSM model}})).

Finally, let us briefly touch on the case where more than one jump may appear. More precisely, instead of $\Q_{\Delta,\tau}$ (with $\tau\in\{0,\ldots,N-1\}$) consider the transition function $\Q_{\Delta,\boldsymbol{\tau}(\ell)}$ (with $1\le\ell\le N$, $\boldsymbol{\tau}(\ell)=(\tau_1,\ldots,\tau_\ell)$, $\tau_1,\ldots,\tau_\ell\in\{0,\ldots,N-1\}$ pairwise distinct) which is still generated by means of (\mbox{\ref{Ex fin - Hadamard BSM - eq10}}) but with the difference that at the $\ell$ different times $\tau_1,\ldots,\tau_\ell$ the distribution $\mathfrak{m}_{\P}$ is replaced by $\delta_\Delta$. Just as in the case $\ell=1$, it turns out that it does not matter at which times $\tau_1,\ldots,\tau_\ell$ exactly these $\ell$ jumps occur. Figure \mbox{\ref{fig:Ex fin - Hadamard nu fixed multi shock}} shows the value of $\dot{\cal V}_{0;\P}^{x_0}(\Q_{\Delta,\boldsymbol{\tau}(\ell)}-\P)$ in dependence on $\ell$ and $\Delta$. It seems that for any fixed $\Delta\in[0,0.8]$ the first-order sensitivity increases approximately linearly in $\ell$.

\begin{figure}[H]
\centering
\subfigure{\includegraphics[width=0.35\textwidth]{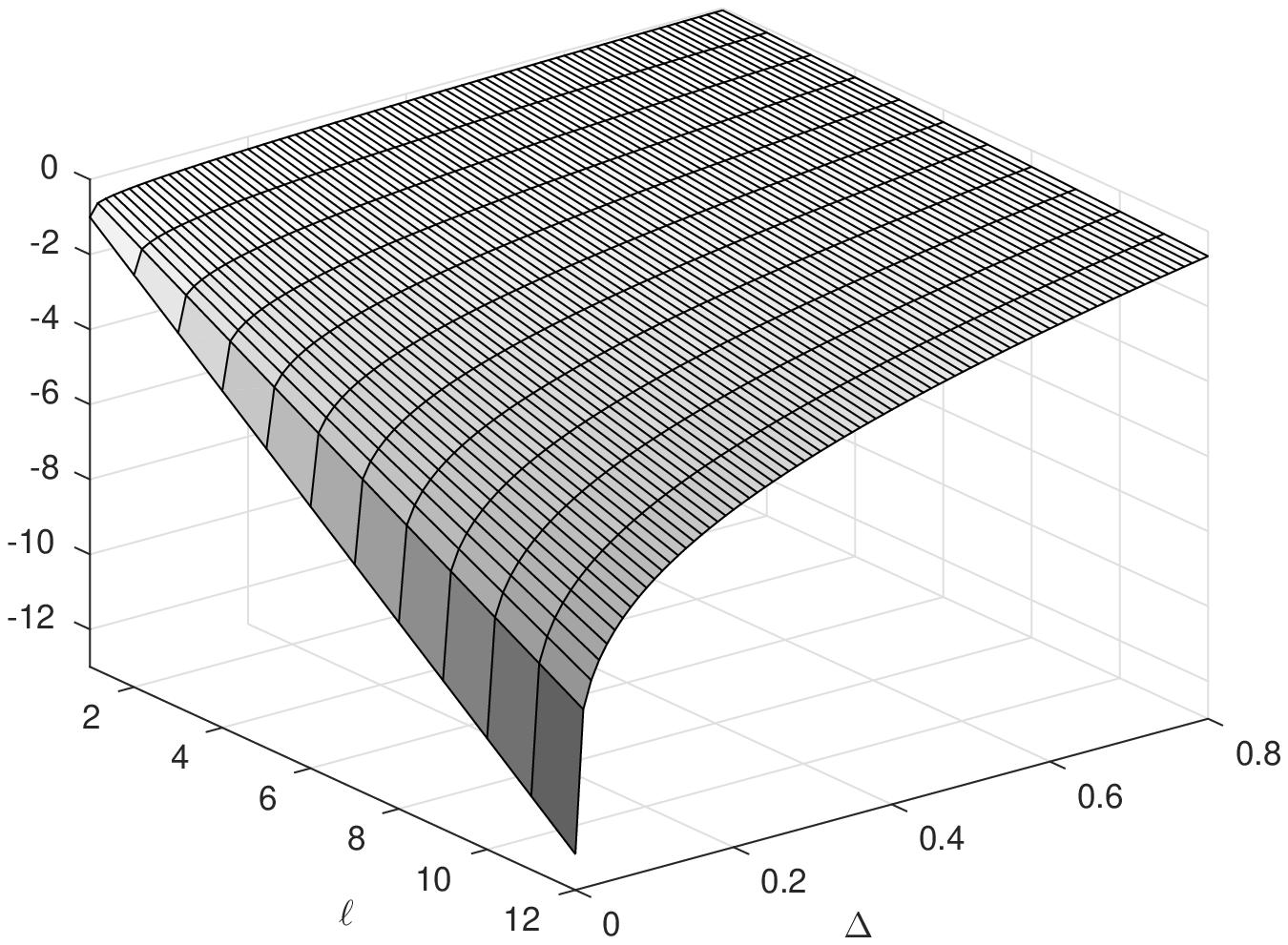}} \hspace{-5mm}
\subfigure{\includegraphics[width=0.35\textwidth]{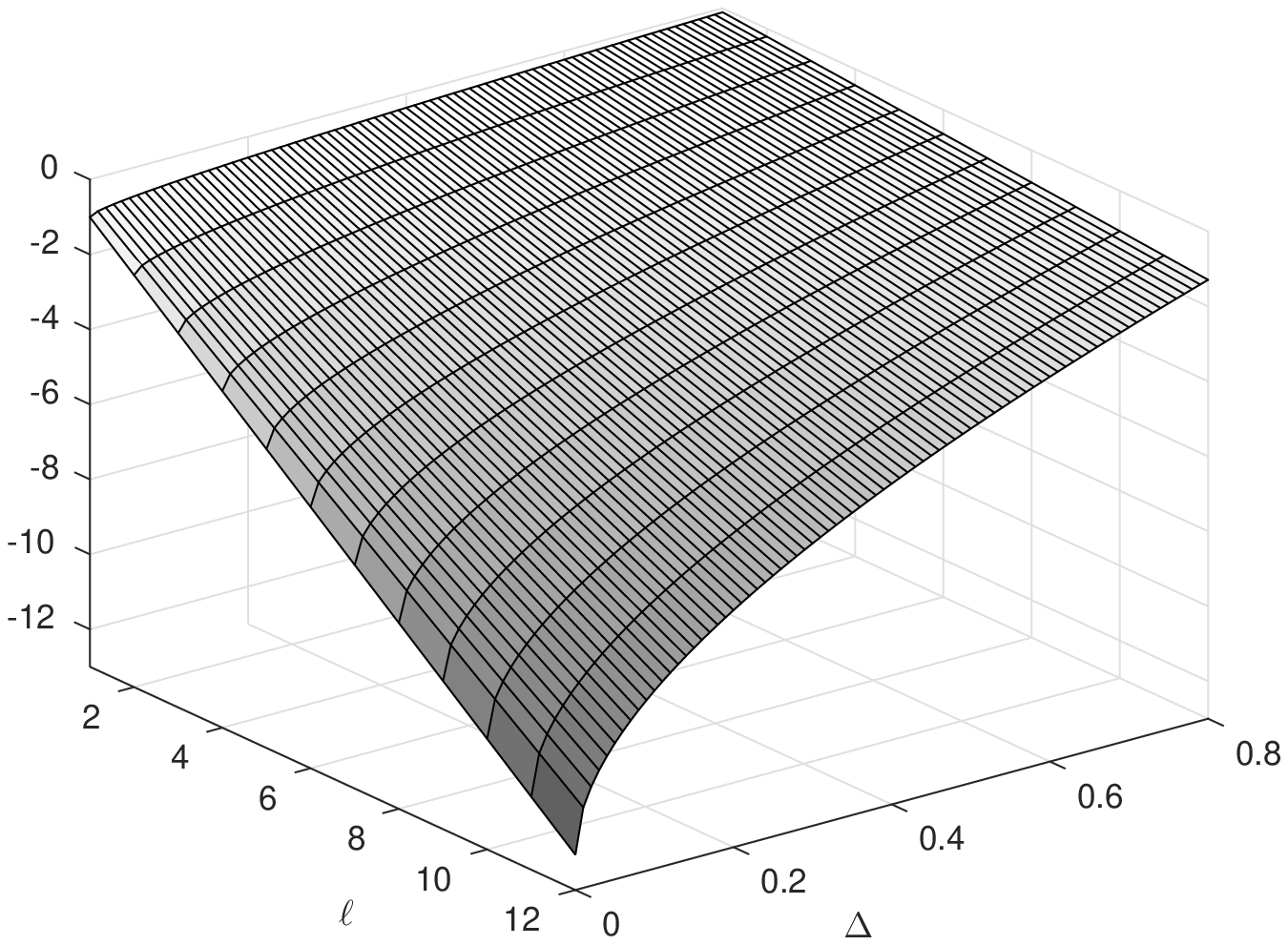}} \hspace{-5mm}
\subfigure{\includegraphics[width=0.35\textwidth]{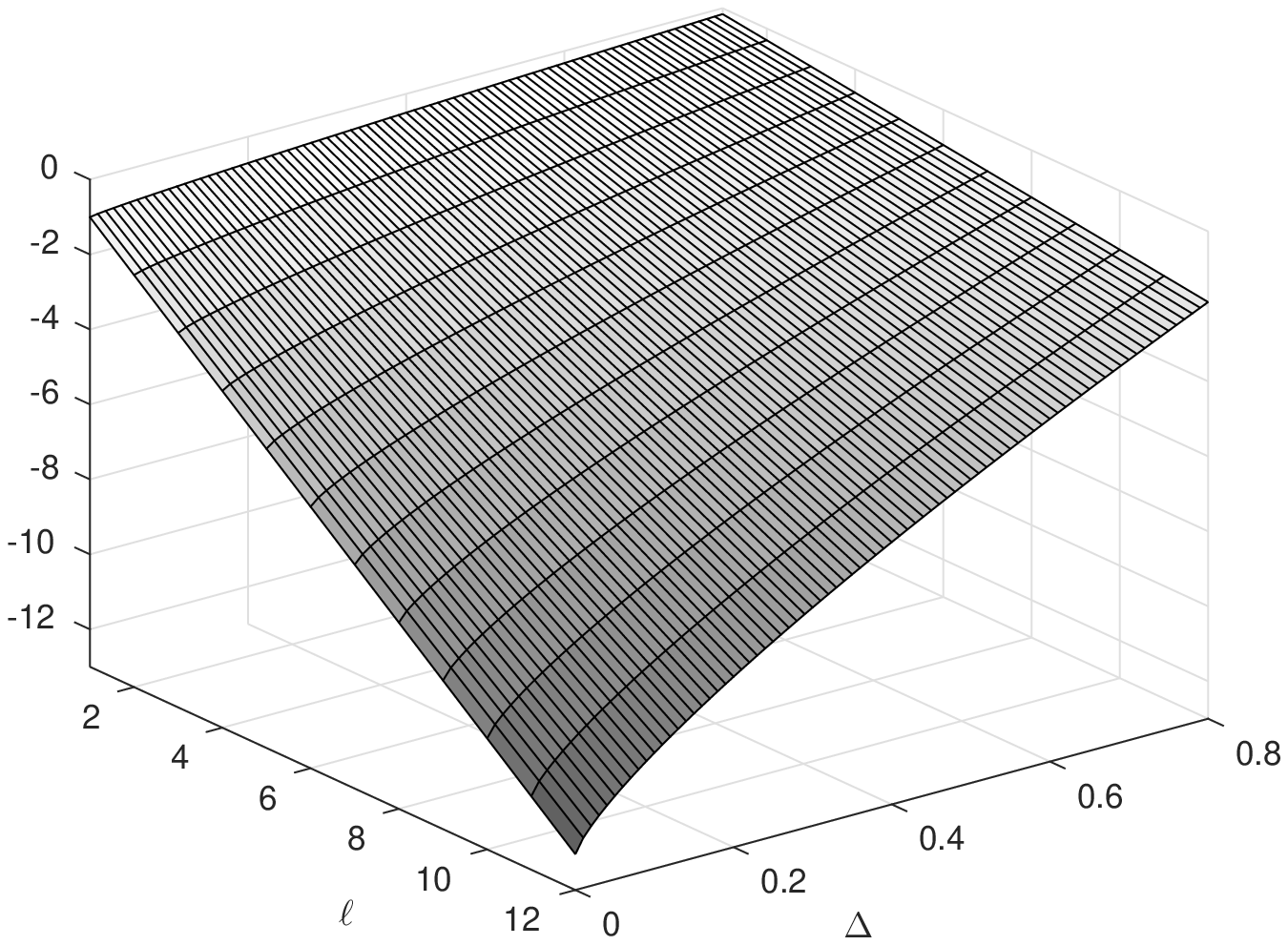}}
\caption{`Hadamard derivative' $\dot{\cal V}_{0;\P}^{x_0}(\Q_{\Delta,\boldsymbol{\tau}(\ell)}-\P)$ for $N=12$ in dependence of $\ell\in\{1,\ldots,N\}$ and $\Delta\in[0,0.8]$ showing $\alpha=0.25$ and $\nu=0.02$ (left), $\alpha=0.5$ and $\nu=0.03$ (middle), and $\alpha=0.75$ and $\nu=0.04$ (right).}
\label{fig:Ex fin - Hadamard nu fixed multi shock}
\end{figure}


\section*{Supplementary material}

The supplementary material illustrates the setting of Sections \mbox{\ref{Sec - Formal definition of MDM}}--\mbox{\ref{Sec - Hadamard differentiability chapter}} in the case of finite state space and action spaces, and contains the proofs of the results from Sections \mbox{\ref{Sec - Hadamard differentiability chapter}}--\mbox{\ref{Sec - Finance Example}}. Moreover, supplemental definitions and results to Section \mbox{\ref{Sec - Formal definition of MDM}} are given and the existence of optimal strategies in general MDMs is discussed. Finally, an supplemental topological result is shown.



\section{Supplement: The discrete case as an illustrating example} \label{Sec - Discrete MDM}


In Sections \mbox{\ref{Sec - Formal definition of MDM}}--\mbox{\ref{Sec - Hadamard differentiability chapter}}  we work in a rather general set-up. This implies that we cannot avoid dealing with `advanced' objects. In the special case where the state space and the action spaces are finite the situation is different. In this case it is possible to present the basic definitions and the main result (Theorem \mbox{\ref{Thm - hadamard cal v}}) in a more comprehensible way.
For the moment we assume that the reader is already familiar with the basic terminology of MDMs. Otherwise we advise the reader to first read Section \mbox{\ref{Sec - Formal definition of MDM}}. In Section \mbox{\ref{Subsec - Hadamard differentiability optimal value functional - discrete case}} it will be discussed how the following elaborations fit to the general set-up of Sections \mbox{\ref{Sec - Formal definition of MDM}}--\mbox{\ref{Sec - Hadamard differentiability chapter}}.


\subsection{Basic model components}\label{Subsec - Model components}


Let $E=\{x_1,\ldots,x_\mathfrak{s}\}$ be a finite state space, $N\in\N$ be a fixed finite time horizon, and $A_{n}(x_i)=\{a_{n,i;1},\ldots,a_{n,i;\mathfrak{t}_{n,i}}\}$ be the finite set of possible actions that can be performed when the MDP is in state $x_i$ at time $n\in\{0,\ldots,N-1\}$. For any $n=0,\ldots,N-1$, $i=1,\ldots,\mathfrak{s}$, and $a\in A_{n}(x_i)$, the (one-step transition) probability measure on $E$ from which the state of the MDP at time $n+1$ is drawn, given that the MDP is in state $x_i$ and action $a$ is selected at time $n$, can be identified with an element $p_{n,i;a}=(p_{n,i;a}(1),\ldots,p_{n,i;a}(\mathfrak{s}))$ of $\R_{\ge 0,1}^{\mathfrak{s}}$. Here $\R_{\ge 0,1}^{\mathfrak{s}}$ is the set of all vectors from $\R^\mathfrak{s}$ whose entries are nonnegative and sum up to $1$, and $p_{n,i;a}(j)$ specifies the probability that the MDP will be in state $x_j$ at time $n+1$, given it is in state $x_i$ and action $a\in A_{n}(x_i)$ is selected at time $n$. In particular, if the initial state $x_0\in E$ is fixed and $i_0$ refers to the corresponding index (i.e.\ $x_0=x_{i_0}$), the vector
\begin{equation}\label{clueing tp}
    \boldsymbol{p}:=\big(\oplus_{k=1}^{\mathfrak{t}_{0,i_0}}p_{0,i_0;a_{0,i_0;k}}\big)\oplus\big(\oplus_{n=1}^{N-1}\oplus_{i=1}^\mathfrak{s}\oplus_{k=1}^{\mathfrak{t}_{n,i}}p_{n,i;a_{n,i;k}}\big)
\end{equation}
in $\R^{\mathfrak{d}}$, with $\mathfrak{d}:=(\mathfrak{t}_{0,i_0}+\sum_{n=1}^{N-1}\sum_{i=1}^\mathfrak{s}\mathfrak{t}_{n,i})\mathfrak{s}$, can be identified with the transition probability function, i.e.\ with the ensemble of all transition probabilities. Here $\oplus$ is the `clueing operator' defined by $(\alpha_1,\ldots,\alpha_s)\oplus(\beta_1,\ldots,\beta_t):=(\alpha_1,\ldots,\alpha_s,\beta_1,\ldots,\beta_t)$. In fact $\boldsymbol{p}$ is even an element of the following subset of $\R^{\mathfrak{d}}$:
\begin{equation}\label{def widetilde cal P}
    \widetilde{\cal P}:=\big(\R_{\ge 0,1}^\mathfrak{s}\big)^{\times(\mathfrak{d}/\mathfrak{s})}.
\end{equation}

If ${\cal V}_0(\boldsymbol{p})$ denotes the optimal value of the MDM based on transition probability function $\boldsymbol{p}$, then ${\cal V}_0={\cal V}_0(\cdot)$ can be seen as a map from $\widetilde{\cal P}$ ($\subseteq\R^{\mathfrak{d}}$) to $\R$.


\subsection{Definition of first-order sensitivity in the discrete case}\label{Subsec - Discrete case definition f-o sensitivity}


It is tempting to consider the classical Fr\'echet (or total) derivative $\dot{\cal V}_{0;\boldsymbol{p}}$ of ${\cal V}_0$ at $\boldsymbol{p}$ in order to obtain a tool for measuring the first-order sensitivity of the optimal value w.r.t.\ a change from $\boldsymbol{p}$ to $(1-\varepsilon)\boldsymbol{p}+\varepsilon\boldsymbol{q}$:
\begin{equation}\label{def total derivative}
    \dot{\cal V}_{0;\boldsymbol{p}}(\boldsymbol{q}-\boldsymbol{p})=\lim_{m\to\infty}\frac{{\cal V}_0(\boldsymbol{p}+h_m(\boldsymbol{q}-\boldsymbol{p}))-{\cal V}_0(\boldsymbol{p})}{h_m}\quad\mbox{ uniformly in }\boldsymbol{q}\in\overline{B}_1(\boldsymbol{p})
\end{equation}
for any $(h_m)\in\R_0^\N$ with $h_m\to 0$, where $\overline{B}_1(\boldsymbol{p})$ is the closed ball in $\R^{\mathfrak{d}}$ around $\boldsymbol{p}$ with radius $1$ and $\R_0:=\R\setminus\{0\}$. This approach is indeed expedient to some extent. However, one has to note that $\boldsymbol{p}+h_m(\boldsymbol{q}-\boldsymbol{p})$ may lie outside ${\cal V}_0$'s domain $\widetilde{\cal P}$. To avoid this problem, we replace condition (\mbox{\ref{def total derivative}}) by the following variant of (\mbox{\ref{def total derivative}}):
\begin{equation}\label{def total-like derivative}
    \dot{\cal V}_{0;\boldsymbol{p}}(\boldsymbol{q}-\boldsymbol{p})=\lim_{m\to\infty}\frac{{\cal V}_0(\boldsymbol{p}+\varepsilon_m(\boldsymbol{q}-\boldsymbol{p}))-{\cal V}_0(\boldsymbol{p})}{\varepsilon_m}\quad\mbox{ uniformly in }\boldsymbol{q}\in\widetilde{\cal P}
\end{equation}
for any $(\varepsilon_m)\in(0,1]^\N$ with $\varepsilon_m\to 0$. Take into account that $\boldsymbol{p}+\varepsilon(\boldsymbol{q}-\boldsymbol{p})$ lies in $\widetilde{\cal P}$ for any $\boldsymbol{p},\boldsymbol{q}\in\widetilde{\cal P}$ and $\varepsilon\in(0,1]$. Also note that, if $\R^{\mathfrak{d}}$ is equipped with the max-norm,
$\widetilde{\cal P}$ is contained in $\overline{B}_1(\boldsymbol{p})$ for any $\boldsymbol{p}\in\widetilde{\cal P}$.

For classical Fr\'echet (or total) differentiability the derivative $\dot{\cal V}_{0;\boldsymbol{p}}$ is required to be linear and continuous. On the one hand, for `Fr\'echet differentiability' (see Definition \mbox{\ref{def of total diff}}) we will also require a sort of continuity, namely that the mapping $\boldsymbol{q}\mapsto\dot{\cal V}_{0;\boldsymbol{p}}(\boldsymbol{q}-\boldsymbol{p})$ from $\widetilde{\cal P}$ to $\R$ is continuous, where $\widetilde{\cal P}$ is equipped with the relative topology of $\R^{\mathfrak{d}}$. On the other hand, the domain of $\dot{\cal V}_{0;\boldsymbol{p}}$ is given by $\widetilde{\cal P}^{\boldsymbol{p};\pm}:=\{\boldsymbol{q}-\boldsymbol{p}:\boldsymbol{q}\in\widetilde{\cal P}\}$ and thus not a linear space. Therefore linearity of $\dot{\cal V}_{0;\boldsymbol{p}}$ is an indefinite property.

In view of (\mbox{\ref{motivation of measure for f-o sensitivity}}), the quantity $\dot{\cal V}_{0;\boldsymbol{p}}(\boldsymbol{q}-\boldsymbol{p})$ can be seen as a measure for the first-order sensitivity of the optimal value ${\cal V}_0(\boldsymbol{p})$ under transition probability function $\boldsymbol{p}$ w.r.t.\ a change from $\boldsymbol{p}$ to $(1-\varepsilon)\boldsymbol{p}+\varepsilon\boldsymbol{q}$, with $\varepsilon>0$ small. For this interpretation it is actually not necessary to require that $\dot{\cal V}_{0;\boldsymbol{p}}(\,\cdot\,-\boldsymbol{p})$ is continuous or that the convergence in (\mbox{\ref{def total-like derivative}}) holds uniformly in $\boldsymbol{q}\in\widetilde{\cal P}$. One can indeed be content with the directional derivative, i.e.\ with the convergence in (\mbox{\ref{def total-like derivative}}) for fixed $\boldsymbol{q}$. Nevertheless continuity and uniformity are natural wishes in this context, because they ensure stability of the first-order sensitivity w.r.t.\ small modifications of $\boldsymbol{q}$ as well as comparability of the first-order sensitivity of (infinitely) many different $\boldsymbol{q}$. We refer to the discussion subsequent to (\mbox{\ref{motivation of measure for f-o sensitivity}}).

\begin{definition}\label{def of total diff}
A map ${\cal V}:\widetilde{\cal P}\rightarrow\R$ is said to be `Fr\'echet differentiable' at $\boldsymbol{p}\in\widetilde{\cal P}$ if there exists a map $\dot{\cal V}_{\boldsymbol{p}}:\widetilde{\cal P}^{\boldsymbol{p};\pm}\to\R$ for which (\mbox{\ref{def total-like derivative}}) (with ${\cal V}_0,\dot{\cal V}_{0;\boldsymbol{p}}$ replaced by ${\cal V},\dot{\cal V}_{\boldsymbol{p}}$ respectively) holds and for which the mapping $\boldsymbol{q}\mapsto\dot{\cal V}_{\boldsymbol{p}}(\boldsymbol{q}-\boldsymbol{p})$ from $\widetilde{\cal P}$ to $\R$ is continuous. In this case $\dot{\cal V}_{\boldsymbol{p}}$ is called `Fr\'echet derivative' of ${\cal V}$ at $\boldsymbol{p}$.
\end{definition}


\subsection{Computation of first-order sensitivity in the discrete case}\label{Subsec - Discrete case computation f-o sensitivity}


To specify the `Fr\'echet derivative' of ${\cal V}_0$ at $\boldsymbol{p}$ we need some further notation. For any strategy $\pi=(f_n)_{n=0}^{N-1}$, we use ${\cal V}_0^\pi(\boldsymbol{p})$ to denote the expected total reward (from time $0$ to $N$) when $\boldsymbol{p}$ is the underlying transition probability function and the decisions are performed according to $\pi$. In the finite setting there exists under $\boldsymbol{p}$ at least one optimal strategy $\pi^{\boldsymbol{p}}$, i.e.\ a strategy $\pi^{\boldsymbol{p}}$ with ${\cal V}_0^{\pi^{\boldsymbol{p}}}(\boldsymbol{p})=\max_\pi{\cal V}_0^{\pi}(\boldsymbol{p})$. We will write $\Pi(\boldsymbol{p})$ for the (finite) set of all optimal strategies w.r.t.\ $\boldsymbol{p}$. Then the results of Subsection \mbox{\ref{Subsec - Hadamard differentiability of the optimal value functional}}  show that the `Fr\'echet derivative' of ${\cal V}_0$ at $\boldsymbol{p}$ is given by
\begin{equation}\label{Frechet opt value discrete case}
    \dot{\cal V}_{0;\boldsymbol{p}}(\boldsymbol{q}-\boldsymbol{p})=\max_{\pi\in\Pi(\boldsymbol{p})}\dot{\cal V}_{0;\boldsymbol{p}}^\pi(\boldsymbol{q}-\boldsymbol{p})\quad\mbox{ for all }\boldsymbol{q}\in\widetilde{\cal P},
\end{equation}
where $\dot{\cal V}_{0;\boldsymbol{p}}^\pi$ refers to the `Fr\'echet derivative' of ${\cal V}_0^\pi$ at $\boldsymbol{p}$. The latter can be obtained from a suitable iteration scheme. According to Remark \mbox{\ref{remark hd - iteration scheme}}  we indeed have
$$
    \dot{\cal V}_{0;\boldsymbol{p}}^\pi(\boldsymbol{q}-\boldsymbol{p})=\dot V_{0}^{\boldsymbol{p},\boldsymbol{q};\pi}(x_{i_0})
$$
(recall that $x_{i_0}\in E$ is the initial state and that $\pi=(f_n)_{n=0}^{N-1}$ refers to a strategy) for
\begin{equation}\label{Frechet pi backward iteration scheme - finite}
	\begin{aligned}
        \dot V_N^{\boldsymbol{p},\boldsymbol{q};\pi}(x_i) & \,:=\, 0 \\
	    \dot V_n^{\boldsymbol{p},\boldsymbol{q};\pi}(x_i) & \,:=\, \sum_{j=1}^\mathfrak{s} \dot V_{n+1}^{\boldsymbol{p},\boldsymbol{q};\pi}(x_j)\,p_{n,i;f_n(x_i)}(j) \\
        & \qquad + \sum_{j=1}^\mathfrak{s} V^{\boldsymbol{p};\pi}_{n+1}(x_j)\,\big(q_{n,i;f_n(x_i)}(j) - p_{n,i;f_n(x_i)}(j)\big),\quad n=0,\ldots,N-1,
	\end{aligned}
\end{equation}
$i=1,\ldots,\mathfrak{s}$, where the $V^{\boldsymbol{p};\pi}_{n}(\cdot)$ are given by the usual backward iteration scheme (see, e.g., Lemma 3.5 in \cite{Hinderer1970} or p.\,80 in \cite{Puterman1994}) for the computation of ${\cal V}^\pi_0(\boldsymbol{p})$:
\begin{equation}\label{reward iteration - finite}
    \begin{aligned}
    V_N^{\boldsymbol{p};\pi}(x_i) & \, := \, r_N(x_i)\\
	V_n^{\boldsymbol{p};\pi}(x_i) & \, := \, r_n(x_i,f_n(x_i)) + \sum_{j=1}^\mathfrak{s} V_{n+1}^{\boldsymbol{p};\pi}(x_j)\,p_{n,i;f_n(x_i)}(j),\quad n=0,\ldots,N-1,
    \end{aligned}
\end{equation}
$i=1,\ldots,\mathfrak{s}$. Here $r_n(x_i,a)$ specifies the (one-stage) reward when making decision $a$ at time $n$ in state $x_i$, and $r_N(x_i)$ specifies the reward of being in state $x_i$ at terminal time $N$. Also note that $f_n(x_i)$ determines the action which is taken at time $n$ in state $x_i$ under strategy $\pi=(f_n)_{n=0}^{N-1}$.


\subsection{An example: stochastic inventory control}\label{Sec - Inventory Example}


In this subsection we will consider an inventory control problem, which is a classical example in discrete dynamic optimization; see, e.g.,  \cite{Bertsekas1995,HernandezLasserre1996,Puterman1994} and references cited therein. At first, we introduce in Subsection \mbox{\ref{Subsec - Inventory Example - Basic inventory control model}} a (simple) inventory control model and formulate the corresponding inventory control problem. Thereafter, in Subsection \mbox{\ref{Subsec - Inventory Example - Embedding inventory control into MDM}}, we will explain how the inventory control model can be embedded into the setting of Subsection \mbox{\ref{Subsec - Model components}}. Finally, in Subsection \mbox{\ref{Subsec - Inventory Example - Numerical example}} we will present some numerical examples for the `Fr\'echet derivative' established in Subsection \mbox{\ref{Subsec - Discrete case computation f-o sensitivity}}.


\subsubsection{Basic inventory control model, and the target}\label{Subsec - Inventory Example - Basic inventory control model}


Consider an $N$-period inventory control system where a supplier of a single product seeks optimal inventory management to meet random commodity demand in such a way that a measure of profit over a time horizon of $N$ periods is maximized. For the formulation of the model, let $I_1,\ldots,I_N$ be $\N_0$-valued independent random variables on some probability space $\OFP$, where $I_{n+1}$ can be seen as the random demand of the single product in the period between time $n$ and time $n+1$. We denote by $\mathfrak{p}_{n+1}=(\mathfrak{p}_{n+1;k})_{k\in\N_0}$ the counting density of $I_{n+1}$ (i.e.\ $\mathfrak{p}_{n+1;k}:=\pr[I_{n+1}=k]$) and assume that $\mathfrak{p}_{n+1}$ is known for any $n=0,\ldots,N-1$. Note that $\mathfrak{p}_{n+1}\in\R_{\ge 0,1}^{\N_0}$, where $\R_{\ge 0,1}^{\N_0}$ denotes the space of all real-valued sequences whose entries are nonnegative and sum up to $1$. Let ${\cal F}_0$ be the trivial $\sigma$-algebra, and set ${\cal F}_n:=\sigma(I_1,\ldots,I_n)$ for any $n=1,\ldots,N$.

We suppose that within each period of time the available inventory level of the single product is restricted to $K$ units (for some fixed $K\in\N$) and that there is no backlogging of unsatisfied demand at the end of each period. The latter means that if at the end of a period the demand exceeds the current inventory, then the whole inventory is sold and the surplus demand gets lost.

Given an initial inventory level $y_0\in\{0,\ldots,K\}$, the supplier intends to find optimal order quantities according to an order strategy to maximize some measure of profit. By {\em order strategy} we mean an (${\cal F}_n$)-adapted $\{0,\ldots,K\}$-valued stochastic process $\varphi=(\varphi_n)_{n=0}^{N-1}$, where $\varphi_n$ specifies the amount of ordered units of the single product at the beginning of period $n$. Here we suppose that the delivery of any order occurs instantaneously.
Since excess demand is lost by assumption,
the corresponding {\em inventory (level) process} $Y^{\varphi}=(Y^{\varphi}_0,\ldots,Y^{\varphi}_N)$ associated with $\varphi=(\varphi_n)_{n=0}^{N-1}$ is given by
\begin{equation}\label{Ex inv - inventory process}
	Y^{\varphi}_0 := y_0 \quad\mbox{ and }\quad Y^{\varphi}_{n+1} := Y^{\varphi}_n + \varphi_n - \min\{I_{n+1},Y^{\varphi}_n + \varphi_n\}, \qquad n=0,\ldots,N-1.
\end{equation}
Note that $\min\{I_{n+1},Y^{\varphi}_n + \varphi_n\}$ corresponds to the amount of units of the single product sold in the period between time $n$ and time $n+1$. Hence we refer to the process $Z^{\varphi}:=(Z^{\varphi}_0,\ldots,Z^{\varphi}_N)$ defined by
\begin{equation}\label{Ex inv - sales process}
	Z^{\varphi}_0 := 0 \quad\mbox{ and }\quad Z^{\varphi}_{n+1} := \min\{I_{n+1},Y^{\varphi}_n + \varphi_n\}, \qquad n=0,\ldots,N-1
\end{equation}
as {\em sales process} associated with $\varphi=(\varphi_n)_{n=0}^{N-1}$.

In view of (\mbox{\ref{Ex inv - inventory process}}) and since the inventory capacity is restricted to $K$ units, we may and do identify any order strategy with an (${\cal F}_n$)-adapted $\{0,\ldots,K\}$-valued stochastic process $\varphi=(\varphi_n)_{n=0}^{N-1}$ satisfying $\varphi_0\in\{0,\ldots,K-y_0\}$ and $\varphi_n\in\{0,\ldots,K-Y^{\varphi}_n\}$ for all $n=1,\ldots,N-1$. We restrict ourselves to {\em Markovian} order strategies $\varphi=(\varphi_n)_{n=0}^{N-1}$ which means that $\varphi_n$ only depends on $n$ and $(Y^{\varphi}_n,Z^{\varphi}_n)$. To put it another way, we suppose that for any $n=0,\ldots,N-1$ there is some map $f_n:\{0,\ldots,K\}^2\to\{0,\ldots,K\}$ such that $\varphi_n = f_n(Y^{\varphi}_n,Z^{\varphi}_n)$. Hence, for given strategy $\varphi=(\varphi_n)_{n=0}^{N-1}$ (resp.\ $\pi=(f_n)_{n=0}^{N-1}$) the process $X^{\varphi}:=(Y^{\varphi},Z^{\varphi})$ is a $\{0,\ldots,K\}^2$-valued $({\cal F}_n)$-Markov process whose one-step transition probability for the transition from state $x=(y,z)\in\{0,\ldots,K\}^2$ at time $n\in\{0,\ldots,N-1\}$ to state $x'=(y',z')\in\{0,\ldots,K\}^2$ at time $n+1$ is given by
$\eta^{\mathfrak{p}_{n+1}}_{(y,f_n(y,z))}(z')\eins_{\{y'=y+f_n(y,z)-z'\}}$ with 
\begin{equation}\label{Ex inv - def mapping eta}
    \eta^{\mathfrak{p}_{n+1}}_{(y,a)}(z') :=
    \left\{
    \begin{array}{lll}
        0 & , & z'>y+a \\
        \mathfrak{p}_{n+1;z'} & , & z'<y+a \\
        \sum_{\ell=z'}^{\infty}\mathfrak{p}_{n+1;\ell} & , & z'=y+a
    \end{array}
    \right..
\end{equation}

The supplier's aim is to find an order strategy $\varphi=(\varphi_n)_{n=0}^{N-1}$ (resp.\ $\pi=(f_n)_{n=0}^{N-1}$) for which the expected total profit
is maximized. Here the profit can be seen as the difference between the sales revenue and the costs for ordering and holding the single product. For the sake of simplicity, we suppose that the sales revenue as well as the ordering and holding costs are known and linear in each period. Hence, we are interested in those order strategies $\varphi=(\varphi_n)_{n=0}^{N-1}$ (resp.\ $\pi=(f_n)_{n=0}^{N-1}$) for which the expectation of
$$
	   \sum_{k=0}^{N-1}\big\{u^{\sf rev}(Z_k^{\varphi}) - u^{\sf ord}(f_k(Y_k^{\varphi},Z_k^{\varphi})) - u^{\sf hol}(Y_k^{\varphi},f_k(Y_k^{\varphi},Z_k^{\varphi}))\big\}
       + \big\{u^{\sf rev}(Z_N^{\varphi}) - u^{\sf hol}(Y_N^{\varphi},0)\big\}
$$
is maximized, where $u^{\sf rev},u^{\sf ord}:\{0,\ldots,K\}\to\N_0$ and $u^{\sf hol}:\{0,\ldots,K\}^2\to\N_0$ are for some fixed $s_{\sf rev}, c_{\sf ord}, c_{\sf fix}, c_{\sf hol}\in\N$ defined by
$$
	u^{\sf rev}(z) := s_{\sf rev}\cdot z, \quad u^{\sf ord}(a) := (c_{\sf fix} + c_{\sf ord}\cdot a)\eins_{\{a>0\}},\quad u^{\sf hol}(y,a) := c_{\sf hol}\cdot(y + a).
$$
Note here that $s_{\sf rev}, c_{\sf ord}, c_{\sf fix}$, and $c_{\sf hol}$ denote the sales revenue, the ordering costs, the fixed ordering costs, and the holding costs per unit of the single product, respectively.


\subsubsection{Embedding into MDM, and optimal order strategies} \label{Subsec - Inventory Example - Embedding inventory control into MDM}


The setting introduced in Subsection \mbox{\ref{Subsec - Inventory Example - Basic inventory control model}} can be embedded into the setting of Subsections \mbox{\ref{Subsec - Model components}} and \mbox{\ref{Subsec - Discrete case computation f-o sensitivity}} as follows. Let $E:=\{x_1,\ldots,x_\mathfrak{s}\}$ for the enumeration $x_1,\ldots,x_\mathfrak{s}$ (with $\mathfrak{s}=(K+1)^2$) of $\{0,\ldots,K\}^2$ given by $x_i=(y_i,z_i)$ with $y_i:=\lceil i/(K+1)\rceil-1$ and $z_i:=i-(K+1)\lceil i/(K+1)\rceil+K$ (here $\lceil\cdot\rceil$ is the ceiling function), $i=1,\ldots,\mathfrak{s}$.

Let $A_{n}(x_i):=\{a_{n,i;1},\ldots,a_{n,i;\mathfrak{t}_{n,i}}\}$ with $a_{n,i;k}:=k-1$ and $\mathfrak{t}_{n,i} = \mathfrak{t}_{i} := K - y_i + 1$ for any $i=1,\ldots,\mathfrak{s}$ and $n=0,\ldots,N-1$.  For any $i=1,\ldots,\mathfrak{s}$, $k=1,\ldots,\mathfrak{t}_{i}$, and $n=0,\ldots,N-1$, let the component $p_{n,i;a_{n,i;k}}=(p_{n,i;a_{n,i;k}}(1),\break \ldots,p_{n,i;a_{n,i;k}}(\mathfrak{s}))$ of the vector $\boldsymbol{p}$ from (\mbox{\ref{clueing tp}}) be given by
\begin{equation}\label{Ex inv - transition prob}
	p_{n,i;a_{n,i;k}}(j) := \eta^{\mathfrak{p}_{n+1}}_{(y_i,a_{n,i;k})}(z_j)\eins_{\{y_j=y_i+a_{n,i;k}-z_j\}},	\qquad j=1,\ldots,\mathfrak{s}
\end{equation}
for some predetermined $\mathfrak{p}_{n+1}\in\R_{\ge 0,1}^{\N_0}$ and for $\eta^{\mathfrak{p}_{n+1}}_{(y,a)}(\cdot)$ introduced in (\mbox{\ref{Ex inv - def mapping eta}}).
In fact any element $\boldsymbol{p}$ of $\widetilde{\cal P}$ is generated via (\mbox{\ref{Ex inv - def mapping eta}})--(\mbox{\ref{Ex inv - transition prob}}) by some $N$-tuple $\mathfrak{p}=(\mathfrak{p}_{1},\ldots,\mathfrak{p}_{N})$ of counting densities $\mathfrak{p}_{1},\ldots,\mathfrak{p}_{N}$ on $\N_0$; here $\mathfrak{p}_{1},\ldots,\mathfrak{p}_{N}$ should be seen as the counting densities of $I_1,\ldots,I_N$. The value in (\mbox{\ref{Ex inv - transition prob}}) should be seen as the probability of a transition from state $(y_i,z_i)$ to state $(y_j,z_j)$ in time between $n$ and $n+1$ (this transition probability is even independent of $z_i$).

For any $i=1,\ldots,\mathfrak{s}$ and $k=1,\ldots,\mathfrak{t}_{i}$, set
\begin{eqnarray}
    r_0(x_i,a_{0,i;k}) & := & - u^{\sf ord}(a_{0,i;k}) - u^{\sf hol}(y_i,a_{0,i;k}),\label{Ex inv - reward time 0} \hspace{0.75cm}\\
    r_n(x_i,a_{n,i;k}) & := & u^{\sf rev}(z_i) - u^{\sf ord}(a_{n,i;k}) - u^{\sf hol}(y_i,a_{n,i;k}),\ n=1,\ldots,N-1,\label{Ex inv - reward time n} \hspace{0.75cm}\\
	r_N(x_i) & := & u^{\sf rev}(z_i) - u^{\sf hol}(y_i,0).\label{Ex inv - terminal reward} \hspace{0.75cm}
\end{eqnarray}
By an (admissible) order strategy we understand an $N$-tuple $\pi=(f_n)_{n=0}^{N-1}$ of maps $f_n:\{x_1,\ldots,x_\mathfrak{s}\}\to\{0,\ldots,K\}$ satisfying
$$
    f_n(x_i)= f_n(y_i,z_i)\in\{0,\ldots,K-y_i\}\quad\mbox{ for all }i=1,\ldots,\mathfrak{s}.
$$

Then for every fixed $\boldsymbol{p}\in\widetilde{\cal P}$ the inventory control problem introduced in Subsection \mbox{\ref{Subsec - Inventory Example - Basic inventory control model}} reads as
\begin{equation}\label{Ex inv - inventory control prob}
    {\cal V}_0^{\pi}(\boldsymbol{p}) \longrightarrow \max\ (\mbox{in $\pi$)\,!}
\end{equation}
where ${\cal V}_0^{\pi}(\boldsymbol{p}):=V_0^{\boldsymbol{p};\pi}(x_{i_0})$ is given by (\mbox{\ref{reward iteration - finite}}) with (\mbox{\ref{Ex inv - reward time 0}})--(\mbox{\ref{Ex inv - terminal reward}}) ($x_{i_0}\in E$ is the initial state). A strategy $\pi^{\boldsymbol{p}}$ is called an {\em optimal order strategy w.r.t.\ $\boldsymbol{p}$} if it solves the maximization problem (\mbox{\ref{Ex inv - inventory control prob}}).


\subsubsection{Numerical examples for the `Fr\'echet derivative'}\label{Subsec - Inventory Example - Numerical example}


Let us take up the numerical example at p.\,41 in \cite{Puterman1994} where $N:=3$, $K:=4$, $s_{\sf rev}:=8$, $c_{\sf ord}:=2$, $c_{\sf fix}:=4$, and $c_{\sf hol}:=1$. We fix $\mathfrak{p}:=(\mathfrak{p}_\bullet,\mathfrak{p}_\bullet,\mathfrak{p}_\bullet)$ with $\mathfrak{p}_\bullet:=(0,\tfrac{1}{4},\tfrac{1}{2},\tfrac{1}{4},0,0\ldots)$, and denote by $\boldsymbol{p}$ the unique element of $\widetilde{\cal P}$ generated by $\mathfrak{p}$ through (\mbox{\ref{Ex inv - def mapping eta}})--(\mbox{\ref{Ex inv - transition prob}}). This choice of $\mathfrak{p}$ means that in each period the demand is $1$, $2$, or $3$ with probability $\tfrac{1}{4}$, $\tfrac{1}{2}$, and $\tfrac{1}{4}$, respectively. Table \mbox{\ref{table:Ex inv - optimal strategy}} provides the (unique) optimal order strategy
$\pi^{\boldsymbol{p}}=(f_0^{\boldsymbol{p}},f_1^{\boldsymbol{p}}
,f_2^{\boldsymbol{p}})$, and the second column of Table \mbox{\ref{table:Ex inv - optimal value}} displays the maximal expected total reward ${\cal V}^{\pi^{\boldsymbol{p}}}_0(\boldsymbol{p})$ of the inventory control problem (\mbox{\ref{Ex inv - inventory control prob}}) for all possible initial inventory levels $y_0:=y_{i_0}\in\{0,\ldots,4\}$.  Moreover, the last two columns in Table \mbox{\ref{table:Ex inv - optimal value}} display the `Fr\'echet derivative' $\dot{\cal V}^{\pi^{\boldsymbol{p}}}_{0;\boldsymbol{p}}(\cdot)$ of ${\cal V}^{\pi^{\boldsymbol{p}}}_0$ at $\boldsymbol{p}$ evaluated at direction $\boldsymbol{q}_{(0)}-\boldsymbol{p}$ and at direction $\boldsymbol{q}_{(4)}-\boldsymbol{p}$ (calculated with the iteration scheme (\mbox{\ref{Frechet pi backward iteration scheme - finite}})), again for all possible initial inventory levels $y_0$. Here  $\boldsymbol{q}_{(0)}$ and $\boldsymbol{q}_{(4)}$ are generated through (\mbox{\ref{Ex inv - def mapping eta}})--(\mbox{\ref{Ex inv - transition prob}}) by $\mathfrak{q}_{(0)}:=(\mathfrak{q}_{(0)\bullet},\mathfrak{q}_{(0)\bullet},\mathfrak{q}_{(0)\bullet})$ and $\mathfrak{q}_{(4)}:=(\mathfrak{q}_{(4)\bullet},\mathfrak{q}_{(4)\bullet},\mathfrak{q}_{(4)\bullet})$ respectively, where $\mathfrak{q}_{(0)\bullet}:=(1,0,0,\ldots)$ and $\mathfrak{q}_{(4)\bullet}:=(0,0,0,0,1,0,0,\ldots)$. As the optimal strategy $\pi^{\boldsymbol{p}}$ is unique in our example, we even have $\dot{\cal V}_{0;\boldsymbol{p}}(\cdot)=\dot{\cal V}^{\pi^{\boldsymbol{p}}}_{0;\boldsymbol{p}}(\cdot)$.

Note that for $i\in\{0,4\}$ the value $\dot{\cal V}_{0;\boldsymbol{p}}(\boldsymbol{q}_{(i)}-\boldsymbol{p})$ (in our case it equals $\dot{\cal V}^{\pi^{\boldsymbol{p}}}_{0;\boldsymbol{p}}(\boldsymbol{q}_{(i)}-\boldsymbol{p})$) quantifies the first-order sensitivity of ${\cal V}_0(\boldsymbol{p})$ (respectively of ${\cal V}^{\pi^{\boldsymbol{p}}}_0(\boldsymbol{p})$) w.r.t.\ a change of the underlying probability transition function from $\boldsymbol{p}$ to $\boldsymbol{p}_{(i)}:=(1-\varepsilon)\boldsymbol{p}+\varepsilon\boldsymbol{q}_{(i)}$ with $\varepsilon\in(0,1)$ small. It can be easily seen that $\boldsymbol{p}_{(i)}$ is generated through (\mbox{\ref{Ex inv - def mapping eta}})--(\mbox{\ref{Ex inv - transition prob}}) where $\mathfrak{p}:=(\mathfrak{p}_\bullet,\mathfrak{p}_\bullet,\mathfrak{p}_\bullet)$ is replaced by $\mathfrak{p}_{(i)}:=(\mathfrak{p}_{(i)\bullet},\mathfrak{p}_{(i)\bullet},\mathfrak{p}_{(i)\bullet})$ with $\mathfrak{p}_{(i)\bullet}:=(1-\varepsilon)\mathfrak{p}_\bullet+\varepsilon\mathfrak{q}_{(i)\bullet}$ (take into account that the case differentiation in (\mbox{\ref{Ex inv - def mapping eta}}) does {\em not} depend on the counting density $\mathfrak{p}_{n+1}$). That is, the change from $\boldsymbol{p}$ to $\boldsymbol{p}_{(i)}$ means that the formerly impossible demand $i$ now gets assigned small but strictly positive probability $\varepsilon$ in each period.

\begin{table}[H]
\centering
\setlength{\tabcolsep}{.9mm}
\caption{Optimal order strategy $\pi^{\boldsymbol{p}}=(f_0^{\boldsymbol{p}},f_1^{\boldsymbol{p}},f_2^{\boldsymbol{p}})$ for $\boldsymbol{p}$ as above.}
\label{table:Ex inv - optimal strategy}
\begin{tabular}{c c c c c c c c c c c c c c}
\hline\noalign{\smallskip}
$(y,z)$ & $(0,0)$ & $(0,1)$ & $(0,2)$ & $(0,3)$ & $(0,4)$ & $(1,0)$ & $(1,1)$ & $(1,2)$ & $(1,3)$ & $(1,4)$ & $(2,0)$ & \hspace{2mm}$\cdots$\hspace{2mm} & $(4,4)$ \\
\noalign{\smallskip}\hline\noalign{\smallskip}
$f_0^{\boldsymbol{p}}$ & 4 & 4 & 4 & 4 & 4 & 3 & 3 & 3 & 3 & 3 & 0 & $\cdots$ & 0 \\
$f_1^{\boldsymbol{p}}$ & 4 & 4 & 4 & 4 & 4 & 3 & 3 & 3 & 3 & 3 & 0 & $\cdots$ & 0 \\
$f_2^{\boldsymbol{p}}$ & 2 & 2 & 2 & 2 & 2 & 0 & 0 & 0 & 0 & 0 & 0 & $\cdots$ & 0 \\
\noalign{\smallskip}\hline
\end{tabular}
\end{table}

\vspace{-1cm}

\begin{table}[H]
\centering
\caption{Optimal value ${\cal V}^{\pi^{\boldsymbol{p}}}_0(\boldsymbol{p})$ and the `Fr\'echet derivative' $\dot{\cal V}^{\pi^{\boldsymbol{p}}}_{0;\boldsymbol{p}}(\boldsymbol{q}_{(i)}-\boldsymbol{p})$ (in our example it equals $\dot{\cal V}_{0;\boldsymbol{p}}(\boldsymbol{q}_{(i)}-\boldsymbol{p})$) with $\boldsymbol{q}_{(i)}$ as above, $i\in\{0,4\}$, in dependence of the initial inventory level $y_0$.}
\label{table:Ex inv - optimal value}
\begin{tabular}{c c c c}
\hline\noalign{\smallskip}
$y_0$ & ${\cal V}^{\pi^{\boldsymbol{p}}}_0(\boldsymbol{p})$ & $\dot{\cal V}^{\pi^{\boldsymbol{p}}}_{0;\boldsymbol{p}}(\boldsymbol{q}_{(0)}-\boldsymbol{p})$ & $\dot{\cal V}^{\pi^{\boldsymbol{p}}}_{0;\boldsymbol{p}}(\boldsymbol{q}_{(4)}-\boldsymbol{p})$ \\
\noalign{\smallskip}\hline\noalign{\smallskip}
0 &  16.5313 & $-$34.0938 & 16.0313 \\
1 &  18.5313 & $-$34.0938 & 16.0313 \\
  2 &  23.1250 & $-$39.8125 & 14.0000 \\
  3 &  26.1094 & $-$37.3906 & 15.6094 \\
  4 &  28.5313 & $-$34.0938 & 16.0313 \\
\noalign{\smallskip}\hline
\end{tabular}
\end{table}

As appears from Table \mbox{\ref{table:Ex inv - optimal value}}, the negative effect of incorporating demand $0$ into the counting density $\mathfrak{p}_\bullet$ with small probability $\varepsilon$ is roughly twice as large as the positive effect of incorporating demand $4$ into $\mathfrak{p}_\bullet$ with the same small probability $\varepsilon$, no matter what the initial inventory level looks like. So, when worrying about robustness of the optimal value w.r.t.\ changes in the demand's counting density $\mathfrak{p}_\bullet$, it seems to be somewhat more important to analyse in detail the adequacy of the assumption that an absent demand is impossible than the adequacy of the assumption that a demand of $4$ is impossible.


\subsection{Embedding the discrete case into the set-up of Sections \mbox{\ref{Sec - Formal definition of MDM}}--\mbox{\ref{Sec - Hadamard differentiability chapter}} }\label{Subsec - Hadamard differentiability optimal value functional - discrete case}


In this subsection we will explain how the elaborations in Subsections \mbox{\ref{Subsec - Model components}}--\mbox{\ref{Subsec - Discrete case computation f-o sensitivity}} match our general theory introduced in Sections \mbox{\ref{Sec - Formal definition of MDM}}--\mbox{\ref{Sec - Hadamard differentiability chapter}}. Assume that the state space $E$ as well as the set of all admissible actions $A_n(x)$ for each point of time $n=0,\ldots,N-1$ and state $x\in E$ are finite. Let $\mathfrak{s}:=\# E\in\N$ and ${\cal E}:=\mathfrak{P}(E)$, and note that the sets $A_n$ as well as $D_n$ are finite for any $n=0,\ldots,N-1$.

Let us measure the distance between two probability measures $\mu$ and $\nu$ from ${\cal M}_1(E)$ by the total variation metric $d_{\rm{\scriptsize{TV}}}$, i.e.\ by
$$
    d_{\rm{\scriptsize{TV}}}(\mu,\nu) \,=\, \max_{B\in\mathfrak{P}(E)}\big|\mu[B] - \nu[B]\big| \,=\, \frac{1}{2}\sum_{y\in E}\big|\mu[\{y\}]-\nu[\{y\}]\big|.
$$
This fits the setting of Subsection \mbox{\ref{Subsec - Metric on set of probability measures}}  with $\M:=\M_{\rm{\scriptsize{TV}}}$ and $\psi:\equiv 1$; see Example \mbox{\ref{examples prob metric - tv}}. Since $E$ was assumed to be finite with $\mathfrak{s}:=\# E\in\N$, we may and do identify any probability measure $\mu\in{\cal M}_1(E)$ with some element $p_\mu=(p_\mu(1),\ldots,p_\mu(\mathfrak{s}))$ of $\R_{\ge 0,1}^\mathfrak{s}$ (with $\R_{\ge 0,1}^\mathfrak{s}$ as in Subsection \mbox{\ref{Subsec - Model components}}).
Hence the total variation distance $d_{\rm{\scriptsize{TV}}}$ between $\mu,\nu\in{\cal M}_1(E)$ can be identified (up to the factor $1/2$) with the $\ell_1$-distance between $p_\mu$ and $p_\nu$:
\begin{equation*}\label{characteriazion TV metric finite state space}
    d_{\rm{\scriptsize{TV}}}(\mu,\nu) \,=\, \frac{1}{2}\sum_{i=1}^\mathfrak{s}\big|p_\mu(i)-p_\nu(i)\big| \,=\, \frac{1}{2}\,\|p_\mu-p_\nu\|_{\ell_1}.
\end{equation*}
That is, the map $\Lambda:{\cal M}_1(E)\rightarrow\R_{\ge 0,1/2}^\mathfrak{s}$, $\mu\mapsto p_\mu/2$, provides a surjective isometry (here $\R_{\ge 0,1/2}^\mathfrak{s}$ is the set of all vectors from $\R^\mathfrak{s}$ whose entries are nonnegative and sum up to $1/2$), and therefore the metric spaces $({\cal M}_1(E),d_{\rm{\scriptsize{TV}}})$ and $(\R_{\ge 0,1/2}^\mathfrak{s},\|\cdot\|_{\ell_1})$ are isometrically isomorphic. This implies in particular that the set ${\cal M}_1(E)$ is compact w.r.t.\ $d_{\rm{\scriptsize{TV}}}$, because $\R_{\ge 0,1/2}^\mathfrak{s}$ is clearly compact w.r.t.\ $\|\cdot\|_{\ell_1}$.

For the distance between two transition functions we will employ the metric
$d_{\infty,\M_{\rm{\scriptsize{TV}}}}^1$, which is defined as in (\mbox{\ref{def metric transition functions}})  with $\psi:\equiv 1$. As the sets $D_0,\ldots,D_{N-1}$ are finite, we can identify the set ${\cal P}$ as a finite product of ${\cal M}_1(E)$:
$$
    {\cal P} \,=\, \times_{n=0}^{N-1}\times_{(x,a)\in D_n}{\cal M}_1(E).
$$
The metric $d_{\infty,\M_{\rm{\scriptsize{TV}}}}^1$ obviously metricizes the product topology on $\overline{\cal P}_{\eins}={\cal P}$ and, as seen above, the set ${\cal M}_1(E)$ is compact w.r.t.\ $d_{\rm{\scriptsize{TV}}}$. It follows from Tychonoff's theorem (see, e.g., \cite[Theorem 2.2.8]{Dudley2002})
that ${\cal P}$ is compact w.r.t.\ $d_{\infty,\M_{\rm{\scriptsize{TV}}}}^1$ and therefore in particular relatively compact w.r.t.\ $d_{\infty,\M_{\rm{\scriptsize{TV}}}}^1$. Hence, Definition \mbox{\ref{def Gateaux-Hadamard-Frechet differentiability}}(b)  of `Hadamard differentiability' (i.e.\ Definition \mbox{\ref{def S differentiability}}  with ${\cal S}:={\cal S}_{\rm rc}$) simplifies insofar as one can simply require that the convergence in (\mbox{\ref{def eq for S-like D}})  holds uniformly in {\it all} $\Q\in{\cal P}$ for every sequence $(\varepsilon_m)\in(0,1]^{\N}$.

Under the imposed assumptions we may via (\mbox{\ref{clueing tp}}) identify any transition function $\P=(P_n)_{n=0}^{N-1}$ from ${\cal P}=\times_{n=0}^{N-1}\times_{(x,a)\in D_n}{\cal M}_1(E)$ with an element $\boldsymbol{p}$ of the set $\widetilde{\cal P}$ defined in (\mbox{\ref{def widetilde cal P}}) with $\mathfrak{d}:=(\mathfrak{t}_{0,i_0}+\sum_{n=1}^{N-1}\sum_{i=1}^\mathfrak{s}\mathfrak{t}_{n;i})\mathfrak{s}$, where $\mathfrak{t}_{n;i}:=\# A_n(x_i)$ and $x_1,\ldots,x_\mathfrak{s}$ is a (finite) enumeration of $E$. Then, imposing (without loss of generality) the metric
$$
    \begin{aligned}
    d_{\infty,\ell_1}(\boldsymbol{p},\boldsymbol{q}) & := \frac{1}{2}\max\Big\{\max_{k=1,\ldots,\mathfrak{t}_{0,i_0}}\|p_{0,i_0;a_{0,i_0;k}}-q_{0,i_0;a_{0,i_0;k}}\|_{\ell_1},\\
    & \qquad\qquad\qquad\max_{n=1,\ldots,N-1}\max_{i=1,\ldots,\mathfrak{s}}\max_{k=1,\ldots\mathfrak{t}_{n;i}}\|p_{n,i;a_{n,i;k}}-q_{n,i;a_{n,i;k}}\|_{\ell_1}\Big\}
    \end{aligned}
$$
on $\widetilde{\cal P}$, it is apparent that Definition \mbox{\ref{def of total diff}} is a special case of Definition \mbox{\ref{def S differentiability}}  with ${\cal S}:={\cal S}_{\rm rc}$.

Note that in the finite setting there exists for any fixed $\P\in{\cal P}$ an optimal strategy $\pi^{\P}\in\Pi$ w.r.t.\ $\P$, which means that the set $\Pi(\P)$ is non-empty; see, e.g., \cite[Proposition 4.4.3]{Puterman1994}. Also note that $\psi:\equiv 1$ provides a bounding function for the MDM $(\boldsymbol{X},\boldsymbol{A},\Q,\Pi,\boldsymbol{r})$ for any $\Q\in{\cal P}$. Thus condition (a) of Theorem \mbox{\ref{Thm - hadamard cal v}}  is satisfied for $\psi:\equiv 1$. According to Remark \mbox{\ref{remark verification assumptions}}(ii)--(iii), conditions (b) and (c) of Theorem \mbox{\ref{Thm - hadamard cal v}}  are satisfied for $\M':=\overline{\M}_{\rm{\scriptsize{TV}}}$ and $\psi:\equiv 1$, where $\overline{\M}_{\rm{\scriptsize{TV}}}$ is defined as in Example \mbox{\ref{examples prob metric - tv}}. Hence, in the finite setting the assumptions of Theorem \mbox{\ref{Thm - hadamard cal v}}  (with $\M:=\M_{\rm{\scriptsize{TV}}}$, $\M':=\overline{\M}_{\rm{\scriptsize{TV}}}$, and $\psi:\equiv 1$) are always fulfilled so that the representation (\mbox{\ref{Frechet opt value discrete case}}) of the `Fr\'echet derivative' of the value functional (with fixed initial state $x_0\in E$) always follows from part (ii) of Theorem \mbox{\ref{Thm - hadamard cal v}}. Take into account that in the finite setting `Fr\'echet differentiability' and `Hadamard differentiability' are equivalent.


\section{Supplement: Auxiliary definitions and results to Section \mbox{\ref{Sec - Formal definition of MDM}} }\label{Sec - Supplement to Section 2}


In this section we supplement the definitions and results of Section \mbox{\ref{Sec - Formal definition of MDM}}. The precise meaning of the definition in display (\mbox{\ref{def P pi}})  of the probability measure $\pr^{x_0,\P;\pi}$ on $(\Omega,{\cal F}):=(E^{N+1},{\cal E}^{\otimes(N+1)})$ is in view of (\mbox{\ref{def prob kernel E to E}})
\begin{eqnarray}
    \lefteqn{\pr^{x_0,\P;\pi}[B]} \label{def P pi - exactly} \\
    & := & \int_E\int_E\cdots\int_E\int_E \eins_{B}(y_0,\ldots,y_N)\,P_{N-1}^\pi(y_{N-1},dy_N) \nonumber \\
    & & \quad P_{N-2}^\pi(y_{N-2},dy_{N-1})\cdots P_{0}^\pi(y_{0},dy_1)\,\delta_{x_0}(dy_0)\nonumber\\
    & = & \int_E\int_E\cdots\int_E\int_E \eins_{B}(y_0,\ldots,y_N)\,P_{N-1}\big((y_{N-1},f_{N-1}(y_{N-1})),dy_N\big)\nonumber\\
    & & \quad P_{N-2}\big((y_{N-2},f_{N-2}(y_{N-2})),dy_{N-1}\big)\cdots P_{0}\big((y_{0},f_{0}(y_{0})),dy_1\big)\,\delta_{x_0}(dy_0)\nonumber
\end{eqnarray}
for $B\in{\cal F}$, for any given $x_0\in E$, $\P=(P_n)_{n=0}^{N-1}\in{\cal P}$, and $\pi=(f_n)_{n=0}^{N-1}\in\Pi$.

By a (regular version of the) factorized conditional distribution of $X$ given $Y$ under $\pr^{x_0,\P;\pi}$ we mean a probability kernel $\pr^{x_0,\P;\pi}_{X\|Y}(\,\cdot\,,\bullet)$ for which for every $B\in{\cal E}$ the random variable $\omega\mapsto\pr^{x_0,\P;\pi}_{X\|Y}(Y(\omega),B)$ is a conditional probability of $\{X\in B\}$ given $Y$ under $\pr^{x_0,\P;\pi}$. This object is only $\pr_Y^{x_0,\P;\pi}$-a.s.\ unique. Thus the formulation of (ii)--(viii) in the following lemma is somewhat sloppy. Assertion (v) in fact means that the probability kernel $P_n((\,\cdot\,,f_n(\,\cdot\,)),\,\bullet\,)$ provides a (regular version of the) factorized conditional distribution of $X_{n+1}$ given $X_n$ under $\pr^{x_0,\P;\pi}$, and analogously for parts (ii)--(iv) and (vi)--(viii). Note that it is also customary to write $\pr^{x_0,\P;\pi}[X\in\,\bullet\,\|Y=\,\cdot\,]$ instead of $\pr^{x_0,\P;\pi}_{X\|Y}(\,\cdot\,,\bullet)$; see, for instance, (ii)--(iv) in Subsection \mbox{\ref{Subsec - Markov decision process}}.

\begin{lemma}\label{lemma on X and P}
For any $\P=(P_n)_{n=0}^{N-1}\in{\cal P}$, $\pi=(f_n)_{n=0}^{N-1}\in\Pi$,
$x_0,\widetilde{x}_0,x_1,\ldots,x_n\in E$ and $1 \leq n < k \leq N$ as well as $x_m\in E$ and $m=1,\ldots,N$ we have
\begin{enumerate}
\item[{\rm (i)}] $\pr^{x_0,\P;\pi}_{X_0}[\,\bullet\,] = \delta_{x_0}[\,\bullet\,]$.

\item[{\rm (ii)}] $\pr^{x_0,\P;\pi}_{X_0\| X_0}(\widetilde{x}_0,\,\bullet\,) = \delta_{x_0}[\,\bullet\,]$.

\item[{\rm (iii)}] $\pr^{x_0,\P;\pi}_{X_1\| X_0}(\widetilde{x}_0,B) = P_0\big((x_0,f_0(x_0)),B\big)$.

\item[{\rm (iv)}] $\pr^{x_0,\P;\pi}_{X_{n+1}\|(X_0,X_1,\ldots,X_n)}((\widetilde{x}_0,x_1,\ldots,x_n),\,\bullet\,) = P_n\big((x_n,f_n(x_n)),\,\bullet\,\big)$.

\item[{\rm (v)}] $\pr^{x_0,\P;\pi}_{X_{n+1}\| X_n}(x_n,\,\bullet\,) = P_n\big((x_n,f_n(x_n)),\,\bullet\,\big)$.

\item[{\rm (vi)}] $\pr^{x_0,\P;\pi}_{X_m\| X_0}(\widetilde{x}_0,\,\bullet\,) = \pr^{x_0,\P;\pi}_{X_m}[\,\bullet\,] = \pr^{x_0,\P;\pi}_{X_{1}\| X_{0}}\cdots\pr^{x_0,\P;\pi}_{X_m\| X_{m-1}}(x_0,\,\bullet\,)$.

\item[{\rm (vii)}] $\pr^{x_0,\P;\pi}_{X_k\| X_n}(x_n,\,\bullet\,) = \pr^{x_0,\P;\pi}_{X_{n+1}\| X_n}\cdots\pr^{x_0,\P;\pi}_{X_k\| X_{k-1}}(x_n,\,\bullet\,)$.

\item[{\rm (viii)}] $\pr^{x_0,\P;\pi}_{X_m\| X_m}(x_m,\,\bullet\,) = \delta_{x_m}[\,\bullet\,]$.
\end{enumerate}
\end{lemma}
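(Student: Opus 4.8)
The plan is to verify each of the eight assertions directly from the explicit product representation (\ref{def P pi - exactly}), rather than to invoke any existence theorem for regular conditional distributions; since we are exhibiting concrete candidate kernels, it suffices to check the defining disintegration identity. Recall that a probability kernel $\kappa(\,\cdot\,,\bullet)$ from the range of $Y$ to the range of $X$ is a version of $\pr^{x_0,\P;\pi}_{X\|Y}$ precisely when
$$
    \pr^{x_0,\P;\pi}[\{Y\in A\}\cap\{X\in B\}] = \int_A \kappa(y,B)\,\pr^{x_0,\P;\pi}_Y(dy)
$$
for all admissible measurable sets $A,B$. All eight claims reduce to verifying identities of this shape, so no appeal to standardness of $(E,{\cal E})$ is needed.

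First I would record the elementary \emph{marginalisation} fact: because each $P_k^\pi(y,\,\cdot\,)$ is a probability measure, integrating the right-most coordinates out of (\ref{def P pi - exactly}) collapses the corresponding integrals (each contributing a factor $P_k^\pi(y,E)=1$), so that for every $j\in\{0,\ldots,N\}$ the joint law of $(X_0,\ldots,X_j)$ under $\pr^{x_0,\P;\pi}$ is the product $\delta_{x_0}\otimes P_0^\pi\otimes\cdots\otimes P_{j-1}^\pi$. Specializing to $j=0$ gives (i). Assertion (iv) is then immediate from the product-of-kernels structure: for $C\in{\cal E}^{\otimes(n+1)}$ and $B\in{\cal E}$ the definition of the product measure yields
$$
    \pr^{x_0,\P;\pi}[\{(X_0,\ldots,X_n)\in C\}\cap\{X_{n+1}\in B\}] = \int_C P_n^\pi(y_n,B)\,\pr^{x_0,\P;\pi}_{(X_0,\ldots,X_n)}(d(y_0,\ldots,y_n)),
$$
and since $P_n^\pi(y_n,B)=P_n((y_n,f_n(y_n)),B)$ depends on $(y_0,\ldots,y_n)$ through $y_n$ alone, this is exactly the disintegration identity identifying $P_n((\,\cdot\,,f_n(\,\cdot\,)),\bullet)$ as the stated conditional distribution.

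The remaining single-step claims follow quickly. Assertion (iii) is the case $n=0$ of (iv) combined with (i): since $X_0=x_0$ $\pr^{x_0,\P;\pi}$-a.s., the conditioning on $\widetilde{x}_0$ is vacuous and the kernel is constant in $\widetilde{x}_0$. Assertion (v) follows from (iv) by the tower property: writing $g(y_n):=P_n((y_n,f_n(y_n)),B)$, the function $g(X_n)$ is $\sigma(X_n)$-measurable and serves as $\pr^{x_0,\P;\pi}[X_{n+1}\in B\,\|\,X_0,\ldots,X_n]$, hence, taking conditional expectation given $X_n$, also as $\pr^{x_0,\P;\pi}[X_{n+1}\in B\,\|\,X_n]$. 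Assertions (ii) and (viii) are the standard fact that conditioning a random variable on itself returns the Dirac measure at its realized value (a $\pr^{x_0,\P;\pi}_{X_m}$-a.s.\ identity); for (ii) one additionally uses $\pr^{x_0,\P;\pi}_{X_0}=\delta_{x_0}$ from (i).

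Finally, the multi-step identities (vi) and (vii) are the Chapman--Kolmogorov equations, which I would establish by induction on the number of steps. The base case is (v) (resp.\ (iv) for the first equality in (vi)), and the inductive step composes the one-step kernel with the $(k-n-1)$-step kernel by integrating out the intermediate coordinate in the product representation of the joint law of $(X_n,\ldots,X_k)$; the first equality in (vi) again uses that $X_0=x_0$ a.s.\ renders the conditioning on $\widetilde{x}_0$ trivial. The whole argument is essentially bookkeeping; the only point demanding care is the consistent use of the $\pr^{x_0,\P;\pi}_Y$-a.s.\ uniqueness of factorized conditional distributions together with the Fubini-type manipulation of products of kernels (for which one may appeal to the standard construction in, e.g., \cite{Kallenberg2002} or \cite{Bauer2001}). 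I expect the verification of (iv) --- i.e.\ pinning down the product-of-kernels disintegration --- to be the only genuinely load-bearing step, with everything else deduced formally from it.
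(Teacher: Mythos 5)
Your proposal is correct and follows essentially the same route as the paper's proof: both verify the defining disintegration identity of the factorized conditional distributions directly against the explicit product representation (\ref{def P pi - exactly}) for the one-step statements, and obtain the multi-step statements (vi)--(vii) from the Chapman--Kolmogorov relation. The only (harmless) differences are organisational --- you deduce (iii) and (v) from (iv) via constancy in $\widetilde{x}_0$ and the tower property, and treat (ii), (viii) as the standard ``conditioning on itself'' fact, where the paper re-runs the direct computation in each case.
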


For parts (vi) and (vii) in the preceding lemma note that the compositions on the right-hand side are for every $B\in{\cal E}$ defined by
\begin{eqnarray}\label{lemma on X and P - eq 10}
	\lefteqn{\pr^{x_0,\P;\pi}_{X_{1}\| X_{0}}\cdots\pr^{x_0,\P;\pi}_{X_m\| X_{m-1}}(x_0,B)} \nonumber \\
	& \hspace{-2mm} := \hspace{-2mm} & \int_E\cdots\int_E \eins_B(y_m)\,P_{m-1}\big((y_{m-1},f_{m-1}(y_{m-1})),dy_{m}\big)\cdots P_0\big((x_0,f_0(x_0)),dy_1\big) \nonumber \\
\end{eqnarray}
and
\begin{eqnarray}\label{lemma on X and P - eq 20}
	\lefteqn{\pr^{x_0,\P;\pi}_{X_{n+1}\| X_n}\cdots\pr^{x_0,\P;\pi}_{X_k\| X_{k-1}}(x_n,B)} \nonumber \\
	& \hspace{-2mm} := \hspace{-2mm} & \int_E\cdots\int_E \eins_B(y_k)\,P_{k-1}\big((y_{k-1},f_{k-1}(y_{k-1})),dy_k\big)\cdots P_n\big((x_n,f_n(x_n)),dy_{n+1}\big). \nonumber \\
\end{eqnarray}

\begin{proof}
First of all it is clear that assertion (i) holds. Thus it suffices to show assertions (ii)--(viii).

(ii): The claim holds true, because
\begin{eqnarray*}
    \lefteqn{\ex^{x_0,\P;\pi}\big[\delta_{X_0}[B]\eins_{B_{1}}(X_0)\big]}\\
    & = & \int_\Omega \delta_{X_0(\omega)}[B]\eins_{B_{1}}(X_0(\omega))\,\pr^{x_0,\P;\pi}(d\omega)\\
    & = & \int_E\int_E\cdots\int_E\delta_{y_0}[B]\eins_{B_{1}}(y_0)\,\\
    & & \quad P_{N-1}\big((y_{N-1},f_n(y_{N-1})),dy_N\big)\cdots P_0\big((y_0,f_0(y_0)),dy_1\big)\,\delta_{x_0}(dy_0)\\
    & = & \int_E\delta_{y_0}[B]\eins_{B_{1}}(y_0)\,\delta_{x_0}(dy_0)
    \,=\, \delta_{x_0}[B]\eins_{B_{1}}(x_0)\,=\,\delta_{x_0}[B\cap B_1]\\
    & = & \pr^{x_0,\P;\pi}[\{X_0\in B\}\cap\{X_0\in B_{1}\}]
\end{eqnarray*}
for any $B\in{\cal E}$ and $B_{1}\in{\cal E}$.

(iii): The claim holds true, because
\begin{eqnarray*}
    \lefteqn{\ex^{x_0,\P;\pi}\big[P_{0}\big((X_0,f_0(X_0)),B\big)\eins_{B_1}(X_0)\big]}\\
    & = & \int_\Omega P_{0}\big((X_0(\omega),f_0(X_0(\omega)),B\big)\eins_{B_1}(X_0(\omega))\,\pr^{x_0,\P;\pi}(d\omega)\\
    & = & \int_E\int_E\cdots\int_E P_{0}\big((y_0,f_0(y_0)),B\big)\eins_{B_1}(y_0)\\
   & & \quad P_{N-1}\big((y_{N-1},f_{N-1}(y_{N-1})),dy_N\big)\cdots P_0\big((y_0,f_0(y_0)),dy_1\big)\,\delta_{x_0}(dy_0)\\
    & = & \int_E\int_E \eins_{B}(y_1)\,P_0\big((y_0,f_0(y_0)),dy_1\big)\eins_{B_1}(y_0)\,\delta_{x_0}(dy_0)\\
    & = & \int_E\int_E \eins_{B_1\times B}(y_0,y_1)\,P_0\big((y_0,f_0(y_0)),dy_1\big)\,\delta_{x_0}(dy_0)\\
    & = & \pr^{x_0,\P;\pi}[\{X_1\in B\}\cap\{X_0\in B_1\}]
\end{eqnarray*}
for any $B\in{\cal E}$ and $B_1\in{\cal E}$.

(iv): The claim holds true, because
\begin{eqnarray*}
    \lefteqn{\ex^{x_0,\P;\pi}\big[P_n\big((X_n,f_n(X_n)),B\big)\eins_{B_{n+1}}(X_0,\ldots,X_n)\big]}\\
    & = & \int_\Omega P_n\big((X_n(\omega),f_n(X_n(\omega))),B\big)\eins_{B_{n+1}}(X_0(\omega),\ldots,X_n(\omega))\,\pr^{x_0,\P;\pi}(d\omega)\\
    & = & \int_E\int_E\cdots\int_E P_n\big((y_n,f_n(y_n)),B\big)\eins_{B_{n+1}}(y_0,\ldots,y_n)\\
    & & \quad P_{N-1}\big((y_{N-1},f_{N-1}(y_{N-1})),dy_N\big)\cdots P_0\big((y_0,f_0(y_0)),dy_1\big)\,\delta_{x_0}(dy_0)\\
    & = & \int_E\int_E\cdots\int_E \int_E\eins_{B}(y_{n+1}) P_n\big((y_n,f_n(y_n)),dy_{n+1}\big)\,\eins_{B_{n+1}}(y_0,\ldots,y_n)\\
    & & \quad P_{n-1}\big((y_{n-1},f_{n-1}(y_{n-1})),dy_n\big)\cdots P_0\big((y_0,f_0(y_0)),dy_1\big)\,\delta_{x_0}(dy_0)\\
    & = & \int_E\int_E\cdots\int_E \int_E\eins_{B_{n+1}\times B}(y_0,\ldots,y_n,y_{n+1})\,P_n\big((y_n,f_n(y_n)),dy_{n+1}\big)\\
    & & \quad P_{n-1}\big((y_{n-1},f_{n-1}(y_{n-1})),dy_n\big)\cdots P_0\big((y_0,f_0(y_0)),dy_1\big)\,\delta_{x_0}(dy_0)\\
    & = & \pr^{x_0,\P;\pi}[\{X_{n+1}\in B\}\cap\{(X_0,\ldots,X_n)\in B_{n+1}\}]
\end{eqnarray*}
for any $B\in{\cal E}$ and $B_{n+1}\in{\cal E}^{\otimes(n+1)}$.

(v): As in the proof of (iv) we obtain
$$
    \ex^{x_0,\P;\pi}\big[P_n\big((X_n,f_n(X_n)),B\big)\eins_{B_{1}}(X_n)\big]=\pr^{x_0,\P;\pi}[\{X_{n+1}\in B\}\cap\{X_n\in B_{1}\}]
$$
for any $B\in{\cal E}$ and $B_{1}\in{\cal E}$.

(vi): First of all, it is known from the Chapman--Kolmogorov relation (see, e.g., \cite[p.\,143]{Kallenberg2002}) that the identity
\begin{equation}\label{lemma on X and P - eq 30}
	\pr^{x_0,\P;\pi}_{X_{m}\| X_{j}}(x_{j},\,\bullet\,) = \int_E \pr^{x_0,\P;\pi}_{X_m\|X_l}(y',\,\bullet\,)\,\pr^{x_0,\P;\pi}_{X_l\|X_j}(x_j,dy')
\end{equation}
holds for any $x_j\in E$ and $0\leq j \leq l < m\leq N$. Hence, by iterating (\mbox{\ref{lemma on X and P - eq 30}}) we obtain by means of parts (iii) and (v) as well as (\mbox{\ref{lemma on X and P - eq 10}})
\begin{eqnarray}\label{lemma on X and P - eq 40}
	\lefteqn{\pr^{x_0,\P;\pi}_{X_m\| X_0}(\widetilde{x}_{0},B)} \nonumber \\
	& = & \int_E\cdots\int_E\pr^{x_0,\P;\pi}_{X_m\|X_{m-1}}(y_{m-1},B)\,\pr^{x_0,\P;\pi}_{X_{m-1}\|X_{m-2}}(y_{m-2},dy_{m-1}) \cdots \pr^{x_0,\P;\pi}_{X_1\|X_0}(\widetilde{x}_0,dy_1) \nonumber\\
	& = & \int_E\cdots\int_E P_{m-1}\big((y_{m-1},f_{m-1}(y_{m-1})),B\big)\,P_{m-2}\big((y_{m-2},f_{m-2}(y_{m-2})),dy_{m-1}\big) \nonumber \\
	& &  \quad \cdots P_0\big((x_0,f_0(x_0)),dy_1\big) \nonumber \\
	& = & \int_E\cdots\int_E \eins_B(y_m)\,P_{m-1}\big((y_{m-1},f_{m-1}(y_{m-1})),dy_{m}\big)\cdots P_0\big((x_0,f_0(x_0)),dy_1\big) \nonumber \\
	& = & \pr^{x_0,\P;\pi}_{X_{1}\| X_{0}}\cdots\pr^{x_0,\P;\pi}_{X_m\| X_{m-1}}(x_0,B)
\end{eqnarray}
for any $B\in{\cal E}$. Moreover, as an immediate consequence of the characterization of the (regular version of the) factorized conditional distribution, we have in view of (\mbox{\ref{lemma on X and P - eq 40}}) and part (i)
\begin{eqnarray*}
	\pr^{x_0,\P;\pi}_{X_{m}}[B]
	& = & \int_E \pr^{x_0,\P;\pi}_{X_{m}\| X_{0}}(y',B)\,\pr^{x_0,\P;\pi}_{X_0}(dy')
	\,=\, \int_E \pr^{x_0,\P;\pi}_{X_{1}\| X_{0}}\cdots\pr^{x_0,\P;\pi}_{X_m\| X_{m-1}}(y',B)\,\delta_{x_0}(dy') \\
	& = & \pr^{x_0,\P;\pi}_{X_{1}\| X_{0}}\cdots\pr^{x_0,\P;\pi}_{X_m\| X_{m-1}}(x_0,B)
\end{eqnarray*}
for any $B\in{\cal E}$.

(vii): As in the proof of (vi) we obtain by iterating (\mbox{\ref{lemma on X and P - eq 30}}) along with part (v) and (\mbox{\ref{lemma on X and P - eq 20}})
$$
	\pr^{x_0,\P;\pi}_{X_k\| X_n}(x_n,B) = \pr^{x_0,\P;\pi}_{X_{n+1}\| X_n}\cdots\pr^{x_0,\P;\pi}_{X_{k}\| X_{k-1}}(x_n,B)
$$
for any $B\in{\cal E}$.

(viii): Analogously to the proof of (ii) we obtain by means of part (vi)
$$
\ex^{x_0,\P;\pi}\big[\delta_{X_m}[B]\eins_{B_1}(X_m)\big] = \pr^{x_0,\P;\pi}[\{X_m\in B\}\cap\{X_m\in B_1\}]
$$
for any $B\in{\cal E}$ and $B_1\in{\cal E}$. This completes the proof.
\end{proof}

Note that the factorized conditional distributions in parts (ii)--(iii) and (vi) of Lemma \mbox{\ref{lemma on X and P}} are constant w.r.t.\ $\widetilde{x}_0\in E$. Also note that in view of part (vii) of Lemma \mbox{\ref{lemma on X and P}} the probability measure $\pr^{x_0,\P;\pi}_{X_k\| X_n}(x_n,\,\bullet\,)$ can be seen as a $(k-n)$-step transition probability from stages $n$ to $k$ given state $x_n$.

Recall that $\M(E)$ stands for the set of all $({\cal E},{\cal B}(\R))$-measurable maps in $\R^E$ and that $\ex_{n,x_n}^{x_0,\P;\pi}$ refers to the expectation w.r.t.\ the factorized conditional distribution $\pr^{x_0,\P;\pi}[\,\bullet\,\| X_n=x_n]$. Moreover we denote by $L^1(\Omega,{\cal F},\pr^{x_0,\P;\pi})$ the usual $L^1$-space on $(\Omega,{\cal F},\pr^{x_0,\P;\pi})$.

\begin{lemma}\label{lemma on integrals of functions of X}
Let $x_0\in E$, $\P=(P_n)_{n=0}^{N-1}\in{\cal P}$, and $\pi=(f_n)_{n=0}^{N-1}\in\Pi$. Moreover let $h\in\M(E)$ such that $h(X_n)\in L^1(\Omega,{\cal F},\pr^{x_0,\P;\pi})$ for all $n=0,\ldots,N$. Then for any
$\widetilde{x}_0,x_n\in E$ and $1\le n<k\le N$ as well as $x_m\in E$ and $m=1,\ldots,N$ we have
\begin{enumerate}
\item[{\rm (i)}] $\ex^{x_0,\P;\pi}[h(X_0)] = h(x_0)$.

\item[{\rm (ii)}] $\ex_{0,\widetilde{x}_0}^{x_0,\P;\pi}[h(X_0)] = h(x_0)$.

\item[{\rm (iii)}] $\ex_{m,x_m}^{x_0,\P;\pi}[h(X_m)] = h(x_m)$.

\item[{\rm (iv)}] $\ex_{0,\widetilde{x}_0}^{x_0,\P;\pi}[h(X_m)] = \ex^{x_0,\P;\pi}[h(X_m)] = \int_E h(y_m)\,\pr^{x_0,\P;\pi}_{X_{1}\| X_{0}}\cdots\pr^{x_0,\P;\pi}_{X_m\| X_{m-1}}(x_0,dy_m)$.

\item[{\rm (v)}] $\ex_{n,x_n}^{x_0,\P;\pi}[h(X_k)] = \int_E h(y_k)\,\pr^{x_0,\P;\pi}_{X_{n+1}\| X_n}\cdots\pr^{x_0,\P;\pi}_{X_k\| X_{k-1}}(x_n,dy_k)$.
\end{enumerate}
Moreover the right-hand side of parts (iv) and (v) can be represented as
\begin{eqnarray*}
	\lefteqn{\int_E h(y_m)\,\pr^{x_0,\P;\pi}_{X_{1}\| X_{0}}\cdots\pr^{x_0,\P;\pi}_{X_m\| X_{m-1}}(x_0,dy_m)}\\
	& = & \int_E\cdots\int_E h(y_m)\,P_{m-1}\big((y_{m-1},f_{m-1}(y_{m-1})),dy_m\big)\cdots P_0\big((x_0,f_0(x_0)),dy_1\big)
\end{eqnarray*}
and
\begin{eqnarray*}
	\lefteqn{\int_E h(y_k)\,\pr^{x_0,\P;\pi}_{X_{n+1}\| X_n}\cdots\pr^{x_0,\P;\pi}_{X_k\| X_{k-1}}(x_n,dy_k)}\\
	& = & \int_E\cdots\int_E h(y_k)\,P_{k-1}\big((y_{k-1},f_{k-1}(y_{k-1})),dy_k\big)\cdots P_n\big((x_n,f_n(x_n)),dy_{n+1}\big).
\end{eqnarray*}
\end{lemma}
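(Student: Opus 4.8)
The statement is essentially a bookkeeping result that lifts the kernel identities of Lemma \ref{lemma on X and P} from indicator functions to integrable functions $h\in\M(E)$. My plan is to derive each part directly from the corresponding part of Lemma \ref{lemma on X and P} together with the standard characterization of the factorized conditional expectation $\ex^{x_0,\P;\pi}_{n,x_n}[\,\cdot\,]$ as the integral against the kernel $\pr^{x_0,\P;\pi}_{X_\bullet\|X_n}(x_n,\,\cdot\,)$. The key point is that for a probability kernel $\kappa(x,\,\cdot\,)$ and an integrable $h$, the map $x\mapsto\int_E h\,d\kappa(x,\,\cdot\,)$ is (a version of) the conditional expectation; this is exactly the defining property, and it is what converts the set-level statements $\pr^{x_0,\P;\pi}_{X_k\|X_n}(x_n,B)=\cdots$ into expectation-level statements about $h(X_k)$.

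First I would dispose of parts (i)--(iii), which are degenerate cases. Part (i) follows from Lemma \ref{lemma on X and P}(i): since $X_0$ has distribution $\delta_{x_0}$ under $\pr^{x_0,\P;\pi}$, we get $\ex^{x_0,\P;\pi}[h(X_0)]=\int_E h\,d\delta_{x_0}=h(x_0)$. Parts (ii) and (iii) follow in the same way from Lemma \ref{lemma on X and P}(ii) and (viii), respectively, using that $\pr^{x_0,\P;\pi}_{X_0\|X_0}(\widetilde x_0,\,\bullet\,)=\delta_{x_0}[\,\bullet\,]$ and $\pr^{x_0,\P;\pi}_{X_m\|X_m}(x_m,\,\bullet\,)=\delta_{x_m}[\,\bullet\,]$, so that integrating $h$ against a Dirac measure evaluates it at the atom. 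The integrability hypothesis $h(X_n)\in L^1(\Omega,{\cal F},\pr^{x_0,\P;\pi})$ for all $n$ is what guarantees all these integrals exist and are finite, and that the factorized conditional expectations are well defined $\pr^{x_0,\P;\pi}_{X_n}$-a.s.

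For the substantive parts (iv) and (v), I would use the kernel representations from Lemma \ref{lemma on X and P}(vi)--(vii), namely that $\pr^{x_0,\P;\pi}_{X_m\|X_0}$ and $\pr^{x_0,\P;\pi}_{X_k\|X_n}$ coincide with the iterated kernel compositions given explicitly in (\ref{lemma on X and P - eq 10}) and (\ref{lemma on X and P - eq 20}). Writing $\ex^{x_0,\P;\pi}_{n,x_n}[h(X_k)]=\int_E h(y_k)\,\pr^{x_0,\P;\pi}_{X_k\|X_n}(x_n,dy_k)$ and inserting the composition yields the claimed iterated-integral formulas; for (iv) one additionally uses part (i) (that $X_0=x_0$ a.s.) to drop the dependence on $\widetilde x_0$, recovering the first equality $\ex^{x_0,\P;\pi}_{0,\widetilde x_0}[h(X_m)]=\ex^{x_0,\P;\pi}[h(X_m)]$. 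The passage from the set identities of Lemma \ref{lemma on X and P} (which are stated for indicators $\eins_B$) to general $h$ is the one place requiring a genuine measure-theoretic argument: I would run the standard algebraic-induction / monotone-class scheme — verify the identity for indicators (this is exactly Lemma \ref{lemma on X and P}), extend by linearity to simple functions, pass to nonnegative $h$ by monotone convergence, and finally split a general integrable $h=h^+-h^-$, the integrability hypothesis ensuring both pieces are finite so the subtraction is legitimate. The main (and only) obstacle is organizing this approximation argument so that the iterated integrals on the right-hand side are handled with the correct order of integration; this is justified by Fubini/Tonelli for kernels (each inner integral is finite by the bounding/integrability assumption), but it must be invoked layer by layer rather than all at once, since the kernels are not product measures.
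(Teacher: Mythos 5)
Your proposal is correct and follows essentially the same route as the paper's proof: parts (i)--(iii) via the Dirac-measure identities of Lemma \ref{lemma on X and P}(i), (ii), (viii), and parts (iv)--(v) by combining the factorized-conditional-expectation representation with the kernel compositions of Lemma \ref{lemma on X and P}(vi)--(vii), extended from indicators to integrable $h$ by the standard linearity/monotone-convergence/$h=h^+-h^-$ scheme. The paper's proof is exactly this, so no further comparison is needed.
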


\begin{proof}
First of all, it is easily seen that the identities
\begin{equation}\label{formula fact cond exp - eq 10}
\ex^{x_0,\P;\pi}[h(X_{m})] = \int_E h(y')\,\pr^{x_0,\P;\pi}_{X_m}(dy')
\end{equation}
and
\begin{equation}\label{formula fact cond exp - eq 20}
\ex_{j,x_j}^{x_0,\P;\pi}[h(X_{m})] = \int_E h(y')\,\pr^{x_0,\P;\pi}_{X_m\|X_j}(x_j,dy')
\end{equation}
hold for any $x_j\in E$ and $0\le j \le m\le N$. 

(i): The claim is an immediate consequence of (\mbox{\ref{formula fact cond exp - eq 10}}) and part (i) of Lemma \mbox{\ref{lemma on X and P}}.

(ii)--(iii): The assertions follow from (\mbox{\ref{formula fact cond exp - eq 20}}) along with parts (ii) and (viii) of Lemma \mbox{\ref{lemma on X and P}}, respectively.

(iv): For the assertions it suffices in view of (\mbox{\ref{formula fact cond exp - eq 10}})--(\mbox{\ref{formula fact cond exp - eq 20}}) to show that
\begin{equation}\label{formula fact cond exp - eq 30}
\int_E h(y_m)\,\pr^{x_0,\P;\pi}_{X_m\|X_0}(\widetilde{x}_0,dy_m) = \int_E h(y_m)\,\pr^{x_0,\P;\pi}_{X_{1}\| X_{0}}\cdots\pr^{x_0,\P;\pi}_{X_m\| X_{m-1}}(x_0,dy_m)
\end{equation}
and
\begin{equation}\label{formula fact cond exp - eq 40}
\int_E h(y_m)\,\pr^{x_0,\P;\pi}_{X_m}(dy_m) = \int_E h(y_m)\,\pr^{x_0,\P;\pi}_{X_{1}\| X_{0}}\cdots\pr^{x_0,\P;\pi}_{X_m\| X_{m-1}}(x_0,dy_m).
\end{equation}
Clearly, in view of part (vi) of Lemma \mbox{\ref{lemma on X and P}}, the assertions in (\mbox{\ref{formula fact cond exp - eq 30}}) and (\mbox{\ref{formula fact cond exp - eq 40}}) are valid for indicator functions and thus by linearity for simple functions. The latter assertions can be extended by the Monotone Convergence theorem to arbitrary nonnegative maps $h\in\M(E)$. Since the integrals on the left-hand sides of (\mbox{\ref{formula fact cond exp - eq 30}}) and (\mbox{\ref{formula fact cond exp - eq 40}}) exist and are finite (recall that $h(X_n)\in L^1(\Omega,{\cal F},\pr^{x_0,\P;\pi})$ for all $n=0,\ldots,N$ by assumption), it follows that the equalities in (\mbox{\ref{formula fact cond exp - eq 30}}) and (\mbox{\ref{formula fact cond exp - eq 40}}) hold even for all $h\in\M(E)$.

(v): Analogously to the proof of (\mbox{\ref{formula fact cond exp - eq 30}}) we obtain by means of (\mbox{\ref{formula fact cond exp - eq 20}})
$$
\ex_{n,x_n}^{x_0,\P;\pi}[h(X_k)] = \int_E h(y_k)\,\pr^{x_0,\P;\pi}_{X_{n+1}\| X_n}\cdots\pr^{x_0,\P;\pi}_{X_k\| X_{k-1}}(x_n,dy_k).
$$

The additional assertions can be verified easily by means of (\mbox{\ref{lemma on X and P - eq 10}}) and (\mbox{\ref{lemma on X and P - eq 20}}) with the same arguments as in the proof of (\mbox{\ref{formula fact cond exp - eq 30}}) and (\mbox{\ref{formula fact cond exp - eq 40}}). This completes the proof.
\end{proof}

Note that (for any given $x_0\in E$, $\P\in{\cal P}$, and $\pi\in\Pi$) the assumption $h(X_n)\in L^1(\Omega,{\cal F},\pr^{x_0,\P;\pi})$ (for some $h\in\M(E)$ and any $n=0,\ldots,N$) is not trivially satisfied. It holds, for example, if $\psi$ is a bounding function for the MDM $(\boldsymbol{X},\boldsymbol{A},\P,\Pi,\boldsymbol{r})$ (in the sense of Definition \mbox{\ref{def bounding function}}  with ${\cal P}':=\{\P\}$) and if $h\in\M_\psi(E)$ (with $\M_\psi(E)$ as in Subsection \mbox{\ref{Subsec - Bounding functions}}). In this case it can be easily verified by means of part (c) of Definition \mbox{\ref{def bounding function}}  (with ${\cal P}':=\{\P\}$) that indeed $h(X_n)\in L^1(\Omega,{\cal F},\pr^{x_0,\P;\pi})$ for all $n=0,\ldots,N$.


\section{Supplement: Proofs of lemmas in Section \mbox{\ref{Sec - Hadamard differentiability chapter}} }\label{Sec - Proof of results from Section 4}


\subsection{Proof of Lemma \mbox{\ref{lemma suff cond for stand cond - pre}} }\label{Subsec - proof lemma suff cond for stand cond - pre}


Fix $x_0\in E$.
By assumption there exist finite constants $K_1,K_3>0$ such that in view of part (v) of Lemma \mbox{\ref{lemma on integrals of functions of X}} as well as parts (a) and (c) of Definition \mbox{\ref{def bounding function}}
\begin{eqnarray*}
    \lefteqn{\ex_{n,x_n}^{x_0,\P;\pi}\big[|r_k(X_k,f_k(X_k))|\big]} \nonumber\\
    & \le & \ex_{n,x_n}^{x_0,\P;\pi}[K_1\psi(X_k)] \nonumber\\
    & = & K_1\int_E\cdots\int_E\int_E\psi(y_k)\,P_{k-1}\big((y_{k-1},f_{k-1}(y_{k-1})),dy_k\big) \nonumber\\
    & & \qquad P_{k-2}\big((y_{k-2},f_{k-2}(y_{k-2})),dy_{k-1}\big)\cdots P_n\big((x_n,f_n(x_n)),dy_{n+1}\big) \nonumber\\
    & \le  & K_1K_3^{k-n}\psi(x_n)
\end{eqnarray*}
for any $x_n\in E$, $\P=(P_n)_{n=0}^{N-1}\in{\cal P}'$, $\pi=(f_n)_{n=0}^{N-1}\in\Pi$, and $1\le n< k\le N-1$. Moreover in view of part (iii) of Lemma \mbox{\ref{lemma on integrals of functions of X}} and part (a) of Definition \mbox{\ref{def bounding function}}  we have
$$
    \ex_{n,x_n}^{x_0,\P;\pi}\big[|r_n(X_n,f_n(X_n))|\big]\,=\,|r_n(x_n,f_n(x_n))|\,\le\, K_1\psi(x_n)
$$
for any $x_n\in E$, $\P\in{\cal P}'$, $\pi=(f_n)_{n=0}^{N-1}\in\Pi$, and $n=1,\ldots,N-1$. Similarly, we find by assumption some finite constant $K_2>0$ such that in view of parts (iii) and (v) of Lemma \mbox{\ref{lemma on integrals of functions of X}} as well as parts (b) and (c) of Definition \mbox{\ref{def bounding function}}
$$
    \ex_{n,x_n}^{x_0,\P;\pi}\big[|r_N(X_N)|\big]\,\le\,K_2K_3^{N-n}\psi(x_n)
$$
for any $x_n\in E$, $\P\in{\cal P}'$, $\pi\in\Pi$, and $n=1,\ldots,N$. In the same way we obtain with parts (ii) and (iv) of Lemma \mbox{\ref{lemma on integrals of functions of X}} and the characteristic properties of the bounding function $\psi$
$$
	\ex_{0,x_0}^{x_0,\P;\pi}\big[|r_k(X_k,f_k(X_k))|\big]\,\le\, K_1K_3^k\psi(x_0)
$$
and
$$
	\ex_{0,x_0}^{x_0,\P;\pi}\big[|r_N(X_N)|\big]\,\le\,K_2K_3^{N}\psi(x_0)
$$
for any $\P\in{\cal P}'$, $\pi=(f_n)_{n=0}^{N-1}\in\Pi$, and $k=0,\ldots,N-1$. Then Assumption {\bf (A)}  holds (uniformly in $\P\in{\cal P}'$).
Moreover by choosing $C_n:=K_1\sum_{k=n}^{N-1}K_3^{k-n} + K_2K_3^{N-n}$ we have $\|V_n^{\P;\pi}\|_{\psi}\le C_n$ and hence $V_n^{\P;\pi}(\cdot)\in\M_\psi(E)$ for every $\P\in{\cal P}'$, $\pi\in\Pi$, and $n=0,\ldots,N$. This completes the proof. \hfill\proofendsign


\subsection{Proof of Lemma \mbox{\ref{lemma HD characterization}} }\label{Subsec - Proof Lemma hd characterization}


(i): Let ${\cal V}$ be `Hadamard differentiable' at $\P$ w.r.t.\ $(\M,\phi)$ with `Hadamard derivative' $\dot{\cal V}_{\P}$. To show that (\mbox{\ref{HD characterization}})  holds, pick a triplet $(\Q, (\Q_m), (\varepsilon_m))\in{\cal P}_\psi\times{\cal P}_\psi^\N\times(0,1]^\N$ with $d_{\infty,\M}^{\phi}(\Q_m,\Q)\to 0$ and $\varepsilon_m\to 0$. Then, the set ${\cal K}:=\{\Q_m:m\in\N\}\,(\subseteq{\cal P}_\psi)$ is clearly relatively compact. Using this and the assumption we obtain
\begin{eqnarray*}
    \lefteqn{\limsup_{m\to\infty}\Big\|\frac{{\cal V}(\P+\varepsilon_m(\Q_m-\P))-{\cal V}(\P)}{\varepsilon_m}-\dot{\cal V}_{\P}(\Q-\P)\Big\|_L}\\
    & \le & \limsup_{m\to\infty}\Big\|\frac{{\cal V}(\P+\varepsilon_m(\Q_m-\P))-{\cal V}(\P)}{\varepsilon_m}-\dot{\cal V}_{\P}(\Q_m-\P)\Big\|_L\\
    & & +\,\limsup_{m\to\infty}\big\|\dot{\cal V}_{\P}(\Q_m-\P)-\dot{\cal V}_{\P}(\Q-\P)\big\|_L\\
    & = & 0+0\,=\,0.
\end{eqnarray*}

(ii): Assume that there exists an $(\M,\phi)$-continuous map $\dot{\cal V}_{\P}:{\cal P}_\psi^{\P;\pm}\rightarrow L$ such that (\mbox{\ref{HD characterization}})  holds for each triplet $(\Q, (\Q_m), (\varepsilon_m))\in{\cal P}_\psi\times{\cal P}_\psi^\N\times(0,1]^\N$ with $d_{\infty,\M}^{\phi}(\Q_m,\Q)\to 0$ and $\varepsilon_m\to 0$. Assume by way of contradiction that $\dot{\cal V}_{\P}$ is {\em not} the `Hadamard derivative' of ${\cal V}$ at $\P$ w.r.t.\ $(\M,\phi)$, i.e.\ that there is some relatively compact set ${\cal K}\subseteq{\cal P}_\psi$ and a sequence $(\varepsilon_m)\in(0,1]^\N$ with $\varepsilon_m\to 0$ such that (\mbox{\ref{def eq for S-like D}})  does {\em not} hold uniformly in $\Q\in{\cal K}$. Then there exist $\delta>0$ and $(\Q_m)\in{\cal K}^\N$ such that
\begin{equation}\label{hadamard differentiability consequence - PROOF - 10}
    \Big\|\frac{{\cal V}(\P+\varepsilon_m(\Q_m-\P))-{\cal V}(\P)}{\varepsilon_m} - \dot{\cal V}_{\P}(\Q_m-\P)\Big\|_L\ge\delta\quad\mbox{for all }m\in\N.
\end{equation}
Since ${\cal K}$ is relatively compact, we can find a subsequence $(\Q_m')$ of $(\Q_m)$ such that $d_{\infty,\M}^{\phi}(\Q_m',\Q')\to 0$ for some $\Q'\in{\cal P}_\psi$. Along with the $(\M,\phi)$-continuity of the map $\dot{\cal V}_{\P}:{\cal P}_\psi^{\P;\pm}\rightarrow L$ and (\mbox{\ref{hadamard differentiability consequence - PROOF - 10}}) (with $\Q_m$ replaced by $\Q_m'$), we obtain
\begin{eqnarray*}
    \lefteqn{\liminf_{m\to\infty}\Big\|\frac{{\cal V}(\P+\varepsilon_m(\Q_m'-\P))-{\cal V}(\P)}{\varepsilon_m} - \dot{\cal V}_{\P}(\Q'-\P)\Big\|_L}\\
    & = & \liminf_{m\to\infty}\Big\|\frac{{\cal V}(\P+\varepsilon_m(\Q_m'-\P))-{\cal V}(\P)}{\varepsilon_m} - \dot{\cal V}_{\P}(\Q_m'-\P)\Big\|_L\\
    & & +~ \liminf_{m\to\infty}\big\|\dot{\cal V}_{\P}(\Q_m'-\P) - \dot{\cal V}_{\P}(\Q'-\P)\big\|_L\\
    & = & \liminf_{m\to\infty}\Big\|\frac{{\cal V}(\P+\varepsilon_m(\Q_m'-\P))-{\cal V}(\P)}{\varepsilon_m} - \dot{\cal V}_{\P}(\Q_m'-\P)\Big\|_L + 0 ~\ge~\delta
\end{eqnarray*}
which contradicts the assumption (\mbox{\ref{HD characterization}}). The proof is now complete.
\hfill\proofendsign


\section{Supplement: Proof of Theorem \mbox{\ref{Thm - hadamard cal v}} } \label{Sec - Proof of Theorem HD}


Under assumption (a) of Theorem \mbox{\ref{Thm - hadamard cal v}}, the value functional ${\cal V}_n^{x_n}$ admits for any $x_n\in E$ and $n=0,\ldots,N$ the representation
\begin{equation}\label{representation cal v}
	{\cal V}_n^{x_n} = \Psi \circ \Upsilon_n^{x_n}
\end{equation}
with maps $\Upsilon_n^{x_n}:{\cal P}_\psi\rightarrow \ell^\infty(\Pi)$ and $\Psi:\ell^\infty(\Pi)\rightarrow \R$ defined by
\begin{equation}\label{def of Upsilon}
	\Upsilon_n^{x_n}(\P):=\big({\cal V}_n^{x_n;\pi}(\P)\big)_{\pi\in\Pi}\quad\mbox{ and }\quad \Psi\big((w(\pi))_{\pi\in\Pi}\big):=\sup_{\pi\in\Pi}w(\pi),
\end{equation}
where $\ell^\infty(\Pi)$ stands for the space of all bounded real-valued functions on $\Pi$ equipped with the  sup-norm $\|\cdot\|_{\infty}$. It is easily seen that assumption (a) ensures that the map $\Upsilon_n^{x_n}$ is well defined for any $x_n\in E$ and $n=0,\ldots,N$, i.e.\ that $({\cal V}_n^{x_n;\pi}(\P))_{\pi\in\Pi}\in\ell^\infty(\Pi)$ for any $x_n\in E$, $\P\in{\cal P}_\psi$, and $n=0,\ldots,N$; see Lemma \mbox{\ref{lemma suff cond for stand cond - pre}}  (with ${\cal P}':=\{\P\}$). In Subsection \mbox{\ref{Subsec - differentiablility of upsilon}} we will show that under the assumptions of Theorem \mbox{\ref{Thm - hadamard cal v}}  and for any $x_n\in E$ and $n=0,\ldots,N$ the map $\Upsilon_n^{x_n}$ is `Fr\'echet differentiable' at $\P$ w.r.t.\ $(\M,\psi)$ with `Fr\'echet derivative' $\dot\Upsilon_{n;\P}^{x_n}:{\cal P}_\psi^{\P;\pm}\rightarrow\ell^\infty(\Pi)$ given by
\begin{equation}\label{def derivative of Upsilon}
    \dot\Upsilon_{n;\P}^{x_n}(\Q-\P):=\big(\dot{\cal V}_{n;\P}^{x_n;\pi}(\Q-\P)\big)_{\pi\in\Pi}
\end{equation}
(the well-definiteness of $\dot\Upsilon_{n;\P}^{x_n}$ is again ensured by assumption (a)).
Together with the Hadamard differentiability of the map $\Psi$ (which is known from \cite{Roemisch2004}), this implies assertion (ii) of Theorem \mbox{\ref{Thm - hadamard cal v}} ; see Subsection \mbox{\ref{Subsec - hadamard cal v}} for details. Assertion (i) of Theorem \mbox{\ref{Thm - hadamard cal v}}  is an immediate consequence of Theorem \mbox{\ref{Thm - hadamard upsilon}} below.
\hfill\proofendsign


\subsection{`Fr\'echet differentiability' of $\Upsilon_n^{x_n}$}\label{Subsec - differentiablility of upsilon}


The following theorem is a direct consequence of Lemmas \mbox{\ref{lemma continuity hd}} and \mbox{\ref{lemma diff quot hd}} ahead.

\begin{theorem}\label{Thm - hadamard upsilon}
Let $\M\subseteq\M_\psi(E)$, and fix $\P\in{\cal P}_\psi$. Let $\M'$ be any generator of $d_\M$ and assume that conditions (a)--(c) of Theorem \mbox{\ref{Thm - hadamard cal v}}  (with this $\M'$) hold. Then for any $x_n\in E$ and $n=0,\ldots,N$ the map $\Upsilon_n^{x_n}:{\cal P}_\psi\rightarrow \ell^\infty(\Pi)$ defined by (\mbox{\ref{def of Upsilon}}) is `Fr\'echet differentiable' at $\P$ w.r.t.\ $(\M,\psi)$ with `Fr\'echet derivative' $\dot\Upsilon_{n;\P}^{x_n}:{\cal P}_\psi^{\P;\pm}\rightarrow\ell^\infty(\Pi)$ given by (\mbox{\ref{def derivative of Upsilon}}).
\end{theorem}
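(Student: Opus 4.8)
The plan is to verify the two defining requirements of `Fr\'echet differentiability' (Definition \ref{def Gateaux-Hadamard-Frechet differentiability}(c), i.e.\ Definition \ref{def S differentiability} with ${\cal S}={\cal S}_{\rm b}$) for the candidate derivative $\dot\Upsilon_{n;\P}^{x_n}$ from (\ref{def derivative of Upsilon}): first, that $\dot\Upsilon_{n;\P}^{x_n}:{\cal P}_\psi^{\P;\pm}\to\ell^\infty(\Pi)$ is well defined and $(\M,\psi)$-continuous (this will be Lemma \ref{lemma continuity hd}), and second, that for every bounded ${\cal K}\subseteq{\cal P}_\psi$ and every null sequence $(\varepsilon_m)$ the difference quotients converge to $\dot\Upsilon_{n;\P}^{x_n}$ uniformly in $\Q\in{\cal K}$, where convergence in the $\ell^\infty(\Pi)$-norm already encodes uniformity over $\pi\in\Pi$ (this will be Lemma \ref{lemma diff quot hd}). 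The common workhorse is the elementary bound $|\int_E h\,d\mu-\int_E h\,d\nu|\le\rho_{\M'}(h)\,d_\M(\mu,\nu)$ for $h\in\M_\psi(E)$ (immediate from the definition of the Minkowski functional together with the fact that $\M'$ generates $d_\M$), combined with $d_\M(P_k((x,a),\bullet),Q_k((x,a),\bullet))\le\psi(x)\,d_{\infty,\M}^{\psi}(\P,\Q)$ coming from (\ref{def metric transition functions}).

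For continuity I would use Representation~I, i.e.\ formula (\ref{hadamard derivative ex tot costs - II}), which exhibits $\dot{\cal V}_{n;\P}^{x_n;\pi}$ as a finite sum over $k=n,\ldots,N-1$ of nested integrals in which $V_{k+1}^{\P;\pi}$ is integrated against the single signed kernel $(Q_k-P_k)((y_k,f_k(y_k)),\bullet)$ and then transported down to $x_n$ by the fixed kernels $P_{k-1},\ldots,P_n$. This expression is linear in the direction, so for $\Q,\widetilde\Q\in{\cal P}_\psi$ the innermost integral of the $k$-th summand is bounded, by the two inequalities above, by $\rho_{\M'}(V_{k+1}^{\P;\pi})\,\psi(y_k)\,d_{\infty,\M}^{\psi}(\Q,\widetilde\Q)$; the remaining integrations against $P_{k-1},\ldots,P_n$ propagate the factor $\psi$ downward using the bounding-function inequality $\int_E\psi\,dP_i((x,a),\bullet)\le K_3\psi(x)$, producing $K_3^{\,k-n}\psi(x_n)$. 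Summing over $k$ and passing to the supremum over $\pi$, conditions (b) (so that $\sup_\pi\rho_{\M'}(V_{k+1}^{\P;\pi})<\infty$) and (c) (so that $\rho_{\M'}(\psi)<\infty$) yield a Lipschitz estimate $\|\dot\Upsilon_{n;\P}^{x_n}(\Q-\P)-\dot\Upsilon_{n;\P}^{x_n}(\widetilde\Q-\P)\|_\infty\le C\,\psi(x_n)\,d_{\infty,\M}^{\psi}(\Q,\widetilde\Q)$ with $C$ independent of $\pi$; taking $\widetilde\Q=\P$ shows simultaneously that $\dot\Upsilon_{n;\P}^{x_n}(\Q-\P)\in\ell^\infty(\Pi)$, and the estimate gives the required continuity.

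For the difference quotient I would exploit that $V_n^{\P_\varepsilon;\pi}(x_n)$, with $\P_\varepsilon:=\P+\varepsilon(\Q-\P)$, depends on the perturbed kernels $P_{i,\varepsilon}=P_i+\varepsilon(Q_i-P_i)$ only through compositions, and apply a telescoping (Abel-type) identity to the product of the one-step operators. This writes $\tfrac{1}{\varepsilon}\big(V_n^{\P_\varepsilon;\pi}-V_n^{\P;\pi}\big)(x_n)=\sum_{k=n}^{N-1}I_k^{\varepsilon,\pi,\Q}(x_n)$, where $I_k^{\varepsilon,\pi,\Q}$ integrates $\int_E V_{k+1}^{\P;\pi}(y)\,(Q_k-P_k)((\cdot,f_k(\cdot)),dy)$ down to $x_n$ through the \emph{perturbed} kernels $P_{n,\varepsilon},\ldots,P_{k-1,\varepsilon}$, whereas the $k$-th summand of $\dot{\cal V}_{n;\P}^{x_n;\pi}$ is the same object through the \emph{unperturbed} kernels. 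Subtracting and telescoping once more turns each difference of kernel products into a further factor $\varepsilon$ times a bounded transport operator; bounding the inner integral by $\rho_{\M'}(V_{k+1}^{\P;\pi})\,\psi\,d_{\infty,\M}^{\psi}(\P,\Q)$ and propagating $\psi$ through both the $P_i$ and the $P_{i,\varepsilon}$ (each of which lies in ${\cal P}_\psi$ and hence satisfies the bounding inequality with the same $K_3$, since ${\cal P}_\psi$ is closed under mixtures and $\psi$ is a bounding function for every member by (a)) shows that each summand is bounded by $\varepsilon\,C'\,\psi(x_n)\,d_{\infty,\M}^{\psi}(\P,\Q)$. As $d_{\infty,\M}^{\psi}(\P,\Q)\le\delta<\infty$ uniformly over the bounded set ${\cal K}$ and the constants $C'$ depend only on $N$, on $K_1,K_2,K_3$, and on $\sup_\pi\rho_{\M'}(V_{k+1}^{\P;\pi})$, the whole remainder is $O(\varepsilon)$ uniformly in both $\pi\in\Pi$ and $\Q\in{\cal K}$, which is exactly the required convergence.

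The main obstacle is securing these estimates \emph{simultaneously} uniformly in $\pi$ and in $\Q$. The uniformity in $\pi$ is forced by the $\ell^\infty(\Pi)$-norm and is precisely what conditions (b) and (c) purchase, since they render the Minkowski factors $\rho_{\M'}(V_{k+1}^{\P;\pi})$ and $\rho_{\M'}(\psi)$ bounded over all strategies; the uniformity in $\Q$ over bounded sets rests on $d_{\infty,\M}^{\psi}(\P,\Q)$ being bounded there, together with the crucial point that the \emph{same} constant $K_3$ controls $\psi$-integration against every transition function in ${\cal P}_\psi$, including all mixtures $\P_\varepsilon$. Care is also needed to group the telescoped terms so that the reward-carrying innermost integral is always of the form $\int V_{k+1}^{\P;\pi}\,d(Q_k-P_k)$, so that condition (b) applies directly and no a priori bound on $\rho_{\M'}(r_m)$ is required.
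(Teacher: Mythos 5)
Your overall architecture coincides with the paper's: the theorem is split into an $(\M,\psi)$-continuity statement for the candidate derivative (the paper's Lemma \mbox{\ref{lemma continuity hd}}) and a statement that the difference quotients converge in $\ell^\infty(\Pi)$ uniformly over bounded sets of directions (the paper's Lemma \mbox{\ref{lemma diff quot hd}}), and your continuity argument --- Representation~I, the Minkowski-functional bound, propagation of $\psi$ via the bounding-function inequality, conditions (a) and (b) --- is essentially the paper's (condition (c) is not actually needed in that step; the third factor $\sup_\pi\ex_{n,x_n}^{x_0,\P;\pi}[\psi(X_k)]$ is controlled by condition (a) alone). Where you genuinely diverge is in the remainder estimate: the paper substitutes $P_j+\varepsilon_m(Q_j-P_j)$ into every slot of the nested integrals and expands multilinearly over the subsets $J$ of perturbed indices, identifying the $|J|=1$ terms with the derivative and bounding the $|J|\ge 2$ terms by $\varepsilon_m^{|J|-1}$ times products of bounding constants. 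Your double telescoping of the one-step operators reaches the same conclusion and is arguably tidier, since each remainder term automatically carries exactly one extra factor of $\varepsilon$ and the reward-carrying inner integral is always of the form $\int V_{k+1}^{\P;\pi}\,d(Q_k-P_k)$, so that condition (b) applies directly, as you note.

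There is, however, one genuine gap. In the remainder estimate you need a bound $\int_E\psi(y)\,Q_j((x,a),dy)\le K\,\psi(x)$ with a constant $K$ that is uniform over all $\Q\in{\cal K}$ (and hence over all mixtures $\P_\varepsilon=\P+\varepsilon(\Q-\P)$), and you justify this by asserting that the \emph{same} constant $K_3$ controls $\psi$-integration against every transition function in ${\cal P}_\psi$ ``by (a)''. Condition (a) does not deliver this: it states that $\psi$ is a bounding function for each single MDM $(\boldsymbol{X},\boldsymbol{A},\Q,\Pi,\boldsymbol{r})$ separately, and Definition \mbox{\ref{def bounding function}} applied to the singleton $\{\Q\}$ allows the constants to depend on $\Q$; over all of ${\cal P}_\psi$ no uniform constant need exist. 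The uniformity you need is available only over \emph{bounded} sets ${\cal K}$, and obtaining it is precisely the job of condition (c): from
$\int_E\psi\,dQ_n((x,a),\cdot)\le\bigl|\int_E\psi\,d(Q_n-P'_n)((x,a),\cdot)\bigr|+\int_E\psi\,dP'_n((x,a),\cdot)\le\rho_{\M'}(\psi)\,d_{\infty,\M}^{\psi}(\Q,\P')\,\psi(x)+K_3\psi(x)$
one gets the uniform constant $\widetilde K_3=\rho_{\M'}(\psi)\,\delta+K_3$ over ${\cal K}$; this is exactly the paper's Lemma \mbox{\ref{lemma char bounding function d-bounded sets}}. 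So condition (c), which you deploy only in the continuity step where it is dispensable, is in fact the ingredient that secures the uniformity in $\Q$ of your $O(\varepsilon)$ bound; without this intermediate step the claimed uniform convergence over ${\cal K}$ does not follow from what you have written.
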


\begin{lemma}\label{lemma continuity hd}
Under the assumptions of Theorem \mbox{\ref{Thm - hadamard upsilon}} and for any fixed $x_n\in E$ and $n=0,\ldots,N$, the map $\dot\Upsilon_{n;\P}^{x_n}:{\cal P}_\psi^{\P;\pm}\rightarrow\ell^\infty(\Pi)$ given by (\mbox{\ref{def derivative of Upsilon}}) is $(\M,\psi)$-continuous.
\end{lemma}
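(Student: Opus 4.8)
The plan is to exploit Representation I of the derivative in (\ref{hadamard derivative ex tot costs - II}) together with the fact that $\dot{\cal V}_{n;\P}^{x_n;\pi}(\Q-\P)$ depends on $\Q$ only \emph{linearly} and only through the signed kernels $Q_k-P_k$. By Definition \ref{def continuity functional} (with $\phi=\psi$ and $L=\ell^\infty(\Pi)$), it suffices to show that $d_{\infty,\M}^{\psi}(\Q_m,\Q)\to 0$ forces $\|\dot\Upsilon_{n;\P}^{x_n}(\Q_m-\P)-\dot\Upsilon_{n;\P}^{x_n}(\Q-\P)\|_\infty\to 0$, i.e.\ that the estimate is uniform over $\pi\in\Pi$. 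First I would note that, by linearity of (\ref{hadamard derivative ex tot costs - II}) in the $\Q$-part, the difference $\dot{\cal V}_{n;\P}^{x_n;\pi}(\Q_m-\P)-\dot{\cal V}_{n;\P}^{x_n;\pi}(\Q-\P)$ equals the same nested-integral expression but with $(Q_k-P_k)$ replaced by $(Q_{m,k}-Q_k)$, where $\Q_m=(Q_{m,k})_k$ and $\Q=(Q_k)_k$.

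The core estimate is then applied to the innermost integral $\int_E V_{k+1}^{\P;\pi}(y_{k+1})\,(Q_{m,k}-Q_k)((y_k,f_k(y_k)),dy_{k+1})$. Here I would invoke the Minkowski-functional inequality associated with the generator $\M'$: for every $h\in\M_\psi(E)$ and $\mu,\nu\in{\cal M}_1^\psi(E)$ one has $|\int_E h\,d\mu-\int_E h\,d\nu|\le\rho_{\M'}(h)\,d_\M(\mu,\nu)$, a direct consequence of (\ref{def minkowski functional}) and of $d_{\M'}=d_\M$. Applying this with $h=V_{k+1}^{\P;\pi}$ (which lies in $\M_\psi(E)$ by Lemma \ref{lemma suff cond for stand cond - pre} under condition (a)) and then bounding $d_\M$ via the definition (\ref{def metric transition functions}) of $d_{\infty,\M}^{\psi}$, the innermost integral is dominated by $\rho_{\M'}(V_{k+1}^{\P;\pi})\,\psi(y_k)\,d_{\infty,\M}^{\psi}(\Q_m,\Q)$.

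Next I would propagate this bound back through the remaining $\P$-kernels $P_{k-1},\dots,P_n$. Each integration of $\psi$ against a one-step kernel is controlled by property (c) of the bounding function $\psi$ (Definition \ref{def bounding function}, valid for $\P$ by condition (a) with $\Q:=\P$), which contributes a factor $K_3$ and lowers the index of $\psi$ by one; after $k-n$ such steps one reaches $K_3^{k-n}\psi(x_n)$. Summing over $k=n,\dots,N-1$ yields
\[
    \big|\dot{\cal V}_{n;\P}^{x_n;\pi}(\Q_m-\P)-\dot{\cal V}_{n;\P}^{x_n;\pi}(\Q-\P)\big|\,\le\,\psi(x_n)\,d_{\infty,\M}^{\psi}(\Q_m,\Q)\sum_{k=n}^{N-1}\rho_{\M'}(V_{k+1}^{\P;\pi})\,K_3^{k-n}.
\]
The decisive point is that this is \emph{uniform in} $\pi$: taking the supremum over $\pi\in\Pi$ and using condition (b) of Theorem \ref{Thm - hadamard cal v} (which bounds $\sup_{\pi\in\Pi}\rho_{\M'}(V_{k+1}^{\P;\pi})$ for each $k+1=1,\dots,N$) turns the sum into a finite constant depending only on $\P$, $n$, $x_n$. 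Since $\psi(x_n)$ is likewise fixed, the $\ell^\infty(\Pi)$-norm on the left is bounded by a constant multiple of $d_{\infty,\M}^{\psi}(\Q_m,\Q)$, whence $(\M,\psi)$-continuity (in fact Lipschitz continuity) follows. The case $n=N$ is vacuous, since the derivative is then identically zero.

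I expect the main obstacle to be organizing the uniformity over $\pi$ correctly: the whole reason for working in $\ell^\infty(\Pi)$ is that the estimate must not degrade as $\pi$ varies, and this is exactly what condition (b) secures. A secondary subtlety, already flagged in the discussion before Theorem \ref{Thm - hadamard cal v}, is that the gauge function in the metric cannot be taken steeper than $\psi$: the factor $\psi(y_k)$ produced by (\ref{def metric transition functions}) must be reabsorbed by the bounding-function inequality (c), and this reabsorption requires precisely $\phi=\psi$ (or flatter). Keeping careful track of which kernels belong to $\P$ (and hence obey (c)) versus which carry the $(Q_{m,k}-Q_k)$ increment is the only bookkeeping that needs attention.
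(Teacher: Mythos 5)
Your proposal is correct and follows essentially the same route as the paper: Representation I plus linearity in the $\Q$-part, the Minkowski-functional inequality $|\int h\,d\mu-\int h\,d\nu|\le\rho_{\M'}(h)\,d_\M(\mu,\nu)$ applied to $h=V_{k+1}^{\P;\pi}$, absorption of the resulting factor $\psi(y_k)$ by the bounding-function property (c) through the outer $\P$-kernels (the paper phrases this as the finiteness of $\sup_{\pi}\ex_{n,x_n}^{x_0,\P;\pi}[\psi(X_k)]$), and uniformity over $\pi$ from condition (b). The resulting Lipschitz-type bound $\psi(x_n)\,d_{\infty,\M}^{\psi}(\Q_m,\Q)\sum_{k=n}^{N-1}\sup_{\pi}\rho_{\M'}(V_{k+1}^{\P;\pi})K_3^{k-n}$ matches the paper's estimates.
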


\begin{proof}
As a simple consequence of the definition of the Minkowski functional $\rho_{\M'}$ (see (\mbox{\ref{def minkowski functional}})) we have
\begin{equation}\label{minkowski - eq10}
    \Big|\int_E h\,d\mu - \int_E h\,d\nu\Big|\,\le\,\rho_{\M'}(h)\cdot d_{\M}(\mu,\nu)\quad\mbox{for all }h\in\M_\psi(E),~\mu,\nu\in{\cal M}_1^\psi(E),
\end{equation}
because $\M'$ $(\subseteq\M_\psi(E))$ is a generator of $d_{\M}$ by assumption.
Now,
let $(\Q_m)$ be any sequence in ${\cal P}_\psi$ which converges to some $\Q\in{\cal P}_\psi$ w.r.t.\ $d_{\infty,\M}^\psi$. Using the representation (\mbox{\ref{hadamard derivative ex tot costs - II}}),
we obtain for any $m\in\N$
\begin{eqnarray*}
    \lefteqn{\|\dot\Upsilon_{n;\P}^{x_n}(\Q_m-\P) - \dot\Upsilon_{n;\P}^{x_n}(\Q-\P)\|_\infty}\\
    & = & \sup_{\pi\in\Pi}\big|\dot{\cal V}_{n;\P}^{x_n;\pi}(\Q_m-\P) - \dot{\cal V}_{n;\P}^{x_n;\pi}(\Q-\P)\big| \\
    & = & \sup_{\pi=(f_n)_{n=0}^{N-1}\in\Pi}\Big\{\Big|\sum_{k=n}^{N-1}\int_E\cdots\int_E\int_E V_{k+1}^{\P;\pi}(y_{k+1})\,(Q^m_k - P_k)\big((y_k,f_k(y_k)),dy_{k+1}\big) \\
    & & \qquad P_{k-1}\big((y_{k-1},f_{k-1}(y_{k-1})),dy_k\big)\cdots P_n\big((x_n,f_n(x_n)),dy_{n+1}\big) \\
    & & -~ \sum_{k=n}^{N-1}\int_E\cdots\int_E\int_E V_{k+1}^{\P;\pi}(y_{k+1})\,(Q_k - P_k)\big((y_k,f_k(y_k)),dy_{k+1}\big) \\
    & & \qquad P_{k-1}\big((y_{k-1},f_{k-1}(y_{k-1})),dy_k\big)\cdots P_n\big((x_n,f_n(x_n)),dy_{n+1}\big)\Big|\Big\} \\
   & = & \sup_{\pi=(f_n)_{n=0}^{N-1}\in\Pi}\Big\{\Big|\sum_{k=n}^{N-1}\int_E\cdots\int_E\int_E V_{k+1}^{\P;\pi}(y_{k+1})\,(Q^m_k - Q_k)\big((y_k,f_k(y_k)),dy_{k+1}\big) \\
    & & \qquad P_{k-1}\big((y_{k-1},f_{k-1}(y_{k-1})),dy_k\big)\cdots P_n\big((x_n,f_n(x_n)),dy_{n+1}\big)\Big|\Big\}\\
    & \leq & \sum_{k=n}^{N-1}\sup_{\pi=(f_n)_{n=0}^{N-1}\in\Pi}\Big\{\int_E\cdots\int_E\Big|\int_E V_{k+1}^{\P;\pi}(y_{k+1})\,(Q^m_k - Q_k)\big((y_k,f_k(y_k)),dy_{k+1}\big)\Big|\\
    & & \qquad P_{k-1}\big((y_{k-1},f_{k-1}(y_{k-1})),dy_k\big)\cdots P_n\big((x_n,f_n(x_n)),dy_{n+1}\big)\Big\}.
\end{eqnarray*}
It follows from (\mbox{\ref{minkowski - eq10}}) and part (v) of Lemma \mbox{\ref{lemma on integrals of functions of X}} that for any $k=n+1,\ldots,N-1$ and $m\in\N$
\begin{eqnarray} \label{eq: hd continuity - eq 10}
    \lefteqn{\sup_{\pi=(f_n)_{n=0}^{N-1}\in\Pi}\Big\{\int_E\cdots\int_E\Big|\int_E V_{k+1}^{\P;\pi}(y_{k+1})\,(Q^m_k - Q_k)\big((y_k,f_k(y_k)),dy_{k+1}\big)\Big|} \nonumber \\
    & & \quad P_{k-1}\big((y_{k-1},f_{k-1}(y_{k-1})),dy_k\big)\cdots P_n\big((x_n,f_n(x_n)),dy_{n+1}\big)\Big\} \nonumber \\
    & \leq & \sup_{\pi=(f_n)_{n=0}^{N-1}\in\Pi }\Big\{\rho_{\M'}\big(V_{k+1}^{\P;\pi}\big)\cdot\sup_{x\in E}\,\frac{1}{\psi(x)}\,d_{\M}\Big(Q^m_k\big((x,f_k(x)),\,\bullet\,\big),Q_k\big((x,f_k(x)),\,\bullet\,\big)\Big) \nonumber \\
	& & \quad \cdot\int_E\cdots\int_E \psi(y_k)\,P_{k-1}\big((y_{k-1},f_{k-1}(y_{k-1})),dy_k\big)
 	\cdots P_n\big((x_n,f_n(x_n)),dy_{n+1}\big)\Big\} \nonumber \\
    & \le & \sup_{f_k\in F_k}\,\sup_{x\in E}\frac{1}{\psi(x)}\,d_{\M}\Big(Q^m_k\big((x,f_k(x)),\,\bullet\,\big),Q_k\big((x,f_k(x)),\,\bullet\,\big)\Big) \nonumber \\
	& & \quad\cdot\, \sup_{\pi\in\Pi}\rho_{\M'}\big(V_{k+1}^{\P;\pi}\big) \,\cdot\, \sup_{\pi\in\Pi}\ex_{n,x_n}^{x_0,\P;\pi}\big[\psi(X_k)\big] \nonumber \\
	& \le & \sup_{(x,a)\in D_k}\frac{1}{\psi(x)}\,d_{\M}\Big(Q^m_k\big((x,a),\,\bullet\,\big),Q_k\big((x,a),\,\bullet\,\big)\Big) \nonumber \\
	& & \quad\cdot\, \sup_{\pi\in\Pi}\rho_{\M'}\big(V_{k+1}^{\P;\pi}\big) \,\cdot\, \sup_{\pi\in\Pi}\ex_{n,x_n}^{x_0,\P;\pi}\big[\psi(X_k)\big]\nonumber\\
	& \le & d_{\infty,\M}^\psi(\Q_m,\Q)\,\cdot\,\sup_{\pi\in\Pi}\rho_{\M'}\big(V_{k+1}^{\P;\pi}\big) \,\cdot\, \sup_{\pi\in\Pi}\ex_{n,x_n}^{x_0,\P;\pi}\big[\psi(X_k)\big]
\end{eqnarray}
because $V_{k+1}^{\P;\pi}(\cdot)\in\M_\psi(E)$ for any $\pi\in\Pi$ due to Lemma \mbox{\ref{lemma suff cond for stand cond - pre}}  (with ${\cal P}':=\{\P\}$). Similarly, for any $m\in\N$
\begin{eqnarray} \label{eq: hd continuity - eq 20}
    \lefteqn{\sup_{\pi=(f_n)_{n=0}^{N-1}\in\Pi}\Big\{\Big|\int_E V_{n+1}^{\P;\pi}(y_{n+1})\,(Q^m_n - Q_n)\big((x_n,f_n(x_n)),dy_{n+1}\big)\Big|\Big\}} \nonumber \\
    & \leq & \sup_{\pi=(f_n)_{n=0}^{N-1}\in\Pi }\Big\{\rho_{\M'}\big(V_{n+1}^{\P;\pi}\big) \nonumber \\
    & & \qquad \cdot\,\sup_{x\in E}\frac{1}{\psi(x)}\,d_{\M}\Big(Q^m_n\big((x,f_n(x)),\,\bullet\,\big),Q_n\big((x,f_n(x)),\,\bullet\,\big)\Big) \psi(x_n)\Big\}  \nonumber \\
    & \le & \sup_{f_n\in F_n}\,\sup_{x\in E}\frac{1}{\psi(x)}\,d_{\M}\Big(Q^m_n\big((x,f_n(x)),\,\bullet\,\big),Q_n\big((x,f_n(x)),\,\bullet\,\big)\Big) \nonumber \\
    & & \qquad\cdot\, \sup_{\pi\in\Pi}\rho_{\M'}\big(V_{n+1}^{\P;\pi}\big) \,\cdot\, \psi(x_n) \nonumber \\
	& \le & \sup_{(x,a)\in D_n}\frac{1}{\psi(x)}\,d_{\M}\Big(Q^m_n\big((x,a),\,\bullet\,\big),Q_n\big((x,a),\,\bullet\,\big)\Big) \nonumber \\
	& & \qquad\cdot\, \sup_{\pi\in\Pi}\rho_{\M'}\big(V_{n+1}^{\P;\pi}\big) \,\cdot\, \psi(x_n) \nonumber\\
	& \le & d_{\infty,\M}^\psi(\Q_m,\Q) \,\cdot\, \sup_{\pi\in\Pi}\rho_{\M'}\big(V_{n+1}^{\P;\pi}\big) \,\cdot\, \psi(x_n).
\end{eqnarray}
The second factor in the last line of both (\mbox{\ref{eq: hd continuity - eq 10}}) and (\mbox{\ref{eq: hd continuity - eq 20}}) is (independent of $m$ and) finite due to assumption (b). Moreover, the finiteness of the third factor in the last line of formula display (\mbox{\ref{eq: hd continuity - eq 10}}) (which is also independent of $m$) follows from part (v) of Lemma \mbox{\ref{lemma on integrals of functions of X}} and assumption (a).
Therefore, we arrive at $\|\dot\Upsilon_{n;\P}^{x_n}(\Q_m-\P)-\dot\Upsilon_{n;\P}^{x_n}(\Q-\P)\|_\infty\to 0$ as $m\to\infty$.
\end{proof}

\begin{lemma}\label{lemma char bounding function d-bounded sets}
Under the assumptions of Theorem \mbox{\ref{Thm - hadamard upsilon}} let ${\cal K}\subseteq{\cal P}_\psi$ be a bounded set. Then $\psi$ is a bounding function for the family of MDMs $\{(\boldsymbol{X},\boldsymbol{A},\Q,\Pi,\boldsymbol{r}):\Q\in{\cal K}\}$.
\end{lemma}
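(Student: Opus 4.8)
The plan is to exhibit the three constants $K_1,K_2,K_3$ required by Definition \ref{def bounding function} \emph{uniformly} over $\Q\in{\cal K}$. First I would observe that conditions (a) and (b) of that definition concern only the fixed reward functions $r_0,\ldots,r_N$ and are completely independent of the transition function. Since condition (a) of Theorem \ref{Thm - hadamard cal v} guarantees that $\psi$ is a bounding function for the MDM associated with some (indeed every) single $\Q\in{\cal P}_\psi$, we already obtain finite constants $K_1,K_2>0$ with $|r_n(x,a)|\le K_1\psi(x)$ for all $(x,a)\in D_n$ and $|r_N(x)|\le K_2\psi(x)$ for all $x\in E$; these constants serve simultaneously for every $\Q\in{\cal K}$. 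Hence the entire difficulty is to produce one constant $K_3$ for which condition (c), i.e.\ $\int_E\psi(y)\,Q_n((x,a),dy)\le K_3\psi(x)$, holds uniformly in $\Q\in{\cal K}$, in $(x,a)\in D_n$, and in $n$.

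To this end I would exploit the boundedness of ${\cal K}$. By the definition of a bounded set (w.r.t.\ $d_{\infty,\M}^{\psi}$) there exist $\P'\in{\cal P}_\psi$ and $\delta>0$ with $d_{\infty,\M}^{\psi}(\Q,\P')\le\delta$ for every $\Q\in{\cal K}$. Unwinding the definition (\ref{def metric transition functions}) of $d_{\infty,\M}^{\psi}$, this means
\[
   d_{\M}\big(Q_n((x,a),\,\bullet\,),P_n'((x,a),\,\bullet\,)\big)\le\delta\,\psi(x)
\]
for all $(x,a)\in D_n$, $n=0,\ldots,N-1$, and all $\Q=(Q_n)_{n=0}^{N-1}\in{\cal K}$. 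Moreover, since by condition (a) of Theorem \ref{Thm - hadamard cal v} the fixed $\P'=(P_n')_{n=0}^{N-1}$ itself admits $\psi$ as a bounding function, there is a finite $K_3'>0$ with $\int_E\psi(y)\,P_n'((x,a),dy)\le K_3'\psi(x)$.

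The key step is then to transfer this estimate from $\P'$ to $\Q$ by means of inequality (\ref{minkowski - eq10}). Note that $\psi\in\M_\psi(E)$, because $\|\psi\|_\psi=\sup_{x\in E}\psi(x)/\psi(x)=1<\infty$, and that $\rho_{\M'}(\psi)<\infty$ by condition (c) of Theorem \ref{Thm - hadamard cal v}. Applying (\ref{minkowski - eq10}) with $h:=\psi$, $\mu:=Q_n((x,a),\,\bullet\,)$, and $\nu:=P_n'((x,a),\,\bullet\,)$ (both lie in ${\cal M}_1^\psi(E)$ as $\Q,\P'\in{\cal P}_\psi\subseteq\overline{\cal P}_\psi$) yields
\[
   \int_E\psi\,dQ_n((x,a),\,\bullet\,)\,\le\,K_3'\psi(x)+\rho_{\M'}(\psi)\,d_{\M}\big(Q_n((x,a),\,\bullet\,),P_n'((x,a),\,\bullet\,)\big)\,\le\,\big(K_3'+\rho_{\M'}(\psi)\,\delta\big)\psi(x).
\]
Setting $K_3:=K_3'+\rho_{\M'}(\psi)\,\delta$, which is finite and independent of $\Q$, $(x,a)$, and $n$, verifies condition (c) uniformly over ${\cal K}$. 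Together with the reward bounds this shows that $\psi$ is a bounding function for the whole family $\{(\boldsymbol{X},\boldsymbol{A},\Q,\Pi,\boldsymbol{r}):\Q\in{\cal K}\}$.

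I expect the only genuinely delicate point to be the uniformity of $K_3$: it rests precisely on the interplay of boundedness (which supplies a single reference $\P'$ and a single radius $\delta$) with the finiteness $\rho_{\M'}(\psi)<\infty$ furnished by condition (c), which is exactly why that hypothesis is imposed in Theorem \ref{Thm - hadamard cal v}. One should also double-check the applicability of (\ref{minkowski - eq10}), namely that $\psi\in\M_\psi(E)$, but as noted this is immediate from $\psi\ge 1$.
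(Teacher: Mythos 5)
Your proposal is correct and follows essentially the same route as the paper's proof: conditions (a) and (b) of the bounding-function definition are dispatched as independent of the transition function, and condition (c) is obtained uniformly by combining the triangle inequality with the Minkowski-functional estimate (\ref{minkowski - eq10}) applied to $h:=\psi$, the boundedness radius $\delta$ around a reference $\P'$, and the finiteness $\rho_{\M'}(\psi)<\infty$ from hypothesis (c), yielding the same constant $K_3'+\rho_{\M'}(\psi)\,\delta$. No gaps.
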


\begin{proof}
Conditions (a) and (b) of Definition \mbox{\ref{def bounding function}}  (which are independent of any transition function) are satisfied due to assumption (a) of Theorem \mbox{\ref{Thm - hadamard cal v}}.
Thus it suffices to show that condition (c) of Definition \mbox{\ref{def bounding function}}  is satisfied for any bounded set ${\cal K}$  (playing the role of ${\cal P}'$).
For any bounded set ${\cal K}$  we can find some $\P'=(P'_n)_{n=0}^{N-1}\in{\cal P}_\psi$ and $\delta>0$ such that $d_{\infty,\M}^\psi(\Q,\P')\le \delta$ for every $\Q\in{\cal K}$. Letting $K_3>0$ denote the finite constant in condition (c) of Definition \mbox{\ref{def bounding function}}  for the singleton ${\cal P}':=\{\P'\}$, and using (\mbox{\ref{minkowski - eq10}}) as well as assumption (c) of Theorem \mbox{\ref{Thm - hadamard cal v}}, we obtain for any $(x,a)\in D_n$, $\Q=(Q_n)_{n=0}^{N-1}\in{\cal K}$, and $n=0,\ldots,N-1$
\begin{eqnarray*}\label{lemma diff quot hd - eq10}
	\lefteqn{\int_E \psi(y)\,Q_n\big((x,a),dy\big)} \\
	& \le & \Big|\int_E \psi(y)\,(Q_n-P'_n)\big((x,a),dy\big)\Big| + \int_E \psi(y)\,P'_n\big((x,a),dy\big) \\
	& \le & \rho_{\M'}(\psi) \cdot \frac{1}{\psi(x)}\,d_{\M}\Big(Q_n\big((x,a),\,\bullet\,\big),P'_n\big((x,a),\,\bullet\,\big)\Big)\cdot \psi(x) + K_3\psi(x) \\
	& \le &	\rho_{\M'}(\psi)\cdot d_{\infty,\M}^\psi(\Q,\P')\cdot \psi(x) + K_3\psi(x)
	~ \le ~ 	\widetilde{K}_3\psi(x)
\end{eqnarray*}
for $\widetilde{K}_3:=\rho_{\M'}(\psi)\cdot \delta + K_3$,
because $\psi\in\M_\psi(E)$. This completes the proof.
\end{proof}

\begin{lemma}\label{lemma diff quot hd}
Under the assumptions of Theorem \mbox{\ref{Thm - hadamard upsilon}} and for any fixed $x_n\in E$ and $n=0,\ldots,N$,
\begin{equation*}\label{lemma diff quot hd - EQ}
    \begin{aligned}
        \lim_{m\to\infty}\Big\|\frac{\Upsilon_n^{x_n}(\P+\varepsilon_m(\Q-\P))-\Upsilon_n^{x_n}(\P)}{\varepsilon_m} - \dot\Upsilon_{n;\P}^{x_n}(\Q-\P)\Big\|_\infty=0\\
        \quad\mbox{uniformly in $\Q\in{\cal K}$}
    \end{aligned}
\end{equation*}
for every bounded set ${\cal K}\subseteq{\cal P}_\psi$  and every sequence $(\varepsilon_m)\in(0,1]^\N$ with $\varepsilon_m\to 0$.
\end{lemma}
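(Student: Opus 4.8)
The plan is to avoid expanding $V_n^{\P+\varepsilon(\Q-\P);\pi}$ into a full power series in $\varepsilon$ and instead to derive a single clean backward recursion for an ``error'' function, exploiting the prefactor $\varepsilon$ to kill all remainders. Fix $x_n$ and $n$; write $\P_\varepsilon:=\P+\varepsilon(\Q-\P)$ with $k$-th kernel $P_k^\varepsilon:=P_k+\varepsilon(Q_k-P_k)$, set $\Gamma_k:=Q_k-P_k$, and for a kernel $K$ let $T_k^{K}$ denote the one-step operator $(T_k^K g)(x):=\int_E g(y)\,K((x,f_k(x)),dy)$ acting on $g\in\M_\psi(E)$ (every $V_k^{\P;\pi}$ lies in $\M_\psi(E)$ by Lemma \ref{lemma suff cond for stand cond - pre}). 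Since the norm on $\ell^\infty(\Pi)$ is the supremum over $\pi$, and $\dot V_n^{\P,\Q;\pi}(x_n)=\dot{\cal V}_{n;\P}^{x_n;\pi}(\Q-\P)$ by Remark \ref{remark hd - iteration scheme}, it suffices to bound
\[ E_n(x_n):=\frac{V_n^{\P_\varepsilon;\pi}(x_n)-V_n^{\P;\pi}(x_n)}{\varepsilon}-\dot V_n^{\P,\Q;\pi}(x_n) \]
by $c_n\,\varepsilon\,\psi(x_n)$ with $c_n$ independent of $\pi\in\Pi$ and $\Q\in{\cal K}$.

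First I would set up the recursion. With $\Delta_k:=V_k^{\P_\varepsilon;\pi}-V_k^{\P;\pi}$ and $E_k:=\Delta_k/\varepsilon-\dot V_k^{\P,\Q;\pi}$, the backward scheme for $V_k^{\,\cdot\,;\pi}$ (Proposition \ref{proposition reward iteration}) together with linearity of $K\mapsto T_k^K$ gives $\Delta_k=T_k^{P_k^\varepsilon}\Delta_{k+1}+\varepsilon\,T_k^{\Gamma_k}V_{k+1}^{\P;\pi}$, while scheme (\ref{Hadamard pi backward iteration scheme}) reads $\dot V_k^{\P,\Q;\pi}=T_k^{P_k}\dot V_{k+1}^{\P,\Q;\pi}+T_k^{\Gamma_k}V_{k+1}^{\P;\pi}$. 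Substituting $\Delta_{k+1}/\varepsilon=E_{k+1}+\dot V_{k+1}^{\P,\Q;\pi}$ and using $T_k^{P_k^\varepsilon}-T_k^{P_k}=\varepsilon\,T_k^{\Gamma_k}$, the reward terms cancel and one is left with
\[ E_k=T_k^{P_k^\varepsilon}E_{k+1}+\varepsilon\,T_k^{\Gamma_k}\dot V_{k+1}^{\P,\Q;\pi},\qquad E_N=0. \]

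The heart of the argument is then a backward induction on $k$, for which two uniform constants are needed. Because $d_{\infty,\M}^\psi(\P_\varepsilon,\P)=\varepsilon\,d_{\infty,\M}^\psi(\Q,\P)$ and ${\cal K}$ is bounded, the set consisting of $\P$, all $\Q\in{\cal K}$, and all mixtures $\P_\varepsilon$ is bounded, so Lemma \ref{lemma char bounding function d-bounded sets} yields one constant $K_3^\ast$ with $\int_E\psi\,dK((x,a),\,\cdot\,)\le K_3^\ast\psi(x)$ for every kernel $K$ of all these transition functions. Next, using the Minkowski bound (\ref{minkowski - eq10}), assumption (b) of Theorem \ref{Thm - hadamard cal v} ($\sup_{\pi}\rho_{\M'}(V_{k+1}^{\P;\pi})<\infty$) and $d_{\infty,\M}^\psi(\Q,\P)\le D$ on ${\cal K}$, a backward induction on (\ref{Hadamard pi backward iteration scheme}) gives $\sup_{\pi\in\Pi}\sup_{\Q\in{\cal K}}\|\dot V_{k}^{\P,\Q;\pi}\|_\psi\le C_k<\infty$ (with $C_N=0$), the key step being $|T_k^{\Gamma_k}V_{k+1}^{\P;\pi}(x)|\le\rho_{\M'}(V_{k+1}^{\P;\pi})\,d_{\infty,\M}^\psi(\Q,\P)\,\psi(x)$. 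Given these, the induction for $E_k$ is immediate from the displayed recursion: if $|E_{k+1}|\le c_{k+1}\varepsilon\psi$ then $|T_k^{P_k^\varepsilon}E_{k+1}|\le c_{k+1}\varepsilon K_3^\ast\psi$, while $\varepsilon\,|T_k^{\Gamma_k}\dot V_{k+1}^{\P,\Q;\pi}|\le 2C_{k+1}K_3^\ast\varepsilon\psi$ (bounding the signed kernel crudely by $\int\psi\,dQ_k+\int\psi\,dP_k$), so $|E_k|\le c_k\varepsilon\psi$ with $c_k:=(c_{k+1}+2C_{k+1})K_3^\ast$. Evaluating at $k=n$, $x=x_n$ and taking the supremum over $\pi$ then yields $\sup_{\pi}|E_n(x_n)|\le c_n\psi(x_n)\,\varepsilon$, which tends to $0$ uniformly in $\Q\in{\cal K}$ as $\varepsilon=\varepsilon_m\to0$.

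The main conceptual obstacle is exactly what this telescoping circumvents. A naive expansion of $V_n^{\P_\varepsilon;\pi}$ in powers of $\varepsilon$ produces higher-order terms containing several $\Gamma_j$-factors nested around ``partial'' value functions (tails built from both $P$- and $\Gamma$-kernels), and assumption (b) controls $\rho_{\M'}$ only of the genuine value functions $V_{k+1}^{\P;\pi}$, not of such tails. The recursion for $E_k$ avoids this because every $\Gamma_k$-factor it ever encounters integrates either $V_{k+1}^{\P;\pi}$ (controlled by (b)) or $\dot V_{k+1}^{\P,\Q;\pi}$ (controlled by the $\psi$-norm bound $C_{k+1}$), and in both places the accompanying $\varepsilon$ makes crude $\psi$-integrability estimates sufficient. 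The remaining delicate point is purely bookkeeping: the constants $K_3^\ast$, $C_k$, $c_k$ must be independent of $\pi$, of $\Q\in{\cal K}$, and of the mixtures $\P_\varepsilon$, and this uniformity is secured precisely by Lemma \ref{lemma char bounding function d-bounded sets} together with the scaling identity $d_{\infty,\M}^\psi(\P_\varepsilon,\P)=\varepsilon\,d_{\infty,\M}^\psi(\Q,\P)$.
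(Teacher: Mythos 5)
Your argument is correct, and it takes a genuinely different route from the paper. The paper's proof of Lemma \ref{lemma diff quot hd} expands the nested integrals for $V_n^{\P+\varepsilon_m(\Q-\P);\pi}$ multilinearly in the kernels, indexing the resulting terms by the subsets $J$ of time points at which the factor $\varepsilon_m(Q_j-P_j)$ is selected: the $|J|=1$ terms reproduce $\dot{\cal V}_{n;\P}^{x_n;\pi}(\Q-\P)$ exactly, and every $|J|\ge 2$ term carries a prefactor $\varepsilon_m^{|J|-1}$ and is estimated crudely by products of the bounding-function constants $K_1,K_2,K_3,\widetilde K_3$ coming from assumption (a) and Lemma \ref{lemma char bounding function d-bounded sets} --- notably \emph{without} ever invoking assumption (b), so your stated ``conceptual obstacle'' (that $\rho_{\M'}$ is only controlled for genuine value functions, not for the partial tails) is not actually an obstacle on the paper's route. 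Your backward recursion $E_k=T_k^{P_k^\varepsilon}E_{k+1}+\varepsilon\,T_k^{\Gamma_k}\dot V_{k+1}^{\P,\Q;\pi}$, $E_N=0$, is a cleaner reorganization of the same cancellation: it trades the combinatorial bookkeeping over subsets $J$ for a one-line induction, at the price of needing the bounding property for the mixture kernels $P_k^\varepsilon$ (which you correctly secure via the scaling identity and Lemma \ref{lemma char bounding function d-bounded sets}, or which follows even more directly from convexity of $\varepsilon\mapsto\int\psi\,dP_k^\varepsilon$) and a uniform $\psi$-norm bound on $\dot V_{k+1}^{\P,\Q;\pi}$. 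One small repair is needed in the latter step: you derive $C_k$ from assumption (b) together with ``$d_{\infty,\M}^{\psi}(\Q,\P)\le D$ on ${\cal K}$'', but boundedness of ${\cal K}$ only gives $d_{\infty,\M}^{\psi}(\Q,\P')\le\delta$ for some center $\P'$, and since the (semi-)metric is allowed to take the value $\infty$, the distance from $\P'$ to the fixed $\P$ need not be finite, so $D$ could be $\infty$. This is harmless: bound $|T_k^{\Gamma_k}V_{k+1}^{\P;\pi}|\le\|V_{k+1}^{\P;\pi}\|_\psi\,(K_3+K_3^\ast)\,\psi$ instead, using Lemma \ref{lemma suff cond for stand cond - pre} for $\sup_{\pi}\|V_{k+1}^{\P;\pi}\|_\psi<\infty$ --- exactly the crude signed-kernel estimate you already use for the $\dot V$-term --- and the induction closes with constants independent of $\pi$, $\Q\in{\cal K}$ and $\varepsilon$, with no appeal to assumption (b) at all.
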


\begin{proof}
Let ${\cal K}\subseteq{\cal P}_\psi$ be a fixed bounded set  and $(\varepsilon_m)\in(0,1]^\N$ such that $\varepsilon_m\to 0$. First of all, note that it can be verified easily by means of assumption (a) of Theorem \mbox{\ref{Thm - hadamard cal v}}  and Lemma \mbox{\ref{lemma char bounding function d-bounded sets}}
that $\Upsilon_n^{x_n}(\P+\varepsilon_m(\Q-\P))\,(=({\cal V}_n^{x_n;\pi}(\P+\varepsilon_m(\Q-\P)))_{\pi\in\Pi})\in\ell^{\infty}(\Pi)$ as well as $\dot\Upsilon_{n;\P}^{x_n}(\Q-\P)\,(=(\dot{\cal V}_{n;\P}^{x_n;\pi}(\Q-\P))_{\pi\in\Pi})\in\ell^{\infty}(\Pi)$ for any $m\in\N$ and $\Q\in{\cal K}$.
In view of Lemma \mbox{\ref{lemma on integrals of functions of X}}, we get for any $m\in\N$, $\Q=(Q_n)_{n=0}^{N-1}\in{\cal K}$, and $\pi=(f_n)_{n=0}^{N-1}\in\Pi$
\begin{eqnarray*}
    \lefteqn{\Big|\frac{{\cal V}_n^{x_n;\pi}(\P+\varepsilon_m(\Q-\P))-{\cal V}_n^{x_n;\pi}(\P)}{\varepsilon_m} - \dot{\cal V}_{n;\P}^{x_n;\pi}(\Q-\P)\Big|}\\
    & = &  \Big|\frac{1}{\varepsilon_m}\sum_{k=n}^{N-1}\Big(\ex_{n,x_n}^{x_0,\P+\varepsilon_m(\Q-\P);\pi}\big[r_k(X_k,f_k(X_k))\big] -\ex_{n,x_n}^{x_0,\P;\pi}\big[r_k(X_k,f_k(X_k))\big]\Big) \\
    & & +\,\frac{1}{\varepsilon_m}\Big(\ex_{n,x_n}^{x_0,\P+\varepsilon_m(\Q-\P);\pi}\big[r_N(X_N)\big] -\ex_{n,x_n}^{x_0,\P;\pi}\big[r_N(X_N)\big]\Big) - \dot{\cal V}_{n;\P}^{x_n;\pi}(\Q-\P)\Big| \\
    & = & \Big|\sum_{k=n+1}^{N-1}\sum_{j=n}^{k-1}\int_E\cdots\int_E r_k(y_k,f_k(y_k))\,P_{k-1}\big((y_{k-1},f_{k-1}(y_{k-1})),dy_k\big)\\
    & & \qquad \cdots (Q_j-P_j)\big((y_j,f_{j}(y_j)),dy_{j+1}\big)\cdots P_n\big((x_n,f_n(x_n)),dy_{n+1}\big)\\
    & & +\,\frac{1}{\varepsilon_m}\sum_{k=n+2}^{N-1}\sum_{\stackrel{J\subseteq\{n,\ldots,k-1\}}{1<|J|\leq k-n}}\varepsilon_m^{|J|}\int_E\int_E\cdots\int_E r_k(y_k,f_k(y_k))\\
    & & \qquad \xi_{k-1,J}^{\Q}\big((y_{k-1},f_{k-1}(y_{k-1})),dy_k\big)\\
    & & \qquad \qquad \cdots\xi_{n+1,J}^{\Q}\big((y_{n+1},f_{n+1}(y_{n+1})),dy_{n+2}\big)\,\xi_{n,J}^{\Q}\big((x_n,f_n(x_n)),dy_{n+1}\big)\\
    & & +\,\sum_{j=n}^{N-1}\int_E\int_E\cdots\int_E r_N(y_N)\,P_{N-1}\big((y_{N-1},f_{N-1}(y_{N-1})),dy_N\big)\\
    & & \qquad \cdots(Q_j-P_j)\big((y_j,f_{j}(y_j)),dy_{j+1}\big)\cdots P_n\big((x_n,f_n(x_n)),dy_{n+1}\big)\\
    & & +\,\frac{1}{\varepsilon_m}\sum_{\stackrel{J\subseteq\{n,\ldots,N-1\}}{1 <|J|\leq N-n}}\varepsilon_m^{|J|}\int_E\int_E\cdots\int_E r_N(y_N)\,\xi_{N-1,J}^{\Q}\big((y_{N-1},f_{N-1}(y_{N-1})),dy_N\big)\\
    & & \qquad \cdots\xi_{n+1,J}^{\Q}\big((y_{n+1},f_{n+1}(y_{n+1})),dy_{n+2}\big)\,\xi_{n,J}^{\Q}\big((x_n,f_n(x_n)),dy_{n+1}\big)\\
	& & -\,\dot{\cal V}_{n;\P}^{x_n;\pi}(\Q-\P) \Big|\\
    & \le & \big|\dot{\cal V}_{n;\P}^{x_n;\pi}(\Q-\P) - \dot{\cal V}_{n;\P}^{x_n;\pi}(\Q-\P)\big| \\
    & & +\,\sum_{k=n+2}^{N-1}\sum_{\stackrel{J\subseteq\{n,\ldots,k-1\}}{1<|J|\leq k-n}}\varepsilon_m^{|J|-1}\Big|\int_E\int_E\cdots\int_E r_k(y_k,f_k(y_k))\\
    & & \qquad\xi_{k-1,J}^{\Q}\big((y_{k-1},f_{k-1}(y_{k-1})),dy_k\big)\\
    & & \qquad\qquad \cdots\xi_{n+1,J}^{\Q}\big((y_{n+1},f_{n+1}(y_{n+1})),dy_{n+2}\big)\,\xi_{n,J}^{\Q}\big((x_n,f_n(x_n)),dy_{n+1}\big)\Big|\\
    & & +\,\sum_{\stackrel{J\subseteq\{n,\ldots,N-1\}}{1 <|J|\leq N-n}}\varepsilon_m^{|J|-1}\Big|\int_E\int_E\cdots\int_E r_N(y_N)\,\xi_{N-1,J}^{\Q}\big((y_{N-1},f_{N-1}(y_{N-1})),dy_N\big)\\
    & & \qquad \cdots\xi_{n+1,J}^{\Q}\big((y_{n+1},f_{n+1}(y_{n+1})),dy_{n+2}\big)\,\xi_{n,J}^{\Q}\big((x_n,f_n(x_n)),dy_{n+1}\big)\Big|\\
    & =: & S_1(\Q,\pi) + S_2(m,\Q,\pi) + S_3(m,\Q,\pi),
\end{eqnarray*}
where $S_1(\Q,\pi)=0$ and $\xi_{j,J}^{\Q}$ is for any subset $J\subseteq\{0,\ldots,N-1\}$ given by
$$
    \xi_{j,J}^{\Q}:=
    \left\{\begin{array}{lll}
        Q_j-P_j & , & j\in J\\
        P_j & , & \mbox{otherwise}
    \end{array}
    \right..
$$
In view of assumption (a) of Theorem \mbox{\ref{Thm - hadamard cal v}} and Lemma \mbox{\ref{lemma char bounding function d-bounded sets}}
there exist finite constants $K_1,K_3,\widetilde{K}_3>0$ such that for every $m\in\N$, $\Q\in{\cal K}$, and $\pi\in\Pi$
\begin{eqnarray*}
    S_2(m,\Q,\pi)
    & \le & \varepsilon_m\cdot \Big\{ K_1\sum_{k=n+2}^{N-1}\sum_{\stackrel{J\subseteq\{n,\ldots,k-1\}}{1<|J|\leq k-n}}\hspace{-1.5mm}\varepsilon_m^{|J|-2}\binom{k-n}{|J|}K_3^{k-n-|J|} \\
    & & \qquad\quad \cdot\,\sum_{l=0}^{|J|}\binom{|J|}{l}K_3^l\widetilde{K}_3^{|J|-l}\psi(x_n) \Big\}.
\end{eqnarray*}
Hence $\lim_{m\to\infty}S_2(m,\Q,\pi)=0$ uniformly in $\Q\in{\cal K}$ and $\pi\in\Pi$. Analogously we find some finite constant $K_2>0$ such that
\begin{eqnarray*}
    S_3(m,\Q,\pi)
    & \le & \varepsilon_m\cdot \Big\{K_2\sum_{\stackrel{J\subseteq\{n,\ldots,N-1\}}{1<|J|\leq N-n}}\hspace{-1.5mm}\varepsilon_m^{|J|-2}\binom{N-n}{|J|}K_3^{N-n-|J|} \\
    & & \qquad\quad \cdot\,\sum_{l=0}^{|J|}\binom{|J|}{l}K_3^l\widetilde{K}_3^{|J|-l}\psi(x_n) \Big\}
\end{eqnarray*}
for every $m\in\N$, $\Q\in{\cal K}$, and $\pi\in\Pi$, and thus $\lim_{m\to\infty}S_3(m,\Q,\pi)=0$ uniformly in $\Q\in{\cal K}$ and $\pi\in\Pi$.
Hence, the assertion follows.
\end{proof}


\subsection{`Hadamard differentiability' of ${\cal V}_n^{x_n}$}\label{Subsec - hadamard cal v}


We intend to show that the value functional ${\cal V}_n^{x_n}$ is `Hadamard differentiable' at $\boldsymbol{P}$ w.r.t.\ $(\M,\psi)$ with `Hadamard derivative' $\dot{\cal V}_{n;\boldsymbol{P}}^{x_n}$ given by (\mbox{\ref{hd value function - 0}}).

The key will be (\mbox{\ref{representation cal v}}) which says that ${\cal V}_n^{x_n}$ can be represented as a composition of the functionals $\Psi$ and $\Upsilon_n^{x_n}$ defined in (\mbox{\ref{def of Upsilon}}). Proposition 1 in \cite{Roemisch2004} ensures that $\Psi$ is Hadamard differentiable (in the sense of \cite{Roemisch2004}) at every $(w(\pi))_{\pi\in\Pi}\in\ell^\infty(\Pi)$ with (possibly nonlinear) Hadamard derivative $\dot\Psi_{(w(\pi))_{\pi\in\Pi}}:\ell^\infty(\Pi)\to\R$ given by
\begin{equation}\label{proof hadamard cal v - eq 10}
  \dot\Psi_{(w(\pi))_{\pi\in\Pi}}\big((z(\pi))_{\pi\in\Pi}\big)
  := \lim_{\delta\searrow 0}\sup_{\pi\in\Pi((w(\pi))_{\pi\in\Pi},\delta)} z(\pi),
\end{equation}
where $\Pi((w(\pi))_{\pi\in\Pi},\delta)$ denotes the set of all $\pi\in\Pi$ for which $\sup_{\sigma\in\Pi}w(\sigma) - \delta\le w(\pi)$. Moreover Theorem \mbox{\ref{Thm - hadamard upsilon}} implies that $\Upsilon_n^{x_n}$ is in particular `Hadamard differentiable' at $\boldsymbol{P}$ w.r.t.\ $(\M,\psi)$ with `Hadamard derivative' $\dot\Upsilon_{n;\P}^{x_n}$ given by (\mbox{\ref{def derivative of Upsilon}}).

In view of (\mbox{\ref{representation cal v}}) and the shape of $\dot\Psi_{(w(\pi))_{\pi\in\Pi}}$ and $\dot\Upsilon_{n;\P}^{x_n}$, `Hadamard differentiability' of ${\cal V}_n^{x_n}$ at $\P$ w.r.t.\ $(\M,\psi)$ with `Hadamard derivative' $\dot{\cal V}_{n;\boldsymbol{P}}^{x_n}$ given by (\mbox{\ref{hd value function - 0}}) (resp.\ (\mbox{\ref{hd value function}}))  can be identified with `Hadamard differentiability' of the map $\Psi\circ\Upsilon_n^{x_n}:{\cal P}_\psi\to\R$ at $\P$ w.r.t.\ $(\M,\psi)$ with `Hadamard derivative' $\dot{(\Psi\circ\Upsilon_n^{x_n})}_{\P}:{\cal P}_\psi^{\P;\pm}\to\R$ given by
\begin{equation}\label{proof hadamard cal v - eq 20}
    \dot{\big(\Psi\circ\Upsilon_n^{x_n}\big)}_{\P}(\Q - \P) := \dot\Psi_{\Upsilon_n^{x_n}(\P)}\circ\dot{\Upsilon}_{n;\P}^{x_n}(\Q - \P).
\end{equation}
Take into account that by (\mbox{\ref{def derivative of Upsilon}}) and (\mbox{\ref{proof hadamard cal v - eq 10}})
\begin{eqnarray*}
	\dot{\big(\Psi\circ\Upsilon_n^{x_n}\big)}_{\P}(\Q - \P)
	 & = & \dot\Psi_{({\cal V}_n^{x_n;\pi}(\P))_{\pi\in\Pi}}\big(\big(\dot{\cal V}_{n;\P}^{x_n;\pi}(\Q-\P)\big)_{\pi\in\Pi}\big) \\
	 & = & \lim_{\delta\searrow 0}\sup_{\pi\in\Pi(\P;\delta)}\dot{\cal V}_{n;\P}^{x_n;\pi}(\Q-\P)
\end{eqnarray*}
for $\Q-\P\in{\cal P}_\psi^{\P;\pm}$, and that, if in addition the set $\Pi(\P)$ is non-empty,
$$
 	\dot{\big(\Psi\circ\Upsilon_n^{x_n}\big)}_{\P}(\Q - \P)
	\,=\, \sup_{\pi\in\Pi(\P)}\dot{\cal V}_{n;\P}^{x_n;\pi}(\Q-\P)
$$
for every $\Q-\P\in{\cal P}_\psi^{\P;\pm}$.

In the remainder of the proof we will show that the composite map $\Psi\circ\Upsilon_n^{x_n}$ is `Hadamard differentiable' at $\P$ w.r.t.\ $(\M,\psi)$ with `Hadamard derivative' $\dot{(\Psi\circ\Upsilon_n^{x_n})}_{\P}$ given by (\mbox{\ref{proof hadamard cal v - eq 20}}). We first note that the map $\dot{(\Psi\circ\Upsilon_n^{x_n})}_{\P}$ is $(\M,\psi)$-continuous by Lemma \mbox{\ref{lemma continuity hd}} and the $(\|\cdot\|_{\infty},|\cdot|)$-continuity of the mapping $(z(\pi))_{\pi\in\Pi}\mapsto\dot\Psi_{\Upsilon_n^{x_n}(\P)}((z(\pi))_{\pi\in\Pi})$. In view of part (ii) of Lemma \mbox{\ref{lemma HD characterization}}, for the desired `Hadamard differentiability' of $\Psi\circ\Upsilon_n^{x_n}$ at $\P$ it therefore suffices to show that
$$
    \lim_{m\to\infty}\Big|\frac{\Psi\circ\Upsilon_n^{x_n}(\P+\varepsilon_m(\Q_m-\P))- \Psi\circ\Upsilon_n^{x_n}(\P)}{\varepsilon_m} - \dot{\big(\Psi\circ\Upsilon_n^{x_n}\big)}_{\P}(\Q-\P)\Big| = 0
$$
for any fixed triplet $(\Q,(\Q_{m}),(\varepsilon_m))\in{\cal P}_\psi\times{\cal P}_\psi^\N\times(0,1]^\N$ with $d_{\infty,\M}^\psi(\Q_m,\Q)\to 0$ and $\varepsilon_m\to 0$. For any such fixed triplet and any $m\in\N$ we have
$$
	\frac{\Psi\circ\Upsilon_n^{x_n}(\P+\varepsilon_m(\Q_m-\P))- \Psi\circ\Upsilon_n^{x_n}(\P)}{\varepsilon_m} = \frac{\Psi(\Upsilon_n^{x_n}(\P) + \varepsilon_m v_m) - \Psi(\Upsilon_n^{x_n}(\P))}{\varepsilon_m}\,,
$$
where $v_m :=  \varepsilon_m^{-1}(\Upsilon_n^{x_n}(\P+\varepsilon_m(\Q_m-\P)) - \Upsilon_n^{x_n}(\P))\,(\in\ell^\infty(\Pi))$. If we set $v:=\dot\Upsilon_{n;\P}^{x_n}(\Q - \P)\,(\in\ell^\infty(\Pi))$, then by Theorem \mbox{\ref{Thm - hadamard upsilon}} and by part (i) of Lemma \mbox{\ref{lemma HD characterization}}
\begin{eqnarray*}
	\lefteqn{\lim_{m\to\infty}\|v_m - v\|_{\infty}}\\
	& = & \lim_{m\rightarrow\infty}\Big\|\frac{\Upsilon_n^{x_n}(\P+\varepsilon_m(\Q_m-\P)) - \Upsilon_n^{x_n}(\P)}{\varepsilon_m} -\dot\Upsilon_{n;\P}^{x_n}(\Q - \P)\Big\|_{\infty} = 0.
\end{eqnarray*}
Thus, since $\Psi$ is Hadamard differentiable at (in particular) $\Upsilon_n^{x_n}(\P)\,(\in\ell^\infty(\Pi))$ (see the discussion above), we obtain
\begin{eqnarray*}
    \lefteqn{\lim_{m\to\infty}\Big|\frac{\Psi\circ\Upsilon_n^{x_n}(\P+\varepsilon_m(\Q_m-\P))- \Psi\circ\Upsilon_n^{x_n}(\P)}{\varepsilon_m} - \dot{\big(\Psi\circ\Upsilon_n^{x_n}\big)}_{\P}(\Q-\P)\Big| } \\
    & = & \lim_{m\to\infty}\Big|\frac{\Psi(\Upsilon_n^{x_n}(\P) + \varepsilon_m v_m) - \Psi(\Upsilon_n^{x_n}(\P))}{\varepsilon_m} - \dot\Psi_{\Upsilon_n^{x_n}(\P)}(v)\Big| \,=\, 0. \hspace{1.5cm}
\end{eqnarray*}
This finishes the proof.
\hfill\proofendsign


\section{Supplement: Proofs of results from Section \mbox{\ref{Sec - Finance Example}} }\label{Sec - Proof of results from Section 5}


\subsection{Proof of Lemma \mbox{\ref{Ex fin - lemma existence solution reduced opt prob}} }\label{Subsec - Proof of Lem ex solution red opt prob}


Let $\P\in{\cal P}_\psi$ and $n=0,\ldots,N-1$ be fixed. Define a map $\mathfrak{f}^{\P}_n:\R_{\ge 0}\times[0,1]\to\R_{\ge 0}$ through
\begin{equation}\label{proof lem ex solution red opt prob - eq10}
	\mathfrak{f}^{\P}_n(y,\gamma) \,:=\, u_\alpha(1+\gamma(y/\mathfrak{r}_{n+1} - 1)).
\end{equation}
Note that $\mathfrak{f}^{\P}_n(\,\cdot\,,\gamma)$ is clearly Borel measurable for any $\gamma\in[0,1]$, and it is easily seen that
\begin{equation}\label{proof lem ex solution red opt prob - eq20}
	|\mathfrak{f}^{\P}_n(y,\gamma)| \,=\, u_\alpha\big((1-\gamma)+\gamma(y/\mathfrak{r}_{n+1})\big) \,\le\, u_\alpha(1 + y)
\end{equation}
for every $y\in\R_{\ge 0}$ and $\gamma\in[0,1]$. Therefore, the function $\mathfrak{f}^{\P}_n$ is absolutely dominated by the Borel measurable function $\mathfrak{h}:\R_{\ge 0}\to\R_{\ge 0}$ given by $\mathfrak{h}(y):=u_\alpha(1+y)$.
Set $\overline{\mathfrak{m}}_{\P}:=\max_{k=0,\ldots,N-1}\int_{\R_{\ge 0}}u_\alpha\,d\mathfrak{m}_{k+1}^{\P}$ and note that $\overline{\mathfrak{m}}_{\P}\in\R_{>0}$. Since $\mathfrak{h}$ satisfies
\begin{equation}\label{proof lem ex solution red opt prob - eq30}
	\int_{\R_{\ge 0}}\mathfrak{h}(y)\,\mathfrak{m}_{n+1}^{\P}(dy)
	\,\le\, 1 + \int_{\R_{\ge 0}}u_\alpha(y)\,\mathfrak{m}_{n+1}^{\P}(dy)
	\,\le\, 1 + \overline{\mathfrak{m}}_{\P}
	\,<\, \infty
\end{equation}
(i.e.\ $\mathfrak{h}$ is $\mathfrak{m}_{n+1}^{\P}$-integrable) and $\mathfrak{f}^{\P}_n(y,\,\cdot\,)$ is continuous on $[0,1]$ for any $y\in\R_{\ge 0}$, we may apply the continuity lemma (see, e.g., \cite[Lemma 16.1]{Bauer2001}) to obtain that the mapping $\mathfrak{F}_n^{\P}:[0,1]\rightarrow\R_{>0}$ given by $\mathfrak{F}_n^{\P}(\gamma):=\int_{\R_{\ge 0}}\mathfrak{f}^{\P}_n(y,\gamma)\,\mathfrak{m}_{n+1}^{\P}(dy)$ is continuous. Along with the compactness of the set $[0,1]$ this ensures the existence of a solution $\gamma_n^{\P}\in[0,1]$ to the optimization problem (\mbox{\ref{Ex fin - one-stage max problem}}). Moreover it can be verified easily by means of part (c) of Assumption {\bf (FM)}  that $\mathfrak{F}_n^{\P}$ is strictly concave; take into account that $\int_{\R_{\ge 0}}\mathfrak{f}^{\P}_n(y,\gamma)\,\mathfrak{m}_{n+1}^{\P}(dy)$ can be seen for any $\gamma\in[0,1]$ as the expectation of $u_\alpha(1+\gamma(\mathfrak{R}_{n+1}/\mathfrak{r}_{n+1} - 1))$ under $\pr$. This implies that the solution $\gamma_n^{\P}$ is even unique.
\hfill\proofendsign


\subsection{Proof of Theorem \mbox{\ref{Ex fin - Thm opt trading strat}} }\label{Subsec - Finance Example - proof Thm opt trad strat}


(i): We intend to apply Theorem \mbox{\ref{Thm - existence opt strategy}} (see Section \mbox{\ref{Sec - Existence of optimal strategies}}).
Let $\M_n^{\P}:=\M'$ and  $F_n':=F'$ for any $n=0,\ldots,N-1$, where
\begin{eqnarray}\label{proof Theorem opt trad strat - eq10}
	\M' & := & \big\{h\in\R^{\R_{\ge 0}}: h(x) = \vartheta\, u_\alpha(x/\kappa),\,x\in\R_{\ge 0},\mbox{ for some }\vartheta\in\R_{>0},\kappa\in\R_{\ge 1}\big\},\nonumber\\
	F' & := &  \big\{f\in F: f(x)=\gamma\,x,\,x\in\R_{\ge 0},\ \mbox{for some }\gamma\in[0,1]\big\}
\end{eqnarray}
with $F:=F_n$ (recall that $F_n=\mathbb{F}_n$ and that $\mathbb{F}_n$ is independent of $n$). It is easily seen that $\M_n^{\P}=\M'$ is a subset of $\M_n^{\P}(\R_{\ge 0})$  for any $n=0,\ldots,N-1$, where $\M_n^{\P}(\R_{\ge 0})$ is defined as in (\mbox{\ref{assumption integrability appendix}}) in Section \mbox{\ref{Sec - Existence of optimal strategies}}. Moreover we obviously have $F_n'=F'\subseteq F_n$ for any $n=0,\ldots,N-1$.

Below we will show that conditions (a)--(c) of Theorem \mbox{\ref{Thm - existence opt strategy}} are met. Thus we may apply part (i) of Theorem \mbox{\ref{Thm - existence opt strategy}} (Bellman equation) to obtain part (i) of Theorem \mbox{\ref{Ex fin - Thm opt trading strat}}. In fact, for $n=N$ we have
$$
	V_N^{\P}(x_N) \,=\, r_N(x_N) \,=\, \mathfrak{v}_N^{\P}\, u_\alpha(x_N/B_N)
$$
for any $x_N\in\R_{\ge 0}$, where $\mathfrak{v}_N^{\P}:=1$. Now, suppose that the assertion holds for $k\in\{n+1,\ldots,N\}$. Then, using again part (i) of Theorem \mbox{\ref{Thm - existence opt strategy}}, we have for any $x_n\in\R_{\ge 0}$
\begin{eqnarray}\label{proof Theorem opt trad strat - eq20}
	V_n^{\P}(x_n)
	& = & {\cal T}_{n}^{\P}V_{n+1}^{\P}(x_{n}) \,=\, \sup_{f_n\in F_n}\,{\cal T}_{n,f_n}^{\P}V_{n+1}^{\P}(x_{n})\nonumber\\
    & = & \sup_{f_n\in F_n}\,\int_{\R_{\ge 0}} V_{n+1}^{\P}(y)\,P_n\big((x_{n},f_n(x_{n})),dy\big) \nonumber\\
    & = & \sup_{f_n\in F_n}\,\int_{\R_{\ge 0}} \mathfrak{v}_{n+1}^{\P}\,u_{\alpha}(y/B_{n+1})\,P_n\big((x_{n},f_n(x_{n})),dy\big) \nonumber\\
    & = & \mathfrak{v}_{n+1}^{\P}\sup_{f_n\in F_n}\,\int_{\R_{\ge 0}} u_{\alpha}\Big(\frac{\mathfrak{r}_{n+1}x_n + f_n(x_n)(y-\mathfrak{r}_{n+1})}{\mathfrak{r}_{n+1}B_{n}}\Big)\,\mathfrak{m}_{n+1}^{\P}(dy). \hspace{1cm}
\end{eqnarray}
For $x_n=0$ we have $f_n(x_n)=0$ for any $f_n\in F_n$ and therefore (in view of (\mbox{\ref{proof Theorem opt trad strat - eq20}})) $V_n^{\P}(x_n)=0$. For $x_n\in\R_{>0}$ we obtain from (\mbox{\ref{proof Theorem opt trad strat - eq20}})
\begin{eqnarray}\label{proof Theorem opt trad strat - eq30}
	V_n^{\P}(x_n)
    & = & \mathfrak{v}_{n+1}^{\P}\,u_{\alpha}(x_n/B_n)\,\sup_{f_n\in F_n}\,\int_{\R_{\ge 0}} u_{\alpha}\Big(1+\frac{f_n(x_n)}{x_{n}}\Big(\frac{y}{\mathfrak{r}_{n+1}}-1\Big)\Big)\,\mathfrak{m}_{n+1}^{\P}(dy) \nonumber\\
    & = & \mathfrak{v}_{n+1}^{\P}\,u_\alpha(x_n/B_n)\,\sup_{\gamma\in[0,1]}\,\int_{\R_{\ge 0}} u_{\alpha}\Big(1+\gamma\Big(\frac{y}{\mathfrak{r}_{n+1}}-1\Big)\Big)\,\mathfrak{m}_{n+1}^{\P}(dy) \nonumber\\
	& = & \mathfrak{v}_{n+1}^{\P}\,u_\alpha(x_n/B_n)\,v_n^{\P}
	\,=\, \mathfrak{v}_n^{\P}\,u_\alpha(x_n/B_n),
\end{eqnarray}
where we used for the second ``$=$'' that the value of $f_n(x_n)$ ranges over the interval $[0,x_n]$ when $f_n$ ranges over $F_n$; we can then indeed replace $f_n(x_n)$ by $\gamma x_{n}$ when ``$\sup_{f_n\in F_n}$'' is replaced by ``$\sup_{\gamma\in[0,1]}$''. For the last step we employed $\mathfrak{v}_n^{\P}= \mathfrak{v}_{n+1}^{\P}v_n^{\P}$.
Hence we have verified the representation of the value function asserted in part (i). It remains to show that conditions (a)--(c) of Theorem \mbox{\ref{Thm - existence opt strategy}} (in Section \mbox{\ref{Sec - Existence of optimal strategies}}) are indeed satisfied.

(a): In view of (\mbox{\ref{Ex fin - terminal reward function}})  we obtain $r_N\in\M'$ by choosing $\vartheta:=1$ ($\in\R_{>0}$) and $\kappa:=B_N$ ($\in\R_{\ge 1}$). In particular, $r_N\in\M_{N-1}^{\P}$.

(b): Let $n\in\{1,\ldots,N-1\}$ and $h\in\M_n^{\P}=\M'$, i.e.\ $h(x) = \vartheta\,u_\alpha(x/\kappa)$, $x\in\R_{\ge 0}$, for some $\vartheta\in\R_{>0}$ and $\kappa\in\R_{\ge 1}$. Then as in (\mbox{\ref{proof Theorem opt trad strat - eq20}}) we obtain for any $x\in\R_{\ge 0}$
\begin{eqnarray}\label{proof Theorem opt trad strat - eq40}
	{\cal T}_n^{\P}h(x)
    & = & \sup_{f_n\in F_n}\,{\cal T}_{n,f_n}^{\P}h(x) \nonumber \\
    & = & \vartheta\,\sup_{f_n\in F_n}\,\int_{\R_{\ge 0}} u_{\alpha}\Big(\frac{\mathfrak{r}_{n+1}x + f_n(x)(y-\mathfrak{r}_{n+1})}{\kappa}\Big)\,\mathfrak{m}_{n+1}^{\P}(dy).
\end{eqnarray}
For $x=0$ we have $f_n(x)=0$ for any $f_n\in F_n$ and therefore (in view of (\mbox{\ref{proof Theorem opt trad strat - eq40}})) ${\cal T}_n^{\P}h(x)=0$. For $x\in\R_{>0}$ we obtain from (\mbox{\ref{proof Theorem opt trad strat - eq40}}) (analogously to (\mbox{\ref{proof Theorem opt trad strat - eq30}}))
\begin{eqnarray}\label{proof Theorem opt trad strat - eq45}
	{\cal T}_n^{\P}h(x)
    & = & \vartheta\,\mathfrak{r}_{n+1}^\alpha\,u_{\alpha}(x/\kappa)\,\sup_{f_n\in F_n}\,\int_{\R_{\ge 0}}u_{\alpha}\Big(1+\frac{f_n(x)}{x}\Big(\frac{y}{\mathfrak{r}_{n+1}}-1\Big)\Big)\,\mathfrak{m}_{n+1}^{\P}(dy) \nonumber\\
	& = & \vartheta\,\mathfrak{r}_{n+1}^\alpha\,u_\alpha(x/\kappa)\,\sup_{\gamma\in[0,1]}\,\int_{\R_{\ge 0}}u_\alpha\Big(1+\gamma\Big(\frac{y}{\mathfrak{r}_{n+1}} - 1\Big)\Big)\,\mathfrak{m}_{n+1}^{\P}(dy) \nonumber\\
	& = & \vartheta\,\mathfrak{r}_{n+1}^\alpha\,u_\alpha(x/\kappa)\,v_n^{\P}
	\,=\, \widetilde{\vartheta}\, u_\alpha(x/\kappa),
\end{eqnarray}
where $\widetilde{\vartheta}:=\vartheta\mathfrak{r}_{n+1}^\alpha v_n^{\P}\in\R_{>0}$ is finite due to (\mbox{\ref{proof lem ex solution red opt prob - eq10}})--(\mbox{\ref{proof lem ex solution red opt prob - eq30}}). Altogether we have shown that ${\cal T}_n^{\P}h\in\M'$. In particular, ${\cal T}_n^{\P} h\in\M_{n-1}^{\P}$.

(c): Let $n\in\{0,\ldots,N-1\}$ and $h\in\M_n^{\P}=\M'$ (with corresponding $\vartheta$ and $\kappa$ as in (b)). Moreover, let $f_n^{\P}$ be the map as defined in (\mbox{\ref{Ex fin - optimal trading strategy}}), and note that $f_n^{\P}\in F_n$. Then, similarly to (\mbox{\ref{proof Theorem opt trad strat - eq40}}), we have for any $x\in\R_{\ge 0}$ and $f_n\in F_n$
$$
	{\cal T}_{n,f_n}^{\P}h(x) \,=\, \vartheta\,\int_{\R_{\ge 0}} u_{\alpha}\Big(\frac{\mathfrak{r}_{n+1}x + f_n(x)(y-\mathfrak{r}_{n+1})}{\kappa}\Big)\,\mathfrak{m}_{n+1}^{\P}(dy).
$$
For $x=0$ we obviously have ${\cal T}_{n,f_n}^{\P}h(x)=0$ and thus ${\cal T}_{n,f_n^{\P}}^{\P}h(x)={\cal T}_n^{\P}h(x)$.
For $x\in\R_{>0}$ we have similarly to (\mbox{\ref{proof Theorem opt trad strat - eq45}}) that for any $f_n\in F_n$
$$
	{\cal T}_{n,f_n}^{\P}h(x) \,=\, \vartheta\,\mathfrak{r}_{n+1}^\alpha\,u_{\alpha}(x/\kappa)\int_{\R_{\ge 0}}u_{\alpha}\Big(1+\frac{f_n(x)}{x}\Big(\frac{y}{\mathfrak{r}_{n+1}}-1\Big)\Big)\,\mathfrak{m}_{n+1}^{\P}(dy).
$$
By Lemma \mbox{\ref{Ex fin - lemma existence solution reduced opt prob}}, the map $\gamma\mapsto\int_{\R_{\ge 0}}u_{\alpha}(1+\gamma(y/\mathfrak{r}_{n+1}-1))\,\mathfrak{m}_{n+1}^{\P}(dy)$ has exactly one maximal point, $\gamma_n^{\P}$, in $[0,1]$. Thus, since the second line in (\mbox{\ref{proof Theorem opt trad strat - eq45}}) coincides with ${\cal T}_n^{\P}h(x)$, we obtain ${\cal T}_{n,f_n^{\P}}^{\P}h(x)={\cal T}_n^{\P}h(x)$ also for any $x\in\R_{>0}$.
Therefore the map $f_n^{\P}$ provides a maximizer $f_n^{\P}\in F_n$ of $h$ with $f_n^{\P}\in F'_n$.

(ii): In the proof of (i) we have seen that the assumptions of Theorem \mbox{\ref{Thm - existence opt strategy}} are fulfilled. Thus, part (i) of this theorem gives $V_{n+1}^{\P}\in\M_n^{\P}$ for any $n=0,\ldots,N-1$. In particular, the above elaborations under (c) show that for any $n=0,\ldots,N-1$ the map $f_n^{\P}$ defined by (\mbox{\ref{Ex fin - optimal trading strategy}})  provides a maximizer $f_n^{\P}\in F_n$ of $V_{n+1}^{\P}$ with $f_n^{\P}\in F'_n$. Hence, part (iii) of Theorem \mbox{\ref{Thm - existence opt strategy}} ensures that the strategy $\pi^{\P}:=(f_n^{\P})_{n=0}^{N-1}\in\Pi_{\rm lin}$ forms an optimal trading strategy w.r.t.\ $\P$.

For the second part of the assertion we assume that there exists another optimal trading strategy $\widetilde{\pi}^{\P}$ w.r.t.\ $\P$ with $\widetilde{\pi}^{\P}\in\Pi_{\rm lin}$. Then, by definition of $\Pi_{\rm lin}$, there exists $\widetilde{\boldsymbol{\gamma}}^{\P}=(\widetilde{\gamma}_n^{\P})_{n=0}^{N-1}\in[0,1]^N$ such that $\widetilde{\pi}^{\P}=\pi_{\widetilde{\boldsymbol{\gamma}}^{\P}}:=(f_n^{\widetilde{\boldsymbol{\gamma}}^{\P}})_{n=0}^{N-1}$. 
In particular, we have $V_0^{\P}(x_0) = V_0^{\P;\pi_{\widetilde{\boldsymbol{\gamma}}^{\P}}}(x_0)$ for any $x_0\in\R_{\ge 0}$. Along with part (i) of this theorem and Lemma \mbox{\ref{lemma Ex-fin repr exp reward}} (see Subsection \mbox{\ref{Subsec - Finance Example - auxiliary lemmas proof Thm hd}}), this implies
$
	\mathfrak{v}_0^{\P}\, u_\alpha(x_0/B_0) = \mathfrak{v}_0^{\P;\pi_{\widetilde{\boldsymbol{\gamma}}^{\P}}}\, u_\alpha(x_0/B_0)
$
for every $x_0\in\R_{>0}$ and thus $\mathfrak{v}_0^{\P} = \mathfrak{v}_0^{\P;\pi_{\widetilde{\boldsymbol{\gamma}}^{\P}}}$, i.e.\
\begin{equation}\label{proof Theorem opt trad strat - eq50}
	\prod_{k=0}^{N-1}v_k^{\P} \,=\, \prod_{k=0}^{N-1}v_k^{\P;\widetilde{\gamma}_k^{\P}}.
\end{equation}
Below we will show that (\mbox{\ref{proof Theorem opt trad strat - eq50}}) implies
\begin{equation}\label{proof Theorem opt trad strat - eq60}
	v_n^{\P} \,=\, v_n^{\P;\widetilde{\gamma}_n^{\P}} \quad\mbox{for all }n=0,\ldots,N-1.
\end{equation}
Then it follows from (\mbox{\ref{proof Theorem opt trad strat - eq60}}) that for any $n=0,\ldots,N-1$ the fraction $\widetilde{\gamma}_n^{\P}\in[0,1]$ is a solution to the optimization problem (\mbox{\ref{Ex fin - one-stage max problem}}). However, according to Lemma \mbox{\ref{Ex fin - lemma existence solution reduced opt prob}}, this optimization problem has exactly one solution, $\gamma_n^{\P}$, in $[0,1]$. Hence $\widetilde{\gamma}_n^{\P}=\gamma_n^{\P}$ for any $n=0,\ldots,N-1$ and we arrive at $\widetilde{\pi}^{\P} =\pi^{\P}$ which implies that $\pi^{\P}$ is unique among all $\pi\in\Pi_{\rm lin}(\P)$.

It remains to show that (\mbox{\ref{proof Theorem opt trad strat - eq50}}) implies (\mbox{\ref{proof Theorem opt trad strat - eq60}}). Assume by way of contradiction that (\mbox{\ref{proof Theorem opt trad strat - eq60}}) does {\em not} hold, i.e.\ there exists $n\in\{0,\ldots,N-1\}$ such that $v_n^{\P} \neq v_n^{\P;\widetilde{\gamma}_n^{\P}}$. Then
$$
	v_n^{\P} \,=\, \sup_{\gamma\in[0,1]}v_n^{\P;\gamma} \,>\, v_n^{\P;\widetilde{\gamma}_n^{\P}}
$$
because the reverse inequality would lead to a contradiction of the maximality of $v_n^{\P}$. By assumption (\mbox{\ref{proof Theorem opt trad strat - eq50}}), this implies that there exists $k\in\{0,\ldots,N-1\}$ with $k\neq n$ such that
$$
	v_k^{\P} \,=\, \sup_{\gamma\in[0,1]}v_k^{\P;\gamma} \,<\, v_k^{\P;\widetilde{\gamma}_k^{\P}}.
$$
This, however, contradicts the maximality of $v_k^{\P}$. Hence (\mbox{\ref{proof Theorem opt trad strat - eq50}}) indeed implies (\mbox{\ref{proof Theorem opt trad strat - eq60}}).
\hfill\proofendsign


\subsection{Proof of Theorem \mbox{\ref{Ex fin - Thm Hadamard}} }\label{Subsec - Finance Example - proof Thm hd}


The following Lemmas \mbox{\ref{lemma Ex-fin repr exp reward}}--\mbox{\ref{lemma Ex-fin recursion hd exp reward}} involve the map $V_n^{\P;\pi}$ given by (\mbox{\ref{exp tot reward}}). In the specific setting of Subsection \mbox{\ref{Subsec - Finance Example - Embedding FM into MDM}}  this map admits the representations
\begin{equation}\label{proof Ex finance Theorem hd - eq10}
V_n^{\P;\pi}(x_n) = \ex_{n,x_n}^{x_0,\P;\pi}[r_N(X_N)]
\end{equation}
for any $x_n\in\R_{\ge 0}$, $\P\in{\cal P}_\psi$, $\pi\in\Pi$, and $n=0,\ldots,N$.


\subsubsection{Auxiliary lemmas}\label{Subsec - Finance Example - auxiliary lemmas proof Thm hd}


\begin{lemma}\label{lemma Ex-fin repr exp reward}
Let $\P=(P_n)_{n=0}^{N-1}\in{\cal P}_\psi$ and $\boldsymbol{\gamma}=(\gamma_n)_{n=0}^{N-1}\in[0,1]^N$ be fixed. Then the map $V_n^{\P;\pi_{\boldsymbol{\gamma}}}$ given by (\mbox{\ref{proof Ex finance Theorem hd - eq10}}) admits the representation
\begin{equation}\label{lemma Ex-fin repr exp reward - eq10}
	V_n^{\P;\pi_{\boldsymbol{\gamma}}}(x_n) = \mathfrak{v}_n^{\P;\pi_{\boldsymbol{\gamma}}}\, u_\alpha(x_n/B_n)
\end{equation}
for any $x_n\in\R_{\ge 0}$ and $n=0,\ldots,N$, where $\mathfrak{v}_n^{\P;\pi_{\boldsymbol{\gamma}}} := \prod_{k=n}^{N-1}v_k^{\P;\boldsymbol{\gamma}} = \prod_{k=n}^{N-1}v_k^{\P;\gamma_k}$.
\end{lemma}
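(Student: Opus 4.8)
The plan is to argue by backward induction on $n$, running from the terminal time $N$ down to $0$, based on the reward iteration scheme recalled in Remark \ref{remark hd - iteration scheme} (Proposition \ref{proposition reward iteration} of the supplementary material). The two features of the present setting that drive the computation are that $r_n\equiv 0$ for $n=0,\ldots,N-1$ while $r_N(x)=u_\alpha(x/B_N)$, and that the one-step kernels have the explicit image-measure form $P_n((x,a),\bullet)=\mathfrak{m}_{n+1}^{\P}\circ\eta_{n,(x,a)}^{-1}$ from (\ref{Ex fin - set trans func psi}), with $\eta_{n,(x,a)}(y)=\mathfrak{r}_{n+1}x+a(y-\mathfrak{r}_{n+1})$. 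All integrals appearing below are finite because $\psi$ is a bounding function for $(\boldsymbol{X},\boldsymbol{A},\P,\Pi,\boldsymbol{r})$, so that $V_{n+1}^{\P;\pi_{\boldsymbol{\gamma}}}(\cdot)\in\M_\psi(\R_{\ge 0})$ by Lemma \ref{lemma suff cond for stand cond - pre}. The base case $n=N$ is immediate: $V_N^{\P;\pi_{\boldsymbol{\gamma}}}=r_N=u_\alpha(\cdot/B_N)$, which matches the claim since $\mathfrak{v}_N^{\P;\pi_{\boldsymbol{\gamma}}}=\prod_{k=N}^{N-1}v_k^{\P;\gamma_k}=1$ is the empty product.

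For the inductive step I would assume $V_{n+1}^{\P;\pi_{\boldsymbol{\gamma}}}(x)=\mathfrak{v}_{n+1}^{\P;\pi_{\boldsymbol{\gamma}}}\,u_\alpha(x/B_{n+1})$. Since $r_n\equiv 0$ and $f_n^{\boldsymbol{\gamma}}(x)=\gamma_n x$, the iteration scheme together with the image-measure form of $P_n$ yields
$$
V_n^{\P;\pi_{\boldsymbol{\gamma}}}(x_n)=\int_{\R_{\ge 0}}V_{n+1}^{\P;\pi_{\boldsymbol{\gamma}}}(y)\,P_n\big((x_n,\gamma_n x_n),dy\big)=\mathfrak{v}_{n+1}^{\P;\pi_{\boldsymbol{\gamma}}}\int_{\R_{\ge 0}}u_\alpha\Big(\frac{\eta_{n,(x_n,\gamma_n x_n)}(y)}{B_{n+1}}\Big)\,\mathfrak{m}_{n+1}^{\P}(dy).
$$
The key algebraic simplification, using $B_{n+1}=\mathfrak{r}_{n+1}B_n$, is
$$
\frac{\eta_{n,(x_n,\gamma_n x_n)}(y)}{B_{n+1}}=\frac{\mathfrak{r}_{n+1}x_n+\gamma_n x_n(y-\mathfrak{r}_{n+1})}{\mathfrak{r}_{n+1}B_n}=\frac{x_n}{B_n}\Big(1+\gamma_n\Big(\frac{y}{\mathfrak{r}_{n+1}}-1\Big)\Big).
$$
Both factors on the right are nonnegative (the second by the remark following (\ref{Ex fin - one-stage max problem})), so the multiplicativity $u_\alpha(ab)=u_\alpha(a)u_\alpha(b)$ of the power utility lets me pull out $u_\alpha(x_n/B_n)$ and recognize the remaining integral as exactly $v_n^{\P;\gamma_n}$, the left-hand side of (\ref{Ex fin - one-stage max problem}) at $\gamma=\gamma_n$. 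This gives $V_n^{\P;\pi_{\boldsymbol{\gamma}}}(x_n)=\mathfrak{v}_{n+1}^{\P;\pi_{\boldsymbol{\gamma}}}\,v_n^{\P;\gamma_n}\,u_\alpha(x_n/B_n)$, and since $\mathfrak{v}_{n+1}^{\P;\pi_{\boldsymbol{\gamma}}}\,v_n^{\P;\gamma_n}=\big(\prod_{k=n+1}^{N-1}v_k^{\P;\gamma_k}\big)v_n^{\P;\gamma_n}=\mathfrak{v}_n^{\P;\pi_{\boldsymbol{\gamma}}}$, the representation (\ref{lemma Ex-fin repr exp reward - eq10}) follows.

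The step that requires the most care is the extension of the recursion to $n=0$, since the generic backward scheme is stated only for $k=1,\ldots,N-1$ and $V_0^{\P;\pi}$ a priori carries the initial state $x_0$. Here I would invoke Lemma \ref{lemma on integrals of functions of X}(iv) to unfold $V_0^{\P;\pi_{\boldsymbol{\gamma}}}(x_0)=\ex^{x_0,\P;\pi_{\boldsymbol{\gamma}}}[r_N(X_N)]$ as an iterated integral against $P_{N-1},\ldots,P_0$ and peel off the innermost kernel $P_0((x_0,\gamma_0 x_0),\cdot)$, which reproduces $V_0^{\P;\pi_{\boldsymbol{\gamma}}}(x_0)=\int V_1^{\P;\pi_{\boldsymbol{\gamma}}}(y)\,P_0((x_0,\gamma_0 x_0),dy)$; the inductive computation above then applies verbatim at $n=0$. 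Apart from this bookkeeping, the proof is a routine verification, the only genuinely structural inputs being the multiplicativity of $u_\alpha$ and the discounting identity $B_{n+1}=\mathfrak{r}_{n+1}B_n$ that makes the wealth argument factorize.
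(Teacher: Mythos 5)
Your proof is correct and follows essentially the same route as the paper's: backward induction from $n=N$, the reward iteration with $r_n\equiv 0$, the image-measure form of $P_n$, the identity $B_{n+1}=\mathfrak{r}_{n+1}B_n$, and the multiplicativity of $u_\alpha$ to factor out $u_\alpha(x_n/B_n)$ and recognize $v_n^{\P;\gamma_n}$. Your extra care at $n=0$ is harmless but unnecessary: the paper invokes part (i) of Proposition \ref{proposition reward iteration}, which asserts $V_n^{\P;\pi}={\cal T}_{n,f_n}^{\P}V_{n+1}^{\P;\pi}$ for all $n=0,\ldots,N-1$ (including $n=0$), so no separate unfolding via Lemma \ref{lemma on integrals of functions of X}(iv) is needed.
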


\begin{proof}
We prove the assertion in (\mbox{\ref{lemma Ex-fin repr exp reward - eq10}}) by (backward) induction on $n$. For $n=N$ we obtain by means of (\mbox{\ref{proof Ex finance Theorem hd - eq10}}), part (iii) of Lemma \mbox{\ref{lemma on integrals of functions of X}}, and (\mbox{\ref{Ex fin - terminal reward function}})
$$
	V_N^{\P;\pi_{\boldsymbol{\gamma}}}(x_N) \,=\, r_N(x_N) \,=\, \mathfrak{v}_N^{\P;\pi_{\boldsymbol{\gamma}}}\, u_\alpha(x_N/B_N)
$$
for any $x_N\in\R_{\ge 0}$, where $\mathfrak{v}_N^{\P;\pi_{\boldsymbol{\gamma}}}:=1$. Now, suppose that the assertion in (\mbox{\ref{lemma Ex-fin repr exp reward - eq10}}) holds for $k\in\{n+1,\ldots,N\}$. Note that $V_{n+1}^{\P;\pi_{\boldsymbol{\gamma}}}(\cdot)\in\M'$ (with $\M'$ defined as in (\mbox{\ref{proof Theorem opt trad strat - eq10}})) by choosing $\vartheta:=\mathfrak{v}_{n+1}^{\P;\pi_{\boldsymbol{\gamma}}}$ ($\in\R_{>0}$) as well as $\kappa:=B_{n+1}$ ($\in\R_{\ge 1}$), and that it can be verified easily that $\M'$ is a subset of $\M_n^{\P}(\R_{\ge 0})$, where $\M_n^{\P}(\R_{\ge 0})$ is defined as in (\mbox{\ref{assumption integrability appendix}}) in the Appendix \mbox{\ref{Sec - Existence of optimal strategies}}.
Then, in view of part (i) of Proposition \mbox{\ref{proposition reward iteration}}, for any $x_n\in\R_{\ge 0}$ we get
\begin{eqnarray}\label{proof lemma Ex-fin repr exp reward - eq10}
	V_n^{\P;\pi_{\boldsymbol{\gamma}}}(x_n)
	& = & {\cal T}_{n,f_n^{\boldsymbol{\gamma}}}^{\P}V_{n+1}^{\P;\pi_{\boldsymbol{\gamma}}}(x_n) \nonumber \\
	& = & \int_{\R_{\ge 0}}V_{n+1}^{\P;\pi_{\boldsymbol{\gamma}}}(y)\,P_n\big((x_n,f_n^{\boldsymbol{\gamma}}(x_n)),dy\big) \nonumber \\
	& = & \int_{\R_{\ge 0}}\mathfrak{v}_{n+1}^{\P;\pi_{\boldsymbol{\gamma}}}\,u_\alpha(y/B_{n+1})\,P_n\big((x_n,f_n^{\boldsymbol{\gamma}}(x_n)),dy\big) \nonumber \\
	& = & \mathfrak{v}_{n+1}^{\P;\pi_{\boldsymbol{\gamma}}}\,\int_{\R_{\ge 0}}u_\alpha\Big(\frac{\mathfrak{r}_{n+1}x_n + f_n^{\boldsymbol{\gamma}}(x_n)(y - \mathfrak{r}_{n+1})}{\mathfrak{r}_{n+1}B_n}\Big)\,\mathfrak{m}_{n+1}^{\P}(dy) \nonumber \\
	& = & \mathfrak{v}_{n+1}^{\P;\pi_{\boldsymbol{\gamma}}}\,\int_{\R_{\ge 0}}u_\alpha\Big(\frac{\mathfrak{r}_{n+1}x_n + \gamma_n\,x_n(y - \mathfrak{r}_{n+1})}{\mathfrak{r}_{n+1}B_n}\Big)\,\mathfrak{m}_{n+1}^{\P}(dy) \nonumber \\
	& = & \mathfrak{v}_{n+1}^{\P;\pi_{\boldsymbol{\gamma}}}\,u_\alpha(x_n/B_n)\,\int_{\R_{\ge 0}}u_\alpha\Big(1 + \gamma_n\Big(\frac{y}{\mathfrak{r}_{n+1}} - 1\Big)\Big)\,\mathfrak{m}_{n+1}^{\P}(dy) \nonumber \\
	& = & \mathfrak{v}_{n+1}^{\P;\pi_{\boldsymbol{\gamma}}}\,u_\alpha(x_n/B_n)\,v_n^{\P;\boldsymbol{\gamma}}
	\,=\, \mathfrak{v}_n^{\P;\pi_{\boldsymbol{\gamma}}}\, u_\alpha(x_n/B_n),
\end{eqnarray}
where we used for the fifth ``$=$'' the definition of the map $f_n^{\boldsymbol{\gamma}}$ in (\mbox{\ref{Ex fin - linear decision rules}}). For the last step we employed $\mathfrak{v}_n^{\P;\pi_{\boldsymbol{\gamma}}}= \mathfrak{v}_{n+1}^{\P;\pi_{\boldsymbol{\gamma}}}v_n^{\P;\boldsymbol{\gamma}}$.
Thus we have verified the representation of the map $V_n^{\P;\pi_{\boldsymbol{\gamma}}}$ in (\mbox{\ref{lemma Ex-fin repr exp reward - eq10}}).
\end{proof}

\begin{lemma}\label{lemma Ex-fin verification assumptions Thm hd}
Let $\M_{\rm{\scriptsize{H\ddot{o}l}},\alpha}$ be defined as in Example \mbox{\ref{examples prob metric - hoelder}}, and let $\psi$ be the gauge function from (\mbox{\ref{Ex fin - gauge function}}). Then the following three assertions hold.
\vspace{-1mm}
\begin{enumerate}
	\item[{\rm (i)}] $\psi$ is a bounding function for the MDM $(\boldsymbol{X},\boldsymbol{A},\Q,\Pi,\boldsymbol{r})$ for any $\Q\in{\cal P}_\psi$.

	\item[{\rm (ii)}] For any fixed $\P\in{\cal P}_\psi$ we have $\sup_{\pi\in\Pi_{\rm lin}}\rho_{\M_{\rm{\scriptsize{H\ddot{o}l}},\alpha}}(V_n^{\P;\pi})<\infty$ for every $n=1,\ldots,N$.
	
	\item[{\rm (iii)}] $\rho_{\M_{\rm{\scriptsize{H\ddot{o}l}},\alpha}}(\psi)<\infty$.
\end{enumerate}
\end{lemma}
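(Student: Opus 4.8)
The plan is to reduce all three assertions to two elementary inequalities for the power function $t\mapsto t^\alpha$ on $[0,\infty)$ with $\alpha\in(0,1)$: subadditivity, $(u+v)^\alpha\le u^\alpha+v^\alpha$ for $u,v\ge 0$, and the Hölder bound $|x^\alpha-y^\alpha|\le|x-y|^\alpha$ for $x,y\ge 0$. I would first recall from Example \ref{example minkowski} that $\rho_{\M_{\rm{\scriptsize{H\ddot{o}l}},\alpha}}(h)=\|h\|_{\rm{\scriptsize{H\ddot{o}l}},\alpha}$, so that (ii) and (iii) are statements about Hölder-$\alpha$ seminorms. Assertion (iii) is then immediate: since $\psi(x)=1+x^\alpha$ (taking $x'=0$ in Example \ref{examples prob metric - hoelder}), one has $|\psi(x)-\psi(y)|=|x^\alpha-y^\alpha|\le|x-y|^\alpha$, whence $\rho_{\M_{\rm{\scriptsize{H\ddot{o}l}},\alpha}}(\psi)=\|\psi\|_{\rm{\scriptsize{H\ddot{o}l}},\alpha}\le 1<\infty$. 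This is precisely the instance flagged in Remark \ref{remark verification assumptions}(iii) and requires no further work.

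For (i), conditions (a)--(b) of Definition \ref{def bounding function} are immediate, because $r_n\equiv 0$ for $n<N$ and $|r_N(x)|=B_N^{-\alpha}x^\alpha\le B_N^{-\alpha}\psi(x)$. The one substantive step is condition (c). Fixing $\Q\in{\cal P}_\psi$ generated through \eqref{Ex fin - set trans func psi} by $(\mathfrak{m}_1^{\Q},\ldots,\mathfrak{m}_N^{\Q})$ and writing $\eta_{n,(x,a)}(y)=\mathfrak{r}_{n+1}(x-a)+ay$ as in \eqref{Ex fin - def mapping eta} (which is nonnegative since $0\le a\le x$), I would use the change-of-variables formula to get
\begin{equation*}
  \int_{\R_{\ge 0}}\psi(z)\,Q_n\big((x,a),dz\big)=\int_{\R_{\ge 0}}\big(1+\eta_{n,(x,a)}(y)^\alpha\big)\,\mathfrak{m}_{n+1}^{\Q}(dy),
\end{equation*}
and then bound $\eta_{n,(x,a)}(y)^\alpha\le\mathfrak{r}_{n+1}^\alpha(x-a)^\alpha+a^\alpha y^\alpha\le x^\alpha(\mathfrak{r}_{n+1}^\alpha+y^\alpha)$ by subadditivity together with $0\le x-a\le x$ and $0\le a\le x$. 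Integrating and setting $c_{n+1}^{\Q}:=\int y^\alpha\,\mathfrak{m}_{n+1}^{\Q}(dy)<\infty$ (finite since $\mathfrak{m}_{n+1}^{\Q}\in{\cal M}_1^\alpha(\R_{\ge 0})$) gives $1+x^\alpha(\mathfrak{r}_{n+1}^\alpha+c_{n+1}^{\Q})\le K_3\psi(x)$ with $K_3:=\max_{n}\max\{1,\mathfrak{r}_{n+1}^\alpha+c_{n+1}^{\Q}\}$, a $\Q$-dependent constant, which is all that is required since here ${\cal P}'=\{\Q\}$.

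For (ii), it suffices to treat linear strategies $\pi_{\boldsymbol{\gamma}}\in\Pi_{\rm lin}$ with $\boldsymbol{\gamma}\in[0,1]^N$. By Lemma \ref{lemma Ex-fin repr exp reward} the value function has the explicit form $V_n^{\P;\pi_{\boldsymbol{\gamma}}}(x)=\mathfrak{v}_n^{\P;\pi_{\boldsymbol{\gamma}}}B_n^{-\alpha}x^\alpha$, so the Hölder bound yields $\|V_n^{\P;\pi_{\boldsymbol{\gamma}}}\|_{\rm{\scriptsize{H\ddot{o}l}},\alpha}\le\mathfrak{v}_n^{\P;\pi_{\boldsymbol{\gamma}}}B_n^{-\alpha}$, and the matter reduces to bounding $\mathfrak{v}_n^{\P;\pi_{\boldsymbol{\gamma}}}=\prod_{k=n}^{N-1}v_k^{\P;\gamma_k}$ uniformly in $\boldsymbol{\gamma}$. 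As the product has finitely many factors, I would bound each $v_k^{\P;\gamma}$ uniformly in $\gamma\in[0,1]$ via the dominated-integrand estimate from the proof of Lemma \ref{Ex fin - lemma existence solution reduced opt prob} (display \eqref{proof lem ex solution red opt prob - eq20}), namely $u_\alpha(1+\gamma(y/\mathfrak{r}_{k+1}-1))\le(1+y)^\alpha\le 1+y^\alpha$, whence $v_k^{\P;\gamma}\le 1+c_{k+1}^{\P}$. This gives $\sup_{\pi\in\Pi_{\rm lin}}\rho_{\M_{\rm{\scriptsize{H\ddot{o}l}},\alpha}}(V_n^{\P;\pi})\le B_n^{-\alpha}\prod_{k=n}^{N-1}(1+c_{k+1}^{\P})<\infty$.

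I do not expect any genuine obstacle here; the only point demanding care is the decomposition $\eta_{n,(x,a)}(y)=\mathfrak{r}_{n+1}(x-a)+ay$ in combination with the admissibility constraint $a\in[0,x]$, which is exactly what lets subadditivity separate the $x$- and $y$-dependence in part (i). The same separation, read off from Lemma \ref{lemma Ex-fin repr exp reward}, drives part (ii), so the three parts share essentially one short computation.
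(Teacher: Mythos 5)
Your proposal is correct and follows essentially the same route as the paper's proof: part (iii) via $\|\psi\|_{\rm{\scriptsize{H\ddot{o}l}},\alpha}\le 1$, part (i) by separating the $x$- and $y$-dependence of $\eta_{n,(x,a)}(y)$ using $a\in[0,x]$ and the subadditivity/monotonicity of $u_\alpha$ (the paper factors out $u_\alpha(x)$ and bounds the remaining integrand by $u_\alpha(1+y)\le 1+u_\alpha(y)$, which is the same elementary estimate in a slightly different order), and part (ii) by combining Lemma \ref{lemma Ex-fin repr exp reward} with the uniform bound $v_k^{\P;\gamma}\le 1+\int u_\alpha\,d\mathfrak{m}_{k+1}^{\P}$. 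No gaps.
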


\begin{proof}
(i): Fix $\Q=(Q_n)_{n=0}^{N-1}\in{\cal P}_\psi$. Since $r_n\equiv 0$ for any $n=0,\ldots,N-1$, there exists a finite constant $K_1>0$ such that
$$
	|r_n(x,a)| \,\le\, K_1 \,\le\, K_1\big(1+u_\alpha(x)\big) \,=\, K_1\psi(x)
$$
for every $(x,a)\in D_n$ and $n=0,\ldots,N-1$.

Moreover, in view of (\mbox{\ref{Ex fin - terminal reward function}}), we can find some finite constant $K_2>0$ such that
$$
	|r_N(x)| \,=\, \big(1/u_\alpha(B_N)\big)\,u_\alpha(x) \,\le\, u_\alpha(x) \,\le\, K_2\psi(x)
$$
for every $x\in\R_{\ge 0}$ and $n=0,\ldots,N-1$.

Next, set $\overline{\mathfrak{r}}:=\max_{k=0,\ldots,N-1}\mathfrak{r}_{k+1}$ and note that $\overline{\mathfrak{r}}\in\R_{\ge 1}$. Using displays (\mbox{\ref{Ex fin - gauge function}})--(\mbox{\ref{Ex fin - set trans func psi}}), we find some finite constant $K_3>0$ (depending on $\Q$) such that
\begin{eqnarray*}
	\int_{\R_{\ge 0}}\psi(y)\,Q_n\big((x,a),dy\big)
	& = & 1 + \int_{\R_{\ge 0}} u_\alpha\big(\mathfrak{r}_{n+1}x + a(y - \mathfrak{r}_{n+1})\big)\,\mathfrak{m}^{\Q}_{n+1}(dy) \\
	& = & 1 + (\mathfrak{r}_{n+1})^\alpha\,\int_{\R_{\ge 0}}u_\alpha\Big(x + a\Big(\frac{y}{\mathfrak{r}_{n+1}} - 1\Big)\Big)\,\mathfrak{m}^{\Q}_{n+1}(dy) \\
	& \le & 1 + \overline{\mathfrak{r}}^\alpha\,u_\alpha(x)\, \int_{\R_{\ge 0}}u_\alpha(1 + y)\,\mathfrak{m}^{\Q}_{n+1}(dy) \\
	& \le & 1 + \overline{\mathfrak{r}}^\alpha\,u_\alpha(x)\, \Big(1 + \int_{\R_{\ge 0}}u_\alpha(y)\,\mathfrak{m}^{\Q}_{n+1}(dy)\Big) \\
	& \le & 1 + \overline{\mathfrak{r}}^\alpha\,u_\alpha(x)\, (1 + \overline{\mathfrak{m}}_{\Q})
	\,\le\, K_3\psi(x)
\end{eqnarray*}
for every $(x,a)\in D_n$ and $n=0,\ldots,N-1$, where $\overline{\mathfrak{m}}_{\Q}$ is defined as in Subsection \mbox{\ref{Subsec - Proof of Lem ex solution red opt prob}}. Take into account that $\alpha\in(0,1)$ introduced in (\mbox{\ref{Ex fin - def power utility}})  is fixed.
Consequently, conditions (a)--(c) of Definition \mbox{\ref{def bounding function}}  are satisfied for ${\cal P}':=\{\Q\}$.

(ii): Fix $n\in\{1,\ldots,N\}$. Since any $\boldsymbol{\gamma}=(\gamma_n)_{n=0}^{N-1}\in[0,1]^N$ induces a linear trading strategy $\pi=\pi_{\boldsymbol{\gamma}}:=(f_n^{\boldsymbol{\gamma}})_{n=0}^{N-1}\in\Pi_{\rm lin}$ through (\mbox{\ref{Ex fin - linear decision rules}}), it suffices in view of Example \mbox{\ref{examples prob metric - hoelder}}  to show that
\begin{equation}\label{proof lemma Ex-fin verification assumptions Thm hd - eq10}
	\sup_{\boldsymbol{\gamma}=(\gamma_n)_{n=0}^{N-1}\in[0,1]^N}\|V_n^{\P;\pi_{\boldsymbol{\gamma}}}\|_{\rm{\scriptsize{H\ddot{o}l}},\alpha} < \infty.
\end{equation}
First of all, it is easily seen that the terminal reward function $r_N$ given by (\mbox{\ref{Ex fin - terminal reward function}})  is contained in $\M_{\rm{\scriptsize{H\ddot{o}l}},\alpha}$. Thus $\|r_N\|_{\rm{\scriptsize{H\ddot{o}l}},\alpha}\le 1$.
Moreover, in view of Lemma \mbox{\ref{lemma Ex-fin repr exp reward}} and (\mbox{\ref{Ex fin - terminal reward function}}), we have $V_n^{\P;\pi_{\boldsymbol{\gamma}}}(\cdot) = \mathfrak{v}_n^{\P;\pi_{\boldsymbol{\gamma}}}\,r_N(\cdot)$ for any $\boldsymbol{\gamma}=(\gamma_n)_{n=0}^{N-1}\in[0,1]^N$, where $\mathfrak{v}_n^{\P;\pi_{\boldsymbol{\gamma}}}:=\prod_{k=n}^{N-1}v_k^{\P;\boldsymbol{\gamma}}$. Then in view of (\mbox{\ref{proof lem ex solution red opt prob - eq10}})--(\mbox{\ref{proof lem ex solution red opt prob - eq30}})
\begin{eqnarray*}
	\lefteqn{\|V_n^{\P;\pi_{\boldsymbol{\gamma}}}\|_{\rm{\scriptsize{H\ddot{o}l}},\alpha}
	\, = \, \|\mathfrak{v}_n^{\P;\pi_{\boldsymbol{\gamma}}}\,r_N\|_{\rm{\scriptsize{H\ddot{o}l}},\alpha}
	\,=\, |\mathfrak{v}_n^{\P;\pi_{\boldsymbol{\gamma}}}|\,\|r_N\|_{\rm{\scriptsize{H\ddot{o}l}},\alpha}
	\,=\, \prod_{k=n}^{N-1}|v_k^{\P;\boldsymbol{\gamma}}|\,\|r_N\|_{\rm{\scriptsize{H\ddot{o}l}},\alpha}}	\\
	& \quad \le & ~ \prod_{k=n}^{N-1}\int_{\R_{\ge 0}}u_\alpha\Big(1+\gamma_k\Big(\frac{y}{\mathfrak{r}_{k+1}^{\P}} - 1\Big)\Big)\,\mathfrak{m}_{k+1}^{\P}(dy)
	\,\le\, (1 + \overline{\mathfrak{m}}_{\P})^{N-n} \hspace{1.8cm}
\end{eqnarray*}
for any $\boldsymbol{\gamma}=(\gamma_n)_{n=0}^{N-1}\in[0,1]^N$, where $\overline{\mathfrak{m}}_{\P}$ is defined as in Subsection \mbox{\ref{Subsec - Proof of Lem ex solution red opt prob}} and we used in the second ``$=$'' the absolute homogeneity of the semi-norm $\|\cdot\|_{\rm{\scriptsize{H\ddot{o}l}},\alpha}$ (as defined in Example \mbox{\ref{examples prob metric - hoelder}}).
Hence, we arrive at (\mbox{\ref{proof lemma Ex-fin verification assumptions Thm hd - eq10}}).

(iii): It can be shown easily that the gauge function $\psi$ belongs to $\M_{\rm{\scriptsize{H\ddot{o}l}},\alpha}$. Thus, in view of Example \mbox{\ref{examples prob metric - hoelder}}, we have $\rho_{\M_{\rm{\scriptsize{H\ddot{o}l}},\alpha}}(\psi) = \|\psi\|_{\rm{\scriptsize{H\ddot{o}l}},\alpha} \le 1 < \infty$.
\end{proof}

\begin{lemma}\label{lemma Ex-fin recursion hd exp reward}
Let $\P=(P_n)_{n=0}^{N-1}\in{\cal P}_\psi$ and $\boldsymbol{\gamma}=(\gamma_n)_{n=0}^{N-1}\in[0,1]^N$ be fixed. Then the solution $(\dot V_k^{\P,\Q;\pi_{\boldsymbol{\gamma}}})_{k=0}^N$ of the backward iteration scheme (\mbox{\ref{Hadamard pi backward iteration scheme}})  admits the representation
\begin{equation}\label{lemma Ex-fin recursion hd exp reward - eq10}
	\dot V_n^{\P,\Q;\pi_{\boldsymbol{\gamma}}}(x_n) = \dot{\mathfrak{v}}_n^{\P,\Q;\pi_{\boldsymbol{\gamma}}}\, u_\alpha(x_n/B_n)
\end{equation}
for any $x_n\in\R_{\ge 0}$, $\Q=(Q_n)_{n=0}^{N-1}\in{\cal P}_\psi$, and $n=0,\ldots,N$, where
$$
\dot{\mathfrak{v}}_n^{\P,\Q;\pi_{\boldsymbol{\gamma}}} := \sum_{k=n}^{N-1}v_{N-1}^{\P;\boldsymbol{\gamma}}\cdots(v_k^{\Q;\boldsymbol{\gamma}} - v_k^{\P;\boldsymbol{\gamma}})\cdots v_n^{\P;\boldsymbol{\gamma}}.
$$
\end{lemma}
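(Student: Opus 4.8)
The plan is to prove the representation (\ref{lemma Ex-fin recursion hd exp reward - eq10}) by backward induction on $n$, exploiting that both the value maps $V_n^{\P;\pi_{\boldsymbol{\gamma}}}$ (via Lemma \ref{lemma Ex-fin repr exp reward}) and the candidate quantities $\dot V_n^{\P,\Q;\pi_{\boldsymbol{\gamma}}}$ are scalar multiples of the single function $u_\alpha(\,\cdot\,/B_n)$. For the base case $n=N$, the scheme (\ref{Hadamard pi backward iteration scheme}) gives $\dot V_N^{\P,\Q;\pi_{\boldsymbol{\gamma}}}(\cdot)\equiv 0$, while $\dot{\mathfrak{v}}_N^{\P,\Q;\pi_{\boldsymbol{\gamma}}}$ is an empty sum and hence equals $0$; thus (\ref{lemma Ex-fin recursion hd exp reward - eq10}) holds trivially.

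For the inductive step I would assume (\ref{lemma Ex-fin recursion hd exp reward - eq10}) at level $n+1$ and evaluate the two integrals defining $\dot V_n^{\P,\Q;\pi_{\boldsymbol{\gamma}}}$ in (\ref{Hadamard pi backward iteration scheme}) at $a:=f_n^{\boldsymbol{\gamma}}(x_n)=\gamma_n x_n$. The key computation pushes both kernels through the map $\eta_{n,(x_n,a)}$ from (\ref{Ex fin - def mapping eta}): since $P_n((x_n,a),\bullet)=\mathfrak{m}_{n+1}^{\P}\circ\eta_{n,(x_n,a)}^{-1}$ and $Q_n((x_n,a),\bullet)=\mathfrak{m}_{n+1}^{\Q}\circ\eta_{n,(x_n,a)}^{-1}$ by (\ref{Ex fin - set trans func psi}), the change-of-variables formula turns $\int_E u_\alpha(y/B_{n+1})\,P_n((x_n,a),dy)$ into $\int_{\R_{\ge 0}}u_\alpha(\eta_{n,(x_n,a)}(y)/B_{n+1})\,\mathfrak{m}_{n+1}^{\P}(dy)$, and analogously for $Q_n$. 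Here I would use $B_{n+1}=\mathfrak{r}_{n+1}B_n$ together with the linear form $a=\gamma_n x_n$ to obtain the factorization
$$\frac{\eta_{n,(x_n,a)}(y)}{B_{n+1}}=\frac{\mathfrak{r}_{n+1}x_n+\gamma_n x_n(y-\mathfrak{r}_{n+1})}{\mathfrak{r}_{n+1}B_n}=\frac{x_n}{B_n}\Big(1+\gamma_n\Big(\frac{y}{\mathfrak{r}_{n+1}}-1\Big)\Big),$$
and then the multiplicativity $u_\alpha(st)=u_\alpha(s)u_\alpha(t)$ of the power utility to pull out the factor $u_\alpha(x_n/B_n)$. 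What remains under the integral is exactly $v_n^{\P;\gamma_n}=v_n^{\P;\boldsymbol{\gamma}}$ (resp. $v_n^{\Q;\boldsymbol{\gamma}}$) from (\ref{Ex fin - one-stage max problem}); note that $\mathfrak{r}_{n+1}$ is the same for $\P$ and $\Q$ because the bond dynamics are fixed.

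Combining the two integrals, using the inductive hypothesis $\dot V_{n+1}^{\P,\Q;\pi_{\boldsymbol{\gamma}}}(y)=\dot{\mathfrak{v}}_{n+1}^{\P,\Q;\pi_{\boldsymbol{\gamma}}}\,u_\alpha(y/B_{n+1})$ in the first term and Lemma \ref{lemma Ex-fin repr exp reward} (giving $V_{n+1}^{\P;\pi_{\boldsymbol{\gamma}}}(y)=\mathfrak{v}_{n+1}^{\P;\pi_{\boldsymbol{\gamma}}}\,u_\alpha(y/B_{n+1})$) in the second, I would arrive at
$$\dot V_n^{\P,\Q;\pi_{\boldsymbol{\gamma}}}(x_n)=u_\alpha(x_n/B_n)\Big[\dot{\mathfrak{v}}_{n+1}^{\P,\Q;\pi_{\boldsymbol{\gamma}}}\,v_n^{\P;\boldsymbol{\gamma}}+\mathfrak{v}_{n+1}^{\P;\pi_{\boldsymbol{\gamma}}}\big(v_n^{\Q;\boldsymbol{\gamma}}-v_n^{\P;\boldsymbol{\gamma}}\big)\Big].$$
It then suffices to verify the scalar recursion $\dot{\mathfrak{v}}_n^{\P,\Q;\pi_{\boldsymbol{\gamma}}}=\dot{\mathfrak{v}}_{n+1}^{\P,\Q;\pi_{\boldsymbol{\gamma}}}\,v_n^{\P;\boldsymbol{\gamma}}+\mathfrak{v}_{n+1}^{\P;\pi_{\boldsymbol{\gamma}}}(v_n^{\Q;\boldsymbol{\gamma}}-v_n^{\P;\boldsymbol{\gamma}})$. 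This follows by isolating the $k=n$ summand in the definition of $\dot{\mathfrak{v}}_n^{\P,\Q;\pi_{\boldsymbol{\gamma}}}$, which yields precisely $\mathfrak{v}_{n+1}^{\P;\pi_{\boldsymbol{\gamma}}}(v_n^{\Q;\boldsymbol{\gamma}}-v_n^{\P;\boldsymbol{\gamma}})$ since $\mathfrak{v}_{n+1}^{\P;\pi_{\boldsymbol{\gamma}}}=\prod_{j=n+1}^{N-1}v_j^{\P;\boldsymbol{\gamma}}$, and by factoring the common trailing factor $v_n^{\P;\boldsymbol{\gamma}}$ out of the remaining summands $k=n+1,\ldots,N-1$, which reassemble into $\dot{\mathfrak{v}}_{n+1}^{\P,\Q;\pi_{\boldsymbol{\gamma}}}\,v_n^{\P;\boldsymbol{\gamma}}$.

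All integrals are finite because $\dot V_{n+1}^{\P,\Q;\pi_{\boldsymbol{\gamma}}}$ and $V_{n+1}^{\P;\pi_{\boldsymbol{\gamma}}}$ lie in $\M_\psi(E)$ (Remark \ref{remark hd - iteration scheme} together with Lemma \ref{lemma suff cond for stand cond - pre}), so no integrability obstruction arises. The only genuinely delicate point I anticipate is the bookkeeping of the telescoping product-sum identity for $\dot{\mathfrak{v}}_n^{\P,\Q;\pi_{\boldsymbol{\gamma}}}$; everything else is a near-verbatim transcription of the computation already performed for the value function in the proof of Lemma \ref{lemma Ex-fin repr exp reward} (display (\ref{proof lemma Ex-fin repr exp reward - eq10})), which crucially relies on the linearity of the decision rule $f_n^{\boldsymbol{\gamma}}$ in (\ref{Ex fin - linear decision rules}) letting $x_n$ factor through $\eta_{n,(x_n,\gamma_n x_n)}$.
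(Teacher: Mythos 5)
Your proposal is correct and follows essentially the same route as the paper's proof: backward induction with base case $\dot V_N^{\P,\Q;\pi_{\boldsymbol{\gamma}}}\equiv 0$, pushing both kernels through $\eta_{n,(x_n,\gamma_n x_n)}$ to factor out $u_\alpha(x_n/B_n)$ via Lemma \ref{lemma Ex-fin repr exp reward} and the inductive hypothesis, and then verifying the scalar recursion $\dot{\mathfrak{v}}_n^{\P,\Q;\pi_{\boldsymbol{\gamma}}}=\dot{\mathfrak{v}}_{n+1}^{\P,\Q;\pi_{\boldsymbol{\gamma}}}v_n^{\P;\boldsymbol{\gamma}}+\mathfrak{v}_{n+1}^{\P;\pi_{\boldsymbol{\gamma}}}(v_n^{\Q;\boldsymbol{\gamma}}-v_n^{\P;\boldsymbol{\gamma}})$ by isolating the $k=n$ summand. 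The integrability justification you give also matches the one in the paper.
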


\begin{proof}
Fix $\Q=(Q_n)_{n=0}^{N-1}\in{\cal P}_\psi$. We prove the assertion in (\mbox{\ref{lemma Ex-fin recursion hd exp reward - eq10}}) by (backward) induction on $n$. Note that in view of Lemmas \mbox{\ref{lemma Ex-fin verification assumptions Thm hd}}(i) and \mbox{\ref{lemma suff cond for stand cond - pre}}  (with ${\cal P}':=\{\Q\}$) all occurring integrals in the following (exist and) are finite; see the discussion in Remark \mbox{\ref{remark hd - iteration scheme}}. For $n=N$, the assertion in (\mbox{\ref{lemma Ex-fin recursion hd exp reward - eq10}}) is valid because of (\mbox{\ref{Hadamard pi backward iteration scheme}})  and by the choice $\dot{\mathfrak{v}}_N^{\P,\Q;\pi_{\boldsymbol{\gamma}}} := 0$. Now, assume that the assertion in (\mbox{\ref{lemma Ex-fin recursion hd exp reward - eq10}}) holds for $k\in\{n+1,\ldots,N\}$. Then, analogously to (\mbox{\ref{proof lemma Ex-fin repr exp reward - eq10}}), we obtain by means of  (\mbox{\ref{Hadamard pi backward iteration scheme}})  and Lemma \mbox{\ref{lemma Ex-fin repr exp reward}}
\begin{eqnarray*}
 	\dot V_n^{\P,\Q;\pi_{\boldsymbol{\gamma}}}(x_n)
 	& = & \int_{\R_{\ge 0}}\dot V_{n+1}^{\P,\Q;\pi_{\boldsymbol{\gamma}}}(y)\,P_n\big((x_n,f_n^{\boldsymbol{\gamma}}(x_n)),dy\big) \\
    & & +~ \int_{\R_{\ge 0}}V_{n+1}^{\P;\pi_{\boldsymbol{\gamma}}}(y)\,(Q_n - P_n)\big((x_n,f_n^{\boldsymbol{\gamma}}(x_n)),dy\big) \\
    & = & \int_{\R_{\ge 0}}\dot{\mathfrak{v}}_{n+1}^{\P,\Q;\pi_{\boldsymbol{\gamma}}}\,u_\alpha(y/B_{n+1})\,P_n\big((x_n,f_n^{\boldsymbol{\gamma}}(x_n)),dy\big) \\
    & & +~ \int_{\R_{\ge 0}}\mathfrak{v}_{n+1}^{\P;\pi_{\boldsymbol{\gamma}}}\,u_\alpha(y/B_{n+1})\,(Q_n - P_n)\big((x_n,f_n^{\boldsymbol{\gamma}}(x_n)),dy\big) \\
	& = & \dot{\mathfrak{v}}_{n+1}^{\P,\Q;\pi_{\boldsymbol{\gamma}}}\,\int_{\R_{\ge 0}}u_\alpha\Big(\frac{\mathfrak{r}_{n+1}x_n + f_n^{\boldsymbol{\gamma}}(x_n)(y - \mathfrak{r}_{n+1})}{\mathfrak{r}_{n+1}B_n}\Big)\,\mathfrak{m}_{n+1}^{\P}(dy) \\
	& & +~ \mathfrak{v}_{n+1}^{\P;\pi_{\boldsymbol{\gamma}}}\cdot\Big(\int_{\R_{\ge 0}}u_\alpha\Big(\frac{\mathfrak{r}_{n+1}x_n + f_n^{\boldsymbol{\gamma}}(x_n)(y - \mathfrak{r}_{n+1})}{\mathfrak{r}_{n+1}B_n}\Big)\,\mathfrak{m}_{n+1}^{\Q}(dy) \\
	& & \quad -~ \int_{\R_{\ge 0}}u_\alpha\Big(\frac{\mathfrak{r}_{n+1}x_n + f_n^{\boldsymbol{\gamma}}(x_n)(y - \mathfrak{r}_{n+1})}{\mathfrak{r}_{n+1}B_n}\Big)\,\mathfrak{m}_{n+1}^{\P}(dy) \Big) \\
	& = & \dot{\mathfrak{v}}_{n+1}^{\P,\Q;\pi_{\boldsymbol{\gamma}}}\,u_\alpha(x_n/B_n)\,\int_{\R_{\ge 0}}u_\alpha\Big(1 + \gamma_n\Big(\frac{y}{\mathfrak{r}_{n+1}} - 1\Big)\Big)\,\mathfrak{m}_{n+1}^{\P}(dy) \\
	& & +~ \mathfrak{v}_{n+1}^{\P;\pi_{\boldsymbol{\gamma}}}\,u_\alpha(x_n/B_n)\cdot \Big(\int_{\R_{\ge 0}}u_\alpha\Big(1 + \gamma_n\Big(\frac{y}{\mathfrak{r}_{n+1}} - 1\Big)\Big)\,\mathfrak{m}_{n+1}^{\Q}(dy) \\
	& & \quad -~ \int_{\R_{\ge 0}}u_\alpha\Big(1 + \gamma_n\Big(\frac{y}{\mathfrak{r}_{n+1}} - 1\Big)\Big)\,\mathfrak{m}_{n+1}^{\P}(dy)\Big) \\
	& = & \dot{\mathfrak{v}}_{n+1}^{\P,\Q;\pi_{\boldsymbol{\gamma}}}\,u_\alpha(x_n/B_n)\,v_n^{\P;\boldsymbol{\gamma}} + \mathfrak{v}_{n+1}^{\P;\pi_{\boldsymbol{\gamma}}}\,u_\alpha(x_n/B_n)\,(v_n^{\Q;\boldsymbol{\gamma}} - v_n^{\P;\boldsymbol{\gamma}}) \\
	& = & \dot{\mathfrak{v}}_n^{\P,\Q;\pi_{\boldsymbol{\gamma}}}\, u_\alpha(x_n/B_n)
\end{eqnarray*}
for every $x_n\in\R_{\ge 0}$, where
\begin{eqnarray*}
	\dot{\mathfrak{v}}_n^{\P,\Q;\pi_{\boldsymbol{\gamma}}}
	& := & \dot{\mathfrak{v}}_{n+1}^{\P,\Q;\pi_{\boldsymbol{\gamma}}}\,v_n^{\P;\boldsymbol{\gamma}} + \mathfrak{v}_{n+1}^{\P;\pi_{\boldsymbol{\gamma}}}\,(v_n^{\Q;\boldsymbol{\gamma}} - v_n^{\P;\boldsymbol{\gamma}}) \\
	& = & \sum_{k=n+1}^{N-1}v_{N-1}^{\P;\boldsymbol{\gamma}}\cdots(v_k^{\Q;\boldsymbol{\gamma}} - v_k^{\P;\boldsymbol{\gamma}})\cdots v_{n+1}^{\P;\boldsymbol{\gamma}}\,v_n^{\P;\boldsymbol{\gamma}} \\
	 & & \quad +\ \prod_{k=n+1}^{N-1}v_k^{\P;\boldsymbol{\gamma}}\,(v_n^{\Q;\boldsymbol{\gamma}} - v_n^{\P;\boldsymbol{\gamma}}) \\
	& = & \sum_{k=n}^{N-1}v_{N-1}^{\P;\boldsymbol{\gamma}}\cdots(v_k^{\Q;\boldsymbol{\gamma}} - v_k^{\P;\boldsymbol{\gamma}})\cdots v_n^{\P;\boldsymbol{\gamma}}.
\end{eqnarray*}
\end{proof}


\subsubsection{Main part of the proof}


Let $\Q\in{\cal P}_\psi$ be arbitrary but fixed. First of all, note that Lemma \mbox{\ref{lemma Ex-fin verification assumptions Thm hd}} ensures that assumptions (a)--(c) of Theorem \mbox{\ref{Thm - hadamard cal v}}  are satisfied for $\M:=\M':=\M_{\rm{\scriptsize{H\ddot{o}l}},\alpha}$, $\psi$ given by (\mbox{\ref{Ex fin - gauge function}}), and $\Pi_{\rm lin}$ instead of $\Pi$. Take into account that a bounding function (see Definition \mbox{\ref{def bounding function}}) is independent of the set of all (admissible) strategies.

(i): It is an immediate consequence of part (i) of Theorem \mbox{\ref{Thm - hadamard cal v}}  that the functional ${\cal V}_0^{x_0;\pi_{\boldsymbol{\gamma}}}$ defined by (\mbox{\ref{Ex fin - value functional Pi lin}})  is `Fr\'echet differentiable' at $\P$ w.r.t.\ $\M_{\rm{\scriptsize{H\ddot{o}l}},\alpha}$. The corresponding `Fr\'echet derivative' $\dot{\cal V}_{0;\P}^{x_0;\pi_{\boldsymbol{\gamma}}}$ of ${\cal V}_0^{x_0;\pi_{\boldsymbol{\gamma}}}$ at $\P$ admits in view of Remark \mbox{\ref{remark hd - iteration scheme}}  and Lemma \mbox{\ref{lemma Ex-fin recursion hd exp reward}} (recall that $B_0= 1$) the representation
$$
	\dot{\cal V}_{0;\P}^{x_0;\pi_{\boldsymbol{\gamma}}}(\Q-\P) \,=\, \dot V_0^{\P,\Q;\pi_{\boldsymbol{\gamma}}}(x_0) \,=\, \dot{\mathfrak{v}}_0^{\P,\Q;\pi_{\boldsymbol{\gamma}}}\,u_\alpha(x_0),
$$
where $\dot{\mathfrak{v}}_0^{\P,\Q;\pi_{\boldsymbol{\gamma}}}:=\sum_{k=0}^{N-1}v_{N-1}^{\P;\boldsymbol{\gamma}}\cdots(v_k^{\Q;\boldsymbol{\gamma}} - v_k^{\P;\boldsymbol{\gamma}})\cdots v_0^{\P;\boldsymbol{\gamma}}$.

(ii): For any $n=0,\ldots,N-1$ let $\gamma_n^{\P}\in[0,1]$ be the unique solution to the optimization problem (\mbox{\ref{Ex fin - one-stage max problem}}), and set $\boldsymbol{\gamma}^{\P}:=(\gamma_n^{\P})_{n=0}^{N-1}\in[0,1]^N$. Then it follows from the first assertion in part (ii) of Theorem \mbox{\ref{Ex fin - Thm opt trading strat}}  that the linear trading strategy $\pi^{\P} = \pi_{\boldsymbol{\gamma}^{\P}}:=(f_n^{\boldsymbol{\gamma}^{\P}})_{n=0}^{N-1}\in\Pi_{\rm lin}$ defined by (\mbox{\ref{Ex fin - linear decision rules}})  is optimal w.r.t.\ $\P$. Therefore,
the value functional ${\cal V}_0^{x_0}$ defined by (\mbox{\ref{Ex fin - value functional Pi lin}})  admits in view of Remark \mbox{\ref{remark hd value function subset}}  the representation
\begin{equation}\label{proof Ex-Fin Thm Hadamard - eq10}
	{\cal V}_0^{x_0}(\P) \,=\, \sup_{\pi\in\Pi_{\rm lin}}{\cal V}_0^{x_0;\pi}(\P).
\end{equation}
As a consequence, part (ii) of Theorem \mbox{\ref{Thm - hadamard cal v}}  implies that the value functional ${\cal V}_0^{x_0}$ is `Hadamard differentiable' at $\P$ w.r.t.\ $\M_{\rm{\scriptsize{H\ddot{o}l}},\alpha}$ with `Hadamard derivative' $\dot{\cal V}_{0;\P}^{x_0}$ given by
\begin{equation}\label{proof Ex-Fin Thm Hadamard - eq20}
	\dot{\cal V}_{0;\P}^{x_0}(\Q-\P) \,=\, \sup_{\pi\in\Pi_{\rm lin}(\P)}\dot{\cal V}_{0;\P}^{x_0;\pi}(\Q-\P).
\end{equation}
By the second assertion in part (ii) of Theorem \mbox{\ref{Ex fin - Thm opt trading strat}}  we have $\Pi_{\rm lin}(\P)=\{\pi_{\boldsymbol{\gamma}^{\P}}\}$ and therefore the representation of the `Hadamard derivative' $\dot{\cal V}_{0;\P}^{x_0}$ in (\mbox{\ref{proof Ex-Fin Thm Hadamard - eq20}}) simplifies to
$$
	\dot{\cal V}_{0;\P}^{x_0}(\Q-\P) \,=\, \sup_{\pi\in\Pi_{\rm lin}(\P)}\dot{\cal V}_{0;\P}^{x_0;\pi}(\Q-\P) 
\,=\, \dot{\cal V}_{0;\P}^{x_0;\pi_{\boldsymbol{\gamma}^{\P}}}(\Q-\P).
$$
This completes the proof of Theorem \mbox{\ref{Ex fin - Thm Hadamard}}.
\hfill\proofendsign


\subsection{Proof of Remark \mbox{\ref{Ex fin - remark relatively compactness}} }\label{Subsec - Finance Example - proof remark relatively compactness}


Fix $\tau\in\{0,\ldots,N-1\}$, $\delta\in\R_{>0}$, and $\alpha\in(0,1)$. We will here show that the set ${\cal K}_{\tau,\delta}:=\{\Q_{\Delta,\tau}=(Q_{\Delta,\tau;n})_{n=0}^{N-1}: \Delta\in[0,\delta]\}$ $(\subseteq{\cal P}_\psi)$ is compact w.r.t.\ $d_{\infty,\M_{\rm{\scriptsize{H\ddot{o}l}},\alpha}}^\psi$, which implies that ${\cal K}_{\tau,\delta}$ is relatively compact w.r.t.\ $d_{\infty,\M_{\rm{\scriptsize{H\ddot{o}l}},\alpha}}^\psi$.

Consider any sequence in ${\cal K}_{\tau,\delta}$. That is, in other words, pick any sequence $(\Delta_m)\in[0,\delta]^\N$ and consider the sequence $(\Q_{\Delta_m,\tau})_{m\in\N}\in{\cal K}_{\tau,\delta}^\N$. Since $[0,\delta]$ is compact and thus sequentially compact w.r.t.\ the Euclidean distance, we can find a subsequence $(\Delta_m')$ of $(\Delta_m)$ and some $\Delta_0\in[0,\delta]$ such that $\Delta_m'\to \Delta_0$. Then $(\Q_{\Delta'_m,\tau})$ is a subsequence of $(\Q_{\Delta_m,\tau})$, and $\Q_{\Delta_0,\tau}\in{\cal K}_{\tau,\delta}$.
Thus in view of displays (\mbox{\ref{Ex fin - set trans func psi}}), (\mbox{\ref{Ex fin - Hadamard BSM - eq10}}), (\mbox{\ref{Ex fin - def mapping eta}}), and (\mbox{\ref{Ex fin - gauge function}})
\begin{eqnarray*}
	\lefteqn{\Big|\int_{\R_{\ge 0}}h(y)\,Q_{\Delta_m',\tau;n}\big((x,a),dy\big) - \int_{\R_{\ge 0}}h(y)\,Q_{\Delta_0,\tau;n}\big((x,a),dy\big)\Big|} \\
	& = & \Big|\int_{\R_{\ge 0}}h\big(\eta_{\tau,(x,a)}(y)\big)\,\delta_{\Delta_m'}(dy) - \int_{\R_{\ge 0}}h\big(\eta_{\tau,(x,a)}(y)\big)\,\delta_{\Delta_0}(dy)\Big| \\
	& = & \big|h\big(\eta_{\tau,(x,a)}(\Delta_m')\big) - h\big(\eta_{\tau,(x,a)}(\Delta_0)\big)\big| ~ \le ~ \big|\eta_{\tau,(x,a)}(\Delta_m') - \eta_{\tau,(x,a)}(\Delta_0)\big|^\alpha \\
	& = & a^\alpha\,|\Delta_m' - \Delta_0|^\alpha ~ \le ~ x^\alpha\,|\Delta_m' - \Delta_0|^\alpha  ~ \le ~ \psi(x)\,|\Delta_m' - \Delta_0|^\alpha
\end{eqnarray*}
for any $h\in\M_{\rm{\scriptsize{H\ddot{o}l}},\alpha}$, $(x,a)\in D_n$, $n=0,\ldots,N-1$, and $m\in\N$. This implies $d_{\infty,\M_{\rm{\scriptsize{H\ddot{o}l}},\alpha}}^\psi(\Q_{\Delta_m',\tau},\Q_{\Delta_0,\tau})\to 0$. Hence, the assertion follows.
\hfill\proofendsign


\section{Supplement: Existence of optimal strategies} \label{Sec - Existence of optimal strategies}


Consider the setting of Subsection \mbox{\ref{Subsec - MDM}}, that is, let $(\boldsymbol{X},\boldsymbol{A},\P,\Pi,\boldsymbol{r})$ be a MDM in the sense of Definition \mbox{\ref{def MDM}}  with fixed transition function $\P=(P_n)_{n=0}^{N-1}\in{\cal P}$. In this section we will recall from \cite{BaeuerleRieder2011} a statement on the existence of optimal strategies in the sense of Definition \mbox{\ref{def opt strategy}} ; see Theorem \mbox{\ref{Thm - existence opt strategy}} below. Moreover Proposition \mbox{\ref{proposition reward iteration}} below recalls the so-called {\em reward iteration} from \cite{BaeuerleRieder2011} which is used for the proof of Theorem \mbox{\ref{Thm - existence opt strategy}} (see \cite[p.\,23]{BaeuerleRieder2011}) and in our elaborations in Sections \mbox{\ref{Sec - Hadamard differentiability chapter}}--\mbox{\ref{Sec - Finance Example}}.

Recall that we used $E$ to denote the state space of the MDP $\boldsymbol{X}$ and that $E$ was equipped with a $\sigma$-algebra ${\cal E}$. For any $n=0,\ldots,N-1$ we used $\mathbb{F}_n$ to denote the set of {\em all} decision rules at time $n$ and we fixed some $F_n\subseteq\mathbb{F}_n$ which was regarded as the set of all {\em admissible} decision rules at time $n$. We referred to $\Pi:=F_0\times\cdots\times F_{N-1}$ as the set of all {\em admissible} strategies, and we defined $\M(E)$ to be the set of all $({\cal E},{\cal B}(\R))$-measurable functions in $\R^E$.

For any $n=0,\ldots,N-1$, let $\M_n^{\P}(E)$ be the set of all $h\in\M(E)$ satisfying
\begin{equation}\label{assumption integrability appendix}
	\int_E |h(y)|\,P_n\big((x,f_n(x)),dy\big)<\infty\quad \mbox{ for all }x\in E \mbox{ and }f_n\in F_n.
\end{equation}
For any $h\in\M_n^{\P}(E)$, $n=0,\ldots,N-1$, and $f_n\in F_n$ we may define maps ${\cal T}^{\P}_{n,f_n}h:E\rightarrow\R$ and ${\cal T}_n^{\P}h:E\rightarrow(-\infty,\infty]$ by
\begin{equation}\label{def integral operators}
     {\cal T}^{\P}_{n,f_n} h(x) := r_n(x,f_n(x)) + \int_E h(y)\,P_n\big((x,f_n(x)),dy\big) \quad\mbox{and}\quad {\cal T}^{\P}_n h(x) := \sup_{f_n\in F_n}{\cal T}_{n,f_n}^{\P} h(x).
\end{equation}
Note that ${\cal T}^{\P}_{n,f_n}$ and ${\cal T}^{\P}_n$ can be seen as maps from $\M_n^{\P}(E)$ to $\M(E)$ and from $\M_n^{\P}(E)$ to $(-\infty,\infty]^E$ respectively, and that ${\cal T}^{\P}_n$ is also called {\em maximal reward operator at time $n$}.

Finally, recall from (\mbox{\ref{exp tot reward}})  the definition of the map $V_n^{\P;\pi}$. This map can be computed via the so-called {\em reward iteration}:

\begin{proposition}\label{proposition reward iteration}
Let $\pi=(f_n)_{n=0}^{N-1}\in\Pi$ be fixed. If $V_{n+1}^{\P;\pi}(\cdot)\in\M_n^{\P}(E)$ for any $n=0,\ldots,N-1$, then the following two assertions hold.
\begin{enumerate}
    \item[{\rm (i)}] $V_N^{\P;\pi} = r_N$, and $V_n^{\P;\pi} = {\cal T}_{n,f_n}^{\P}V_{n+1}^{\P;\pi}$ for $n=0,\ldots,N-1$.
    \item[{\rm (ii)}] $V_n^{\P;\pi} = {\cal T}_{n,f_n}^{\P}{\cal T}_{n+1,f_{n+1}}^{\P}\cdots{\cal T}_{N-1,f_{N-1}}^{\P}r_N$ for $n=0,\ldots,N-1$.
\end{enumerate}
\end{proposition}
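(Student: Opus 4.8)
The plan is to prove part (i) first and then obtain part (ii) by iterating the one-step relation. For part (i) the terminal identity $V_N^{\P;\pi}=r_N$ is immediate: since the sum $\sum_{k=N}^{N-1}$ is empty under the convention adopted in the excerpt, (\ref{exp tot reward}) reduces to $V_N^{\P;\pi}(x_N)=\ex_{N,x_N}^{x_0,\P;\pi}[r_N(X_N)]$, and this equals $r_N(x_N)$ by part (iii) of Lemma \ref{lemma on integrals of functions of X}. The substantive claim is the one-step recursion $V_n^{\P;\pi}={\cal T}_{n,f_n}^{\P}V_{n+1}^{\P;\pi}$, which I would derive by conditioning on the transition from time $n$ to time $n+1$.

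First I would split off the $k=n$ summand. By linearity of the finite conditional expectation $\ex_{n,x_n}^{x_0,\P;\pi}$ (legitimate because the standing Assumption {\bf (A)} guarantees integrability of each term) one writes $V_n^{\P;\pi}(x_n)=\ex_{n,x_n}^{x_0,\P;\pi}[r_n(X_n,f_n(X_n))]+\ex_{n,x_n}^{x_0,\P;\pi}[\sum_{k=n+1}^{N-1}r_k(X_k,f_k(X_k))+r_N(X_N)]$, and the first term equals $r_n(x_n,f_n(x_n))$ again by part (iii) of Lemma \ref{lemma on integrals of functions of X}. For the tail term the key is to express each summand $\ex_{n,x_n}^{x_0,\P;\pi}[r_k(X_k,f_k(X_k))]$, and the $r_N$ contribution, through the iterated-integral representations furnished by the ``Moreover'' clause of Lemma \ref{lemma on integrals of functions of X}(v). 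In those representations the outermost integration is against $P_n((x_n,f_n(x_n)),dy_{n+1})$, so I would apply a Fubini-type factorization to pull this kernel to the front; the remaining iterated integral, read as a function of $y_{n+1}$, is precisely the level-$(n{+}1)$ representation of $\ex_{n+1,y_{n+1}}^{x_0,\P;\pi}[r_k(X_k,f_k(X_k))]$ (resp.\ the $r_N$ term). Summing over $k$ and recombining then gives $\ex_{n,x_n}^{x_0,\P;\pi}[\sum_{k=n+1}^{N-1}r_k(X_k,f_k(X_k))+r_N(X_N)]=\int_E V_{n+1}^{\P;\pi}(y)\,P_n((x_n,f_n(x_n)),dy)$, where the integral is finite — hence ${\cal T}_{n,f_n}^{\P}V_{n+1}^{\P;\pi}$ well defined — exactly because of the hypothesis $V_{n+1}^{\P;\pi}(\cdot)\in\M_n^{\P}(E)$. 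Adding back the $k=n$ term yields part (i).

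Part (ii) then follows by downward induction on $n$ from $N-1$ to $0$: starting from $V_N^{\P;\pi}=r_N$, each application of the one-step relation of part (i) inserts one further operator, so $V_n^{\P;\pi}={\cal T}_{n,f_n}^{\P}V_{n+1}^{\P;\pi}={\cal T}_{n,f_n}^{\P}{\cal T}_{n+1,f_{n+1}}^{\P}\cdots{\cal T}_{N-1,f_{N-1}}^{\P}r_N$. Each of these steps is licensed by the hypothesis $V_{k+1}^{\P;\pi}(\cdot)\in\M_k^{\P}(E)$, which guarantees the corresponding operator is applied to an integrable argument.

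The main obstacle I anticipate is not conceptual but a matter of bookkeeping: rigorously justifying the interchange of the finite sum with the integrals and the Fubini-type factorization that peels off $P_n$ from the iterated integral, for which integrability must be tracked carefully at each level. This is precisely where Assumption {\bf (A)} together with the iterated-integral identities of Lemma \ref{lemma on integrals of functions of X} do the real work, allowing one to sidestep the $\pr_Y^{x_0,\P;\pi}$-a.s.\ uniqueness subtleties of the abstract factorized conditional distributions and argue entirely with honest (finite) integrals.
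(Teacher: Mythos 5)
Your argument is correct and is essentially the canonical proof: the paper itself offers no independent argument but simply cites Theorem 2.3.4 of B\"auerle--Rieder (2011), whose proof is precisely this splitting off of the $k=n$ summand, conditioning on the transition from $n$ to $n+1$ via the iterated-integral representations, and backward induction for part (ii). One cosmetic remark: no Fubini-type rearrangement is needed to ``pull $P_n$ to the front,'' since in the representation of Lemma \ref{lemma on integrals of functions of X}(v) the integration against $P_n\big((x_n,f_n(x_n)),dy_{n+1}\big)$ is already the outermost one, so the inner iterated integral is by construction the level-$(n+1)$ expression evaluated at $y_{n+1}$; only the interchange of the finite sum with that outer integral (justified by Assumption {\bf (A)} and Tonelli) and the measurability of the inner function remain to be checked, exactly as you indicate.
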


\begin{proof}
The proof of Theorem 2.3.4 in \cite{BaeuerleRieder2011} can be transferred verbatim.
\end{proof}

Note that the assumption $V_{n+1}^{\P;\pi}(\cdot)\in\M_n^{\P}(E)$ (for any $n=0,\ldots,N-1$) is not trivially satisfied. It holds, for example, if the MDM $(\boldsymbol{X},\boldsymbol{A},\P,\Pi,\boldsymbol{r})$ possesses a bounding function $\psi$ (in the sense of Definition \mbox{\ref{def bounding function}}  with ${\cal P}':=\{\P\}$). This is ensured by Lemma \mbox{\ref{lemma suff cond for stand cond - pre}}  with ${\cal P}':=\{\P\}$, taking into account that by (c) of Definition \mbox{\ref{def bounding function}}  we have $\M_\psi(E)\subseteq\M_n^{\P}(E)$ (with $\M_\psi(E)$ as in Subsection \mbox{\ref{Subsec - Bounding functions}}) for any $n=0,\ldots,N-1$. In some cases the assumption in Proposition \mbox{\ref{proposition reward iteration}} can also be shown directly; see e.g.\ the proof of Lemma \mbox{\ref{lemma Ex-fin repr exp reward}} in Subsection \mbox{\ref{Subsec - Finance Example - auxiliary lemmas proof Thm hd}}.

Theorem \mbox{\ref{Thm - existence opt strategy}} below is concerned with the existence of optimal strategies. It invokes the following definition.

\begin{definition} \label{def opt decision rule}
For any $n=0,\ldots,N-1$, a decision rule $f_n^{\P}\in F_n$ is called a {\em maximizer of $h\in\M_n^{\P}(E)$} if ${\cal T}_{n,f_n^{\P}}^{\P}h(x) = {\cal T}_n^{\P}h(x)$ for all $x\in E$.
\end{definition}

The following result which is also known as {\em structure theorem} provides sufficient conditions for the existence of optimal strategies. Recall from (\mbox{\ref{value function}})  the definition of the value function $V_n^{\P}$.

\begin{theorem}\label{Thm - existence opt strategy}
Suppose that there exist for any $n=0,\ldots,N-1$ sets $\M_n^{\P}\subseteq\M_n^{\P}(E)$ and $F'_n\subseteq F_n$ such that the following conditions hold.
\begin{enumerate}
\item[{\rm (a)}] $r_N\in\M_{N-1}^{\P}$.

\item[{\rm (b)}] For any $n=1,\ldots,N-1$ and $h\in\M_{n}^{\P}$ we have ${\cal T}_{n}^{\P} h\in\M_{n-1}^{\P}$.

\item[{\rm (c)}] For any $n=0,\ldots,N-1$ and $h\in\M_n^{\P}$, there exists a maximizer $f_n^{\P}\in F_n$ of $h$ with $f_n^{\P}\in F'_n$.
\end{enumerate}
Then the following three assertions are valid:
\begin{enumerate}
    \item[{\rm (i)}] $V_{0}^{\P}\in\M(E)$, and $V_{n+1}^{\P}\in\M_n^{\P}$ for any $n=0,\ldots,N-1$. Moreover $V_N^{\P} = r_N$, and $V_n^{\P} = {\cal T}_n^{\P} V_{n+1}^{\P}$ for any $n=0,\ldots,N-1$.

    \item[{\rm (ii)}] $V_n^{\P}= {\cal T}_n^{\P}{\cal T}_{n+1}^{\P}\cdots{\cal T}_{N-1}^{\P}r_N$ for any $n=0,\ldots,N-1$.

    \item[{\rm (iii)}] For any $n=0,\ldots,N-1$ there exists a maximizer $f_n^{\P}\in F_n$ of $V_{n+1}^{\P}$ with $f_n^{\P}\in F'_n$. Any such maximizers $f_0^{\P},\ldots,f_{N-1}^{\P}$ form an optimal strategy $\pi^{\P}:=(f_n^{\P})_{n=0}^{N-1}\in\Pi$ w.r.t.\ $\P$ in the MDM $(\boldsymbol{X},\boldsymbol{A},\P,\Pi,\boldsymbol{r})$.
\end{enumerate}
\end{theorem}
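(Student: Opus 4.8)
The plan is to prove all three assertions through a single backward induction anchored at the Bellman (dynamic programming) equation $V_n^{\P}={\cal T}_n^{\P}V_{n+1}^{\P}$. As the authors point out, this is the structure theorem of \cite{BaeuerleRieder2011}, and the argument transfers; I would organize it around the deterministic recursion $J_N:=r_N$ and $J_n:={\cal T}_n^{\P}{\cal T}_{n+1}^{\P}\cdots{\cal T}_{N-1}^{\P}r_N$ for $n=0,\ldots,N-1$. The whole theorem reduces to showing $V_n^{\P}=J_n$ for every $n$: this identity is exactly the recursion in (i) and the unfolded form in (ii), and the maximizers used to build $J_n$ will furnish the optimal strategy in (iii).

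First I would establish that the $J_n$ are well defined, measurable, and stay inside the prescribed classes, by backward induction on $n$. The base case is $J_N=r_N\in\M_{N-1}^{\P}$ by (a). For the step, assume $J_{n+1}\in\M_n^{\P}\,(\subseteq\M_n^{\P}(E))$, so that ${\cal T}_n^{\P}J_{n+1}$ is defined by \eqref{def integral operators}. Condition (c) supplies a (measurable) maximizer $f_n^{\P}\in F_n$ with $f_n^{\P}\in F'_n$ and ${\cal T}_{n,f_n^{\P}}^{\P}J_{n+1}={\cal T}_n^{\P}J_{n+1}=J_n$; since ${\cal T}_{n,f_n^{\P}}^{\P}$ maps $\M_n^{\P}(E)$ into $\M(E)$, this gives $J_n\in\M(E)$ (rescuing measurability of the defining supremum), and (b) then propagates $J_n\in\M_{n-1}^{\P}$ for $n\ge1$. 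This backward pass simultaneously produces the candidate strategy $\pi^{\P}=(f_n^{\P})_{n=0}^{N-1}\in\Pi$.

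Next I would prove $V_n^{\P}=J_n$ by the two usual inequalities. For ``$\le$'', fix any admissible $\pi=(f_k)_{k=0}^{N-1}$; the reward iteration (Proposition \ref{proposition reward iteration}) gives $V_n^{\P;\pi}={\cal T}_{n,f_n}^{\P}{\cal T}_{n+1,f_{n+1}}^{\P}\cdots{\cal T}_{N-1,f_{N-1}}^{\P}r_N$, and since each ${\cal T}_{k,f_k}^{\P}$ is monotone (it is $r_k$ plus integration against a probability kernel) and satisfies ${\cal T}_{k,f_k}^{\P}h\le{\cal T}_k^{\P}h$ pointwise, iterating downward yields $V_n^{\P;\pi}\le J_n$; taking the supremum over $\pi$ in \eqref{value function} gives $V_n^{\P}\le J_n$. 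For ``$\ge$'', I would evaluate $\pi^{\P}$ and show $V_k^{\P;\pi^{\P}}=J_k$ by backward induction: $V_N^{\P;\pi^{\P}}=r_N=J_N$, and if $V_{k+1}^{\P;\pi^{\P}}=J_{k+1}$ then $V_k^{\P;\pi^{\P}}={\cal T}_{k,f_k^{\P}}^{\P}V_{k+1}^{\P;\pi^{\P}}={\cal T}_{k,f_k^{\P}}^{\P}J_{k+1}={\cal T}_k^{\P}J_{k+1}=J_k$ by the maximizer identity. Hence $V_n^{\P}\ge V_n^{\P;\pi^{\P}}=J_n$, so $V_n^{\P}=J_n$. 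This delivers (i) and (ii); and since $V_0^{\P;\pi^{\P}}(x_0)=J_0=V_0^{\P}(x_0)$ for every $x_0\in E$, the strategy $\pi^{\P}$ is optimal in the sense of Definition \ref{def opt strategy}, giving (iii).

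The hard part will be the integrability bookkeeping rather than the algebra. The reward iteration of Proposition \ref{proposition reward iteration} requires $V_{k+1}^{\P;\pi}(\cdot)\in\M_k^{\P}(E)$ (in the sense of \eqref{assumption integrability appendix}), which is \emph{not} automatic; I would secure it alongside the induction, using the inclusion $\M_n^{\P}\subseteq\M_n^{\P}(E)$ in the hypotheses together with condition (c), which is precisely what keeps the iterated functions inside the integrability classes so that every operator application is legitimate. The second delicate point is the measurability of ${\cal T}_n^{\P}h$, a supremum of measurable functions, which in general fails (cf.\ the remark on measurability after Definition \ref{def value function}) and is recovered here only because (c) replaces the supremum by the single map ${\cal T}_{n,f_n^{\P}}^{\P}h\in\M(E)$; without a measurable maximizer the claimed membership $V_0^{\P}\in\M(E)$ would be out of reach.
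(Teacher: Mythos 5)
Your proposal is correct and is essentially the paper's own proof: the authors simply invoke Theorem 2.3.8 of B\"auerle and Rieder, whose argument is exactly the backward induction you describe (define $J_n={\cal T}_n^{\P}\cdots{\cal T}_{N-1}^{\P}r_N$, use (a)--(c) to keep the iterates in the classes $\M_n^{\P}$ and to extract measurable maximizers, bound $V_n^{\P;\pi}\le J_n$ by monotonicity of the one-step operators, and attain equality with the strategy built from the maximizers). Your flagged integrability point is handled by the standing Assumption {\bf (A)} together with the inclusions $\M_n^{\P}\subseteq\M_n^{\P}(E)$, just as in the cited proof.
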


\begin{proof}
The proof of Theorem 2.3.8 in \cite{BaeuerleRieder2011} can be transferred verbatim.
\end{proof}

The iteration scheme in part (i) of Theorem \mbox{\ref{Thm - existence opt strategy}} is known as {\em Bellman equation}. Note that conditions  (a)--(c) of Theorem \mbox{\ref{Thm - existence opt strategy}} are not trivially satisfied. It is discussed in Subsection 2.4 of the monograph  \cite{BaeuerleRieder2011} that these conditions hold in so-called structured MDMs. In some situations, however, these conditions can be verified directly; see Subsection \mbox{\ref{Subsec - Finance Example - proof Thm opt trad strat}} (proof of Theorem \mbox{\ref{Ex fin - Thm opt trading strat}}) for an example. For original work on the existence of optimal strategies in MDM see, for instance, \cite{Hinderer1970,Schael1975}. Also note that Theorem \mbox{\ref{Thm - existence opt strategy}} shows that a solution to the (Markov decision) optimization problem (\mbox{\ref{maximization problem}})  can be obtained by solving iteratively $N$ (one-stage) optimization problems.

\begin{remarknorm}\label{remark existence optimal strat}
(i) Under conditions (a)--(c) of Theorem \mbox{\ref{Thm - existence opt strategy}}, part (i) of Theorem \mbox{\ref{Thm - existence opt strategy}} implies that the value function $V_n^{\P}(\cdot)$ is $({\cal E},{\cal B}(\R))$-measurable for any $n=0,\ldots,N$. The measurability of the value function is also ensured if the sets $F_n,\ldots,F_{N-1}$ are at most countable; take into account that the right-hand side of (\mbox{\ref{value function}})  includes the map $V_n^{\P;\pi}$ (as defined in (\mbox{\ref{exp tot reward}})) which depends only on the last $N-n$ components $(f_n,\ldots,f_{N-1})$ of the strategy $\pi=(f_n)_{n=0}^{N-1}\in\Pi$.
The measurability of the value function has been discussed in the literature several times; see, for instance, \cite{Hinderer1970,Schael1975}.

(ii) It follows from Theorem \mbox{\ref{Thm - existence opt strategy}} that any $N$-tuple $(f_n^{\P})_{n=0}^{N-1}$ of maximizers provides an optimal strategy $\pi^{\P}$ w.r.t.\ $\P$ in the MDM $(\boldsymbol{X},\boldsymbol{A},\P,\Pi,\boldsymbol{r})$ via $\pi^{\P}:=(f_n^{\P})_{n=0}^{N-1}$. The reverse statement, however, is not true since even under the assumptions of Theorem \mbox{\ref{Thm - existence opt strategy}} optimal strategies are {\em not} necessarily composed of maximizers; see, e.g., \cite[Example 2.3.10]{BaeuerleRieder2011}. Hence, Theorem \mbox{\ref{Thm - existence opt strategy}} provides only a sufficient criterion for the existence of optimal strategies.

(iii) In view of the second part of (ii), an optimal strategy in a MDM can in general be non-unique. However, this does not exclude that in specific situations there is {\em exactly} one optimal strategy. For an example see Subsection \mbox{\ref{Subsec - Finance Example - Comp opt trading strat}}.

(iv) In the case where we are interested in minimizing expected total costs in the MDM $(\boldsymbol{X},\boldsymbol{A},\P,\Pi,\boldsymbol{r})$ (see Remark \mbox{\ref{remark non Markovian strat}}(ii)),
the integral operator ${\cal T}^{\P}_n$ is given by (\mbox{\ref{def integral operators}}) with ``$\sup$'' replaced by ``$\inf$'' and in Definition \mbox{\ref{def opt decision rule}} we have to replace ``maximizer'' by ``minimizer''.
{\hspace*{\fill}$\Diamond$\par\bigskip}
\end{remarknorm}


\section{Supplement: Topology generated by the H{\"o}lder-$\alpha$ metric}\label{Sec - hoelder metric psi weak topology}


We use the notation and terminology introduced in Subsection \mbox{\ref{Subsec - Metric on set of probability measures}}. In particular, the H{\"o}lder-$\alpha$ metric $d_{\rm{\scriptsize{H\ddot{o}l}},\alpha}$ was introduced in Example \mbox{\ref{examples prob metric - hoelder}} of Subsection \mbox{\ref{Subsec - Metric on set of probability measures}}.

\begin{lemma}\label{lemma - d beta hoelder - psi weak topology}
Assume that $(E,d_E)$ is a complete and separable metric space, and let $\alpha\in(0,1]$ and $x'\in E$ be arbitrary but fixed. Then the H{\"o}lder-$\alpha$ metric $d_{\rm{\scriptsize{H\ddot{o}l}},\alpha}$ introduced in Example \mbox{\ref{examples prob metric - hoelder}}  metricizes the $\psi$-weak topology on ${\cal M}_1^\psi(E)$ for $\psi(x):= 1 + d_E(x,x')^\alpha$.
\end{lemma}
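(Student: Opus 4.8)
The statement to prove is that, on a complete separable metric space $(E,d_E)$, the H\"older-$\alpha$ metric $d_{\rm{\scriptsize{H\ddot{o}l}},\alpha}$ metricizes the $\psi$-weak topology on ${\cal M}_1^\psi(E)$ for the gauge function $\psi(x):=1+d_E(x,x')^\alpha$. Recall that the $\psi$-weak topology is by definition the coarsest topology rendering all maps $\mu\mapsto\int_E h\,d\mu$ with $h\in\mathbb{C}_\psi(E)$ continuous. Since $d_{\rm{\scriptsize{H\ddot{o}l}},\alpha}$ is already known (from Example \ref{examples prob metric - hoelder}) to be a genuine metric on ${\cal M}_1^\psi(E)$, it suffices to show that the two topologies agree, i.e.\ that a sequence $(\mu_m)$ in ${\cal M}_1^\psi(E)$ converges to $\mu$ in the $\psi$-weak topology if and only if $d_{\rm{\scriptsize{H\ddot{o}l}},\alpha}(\mu_m,\mu)\to 0$. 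Because both topologies on ${\cal M}_1^\psi(E)$ are metrizable (the $\psi$-weak one under the stated completeness/separability assumptions), it is enough to argue the equivalence at the level of sequences.

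The natural route is to reduce the problem to the already-established Kantorovich ($\alpha=1$) case by means of a homeomorphism of metric spaces. First I would introduce the \emph{snowflake} metric $d_E^{(\alpha)}:=d_E^{\,\alpha}$ on $E$; since $\alpha\in(0,1]$, the map $t\mapsto t^\alpha$ is concave and subadditive, so $d_E^{(\alpha)}$ is again a metric on $E$ inducing the same topology as $d_E$ (the identity map $(E,d_E)\to(E,d_E^{(\alpha)})$ is a homeomorphism). Moreover $(E,d_E^{(\alpha)})$ inherits completeness and separability from $(E,d_E)$, because balls of the two metrics are nested and Cauchy/convergent sequences correspond. The key observation is that the H\"older-$\alpha$ seminorm with respect to $d_E$ is exactly the Lipschitz seminorm with respect to $d_E^{(\alpha)}$: for $h\in\R^E$,
\begin{equation*}
  \|h\|_{\rm{\scriptsize{H\ddot{o}l}},\alpha}
  =\sup_{x\neq y}\frac{|h(x)-h(y)|}{d_E(x,y)^\alpha}
  =\sup_{x\neq y}\frac{|h(x)-h(y)|}{d_E^{(\alpha)}(x,y)}
  =\|h\|_{\rm{\scriptsize{Lip}},\,d_E^{(\alpha)}}.
\end{equation*}
Hence the generating class $\M_{\rm{\scriptsize{H\ddot{o}l}},\alpha}$ of $d_{\rm{\scriptsize{H\ddot{o}l}},\alpha}$ coincides with the Kantorovich class $\M_{\rm{\scriptsize{Kant}}}$ built from the metric $d_E^{(\alpha)}$, and therefore $d_{\rm{\scriptsize{H\ddot{o}l}},\alpha}$ computed from $d_E$ equals the Kantorovich metric $d_{\rm{\scriptsize{Kant}}}^{(\alpha)}$ computed from $d_E^{(\alpha)}$.

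With this identification the proof reduces to invoking the classical result (Theorem 7.12 in \cite{Villani2003}, as cited in Example \ref{examples prob metric - kantorovich}): on a complete separable metric space, here $(E,d_E^{(\alpha)})$, the Kantorovich metric metricizes the $\psi'$-weak topology for the gauge $\psi'(x)=1+d_E^{(\alpha)}(x,x')$. But $d_E^{(\alpha)}(x,x')=d_E(x,x')^\alpha$, so $\psi'=\psi$ exactly. Thus $d_{\rm{\scriptsize{Kant}}}^{(\alpha)}=d_{\rm{\scriptsize{H\ddot{o}l}},\alpha}$ metricizes the $\psi$-weak topology on ${\cal M}_1^{\psi}(E)$. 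The only point requiring care is that the $\psi$-weak topology is defined via \emph{continuous} test functions $h\in\mathbb{C}_\psi(E)$, and continuity must be interpreted with respect to the \emph{same} topology on $E$: this is exactly where I use that $d_E$ and $d_E^{(\alpha)}$ generate the same topology, so the families $\mathbb{C}_\psi(E)$ coincide and the two $\psi$-weak topologies are literally identical. I would also double check that $\int_E\psi\,d\mu<\infty$ is the same finiteness condition in both pictures (immediate, since $\psi$ is literally the same function).

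The main obstacle, and the step I would write out most carefully, is verifying that the snowflake transform $t\mapsto t^\alpha$ genuinely produces a metric inducing the same topology and preserving completeness and separability — the triangle inequality for $d_E^{(\alpha)}$ follows from subadditivity of $t\mapsto t^\alpha$ (which in turn follows from concavity together with $0^\alpha=0$), and topological equivalence is routine, but it must be stated because Theorem 7.12 of \cite{Villani2003} is applied to $(E,d_E^{(\alpha)})$ rather than to $(E,d_E)$. Everything else is bookkeeping: matching the two generating classes via the seminorm identity above, and checking that the gauge functions and the integrability constraint coincide. I expect no analytic difficulty beyond the snowflake lemma, so the proof is essentially a clean reduction to the Kantorovich case.
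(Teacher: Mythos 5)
Your argument is correct, but it takes a genuinely different route from the paper's proof. You reduce the H\"older-$\alpha$ case to the Kantorovich case ($\alpha=1$) via the snowflake metric $d_E^{(\alpha)}:=d_E^{\,\alpha}$: the triangle inequality follows from subadditivity of $t\mapsto t^\alpha$, the identity map is a homeomorphism preserving completeness and separability, the H\"older-$\alpha$ ball for $d_E$ is exactly the Lipschitz ball for $d_E^{(\alpha)}$, and the gauge functions and continuity classes (hence the two $\psi$-weak topologies) literally coincide; one then quotes the fact recorded in Example \ref{examples prob metric - kantorovich} for the complete separable space $(E,d_E^{(\alpha)})$. All of these steps check out. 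The paper instead gives a direct, self-contained proof: one implication uses that bounded Lipschitz functions have finite H\"older-$\alpha$ seminorm together with the Portmanteau theorem and the characterization of $\psi$-weak convergence as weak convergence plus convergence of $\int_E\psi\,d\mu_n$; the converse implication is a truncation argument, splitting each $h$ in the H\"older ball into a bounded part and a tail controlled by $\psi\eins_{\{\psi>K\}}$, and invoking uniform convergence over uniformly bounded equicontinuous families (Corollary 11.3.4 in \cite{Dudley2002}). Your reduction is shorter and more conceptual, but it outsources the analytic content to the cited Kantorovich result (and implicitly to the Kantorovich--Rubinstein identification already assumed in Example \ref{examples prob metric - kantorovich}); the paper's truncation proof is longer but exhibits explicitly where the uniformity over the H\"older ball comes from and does not depend on the generating class being of snowflake type. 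As you note yourself, the one step you must write out carefully is the snowflake lemma, since the external theorem is applied to $(E,d_E^{(\alpha)})$ rather than to $(E,d_E)$; with that done, your proof is complete.
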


\begin{proof}
As the $\psi$-weak topology is metrizable (see, e.g., Corollary A.45 in \cite{FoellmerSchied2011}), it suffices to show that for any choice of $\mu,\mu_1,\mu_2\ldots\in{\cal M}_1^\psi(E)$ we have $\mu_n\to\mu$ $\psi$-weakly if and only if $d_{\M_{\rm{\scriptsize{H\ddot{o}l}},\alpha}}(\mu_n,\mu)\to 0$.

First assume that $d_{\M_{\rm{\scriptsize{H\ddot{o}l}},\alpha}}(\mu_n,\mu)\to 0$. As $\mu_n\to\mu$ $\psi$-weakly if and only if $\mu_n\to\mu$ weakly and $\int_E\psi\,d\mu_n\to\int_E\psi\,d\mu$ (see, e.g., Lemma 2.1 in \cite{Kraetschmeretal2017}), it suffices to show that $\mu_n\to\mu$ weakly and $\int_E\psi\,d\mu_n\to\int_E\psi\,d\mu$. Any bounded $h\in\R^E$ with $\|h\|_{\rm{\scriptsize{Lip}}}<\infty$ satisfies $\|h\|_{\rm{\scriptsize{H\ddot{o}l}},\alpha}\le C_h:=\max\{\|h\|_{\rm{\scriptsize{Lip}}},2\|h\|_\infty\}$. Since $h/C_h$ lies in $\M_{\rm{\scriptsize{H\ddot{o}l}},\alpha}$, our assumption implies $\int_E h\,d\mu_n\to\int_Eh\,d\mu$. That is, $\int_E h\,d\mu_n\to\int_Eh\,d\mu$ for any bounded and Lipschitz continuous $h\in\R^E$. By the Portmanteau theorem we can conclude $\mu_n\to\mu$ weakly. Moreover, as $\psi$ lies in $\M_{\rm{\scriptsize{H\ddot{o}l}},\alpha}$, our assumption also implies $\int_E\psi\,d\mu_n\to\int_E\psi\,d\mu$.

Conversely, assume that $\mu_n\to\mu$ $\psi$-weakly. We have to show that for every $\varepsilon>0$ there exists some $n_0\in\N$ such that
\begin{equation}\label{Charakterisierung psi-schwache Topologie - Proof - 10}
    \sup_{h\in\M_{\rm{\scriptsize{H\ddot{o}l}},\alpha}}\Big|\int_E h\,d\mu_n-\int_E h\,d\mu\Big|\le\varepsilon\qquad\mbox{for all }n\ge n_0.
\end{equation}
For any $K>0$, the left hand side of (\mbox{\ref{Charakterisierung psi-schwache Topologie - Proof - 10}}) is bounded above by
\begin{equation}\label{Charakterisierung psi-schwache Topologie - Proof - 20}
    \sup_{h\in\M_{\rm{\scriptsize{H\ddot{o}l}},\alpha}}\Big|\int_E h_K\,d\mu_n-\int_E h_K\,d\mu\Big|+\sup_{h\in\M_{\rm{\scriptsize{H\ddot{o}l}},\alpha}}\Big|\int_E h^K\,d\mu_n-\int_E h^K\,d\mu\Big|
\end{equation}
with $h_K:=h\eins_{\{|h|\le K\}}+K\eins_{\{h> K\}}-K\eins_{\{h<-K\}}$, and $h^K:=h-h_K$. Without loss of generality we may and do assume that $h(x_0)=0$ for all $h\in\M_{\rm{\scriptsize{H\ddot{o}l}},\alpha}$; take into account that $|\int_E h\,d\mu_n-\int_E h\,d\mu|$ remains unchanged when a constant is added to $h$. Then $|h(x)|=|h(x)-h(x_0)|\le d_E(x,x_0)^\alpha\le\psi(x)$ for all $h\in\M_{\rm{\scriptsize{H\ddot{o}l}},\alpha}$. In particular, $|h^K|\le |h|\eins_{\{|h|>K\}}\le \psi\eins_{\{\psi>K\}}$. Thus the second summand in (\mbox{\ref{Charakterisierung psi-schwache Topologie - Proof - 20}}) is bounded above by
\begin{equation}\label{Charakterisierung psi-schwache Topologie - Proof - 30}
    \int_E\psi\eins_{\{\psi>K\}}\,d\mu_n+\int_E\psi\eins_{\{\psi>K\}}\,d\mu
\end{equation}
Now we can choose $K>0$ so large that the second summand in (\mbox{\ref{Charakterisierung psi-schwache Topologie - Proof - 30}}) is at most $\varepsilon/5$. The first summand in (\mbox{\ref{Charakterisierung psi-schwache Topologie - Proof - 30}}) is bounded above by
\begin{equation}\label{Charakterisierung psi-schwache Topologie - Proof - 40}
    \Big|\int_E\psi\eins_{\{\psi>K\}}\,d\mu_n-\int_E\psi\eins_{\{\psi>K\}}\,d\mu\Big|+\int_E\psi\eins_{\{\psi>K\}}\,d\mu
\end{equation}
The second summand in (\mbox{\ref{Charakterisierung psi-schwache Topologie - Proof - 40}}) is at most $\varepsilon/5$ (see above) and the first summand in (\mbox{\ref{Charakterisierung psi-schwache Topologie - Proof - 40}}) is bounded above by
\begin{equation}\label{Charakterisierung psi-schwache Topologie - Proof - 50}
    \Big|\int_E\psi\,d\mu_n-\int_E\psi\,d\mu\Big|+\Big|\int_E\psi\eins_{\{\psi\le K\}}\,d\mu_n-\int_E\psi\eins_{\{\psi\le K\}}\,d\mu\Big|.
\end{equation}
The first summand in (\mbox{\ref{Charakterisierung psi-schwache Topologie - Proof - 50}}) converges to $0$ as $n\to\infty$, because $\mu_n\to\mu$ $\psi$-weakly. Thus we can find $n_0\in\N$ such that it is bounded above by $\varepsilon/5$ for every $n\ge n_0$. Since $\mu\circ\psi^{-1}$ as a probability measure on the real line has at most countably many atom, we may and do assume that $K>0$ is chosen such that $\mu[\{\psi=K\}]=0$. Since $\mu_n\to\mu_0$ ($\psi$-weakly and thus) weakly, it follows by the portmanteau theorem that the second summand in (\mbox{\ref{Charakterisierung psi-schwache Topologie - Proof - 50}}) converges to $0$ as $n\to\infty$. By possibly increasing $n_0$ we obtain that the second summand in (\mbox{\ref{Charakterisierung psi-schwache Topologie - Proof - 50}}) is at most $\varepsilon/5$ for all $n\ge n_0$. So far we have shown that the second summand in (\mbox{\ref{Charakterisierung psi-schwache Topologie - Proof - 20}}) is bounded above by $4\varepsilon/5$ for all $n\ge n_0$. As the functions of $\M_{{\rm{\scriptsize{H\ddot{o}l}}},\alpha;K}:=\{h_K:h\in\M_{\rm{\scriptsize{H\ddot{o}l}},\alpha}\}$ are uniformly bounded and equicontinuous, Corollary 11.3.4 in \cite{Dudley2002} ensures that one can increase $n_0$ further such that the first summand in (\mbox{\ref{Charakterisierung psi-schwache Topologie - Proof - 20}}) is bounded above by $\varepsilon/5$ for all $n\ge n_0$. That is, we arrive at (\mbox{\ref{Charakterisierung psi-schwache Topologie - Proof - 10}}).
\end{proof}


\end{document}